\theoremstyle{plain} % style plain
\newtheorem{theorem}{Theorem}[section]
\newtheorem{corollary}[theorem]{Corollary}
\newtheorem{proposition}[theorem]{Proposition}
\newtheorem{lemma}[theorem]{Lemma}
\theoremstyle{definition} % style definition
\newtheorem{definition}[theorem]{Definition}
\newtheorem{example}[theorem]{Example}
\newtheorem{remark}[theorem]{Remark}
\newtheorem{notation}[theorem]{Notation}
\numberwithin{equation}{section}
\newcommand{\A}{\mathscr{A}}
\newcommand{\B}{\mathscr{B}}
\newcommand{\C}{\mathscr{C}}
\newcommand{\D}{\mathscr{D}}
\newcommand{\E}{\mathscr{E}}
\newcommand{\F}{\mathscr{F}}
\newcommand{\G}{\mathscr{G}}
\renewcommand{\H}{\mathscr{H}}
\newcommand{\K}{\mathscr{K}}
\newcommand{\M}{\mathscr{M}}
\newcommand{\N}{\mathscr{N}}
\renewcommand{\O}{\mathscr{O}}
\renewcommand{\P}{\mathscr{P}}
\newcommand{\Q}{\mathscr{Q}}
\newcommand{\R}{\mathscr{R}}
\renewcommand{\S}{\mathscr{S}}
\newcommand{\T}{\mathscr{T}}
\newcommand{\U}{\mathscr{U}}
\renewcommand{\AA}{\mathbb{A}}
\newcommand{\CC}{\mathbb{C}}
\newcommand{\EE}{\mathbb{E}}
\newcommand{\NN}{\mathbb{N}}
\newcommand{\ZZ}{\mathbb{Z}}
\newcommand{\GGG}{\mathbf{G}}
\newcommand{\HHH}{\mathbf{H}}
\newcommand{\coh}{\mathrm{coh}}
\newcommand{\qcc}{\mathrm{qcc}}
\newcommand{\qcoh}{\mathrm{qcoh}}
\newcommand{\gqcoh}{\mathrm{gqcoh}}
\newcommand{\Chbar}{{\mathbb{C}^\hbar}}
\DeclareMathOperator{\gr}{gr_\hbar}
\DeclareMathOperator{\cogr}{cogr_\hbar}
\newcommand{\Ltensor}{\mathop{\otimes}\limits^{\mathrm{L}}}
\newcommand{\Lutensor}{\mathop{\underline{\otimes}}\limits^{\mathrm{L}}}
\newcommand{\tensor}{\mathop{\otimes}\limits^{}}
\newcommand{\btimes}{\mathop{\boxtimes}\limits^{}}
\newcommand{\ubtimes}{\mathop{\underline{\boxtimes}}\limits^{}}
\newcommand{\ctensor}{\mathop{\widehat{\otimes}}\limits^{}}
\newcommand{\cubtimes}{\mathop{\underline{\widehat{\boxtimes}}}\limits^{}}
\newcommand{\utensor}{\mathop{\underline{\otimes}}\limits^{}}
\newcommand{\cutensor}{\mathop{\underline{\widehat{\otimes}}}\limits^{}}
\newcommand{\convp}{\mathop{\circ}\limits^{}}
\newcommand{\dsu}{\mathds{1}}
\newcommand{\Der}{\mathscr{D}}
\newcommand{\Derb}{\mathscr{D}^\mathrm{b}}
\newcommand{\hDer}{\mathrm{D}}
\newcommand{\hDerb}{\mathrm{D}^\mathrm{b}}
\renewcommand{\i}{\infty}
\newcommand{\too}{\longrightarrow}
\newcommand{\op}{\mathrm{op}}
\newcommand{\fExt}{\E xt}
\newcommand{\cc}{\mathrm{cc}}
\newcommand{\Cpl}{\mathrm{cpl}}
\newcommand{\loc}{\mathrm{loc}}
\newcommand{\nil}{\mathrm{nil}}
\newcommand{\inj}{\mathrm{inj}}
\newcommand{\pullbackcorner}[1][dl]{\save*!/#1-1pc/#1:(-1,-1)@_{|-}\restore}
\DeclareMathOperator{\fEnd}{\mathscr{E}nd}
\DeclareMathOperator{\fHom}{\mathscr{H}om}
\DeclareMathOperator{\OfFun}{\F un}
\DeclareMathOperator{\OsFun}{\mathbf{Fun}}
\DeclareMathOperator{\smf}{sf}
\DeclareMathOperator{\coker}{coker}
\DeclareMathOperator{\Perf}{Perf}
\DeclareMathOperator{\Hn}{H}
\DeclareMathOperator{\sMod}{\mathscr{M}od}
\DeclareMathOperator{\Mod}{Mod}
\DeclareMathOperator{\Cat}{Cat}
\DeclareMathOperator{\OFib}{Fib}
\DeclareMathOperator{\OFun}{Fun}
\newcommand{\Funl}{\mathrm{Fun}^\mathrm{L}}
\newcommand{\Funr}{\mathrm{Fun}^\mathrm{R}}
\DeclareMathOperator{\Cyl}{Cyl}
\DeclareMathOperator{\id}{id}
\DeclareMathOperator{\Map}{Map}
\DeclareMathOperator{\uMap}{\underline{Map}}
\DeclareMathOperator{\fMap}{\M ap}
\DeclareMathOperator{\colim}{colim}
\DeclareMathOperator{\Open}{Open}
\DeclareMathOperator{\OPr}{Pr}
\DeclareMathOperator{\OfPPStk}{2-\P \S tk}
\DeclareMathOperator{\Gr}{Gr}
\DeclareMathOperator{\Ch}{Ch}
\DeclareMathOperator{\Fct}{Fun}
\newcommand{\Prr}[1][]{
                                        \ifthenelse{\isempty{#1}}%
                                        {\OPr^\mathrm{R}}%
                                        {\OPr^\mathrm{R}_{#1}}%
                                       }
\newcommand{\Prl}[1][]{
                                        \ifthenelse{\isempty{#1}}%
                                        {\OPr^\mathrm{L}_{\mathrm{st}}}%
                                        {\OPr^\mathrm{L}_{#1}}%
                                       }
\newcommand{\Fib}[1][]{
                                        \ifthenelse{\isempty{#1}}%
                                        {\OFib}%
                                        {\OFib_{\,#1}}%
                                      }
\newcommand{\fPPStk}[1][]{
                                        \ifthenelse{\isempty{#1}}%
                                        {\OfPPStk}%
                                        {\OfPPStk_{#1}}%
                                       }
\newcommand{\Fun}[1][]{
                                        \ifthenelse{\isempty{#1}}%
                                        {\OFun}%
                                        {\OFun_{#1}}%
                                       }
\newcommand{\fFun}[1][]{
                                        \ifthenelse{\isempty{#1}}%
                                        {\OfFun}%
                                        {\OfFun_{#1}}%
                                       }
\newcommand{\sFun}[1][]{
                                        \ifthenelse{\isempty{#1}}%
                                        {\OsFun}%
                                        {\OsFun_{#1}}%
                                       }
\newcommand{\comp}{i^\land\,}
\DeclareMathOperator{\PrL}{\Prl[st]}
\DeclareMathOperator{\Shv}{Shv}
\renewcommand{\id}{\mathrm{id}}
\def\fibdown{\ar@{->>}[d]}
\def\hookdown{\ar@<-.5ex>[d]|{\phantom{a}}|<<{\put(-.7,2){$\scriptstyle\cap$}}}
\def\llarrow{  \hspace{.05cm}\mbox{\,\put(0,-2){$\leftarrow$}\put(0,2){$\leftarrow$}\hspace{.45cm}}}
\def\rrarrow{  \hspace{.05cm}\mbox{\,\put(0,-2){$\rightarrow$}\put(0,2){$\rightarrow$}\hspace{.45cm}}}
\def\lllarrow{ \hspace{.05cm}\mbox{\,\put(0,-3){$\leftarrow$}\put(0,1){$\leftarrow$}\put(0,5){$\leftarrow$}\hspace{.45cm}}}
\def\rrrarrow{ \hspace{.05cm}\mbox{\,\put(0,-3){$\rightarrow$}\put(0,1){$\rightarrow$}\put(0,5){$\rightarrow$}\hspace{.45cm}}}
\renewcommand{\epsilon}{\varepsilon}
\DeclareMathOperator{\Ind}{Ind}
\title{Integral representation theorems for DQ-modules}
\author{David Gepner \& Fran\c{c}ois Petit}
\newcommand{\Addresses}{{% additional braces for segregating \footnotesize
  \bigskip
  \footnotesize

\noindent  D.~Gepner, \textsc{School of Mathematics and Statistics, The University of Melbourne, Parkville,\\ VIC, 3010 Australia}\par\nopagebreak
\noindent  \textit{E-mail address}: \texttt{david.gepner@unimelb.edu.au}

  \medskip

\noindent  F.~Petit, \textsc{Université de Paris, CRESS, INSERM, INRA, F-75004 Paris, France}\par\nopagebreak
\noindent  \textit{E-mail address}: \texttt{francois.petit@u-paris.fr}
}}
\begin{document}

\maketitle

\begin{abstract}
We identify the type of $\CC[[\hbar]]$-linear structure inherent in the $\i$-categories which arise in the theory of Deformation Quantization modules. Using this structure, we show that the $\i$-category of quasicoherent cohomologically complete DQ-modules is a deformation of the $\i$-category of quasicoherent sheaves. We also obtain integral representation results for DQ-modules similar to the ones of To\"en and Ben-Zvi-Nadler-Francis, stating that suitably linear functors between $\i$-categories of DQ-modules are integral transforms.  
\end{abstract}

\tableofcontents

\section{Introduction}

One of the major developments in algebraic geometry towards the end of the last century was the study of {\em Fourier-Mukai transforms}, also known as {\em integral transforms}.
More precisely, the Fourier-Mukai transform is a sheaf-theoretic analogue of the notion of integral transform.
If $X$ and $Y$ are smooth manifolds and $K(x,y)$ is a function on $X \times Y$, then (under reasonable hypotheses) the integral transform of a function $f(x)$ on $X$ is the function $T(f)$ on $Y$ given by the formula
\[
T(f)(y):= \int f(x) K(x,y) dx.
\]
Writing $p\colon X \times Y \to X$ and $q : X \times Y \to Y$ for the projections, this formula can be rewritten
\[
T(f)(y)=\int (f \circ p)(x,y) K(x,y) dx =\int p^*(f) K(x,y) dx,
\]
and the integral along the $x$ variable amounts to the pushforward along $q$; that is,
\[
T(f)(y)=q_*( p^*(f) K(x,y)).
\]
This is the idea of integral transforms in the setting of sheaf theory.

Historically, one of the first notable examples of an integral transform was the Fourier-Sato transform, introduced in \cite{SKK}, and further studied by several authors, notably Kashiwara-Schapira \cite{KS1} in the context of the microlocal theory of sheaves. In algebraic geometry, inspired by the formalism of integral transforms, Mukai constructed an equivalence between the bounded derived categories of coherent sheaves on an abelian variety and the dual abelian variety \cite{Mukai}.
This work established the importance of the Fourier-Mukai  transform in algebraic geometry.

In the 2000s, the conceptual approach to integral representation theorems changed, especially under the influence of the article \cite{mordg}. In this paper, B. To\" en develops the derived Morita theory of dg-categories and relying on the compact generation result for quasi-coherent sheaves of A. Bondal and M. Van den Bergh \cite{BVdB}, set integral representation theorems for quasi-coherent sheaves in this context. These ideas where further developed in \cite{BFN,Ben-Zvi2013} in the framework of linear $\i$-categories and allowed to obtain far reaching generalizations of Orlov's theorem \cite{Orlov1997}, one of the central result in the study of the structure of the derived category of coherent sheaves on a smooth algebraic variety. This theorem and its refinement state that any exact fully faithful functor between derived categories of coherent sheaves on smooth projective varieties is given by an integral transform such that the kernel is an object of the derived category of coherent sheaves of the product variety.

This project is rooted in the line of thought, promoted since the 1990s by the Moscow school, that the derived category of coherent sheaves of an algebraic variety should be considered as an invariant of the latter. More precisely, we'd like to know the extent to which derived categories of DQ-modules can be regarded as invariants of Poisson varieties, as DQ-algebroids quantize Poisson varieties.  A fundamental step in such a program is to describe functors between such categories.

The study of Fourier-Mukai transforms for DQ-modules was initiated in \cite{NCmukai}. This study was further pursued  by several authors \cite{KS3, Mukai_Pantev, FMQ}. In particular in \cite{KS3}, the authors establish several fundamental finiteness and duality results for DQ-modules and lay the foundations for integral transforms in this context. To that end, they introduce the notion of cohomological completeness, a notion closely related to derived adic-completion (see \cite{SAG} and \cite{Porta2012} for a comparison of the two notions), and defined in term of semi-orthogonal decomposition of the triangulated categories of DQ-modules. This notion of cohomological completion was vastly generalised by many authors, culminating in the treatment of J. Lurie in \cite{DAGXII, SAG}, and plays a key role in our work.

Using the theory of linear $\i$-categories, we establish an integral representation theorem for colimit preserving functors between $\i$-categories of DQ-modules. We also prove that the $\i$-category of qcc DQ-modules is a deformation of the category of quasi-coherent sheaves. Our integral representation theorem is a non-commutative analogue of To\"en's representation theorem.

To obtain  our integral representation theorem for DQ-modules, we first elucidate the linear structure of categories of DQ-modules. In particular, we show that the categories of qcc DQ-modules, introduced in \cite{dgaff}, and which is  the analogue of the derived category of quasi-coherent sheaves in the theory of DQ-modules, is enriched in the $\i$-categories of cohomologically complete $\Chbar$-modules, where $\Chbar:=\CC[[\hbar]]$ denotes the ring of formal power series in $\hbar$ with coefficients in $\CC$. This permit to describe precisely the action of $\Der(\Chbar)$ on the category of qcc modules. This allows us to establish that the category of qcc modules is a deformation of the category of quasicoherent sheaves in the sense of \cite{dagx} (see Theorem \ref{thm:defor} and Corollary \ref{cor:defor}) establishing a direct relations between the theory of DQ-modules and derived deformation theory (see also \cite{Pridham2018}). This result may also be relevant to better understand the relation between DQ-modules and the approach of deformation quantization introduced and developed in \cite{CPTVV}. 

Following the approach of \cite{mordg,BFN}, we obtain an equivalence between linear $\i$-functors between categories of qcc DQ-modules on algebraic varieties and qcc modules over the product of these varieties (see Theorem \ref{thm:FMDQ} and Formula \eqref{eq:FMformula}).
Moreover, we show that this equivalence associates to a DQ-module on the product of the varieties the corresponding Fourier-Mukai transforms. These results allow us to prove that an equivalence between $\i$-categories of qcc modules induces an equivalence between $\i$-categories of quasi-coherent sheaves (Corollary \ref{cor:semiequi}). 
We  also study the functors preserving compact objects. In the category of qcc modules, compact and dualizable objects do not coincide. The compact objects are the coherent DQ-modules which are of torsion with respects to the deformation parameters while the coherent DQ-modules are the dualizable one. We prove that the categories of functors preserving compact qcc modules on proper algebraic varieties is equivalent to the category of coherent DQ-modules on the product of the two algebraic varieties (see Theorem \ref{thm:Intrepcomp}). Hence, in our setting, we could summarize the situation by dualizable DQ-modules preserves the compact one. At first sight, the study of integral transforms for DQ-modules may appear similar to the one for quasi-coherent sheaves. They are several key differences which are for instance the nature of the linear structure encountered on the categories of the qcc sheaves, the fact that dualizable and compact objects are different in the categories of qcc modules (even though the underlying variety is smooth)  contrary to the situation studied in \cite{BFN}. Indeed, in our setting, the perfect objects are dualizable but not compact, the compact being the dualizable objects of torsion with respect to the deformation parameters.

In the non smooth setting, the study of integral transforms between categories of coherent objects (not necessarily perfect) is pursued in \cite{Ben-Zvi2013}.
For this purpose, they introduce a categorial notion of coherent object; however, it seems that this notion does not encompass the notion of coherent DQ-module.
In both situations, coherent objects are objects satisfying a weaker finiteness condition than that of compact objects, but otherwise the conditions are very different in nature.

It is known that DQ-modules encode certain aspects of Poisson geometry. For instance, if $X$ is a complex variety endowed with a DQ-algebroid $\A_X$, a coherent $\A_X$-module without torsion with respect to the deformation parameter has a coisotropic support for the Poisson structure associated to the DQ-algebroid $\A_X$. Here, the condition regarding the absence of torsion with respect to the deformation parameter --- usually called $\hbar$-torsion --- is of central importance. Indeed, the theory of DQ-modules takes place over $\Chbar$, which suggests one of two possible approaches: Either work over the central fiber and pass to the semi-classical limit (set the deformation parameter equal to zero); if this is done appropriately problems often reduce to  questions of complex geometry. Alternatively, work away from the central fiber, over $\CC((\hbar))$. In this situation, one study objects over $\A_X^\loc=\CC((\hbar)) \otimes_\Chbar \A_X$  which encodes information related to the Poisson structure induced by the DQ-algebroid. In this setting, it is not anymore possible to take the semi-classical limit has the deformation parameter has been inverted. Hence, the standard approach is to study modules over $\A_X^{\loc}$-module generated by $\A_X$-module satisfying some finiteness conditions (e.g. coherency). This approach is very similar to the one encountered in the study of $\D_X$-modules (in particular in the analytic setting where one often requires the $\D_X$-modules to be \textit{good}). This is why, as a first step, we study the case where $\hbar$ is not inverted. It is likely that the study of $\A_X^{\loc}$-modules will also require developing the theory of ind-coherent DQ-modules (see for instance \cite{BDMN2017}).

The present paper is organized as follows. We start by briefly reviewing, in Section \ref{sec:hcomp}, the notions of algebroid stacks and their derived $\i$-categories. We then move on to consider $\hbar$-completion, also called cohomological completion, and review the main properties of the $\gr$ functor.
In Section \ref{sec:DQmod}, we recall aspects of the theory of DQ-modules and define the $\i$-category of ``qcc'' (quasicoherent cohomologically complete) DQ-modules as well as the subcategory of coherent DQ-modules --- note that, though the $\i$-category of qcc-modules is not monoidal, the coherent objects neverthless correspond to dualizable objects in an appropriate sense. Finally, we prove that the category $\Der_\qcc(\A_X)$ of qcc-modules over a DQ-algebroid stack $\A_X$ is a deformation of the category of quasi-coherent sheaves $\Der_\qcoh(\O_X)$.
In Section \ref{sec:FMQ}, relying on the techniques of $\hbar$-completions of $\i$-categories of Section \ref{sec:hcomp}, we define operations for qcc-modules and in particular Fourier-Mukai transforms for $\i$-categories of qcc-modules and establish our integral representations theorems for DQ-modules. Finally, in an appendix, we collect some results concerning nilpotent, local, and complete $\i$-categories, first construct various $\i$-functors used in the theory of DQ-modules relying on techniques of model categories. In a second appendix, we construct all the operations needed for a robust theory of DQ-modules over an algebroid stack in the $\i$-categorical context.\\

\noindent \textbf{Acknowledgments:} Fran\c cois Petit would like to thank Benjamin Hennion, Mauro Porta and Marco Robalo for generously sharing their knowledge and for several useful scientific conversations. He is also grateful to Pierre Schapira, Bertrand To\"en and Damien Calaque for insightful conversations. He acknowledge the support of the Idex ``Universit\'e de Paris 2019".

\section{\texorpdfstring{$\hbar$}{h}-complete \texorpdfstring{$\infty$}{infinity}-categories} \label{sec:hcomp}

\subsection{Modules over algebroid stacks} \label{app:modelstruct}

We very briefly review the theory of algebroid stacks and their modules. For a detailed study of $k$-algebroid stacks and their operations, we refer to \cite{APMicro,APStack,KS3}.
We fix a topological space $X$, which in practice will be the underlying space of a smooth complex variety $(X,\O_X)$.

We let $k$ denote a commutative ring, and recall that a $k$-enriched category is a category enriched in the symmetric monoidal category of (ordinary) $k$-modules.
We write $\Cat_k$ for the category of $k$-enriched categories.

A $k$-enriched stack $\mathfrak{S}$ is a (pseudo)functor $\Open(X)^{\op}\to\Cat_k$ which satisfies the stack condition.
We write $\Shv_{\Cat_k}(X)\subset\Fun(\Open(X)^{\op},\Cat_k)$ of the category of $k$-enriched stacks on $X$.

More generally, we could consider instead a sheaf of commutative rings $\mathscr{R}$ on $X$. An $\mathscr{R}$-enriched stack is a $\ZZ$-enriched stack $\mathfrak{S}$ such that the sheaf $\mathscr{E}nd(\id_\mathfrak{S})$ of endomorphisms of the identity functor $\id_\mathfrak{S}\colon\mathfrak{S}\to\mathfrak{S}$ is a sheaf of commutative $\mathscr{R}$-algebras.

\begin{definition}
Let $X$ be a topological space.
\begin{enumerate}
\item
A $k$-enriched stack $\A$ on $X$ is locally nonempty if there exists a covering of $X$ by open subsets $\lbrace U_i\rbrace_{i \in I}$ such that for every $i \in I$, the category $\A(U_i)$ is non-empty.
\item 
A $k$-enriched stack $\A$ on $X$ is locally connected if, for any open subset $U$ of $X$ and any pair of objects $\alpha,\beta\in\A(U)$, there exists an open subset $j:V\subset U$ such that $j^*\alpha\cong j^*\beta$ in $\A(V)$.
\item
A $k$-algebroid stack $\A$ on $X$ is a $k$-enriched stack on $X$ which is locally nonempty and locally connected.
\end{enumerate}
\end{definition}

\begin{definition}
Let $X$ be a topological space and let $\R$ be a sheaf of commutative rings on $X$.
\begin{enumerate}
\item
An $\mathscr{R}$-algebroid stack $\A$ on $X$ is a $\ZZ$-algebroid stack endowed with a morphism of sheaves of rings $\mathscr{R} \to \mathscr{E}nd(\id_\A)$ (the sheaf of endomorphisms of the identity functor).
\item
An $\mathscr{R}$-algebroid $\A$ is said to be invertible if $\mathscr{R}|_U \to \mathscr{E} nd_{\A}(\sigma)$ is an isomorphism for any open subset $U$ of $X$ and any $\sigma \in \A(U)$.
\end{enumerate}
\end{definition}

\begin{example}
If $A$ is a $k$-algebra, we denote by $A^+$ the $k$-enriched category with one object having $A$ as the endomorphism ring of this object. Let $\A$ be a sheaf of algebras on $X$ and consider the prestack which associates to an open set $U$ the $k$-enriched category $\A(U)^+$. The stack associated to this prestack, denoted $\A^{\dagger}$, is an algebroid stack.
\end{example}

We write $k_X$ for the sheaf associated to the constant presheaf with value $k$ and $\sMod_{k_X}$ for the $k$-enriched stack of sheaves of $k$-modules on $X$. If $\mathfrak{S}_1$ and $\mathfrak{S}_2$ are two $k$-enriched stacks, we denote by $\Fct_k(\mathfrak{S}_1, \mathfrak{S}_2)$ the category of $k$-enriched morphisms of stacks from $\mathfrak{S}_1$ to $\mathfrak{S}_2$. We set
\begin{equation*}
\Mod_\A:=\Fct_k(\A,\sMod_{k_X}).
\end{equation*}
Finally, we write $\Mod_{\A}$ for the (ordinary) category of left $\A$-modules. It is a Grothendieck abelian category (see \cite{KS3} for details).

\subsection{Derived categories}\label{subsec:Derived}

Let $k$ be a commutative ring and let $\A$ be a sheaf of algebras, or an algebroid stack. Recall that $\Mod_{\A}$ is a Grothendieck abelian category which is tensored over $\Mod_{k}$, the abelian category of $k$-modules. It follows that the category of chain complexes $\Ch(\A)$ of objects of $\Mod_{\A}$ can be endowed with a proper combinatorial model structure (see for instance \cite[Proposition 1.3.53]{HA}) where the weak equivalences are the quasi-isomorphisms and a map of chain complexes $f \colon \M^\bullet \to \N^\bullet$ is a cofibration if for every $k \in \ZZ$, the induced map $f^k \colon \M^k \to \N^k$ is a monomorphism. This model structure on $\Ch(\A)$ is called the injective model structure. The $\i$-category presented by $\Ch(\A)$ endowed with this model structure is denoted $\Der(\A)$ and is called the derived category of $\Mod_{\A}$. It is the category obtained by localizing the $\i$-category $\mathrm{N}_{\mathrm{dg}}(\Ch(\A))$ with respects to the quasi-isomorphisms \cite[Proposition 1.3.5.13 \& 1.3.5.14]{HA}. Its homotopy category is the ``usual'' derived category of $\Mod_\A$ and we denote it $\hDer(\A)$. 
We will also use the semi-free model structure, subordinated to a covering $\U$ of $X$, on $\Ch(\A)$ (see Appendix \ref{sec:modstruct}). The $\i$-category underlying this model category is equivalent to $\Der(\A)$ by \cite[Lemma 1.3.4.21]{HA} and \cite[Proposition 1.3.4.15]{HA}

It follows from \cite[Proposition 1.3.5.9 \& Proposition 1.3.5.21]{HA} that $\Der(\A)$ is a presentable stable $\i$-category and from \cite[Example D.1.3.9]{SAG} that $\Der(\A)$ is a $k$-linear $\i$-category. This implies that $\Der(\A)$ has a $\Der(k)$-enriched mapping space functor
\begin{equation*}
\uMap_{\Der(\A)}( - \, , -) \colon \Der(\A)^{\op} \times \Der(\A) \to \Der(k).
\end{equation*}

Let $f \colon X \to Y$ be a map of topological spaces and 
let $\A$ be an algebroid stack on $Y$. Then there is an 
algebroid stack functorially associated with $\A$ and $f$ 
on $X$, denoted $f^{-1}\A$ (see \cite[\textsection 2.1]{KS3} for details), and we have the pair of adjoint functors 
\begin{equation*}
f^{-1} \colon \Mod_{\A} \rightleftarrows \Mod_{f^{-1} \A} \colon f_\ast.
\end{equation*}
They induce a Quillen adjunction
\begin{equation*}
f^{-1} \colon \Ch(\A) \rightleftarrows \Ch(f^{-1} \A) \colon f_\ast
\end{equation*}
with $f^{-1}$ exact and $k$-linear. Using \cite[Proposition 1.3.4.26]{HA}, one obtains a $k$-linear functor
\begin{equation*}
f^{-1} \colon \Der(\A) \rightarrow \Der(f^{-1} \A).
\end{equation*}
Its right adjoint is obtained as the right derived functor of $f_\ast$ by using the injective model structure on $\Ch(f^{-1} \A)$. Hence, we obtain a functor in $\Prr[\mathrm{st}]$
\begin{equation*}
f_\ast \colon \Der(f^{-1} \A) \rightarrow \Der(\A).
\end{equation*}

\begin{proposition}\label{prop:commlim}
Assume that $X$ and $Y$ are Noetherian topological spaces of finite Krull dimensions. The functor $f_\ast:\D(f^{-1} \A) \to \D(\A)$ commutes with colimits and is $k$-linear.
\end{proposition}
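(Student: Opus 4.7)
Since $f_*$ already lives in $\Prr[\mathrm{st}]$ as the right adjoint of $f^{-1}$, it automatically preserves all small limits; as a functor between stable presentable $\infty$-categories that preserves finite limits it is exact, and therefore preserves finite colimits. Because every small colimit in a presentable $\infty$-category is built from filtered colimits of finite colimits, the colimit assertion reduces to showing that $f_*$ commutes with filtered colimits.

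For the filtered colimit statement I would exploit the two hypotheses separately. The Noetherian hypotheses on $X$ and $Y$ imply that the underived pushforward $f_* \colon \Mod_{f^{-1}\A} \to \Mod_{\A}$ on abelian categories commutes with filtered colimits, because on a Noetherian space every open is quasi-compact and sections of filtered colimits of sheaves of $k$-modules agree with filtered colimits of sections over quasi-compact opens. The finite Krull dimension of $X$, combined with Grothendieck's vanishing theorem, yields $R^i f_* = 0$ for $i > \dim X$: indeed $R^i f_*\M$ is the sheafification of $V \mapsto H^i(f^{-1}V, \M)$ and each $f^{-1}V$ is a Noetherian space of Krull dimension at most $\dim X$. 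Consequently $Rf_*$ has finite cohomological amplitude with respect to the natural $t$-structures on $\Der(f^{-1}\A)$ and $\Der(\A)$. A standard spectral sequence argument (resolve by a bounded complex of $f_*$-acyclic sheaves, use $t$-exactness of filtered colimits, and compare strongly convergent hypercohomology spectral sequences) then promotes the heart-level statement to the $\infty$-categorical one, giving that $f_*$ commutes with filtered colimits.

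For the $k$-linearity: since $f^{-1}$ is a morphism in $\Mod_{\Der(k)}(\Prl)$, its right adjoint $f_*$ is canonically lax $\Der(k)$-linear, equipped with a natural comparison map
\[
V \otimes_k f_*\M \longrightarrow f_*(V \otimes_k \M), \qquad V \in \Der(k),\ \M \in \Der(f^{-1}\A).
\]
Both sides preserve colimits in the variable $V$: the left side trivially, and the right side because we have just shown that $f_*$ preserves colimits and $V \otimes_k (-)$ preserves colimits in $V$. Since the comparison is the identity for $V = k$ and $\Der(k)$ is generated under colimits by $k$, the map is an equivalence for all $V$, establishing strict $\Der(k)$-linearity of $f_*$.

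The main obstacle is the second paragraph: lifting the heart-level commutation of $f_*$ with filtered colimits to the derived $\infty$-categorical statement. Both Noetherian (to handle the heart) and finite Krull dimension (to bound the cohomological amplitude of $Rf_*$) are essential here; without finite cohomological amplitude, the spectral sequence argument would fail for unbounded complexes and one could not conclude for the unbounded derived $\infty$-category.
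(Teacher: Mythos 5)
Your argument is correct, and it reconstructs in detail what the paper handles by citation. The paper's proof reduces the colimit statement to coproducts (via stability) and then delegates the Noetherian and finite-Krull-dimension content to \cite[3.9.3.2]{Lip} (noting in a footnote that the cited result, stated for $\O_X$-modules, adapts verbatim), and delegates $k$-linearity to \cite[Example D.1.3.9]{SAG}. You instead reduce to filtered colimits, prove the heart-level commutation directly from quasi-compactness of opens in a Noetherian space, derive finite cohomological amplitude of $Rf_*$ from Grothendieck vanishing (applied on the Noetherian finite-dimensional spaces $f^{-1}(V)$), and then run the spectral-sequence/truncation argument to lift to the unbounded derived $\i$-category — this is precisely the content packaged inside Lipman's result. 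Your $k$-linearity argument (right adjoint of a $k$-linear functor is canonically lax $k$-linear; colimit preservation in $V$ plus generation of $\Der(k)$ under colimits by $k$ upgrades lax to strict) is the explicit form of what the paper's citation to SAG encapsulates. So the two proofs are the same in substance; yours is self-contained and exposes exactly where each of the two hypotheses (Noetherian on both spaces, finite Krull dimension of $X$) is used, which the paper leaves implicit in the reference. One small remark: the paper's phrase ``the question is local on $X$'' should read ``local on $Y$'', since the comparison map lives in $\Der(\A)$ on $Y$; your phrasing in terms of opens $V$ of $Y$ and their preimages $f^{-1}V$ is the correct one.
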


\begin{proof} Since the $\i$-category $\D(f^{-1} \A)$ is stable we just need to check that $f_\ast$ commutes with coproducts.
Let $(\M_i)_{i \in I}$ be a family of objects of $\D(f^{-1} \A)$. To check that the morphism
\[
\alpha:\bigoplus_{i \in I} f_\ast \M_i \to f_\ast( \bigoplus_{i \in I} \M_i)
\]
is an equivalence, we observe that the question is local on $X$ and thus reduces to question in ordinary sheaf theory. Since $X$ is a Noetherian topological space of finite Krull dimension, this follows from \cite[3.9.3.2]{Lip}.\footnote{Although the result in \cite{Lip} is stated for $\O_X$-modules, the spaces considered here are Noetherian topological spaces of finite Krull dimensions and thus its proof adapts word for word to our setting.}
Finally, $k$-linearity follows from the fact that $f_\ast$ commutes with colimits and \cite[Example D.1.3.9]{SAG}.
\end{proof}

Similarly, the functor $\fHom(-,-) \colon \Mod_{\A}^{\op} \times \Mod_{\A} \to \Mod_{\CC^\hbar_X}$
is left exact. Using the injective model structure on $\Mod_{\A}$, we obtain a right derived functor
\begin{equation*}
\fMap_{\A_X}(-,-) \colon \Der(\A)^{\op} \times \Der(\A) \to \Der(\CC^\hbar_X).
\end{equation*}
Note that $\Gamma(X,\fMap_{\A}(-,-)) \simeq \uMap_{\A} (-,-)$.

\subsection{\texorpdfstring{$\hbar$}{h}-completion}

The introduction of the notion of derived completion in the framework of DQ-modules was inspired by the following homological formulation of completeness with respect to the $\hbar$-adic topology. It was originally formulated by Kashiwara and Schapira in order to deduce the Grauert direct image theorem for DQ-modules from the usual version for $\O_X$-modules.

\begin{lemma}[{\cite[Lemma 1.5.4]{KS3}}]
Let $\R$ be a sheaf of $\ZZ[\hbar]$-algebras without $\hbar$-torsion and set $\R^\loc=\ZZ[\hbar,\hbar^{-1}] \otimes_{\ZZ[\hbar]} \R$. Let $\M$ be an $\R$-module and assume that $\M$ has no $\hbar$-torsion.
\begin{enumerate}
\item $\widehat{\M} \simeq \fExt^1_{\R}(\R^{\loc} / \R, \M)$, where $\widehat{\M}$ is the completion of $\M$ with respect to the $\hbar$-adic topology.
\item The module $\M$ is $\hbar$-adically complete if and only if $\fExt^j_{\R}(\R^{\loc}, \M)=0$ for $j=0,1$.
\end{enumerate}
\end{lemma}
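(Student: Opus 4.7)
The plan is to apply $\fHom_{\R}(-,\M)$ to the tautological short exact sequence
\begin{equation*}
0 \to \R \to \R^{\loc} \to \R^{\loc}/\R \to 0
\end{equation*}
and read off both statements from the resulting long exact sequence, after identifying $\fExt^1_{\R}(\R^{\loc}/\R,\M)$ with the $\hbar$-adic completion $\widehat{\M}$.

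First I would note that, since $\R$ is a free module over itself, $\fExt^j_{\R}(\R,\M)=0$ for $j\geq 1$, so the long exact sequence shortens to
\begin{equation*}
0 \to \fHom_{\R}(\R^{\loc}/\R,\M) \to \fHom_{\R}(\R^{\loc},\M) \to \M \xrightarrow{\phi} \fExt^1_{\R}(\R^{\loc}/\R,\M) \to \fExt^1_{\R}(\R^{\loc},\M) \to 0.
\end{equation*}
The main computation is then to identify the target of $\phi$. For this I write $\R^{\loc}/\R=\colim_n \R/\hbar^n\R$ (as the filtered colimit along the multiplication by $\hbar$ maps), and use the short exact sequence $0\to \R \xrightarrow{\hbar^n} \R \to \R/\hbar^n\R \to 0$. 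Because $\M$ has no $\hbar$-torsion, this gives
\begin{equation*}
\fHom_{\R}(\R/\hbar^n\R,\M)=0 \quad\text{and}\quad \fExt^1_{\R}(\R/\hbar^n\R,\M)=\M/\hbar^n\M.
\end{equation*}
The Milnor/$\lim^1$ exact sequence associated with $\R^{\loc}/\R=\colim_n \R/\hbar^n\R$ then degenerates (its $\lim^1$ term vanishes because all the $\fHom$'s are zero), yielding sheaf-level isomorphisms
\begin{equation*}
\fHom_{\R}(\R^{\loc}/\R,\M)=0, \qquad \fExt^1_{\R}(\R^{\loc}/\R,\M)\simeq \lim_n \M/\hbar^n\M = \widehat{\M}.
\end{equation*}
A naturality check on the connecting maps shows that, under this identification, $\phi$ is the canonical completion map $\M \to \widehat{\M}$. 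This proves (1).

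For (2), the long exact sequence above has simplified to
\begin{equation*}
0 \to \fHom_{\R}(\R^{\loc},\M) \to \M \xrightarrow{\phi} \widehat{\M} \to \fExt^1_{\R}(\R^{\loc},\M) \to 0.
\end{equation*}
Since $\M$ is $\hbar$-adically complete if and only if $\phi$ is an isomorphism, the vanishing of $\fHom_{\R}(\R^{\loc},\M)$ and $\fExt^1_{\R}(\R^{\loc},\M)$ is exactly equivalent to the injectivity and surjectivity of $\phi$, respectively, which gives (2).

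The main thing to be careful with is the Milnor sequence in the sheaf setting: one must verify that the vanishing $\fHom_{\R}(\R/\hbar^n\R,\M)=0$ and the identification of the Ext sheaf hold before sheafifying, which they do because of $\hbar$-torsion-freeness locally, and that the inverse limit of sheaves $\lim_n \M/\hbar^n\M$ indeed represents the $\hbar$-adic completion; the rest is formal diagram chasing.
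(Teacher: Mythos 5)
The paper does not prove this lemma --- it is cited directly from \cite[Lemma 1.5.4]{KS3} --- so there is no in-paper argument to compare against. Your proof is correct, and as far as I recall it is essentially the argument given in \cite{KS3}: one writes $\R^{\loc}/\R\simeq\colim_n\R/\hbar^n\R$, deduces $\fHom_{\R}(\R/\hbar^n\R,\M)=0$ and $\fExt^1_{\R}(\R/\hbar^n\R,\M)\simeq\M/\hbar^n\M$ from $\hbar$-torsion-freeness, passes to the colimit via the dual Milnor sequence (whose $\lim^1$ term vanishes because all the $\fHom$'s do), identifies the connecting map with the canonical completion map $\M\to\widehat{\M}$ by naturality, and then reads (2) off the remaining four-term exact sequence. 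All the steps hold; the cautionary remarks at the end of your proof are reasonable but no genuine obstruction arises there.
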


Since one of our main motivations is the study of DQ-modules, we will mostly be interested in the case where the base ring is $\Chbar$, the ring of formal power series with coefficients in $\CC$. However, most everything remains true in the slightly more general case where $A$ is a discrete $\ZZ[\hbar]$-algebra without $\hbar$-torsion.

Recall that if $\C\in\Mod_{\Der(A)}$ is a $\Der(A)$-linear $\i$-category, we say that:
\begin{enumerate}
		\item[(1)] An object $N \in \C$ is $\hbar$-nilpotent if the localization $A[\hbar^{-1}] \otimes_A N $ vanishes.
		\item[(2)] An object $L \in \C$ is $\hbar$-local if the canonical map $L \to A[\hbar^{-1}] \otimes_A L$ is an equivalence.
		\item[(3)] An object $M \in \C$ is $\hbar$-cohomologically complete if $M^{A[\hbar^{-1}]} \simeq 0$.
\end{enumerate}

We denote by $\C^\nil$, $\C^\loc$, $\C^\cc$ the full subcategories of $\C$ respectively spanned by the nilpotent, local and complete objects. We write
\begin{align*}
i_\lor \colon \C^\nil \subset \C && j_\ast \colon \C^\loc \subset \C && i_\land \colon \C^\cc \subset \C
\end{align*}
for the respective fully faithful inclusion, $i^\lor$ for the right adjoint of $i_\lor$, and $j^\ast$ and $i^\land$ for the left adjoints of $j_\ast$ and $i_\land$, respectively. We refer the reader to Appendix \ref{app:nilloccomp} and in particular to Proposition \ref{prop:fundccnil} for more details.

We write $\Der := \Der(A)$, $\Der_{\cc}:=\Der_{\cc}(A)$ and $\Der_{\loc}:=\Der_{\loc}(A)$ for the full subcategories of $\hbar$-complete and $\hbar$-local objects, respectively. It is standard (see for instance \cite[Theorem 7.1.2.1]{HA} and \cite[Remark 7.1.2.3]{HA} for a proof) that $\Der_{\loc}$ is equivalent to $\Der(A[\hbar^{-1}])$.
The semiorthogonal decomposition $(\Der_\loc,\Der_\cc)$ of $\D$ implies that
\[
\D_\loc\overset{j_*}{\to}\D\overset{i^\land}{\to}\D_\cc
\]
is a Verdier sequence in $\PrL$.
Moreover, the fact that the ideal $I=(\hbar)\subset A$ is finitely generated (even principal) implies that $j_*$ admits a left adjoint $j^*:\D\to\D_{\loc}$.

Since $\D_\cc$ is a commutative $\D$-algebra via the $\hbar$-completion functor $i^\land:\D\to\D_\cc$, we obtain a ``restriction along $i^\land$'' functor
\[
i_*\colon\Mod_{\D_\cc}\to\Mod_\D
\]
which admits a left adjoint ``base change along $i^\land$'' functor
\[
i^*:\Mod_{\D}\to\Mod_{\D_\cc}.
\]
The fact that $\D_\cc$ is an idempotent (hence commutative) $\D$-algebra implies, using \cite[Propositions 4.8.2.9 \& 4.8.2.10]{HA}, that the forgetful functor $i_*:\Mod_{\D_\cc}\to\Mod_\D$ is fully faithful with essential image those $\D$-linear $\i$-categories $\M$ of the form $\M\simeq\D_\cc\otimes_\D\N$ for some $\D$-linear $\i$-category $\N$.
In fact we can just take $\N=\M$, since $\M\to i_*i^*\M$ is an equivalence if $\M$ is in the image of $i_*:\Mod_{\D_\cc}\to\Mod_{\D}$.

\begin{proposition}
The $\i$-category $\Der_{\cc}$ admits the structure of a presentably symmetric monoidal $\i$-category under $\Der(A)$.
That is, there exists a presentably symmetric monoidal structure on $\Der_\cc$ such that the functor $\comp \colon \Der \to \Der_{\cc}$ is symmetric monoidal and the tensor product functor $\Der_{\cc}\times\Der_{\cc}\to\Der_{\cc}$ is given on objects by the formula $(M,N) \mapsto \comp( i_\land(M) \otimes_A i_\land(N))$.
\end{proposition}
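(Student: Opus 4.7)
The plan is to deduce the result from the fact, established in the preceding discussion, that $\Der_\cc$ is an idempotent commutative $\Der$-algebra. By \cite[Theorem 4.8.5.16]{HA}, commutative algebras in $\Prl$ are equivalent to presentably symmetric monoidal stable $\i$-categories, and more generally commutative algebras in $\Mod_{\Der}(\Prl)$ correspond to presentably symmetric monoidal $\i$-categories equipped with a symmetric monoidal colimit-preserving functor from $\Der$. Applying this equivalence to the commutative $\Der$-algebra $\Der_\cc$ endows $\Der_\cc$ with such a structure and makes the unit map $i^\land \colon \Der \to \Der_\cc$ into a symmetric monoidal functor. Presentability is automatic from the fact that $\Der_\cc$ is a presentable stable $\i$-category (Proposition \ref{prop:fundccnil}).

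For the tensor product formula, I would use that since $i^\land$ is symmetric monoidal, its right adjoint $i_\land$ inherits a canonical lax symmetric monoidal structure. The fully faithfulness of $i_\land$ gives the equivalence $i^\land \circ i_\land \simeq \id_{\Der_\cc}$, so for any $M, N \in \Der_\cc$,
\[
M \otimes_{\Der_\cc} N \simeq i^\land(i_\land M) \otimes_{\Der_\cc} i^\land(i_\land N) \simeq i^\land\bigl( i_\land M \otimes_A i_\land N \bigr),
\]
where the second equivalence uses that $i^\land$ is symmetric monoidal.

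The only non-formal point, which is the main (and rather mild) obstacle, is to verify the compatibility of the localization $i^\land$ with the symmetric monoidal structure on $\Der$, in the sense of \cite[Proposition 2.2.1.9]{HA}. Concretely one must check that the class of $\hbar$-local objects is an ideal of $\Der$, i.e.\ stable under tensoring with arbitrary objects. But if $L \in \Der$ is $\hbar$-local, i.e.\ $L \simeq A[\hbar^{-1}] \otimes_A L$, then for any $P \in \Der$ we have
\[
A[\hbar^{-1}] \otimes_A (L \otimes_A P) \simeq (A[\hbar^{-1}] \otimes_A L) \otimes_A P \simeq L \otimes_A P,
\]
so $L \otimes_A P$ is again $\hbar$-local. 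This is exactly the compatibility condition needed for the Bousfield localization $i^\land$ to underlie a symmetric monoidal localization, and is also what manifests concretely in the idempotency of $\Der_\cc$ as a $\Der$-algebra.
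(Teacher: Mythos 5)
The paper states this proposition without an explicit proof, relying implicitly on the preceding discussion that $\Der_\cc$ is an idempotent commutative $\Der$-algebra (which it obtains via the Verdier sequence $\D_\loc \to \D \to \D_\cc$ and the corollary at the end of Appendix A). Your proof correctly fills in the omitted details, and the core verification you supply---that $\D_\loc$ is a tensor ideal of $\D$, so the Bousfield localization $i^\land$ is compatible with the symmetric monoidal structure in the sense of \cite[Proposition 2.2.1.9]{HA}---is exactly the non-formal content underlying both the paper's appeal to idempotency and the existence of the induced symmetric monoidal structure; the derivation of the formula for the tensor product from the lax monoidal structure on the fully faithful right adjoint $i_\land$ is also the standard argument. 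The only quibble is the citation \cite[Theorem 4.8.5.16]{HA}: the identification of $\CAlg(\Prl)$ with presentably symmetric monoidal $\i$-categories, and of $\CAlg(\Mod_\D(\Prl))$ with those equipped with a symmetric monoidal colimit-preserving functor from $\D$, is essentially definitional (cf.\ around \cite[\S 3.4.4 and \S 4.8.2]{HA}) rather than a consequence of 4.8.5.16, but this does not affect the correctness of the argument.
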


\begin{proposition}
Any $\D$-linear $\i$-category $\M$ admits a semiorthogonal decomposition of the form $(\M^{\loc},\M^{\cc})$.
\end{proposition}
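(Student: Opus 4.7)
The plan is to derive the claimed semiorthogonal decomposition of $\M$ by base-changing the Verdier sequence $\D_\loc \overset{j_*}{\to} \D \overset{i^\land}{\to} \D_\cc$ --- established for $\D$ itself in the paragraphs preceding the statement --- along the $\D$-action on $\M$. Concretely, I would set
\[
\M^\loc := \D_\loc \otimes_\D \M \qquad \text{and} \qquad \M^\cc := \D_\cc \otimes_\D \M,
\]
with both relative tensor products computed in $\Mod_\D(\PrL)$. Since the Verdier sequence is a cofiber sequence there and the endofunctor $(-)\otimes_\D\M$ of $\Mod_\D(\PrL)$ preserves colimits (being a left adjoint), applying it yields a cofiber sequence $\M^\loc \to \M \to \M^\cc$ in $\Mod_\D(\PrL)$, and hence in $\PrL$. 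This supplies the underlying exact-triangle structure of the desired Verdier sequence.

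Next, I would upgrade this cofiber sequence to an honest Verdier sequence by verifying fully faithfulness at both ends. Both $\D_\loc$ and $\D_\cc$ are idempotent commutative $\D$-algebras --- the very property already invoked just above the statement to identify $\Mod_{\D_\cc}$ with a reflective subcategory of $\Mod_\D$. Hence, by \cite[Propositions 4.8.2.9 \& 4.8.2.10]{HA}, the forgetful functors $\Mod_{\D_\loc}(\PrL) \to \Mod_\D(\PrL)$ and $\Mod_{\D_\cc}(\PrL) \to \Mod_\D(\PrL)$ are fully faithful. Consequently, $\M^\loc$ embeds fully faithfully into $\M$, and $\M^\cc$ appears as the essential image of a fully faithful right adjoint to the quotient $\M \to \M^\cc$. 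Combined with the cofiber sequence above, this gives the Verdier/semiorthogonal package.

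Semiorthogonality itself --- the vanishing $\Map_\M(X,Y) \simeq 0$ for $X \in \M^\loc$ and $Y \in \M^\cc$ --- then follows formally from the $\D$-enrichment of $\M$: the enriched mapping object $\uMap_\M(X,Y)$ carries compatible actions of $\D_\loc$ through $X$ and of $\D_\cc$ through $Y$, so it is simultaneously $\hbar$-local and $\hbar$-complete, and therefore zero. The main technical hurdle I anticipate is ensuring that the base-change step preserves the full Verdier-exact structure --- not merely the underlying cofiber sequence in $\PrL$ --- and this is precisely what the idempotent-algebra formalism of \cite[\S 4.8.2]{HA} is tailored to deliver.
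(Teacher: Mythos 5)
Your proposal takes a genuinely different route from the paper's one-line proof, which simply cotensors the exact triangle $i_\lor i^\lor A \to A \to j_*j^*A$ in $\D$ against $\M$ to produce an exact triangle of endofunctors $(-)^{j_*j^*A}\to\id\to(-)^{i_\lor i^\lor A}$ of $\M$, whose outer terms land in $\M^\loc$ and $\M^\cc$ respectively (the latter by Proposition \ref{prop:compformula}); this exhibits the decomposition entirely internally to $\M$, with no base-change of module categories. Your base-change strategy could in principle work, but as written it has two genuine gaps. First, you redefine $\M^\loc$ and $\M^\cc$ to be the tensor products $\D_\loc\otimes_\D\M$ and $\D_\cc\otimes_\D\M$, whereas the proposition concerns the full subcategories of $\M$ spanned by the local and complete objects (this is the notation fixed just above the statement). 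Identifying the tensor products with these subcategories is precisely the content of Proposition \ref{prop:comparecompl}, which the paper proves \emph{after} this one and \emph{using} it; your argument never supplies that identification, and indeed your semiorthogonality paragraph tacitly assumes it by treating objects of the tensor products as objects of $\M$ and then running the enriched-mapping argument.

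Second, the fully-faithfulness step is a non-sequitur. \cite[Propositions 4.8.2.9 \& 4.8.2.10]{HA} establish that the forgetful functor $\Mod_{\D_\cc}(\PrL)\to\Mod_\D(\PrL)$ is fully faithful --- a statement one level up, about the category of $\D$-linear $\i$-categories --- and this does not imply that the $\D$-linear functor $j_*\otimes_\D\M\colon\D_\loc\otimes_\D\M\to\M$ is fully faithful. In general $-\otimes_\D\M$ does not preserve full faithfulness, since it is a colimit construction. The paper addresses exactly this point with a dedicated proposition in Appendix A, whose proof exploits the fact that $j_*$ admits a \emph{left} adjoint $j^*$, so that $j_*$ is a right adjoint, and hence $j_*\otimes_\D\M$ can be computed as postcomposition of right-adjoint functors $\Funr_\D(\M^{\op},\D_\loc)\to\Funr_\D(\M^{\op},\D)$, which manifestly preserves full faithfulness. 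That is the argument your step requires, not the idempotent-algebra one. If carried out carefully your route would in fact prove Proposition \ref{prop:comparecompl} simultaneously, but it needs the Appendix A machinery; the cotensoring argument is both simpler and logically prior.
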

\begin{proof}
Cotensoring with the exact triangle $i_\lor i^\lor A\to A\to j_*j^*A$ yields an exact triangle of endofunctors
$
(-)^{j_*j^*A}\to\id\to(-)^{i_\lor i^\lor A}\colon\M\to\M.
$
\end{proof}

\begin{proposition}
The canonical map $\D_{\loc}\otimes_\D\M\to\M$ factors through the full subcategory $\M^{\loc}\subset\M$ of local objects.
\end{proposition}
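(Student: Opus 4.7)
The plan is to observe that $\D_\loc\otimes_\D\M$ carries a canonical $\D_\loc$-module structure refining its $\D$-module structure, and that any $\D$-linear functor out of a $\D_\loc$-module necessarily lands in the subcategory of $\hbar$-local objects. Applied to the canonical map, this will give the factorization through $\M^{\loc}$.

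For the first point, I would first note that $\D_\loc$ is an idempotent commutative $\D$-algebra (the multiplication $\D_\loc\otimes_\D\D_\loc\to\D_\loc$ is an equivalence, since $j^*$ is a smashing localization), exactly parallel to the observation made just above for $\D_\cc$. Thus \cite[Propositions 4.8.2.9 \& 4.8.2.10]{HA} applies to show that the forgetful functor $\Mod_{\D_\loc}\to\Mod_{\D}$ is fully faithful with left adjoint $\D_\loc\otimes_\D(-)$; in particular, for any $\D$-linear $\i$-category $\M$, the tensor product $\D_\loc\otimes_\D\M$ is naturally a $\D_\loc$-module. The map of the statement is then the $\D$-linear functor $j_*\otimes_\D\id_\M\colon\D_\loc\otimes_\D\M\to\D\otimes_\D\M\simeq\M$ coming from the fully faithful $\D$-linear inclusion $j_*\colon\D_\loc\hookrightarrow\D$.

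The technical core is the following: for a $\D_\loc$-module $\N$ and any $n\in\N$, the natural map $n\to A[\hbar^{-1}]\otimes_A n$ computed via the $\D$-action is an equivalence, because the $A$-action on $n$ extends to an $A[\hbar^{-1}]$-action and $A[\hbar^{-1}]\otimes_A n\simeq A[\hbar^{-1}]\otimes_{A[\hbar^{-1}]}n\simeq n$. Given this, for any $\D$-linear functor $F\colon\N\to\M$, $\D$-linearity yields
\[
A[\hbar^{-1}]\otimes_A F(n)\simeq F(A[\hbar^{-1}]\otimes_A n)\simeq F(n),
\]
so $F(n)\in\M^{\loc}$. Applying this to $\N=\D_\loc\otimes_\D\M$ with $F$ the canonical map concludes the argument.

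The main obstacle I anticipate is purely organisational: correctly identifying the canonical $\D_\loc$-module structure on $\D_\loc\otimes_\D\M$, and verifying that the canonical map is indeed the $\D$-linear functor induced by $j_*$; once these are in place, the remainder of the argument is a formal consequence of the fact that $\D$-linear functors commute with the action of $A[\hbar^{-1}]$.
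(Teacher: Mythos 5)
Your proof is correct and takes a genuinely different route from the paper's. You exploit the $\D_\loc$-module structure on $\D_\loc\otimes_\D\M$ (via the idempotence of $\D_\loc$, parallel to the argument for $\D_\cc$ already in the text) together with the elementary observation that every object of a $\D_\loc$-module is $\hbar$-local and that $\D$-linear functors preserve this, since they commute with the tensoring $A[\hbar^{-1}]\otimes_A(-)$. The paper instead passes to the dual picture $\D_\loc\otimes_\D\M\simeq\Funr_\D(\M^{\op},\D_\loc)$, identifies the canonical map with postcomposition by $j_*:\D_\loc\to\D$, and checks that a representing object $M$ of a $\D$-linear right adjoint $\underline{\Map}_\M(-,M):\M^{\op}\to\D$ that factors through $\D_\loc$ must be local, by computing $\underline{\Map}_\M(L,M^{i_\lor i^\lor A})\simeq\underline{\Map}_\M(L,M)^{i_\lor i^\lor A}\simeq 0$. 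Your argument is shorter and more transparent: it reduces everything to ``objects of $\D_\loc$-modules are local'' plus $\D$-linearity. The paper's heavier right-adjoint formalism is not gratuitous, though: the identification $\D_\cc\otimes_\D\M\simeq\Funr_\D(\M^{\op},\D_\cc)$ is exactly what the very next Proposition (the commutative diagram of Verdier sequences) needs, and the mapping-space computation there for the complete side is genuinely dual to the one for the local side given here. So the paper's choice is driven by economy across the two results, whereas yours optimizes the proof of this one statement. One minor point of care: you speak informally of the ``$A$-action on $n$ extending to an $A[\hbar^{-1}]$-action''; the precise statement is that the $\D$-action factors through $j^*:\D\to\D_\loc$, so that $A[\hbar^{-1}]\otimes_A n\simeq j^*(A[\hbar^{-1}])\otimes_{A[\hbar^{-1}]} n\simeq n$ because $j^*(A[\hbar^{-1}])$ is the unit of $\D_\loc$. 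With that phrasing the argument is airtight.
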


\begin{proof}
The functor may be identified with the map
\[
\Funr_{\D}(\M^{\op},\D_{\loc})\to\Funr_{\D}(\M^{\op},\D)\simeq\M
\]
which composes a right adjoint $\D$-linear functor $F:\D_{\loc}^{\op}\to\M$ with the right adjoint $\D$-linear functor $\D_{\loc}\to\D$.
Note that the equivalence $\M\to\Funr_\D(\M^{\op},\D)$ is adjoint to the $\D$-enriched mapping object functor $\underline{\Map}_\M(-,-):\M^{\op}\times\M\to\D$, and sends the object $M$ to the $\D$-linear right adjoint functor $\underline{\Map}_\M(-,M):\M^{\op}\to\D$ (this functor evidently preserves limits and $\kappa$-filtered colimits for some $\kappa$, hence admits a left adjoint); the fact that this is an equivalence is the assertion that any such $\D$-linear functor is representable.
It therefore suffices to show that the object $M$ representing the $\D$-linear right adjoint functor $\underline{\Map}_\M(-,M):\M^{\op}\to\D$ is $\hbar$-local whenever this functor factors through the full subcategory $\D_\loc\subset\D$ of $\hbar$-local objects.

To see this, it suffices to check that the canonical map $M^{j_*j^*A}\to M$ is an equivalence, which is to say that $M^{i_\vee i^\vee A}\simeq 0$.
Indeed, for any object $L$ of $\M$,
\begin{align*}
\underline{\Map}_\M(L,M^{i_\vee i^\vee A}) & \simeq\underline{\Map}_\M(i_\vee i^\vee A\otimes_A L,M)
\simeq\underline{\Map}_\M(L,M)^{i_\vee i^\vee A}
\simeq 0
\end{align*}
since $\underline{\Map}_\M(L,M)$ is $\hbar$-local by hypothesis.
\end{proof}

\begin{proposition}\label{prop:comparecompl}
For any $\D$-linear $\i$-category $\M$, there is a commutative diagram of Verdier sequences
\[
\xymatrix{
\D_{\loc}\otimes_\D \M\ar[r]\ar[d] & \D\otimes_\D \M\ar[r]\ar[d] & \D_{\cc}\otimes_\D\M\ar[d]\\
\M^{\loc}\ar[r] & \M\ar[r] & \M^{\cc}}
\]
in which the vertical maps are equivalences.
\end{proposition}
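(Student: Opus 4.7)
The plan is to verify that each row is a Verdier sequence in $\Mod_\D$, construct the three vertical comparison maps, and then show that the outer two are equivalences. The bottom row is the Verdier sequence arising from the semiorthogonal decomposition $(\M^\loc,\M^\cc)$ of the preceding Proposition, and the middle vertical is the canonical equivalence $\D\otimes_\D\M\simeq\M$.

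For the top row, observe that the Verdier sequence $\D_\loc\overset{j_*}{\to}\D\overset{i^\land}{\to}\D_\cc$ lives in $\Mod_\D$, since $\D_\loc$ and $\D_\cc$ are both idempotent $\D$-algebras. Applying the colimit-preserving endofunctor $-\otimes_\D\M$ of $\Mod_\D$ yields a cofiber sequence; it is a Verdier sequence because the counit $j^*j_*\simeq\id_{\D_\loc}$ tensors to give $(j^*\otimes\id_\M)\circ(j_*\otimes\id_\M)\simeq\id$, so $j_*\otimes_\D\id_\M$ is fully faithful. The left vertical $\D_\loc\otimes_\D\M\to\M^\loc$ is the factorization provided by the preceding Proposition; combined with the middle vertical, it induces the right vertical $\D_\cc\otimes_\D\M\to\M^\cc$ on cofibers, and the diagram commutes by construction.

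Since the middle vertical is an equivalence, it suffices by functoriality of cofibers to show the left vertical is an equivalence. Both $\D_\loc\otimes_\D\M$ and $\M^\loc$ are full subcategories of $\M$: the former as the essential image of the fully faithful $j_*\otimes_\D\id_\M$, and the latter by definition. The preceding Proposition already shows this essential image is contained in $\M^\loc$. Conversely, for any $L\in\M^\loc$, the unit $L\to(j_*\otimes_\D\id_\M)\circ(j^*\otimes_\D\id_\M)(L)$ of the adjunction identifies with the canonical map $L\to L\otimes_A A[\hbar^{-1}]$, which is an equivalence since $L$ is $\hbar$-local. Hence $L$ lies in the essential image, proving the left vertical is an equivalence.

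The main difficulty lies in identifying $\D_\loc\otimes_\D\M$ with $\M^\loc$ as subcategories of $\M$; once this is in place, the right vertical equivalence and full commutativity of the diagram follow formally from the two cofiber sequences in $\PrL$.
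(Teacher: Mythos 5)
Your proof takes a genuinely different route from the paper's. The paper works on the right-hand square directly, identifying $\D_{\cc}\otimes_\D\M$ with $\Funr_\D(\M^\op,\D_\cc)$ and showing the square is a pullback by a mapping-space computation (if a representable $\D$-linear right adjoint $\M^\op\to\D$ factors through $\D_\cc$, then the representing object is cohomologically complete). You instead attack the left-hand square, using the explicit left adjoint $j^*\simeq A[\hbar^{-1}]\otimes_A(-)$ to identify $\D_{\loc}\otimes_\D\M$ with $\M^\loc$ inside $\M$, and then obtain the right vertical equivalence by functoriality of cofibers. This is precisely the alternative the paper gestures at in its final sentence (``by an analogous argument, or by general facts about semiorthogonal decompositions''), and it is arguably more concrete since $j^*$ is a simple tensoring functor.

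There is, however, a gap in your full faithfulness argument. You assert that $(j^*\otimes\id_\M)\circ(j_*\otimes\id_\M)\simeq\id$ implies $j_*\otimes_\D\id_\M$ is fully faithful, but a one-sided retraction of $\infty$-functors does not by itself give full faithfulness (it only gives that the maps on mapping spaces are split monomorphisms). What you actually need, and what you also rely on later when you invoke ``the unit of the adjunction,'' is that $-\otimes_\D\id_\M$ carries the adjunction $j^*\dashv j_*$ to an adjunction $(j^*\otimes\id_\M)\dashv(j_*\otimes\id_\M)$; once that is in place, invertibility of the tensored counit does imply full faithfulness of the right adjoint, and the rest of your argument goes through. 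This preservation of adjunctions is true but is not free; the paper's appendix proposition on tensoring Verdier sequences fills the gap by a different device: since $j_*$ is itself a right adjoint (as well as a left adjoint), the functor $j_*\otimes_\D\id_\M$ is computed as postcomposition with $j_*$ under $\D_{\loc}\otimes_\D\M\simeq\Funr_\D(\M^\op,\D_{\loc})$, and postcomposition with a fully faithful right adjoint is fully faithful. You should either cite that appendix result or supply a justification for the tensored adjunction; either repair makes your argument complete.
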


\begin{proof}
The vertical map in the middle is the canonical equivalence $\D\otimes_\D\M\to\M$, and the vertical map on the left is the factorization $\D_\loc\otimes_\D\M\to\M$ through the full subcategory $\M^{\loc}\subset\M$ of $\hbar$-local objects.
The vertical map on the right is the functor induced by taking horizontal cofibers; equivalently, the commutative square on the right (or on the left) may be rewritten in its right adjoint form as the commutative square
\[
\xymatrix{
\Funr_\D(\M^{\op},\D) & \Funr_\D(\M^{\op},\D_\cc)\ar[l]\\
\M\ar[u] & \M^\cc\ar[u]\ar[l]}
\]
in which the vertical maps send the object $M$ of $\M$ to the functor $\underline{\Map}_\M(-,M):\M^{\op}\to\D$ represented by $M$ (observe that if $M$ is cohomologically complete then the representable functor $\underline{\Map}_\M(-,M):\M^{\op}\to\D_\cc\subset\D$ factors through the full subcategory of cohomologically complete objects).
In particular, all the functors in this square are fully faithful and the left vertical functor is the canonical equivalence $\M\simeq\Funr_\D(\M^{\op},\D)$.
In fact, this square is a pullback: if $F:\M^{\op}\to\D_\cc$ is a $\D$-linear right adjoint functor such that $F\simeq\underline{\Map}_\M(-,M)$ as a functor $\M^{\op}\to\D$, then $M$ is cohomologically complete.
Indeed, just as above, for any object $L$ of $\M$,
\begin{align*}
\underline{\Map}_\M(L,M^{j_* j^* A}) & \simeq\underline{\Map}_\M(j_* j^* A\otimes_A L,M)
\simeq\underline{\Map}_\M(L,M)^{j_* j_* A}
\simeq 0
\end{align*}
since $\underline{\Map}_\M(L,M)$ is $\hbar$-complete.
We therefore conclude that the map $\D_\cc\otimes_\D\M\to\M^{\cc}$ is an equivalence, and by an analogous argument (or by general facts about semiorthogonal decompositions) we conclude that the map $\D_\loc\otimes_\D\M\to\M^{\loc}$ is an equivalence. 
\end{proof}

Let $\Mod_\D^{\cc}\subset\Mod_\D$ denote the full subcategory of $\Mod_\D$ spanned by the $\D$-linear $\i$-categories which are cohomologically complete.
The previous discussion can be summarized as follows.

\begin{corollary}
The forgetful functor $\Mod_{\D_\cc}\to\Mod_\D$ induces an equivalence $\Mod_{\D_\cc}\simeq\Mod_\D^\cc$.
\end{corollary}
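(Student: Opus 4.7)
The plan is to assemble the corollary directly from the preceding results, since essentially all the work has already been done. The forgetful functor $i_*\colon\Mod_{\D_\cc}\to\Mod_\D$ restricts along the symmetric monoidal localization $i^\land\colon\D\to\D_\cc$ established in the first proposition of the subsection. Because $i^\land$ exhibits $\D_\cc$ as an idempotent (hence commutative) algebra object of $\D$ under $\Prl$, Lurie's results \cite[Propositions 4.8.2.9 \& 4.8.2.10]{HA}---already invoked earlier in the section---imply that $i_*$ is fully faithful, with essential image consisting of those $\D$-linear $\i$-categories $\M$ for which the canonical map $\M\to i_*i^*\M\simeq\D_\cc\otimes_\D\M$ is an equivalence.

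The main point is therefore to identify this essential image with $\Mod_\D^\cc$. First I would observe the inclusion essential image $\subseteq\Mod_\D^\cc$: if $\M\simeq\D_\cc\otimes_\D\N$ for some $\N$, then Proposition \ref{prop:comparecompl} identifies $\D_\cc\otimes_\D\N$ with $\N^{\cc}$, so every object of $\M$ is cohomologically complete, i.e.\ $\M\in\Mod_\D^\cc$. Conversely, if $\M\in\Mod_\D^\cc$ then every object of $\M$ is already complete, so $\M^\cc=\M$; applying Proposition \ref{prop:comparecompl} again, the canonical functor $\D_\cc\otimes_\D\M\to\M^\cc=\M$ is an equivalence, showing $\M$ lies in the essential image of $i_*$.

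Combining these two observations gives $\Mod_{\D_\cc}\simeq\Mod_\D^\cc$. I do not anticipate any serious obstacle here, as the corollary is formal once Proposition \ref{prop:comparecompl} (identifying $\D_\cc\otimes_\D\M$ with $\M^\cc$) and the idempotent-algebra characterization of the essential image of $i_*$ are both in hand. The only mild subtlety worth making explicit in the write-up is that ``$\M$ is cohomologically complete'' in the sense of $\Mod_\D^\cc$ means that every object of $\M$ is $\hbar$-cohomologically complete, which is precisely the condition $\M^\cc=\M$ used above.
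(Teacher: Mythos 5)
Your proof is correct and takes essentially the same approach as the paper; the paper gives no explicit proof, stating only that ``the previous discussion can be summarized as follows,'' and the previous discussion is precisely what you invoke (the idempotent-algebra characterization of the essential image of $i_*$ from \cite[Propositions 4.8.2.9 \& 4.8.2.10]{HA} plus Proposition~\ref{prop:comparecompl} identifying $\D_\cc\otimes_\D\M$ with $\M^\cc$). The only point one might spell out a bit further is why ``$\M\simeq\N^{\cc}$'' implies every object of $\M$ is cohomologically complete \emph{with respect to the induced $\D$-action on $\M$}, which is immediate because cotensoring by any $V\in\D$ preserves $\hbar$-completeness (as $(N^V)^{A[\hbar^{-1}]}\simeq (N^{A[\hbar^{-1}]})^V$), so the cotensor on $\N^\cc$ agrees with that on $\N$ and objects of $\N^\cc$ are complete there; alternatively one can appeal to Proposition~\ref{prop:carcatcc}, items (1)$\Leftrightarrow$(5), since $\D_\cc\otimes_\D\N$ is manifestly a $\D_\cc$-module.
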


\begin{corollary}
Let $f^* \colon \M \to \N$ be a morphism in $\Mod_\D$. The diagram
\begin{equation*}
\xymatrix{
\D_\cc \otimes_\D \M\ar[r]^-{\id_{\D_\cc} \otimes f^*}\ar[d] & \D_\cc \otimes_\D \N \ar[d]\\
\M^{\cc}\ar[r]^-{i^\land \circ f^* \circ \, i_\land} & \N^{\cc},}
\end{equation*}
where the vertical maps are the equivalences of Proposition \ref{prop:comparecompl}, commutes up to natural equivalence.
Moreover, the composite $i^\land\circ f^*\circ i_\land:\M^\cc\to\N^\cc$ is $\D_\cc$-linear.
\end{corollary}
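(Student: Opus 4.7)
The plan is to identify the functor induced by $\id_{\D_\cc}\otimes f^*$ under the vertical equivalences of Proposition \ref{prop:comparecompl} with $i^\land\circ f^*\circ i_\land$, by combining naturality of base change along $i^\land\colon\D\to\D_\cc$ with the reflective-localization identity $i^\land\circ i_\land\simeq\id_{\M^\cc}$.

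First, applying $-\otimes_\D-$ to the $\D$-linear morphism $f^*\colon\M\to\N$ yields a commutative square
\[
\xymatrix{
\D\otimes_\D\M\ar[r]^-{\id_\D\otimes f^*}\ar[d] & \D\otimes_\D\N\ar[d]\\
\D_\cc\otimes_\D\M\ar[r]^-{\id_{\D_\cc}\otimes f^*} & \D_\cc\otimes_\D\N
}
\]
in $\Mod_\D$, whose vertical maps are the horizontal arrows of the top row of the diagram appearing in Proposition \ref{prop:comparecompl}. Under the vertical equivalences of that proposition, the middle equivalence $\D\otimes_\D\M\simeq\M$ carries $\id_\D\otimes f^*$ to $f^*$, while the vertical map $\D\otimes_\D\M\to\D_\cc\otimes_\D\M$ becomes the completion unit $i^\land\colon\M\to\M^\cc$, by direct inspection of the commutative diagram in Proposition \ref{prop:comparecompl}. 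Writing $\tilde{f}^*\colon\M^\cc\to\N^\cc$ for the functor to which $\id_{\D_\cc}\otimes f^*$ corresponds under these identifications, the square now reads $\tilde{f}^*\circ i^\land\simeq i^\land\circ f^*$ as functors $\M\to\N^\cc$.

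Next, precomposing with $i_\land\colon\M^\cc\to\M$ and using $i^\land\circ i_\land\simeq\id_{\M^\cc}$, we obtain
\[
\tilde{f}^*\simeq\tilde{f}^*\circ(i^\land\circ i_\land)\simeq(\tilde{f}^*\circ i^\land)\circ i_\land\simeq i^\land\circ f^*\circ i_\land,
\]
which is exactly the commutativity claimed in the statement. For $\D_\cc$-linearity, the functor $\id_{\D_\cc}\otimes f^*$ is tautologically a morphism in $\Mod_{\D_\cc}$; under the equivalence $\Mod_{\D_\cc}\simeq\Mod_\D^\cc$ supplied by the preceding corollary, this transports to the assertion that $\tilde{f}^*\simeq i^\land\circ f^*\circ i_\land$ is $\D_\cc$-linear.

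I do not expect a serious obstacle. The argument is a formal consequence of Proposition \ref{prop:comparecompl} and the universal property of the reflective localization $(i^\land,i_\land)$; the only point requiring a little care is the identification of the canonical map $\D\otimes_\D\M\to\D_\cc\otimes_\D\M$ with $i^\land$ under the vertical equivalences, but this is immediate from how those equivalences were produced, namely as the induced morphism on horizontal cofibers of the two rows in the diagram of Proposition \ref{prop:comparecompl}.
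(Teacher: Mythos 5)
Your proof is correct, but it takes a genuinely different route from the paper's. The paper establishes commutativity of the square by passing to the right adjoint picture: writing $\D_\cc\otimes_\D\M\simeq\Funr_\D(\M^\op,\D_\cc)$ and $\D_\cc\otimes_\D\N\simeq\Funr_\D(\N^\op,\D_\cc)$, and then verifying the commutativity of the associated square of right adjoints via the natural transformation $\underline{\Map}_\N(-,f_*(-))\to\underline{\Map}_\M(f^*(-),-)$ coming from the adjunction counit. Your argument stays entirely on the left-adjoint side: you invoke naturality of the base change $i^\land\otimes(-)\colon\D\otimes_\D(-)\to\D_\cc\otimes_\D(-)$ along $f^*$, identify (via the commutativity already packaged in Proposition \ref{prop:comparecompl}) the induced map $\M\to\M^\cc$ with the completion unit $i^\land$, and then precompose with $i_\land$ and use the reflective-localization identity $i^\land\circ i_\land\simeq\id_{\M^\cc}$ to pin down the functor. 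This is arguably cleaner: it makes the commutativity a purely formal consequence of the statement of Proposition \ref{prop:comparecompl}, rather than re-entering the $\Funr_\D$-computation. The $\D_\cc$-linearity claim is handled the same way in both: once commutativity is known, $i^\land\circ f^*\circ i_\land$ is identified with $\id_{\D_\cc}\otimes f^*$, which is tautologically a morphism in $\Mod_{\D_\cc}$. The only point to be slightly careful about (which you do flag) is that the identification of the canonical comparison $\D\otimes_\D\M\to\D_\cc\otimes_\D\M$ with $i^\land$ really is part of the content of Proposition \ref{prop:comparecompl}, and not merely an artifact; this is indeed how the right-hand vertical equivalence there is constructed.
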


\begin{proof}
Supposing the square commutes, it follows that $i^\land\circ f^*\circ i_\land$ is the basechange along the commutative algebra map $i^\land:\D\to\D_\cc$ of the $\D$-linear functor $f^*$, and therefore is $\D_\cc$-linear.
To see that the square commutes, writing the tensor products as $\i$-categories of $\D$-linear right adjoint functors $\D_\cc\otimes_\D\M\simeq\Funr_\D(\M^{\op},\D_\cc)$ and $\D_\cc\otimes_\D\N\simeq\Funr_\D(\N^{\op},\D_\cc)$, we check that the associated diagram of right adjoints commutes.
The counit of the adjunction induces a natural transformation of functors
\[
\underline{\Map}_\N(-,f_*(-))\to
\underline{\Map}_\M(f^*(-),-)\colon \M^{\op}\times\N^{\cc}\to\D_\cc
\]
in which the source and target functors represent the two ways of traversing the associated diagram of right adjoints.
The square commutes because the composite transformation is an equivalence.
\end{proof}

\begin{lemma}
We set $T := (A[\hbar^{-1}]/A)[-1]$. Then $i_\land i^\land \simeq ( - )^{T}$.
\end{lemma}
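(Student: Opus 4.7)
My plan is to deduce the lemma directly from the preceding proposition, the main step being to identify $T$ with the $\hbar$-nilpotent part $i_\lor i^\lor A$ of $A$.

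By construction, $T = (A[\hbar^{-1}]/A)[-1]$ is the fiber in $\D = \D(A)$ of the canonical map $A \to A[\hbar^{-1}]$. Since $A[\hbar^{-1}] = j_* j^* A$, the exact triangle $i_\lor i^\lor A \to A \to j_* j^* A$ employed in the proof of that proposition likewise realizes $i_\lor i^\lor A$ as the fiber of $A \to A[\hbar^{-1}]$, so $T \simeq i_\lor i^\lor A$.

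Applying the proposition, cotensoring this triangle produces the canonical exact triangle of endofunctors $j_* j^* \to \id_\M \to i_\land i^\land$ on $\M$ coming from the semiorthogonal decomposition $(\M^\loc, \M^\cc)$. Under the substitution $T \simeq i_\lor i^\lor A$, the right-hand endofunctor is on the one hand $i_\land i^\land$ and on the other $(-)^T$, giving the desired equivalence $i_\land i^\land \simeq (-)^T$.

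The argument is essentially formal. The one verification implicit in invoking the proposition is that the cotensor endofunctor $(-)^T$ indeed lands in $\M^\cc$, which amounts to $(M^T)^{A[\hbar^{-1}]} \simeq 0$: using the adjunction identity $(M^N)^{N'} \simeq M^{N \otimes_A N'}$, this reduces to $T \otimes_A A[\hbar^{-1}] \simeq 0$, itself obtained by tensoring the fiber sequence $T \to A \to A[\hbar^{-1}]$ with $A[\hbar^{-1}]$ and invoking the idempotence $A[\hbar^{-1}] \otimes_A A[\hbar^{-1}] \simeq A[\hbar^{-1}]$.
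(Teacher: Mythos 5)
Your approach is in spirit the same as the paper's: identify $T$ with $i_\lor i^\lor A$ (using that $A$ has no $\hbar$-torsion, so $T$ is the fiber of $A\to A[\hbar^{-1}]\simeq j_*j^*A$) and then conclude $i_\land i^\land\simeq(-)^{i_\lor i^\lor A}=(-)^T$. The paper's proof simply cites Proposition \ref{prop:compformula} from the appendix for the final step, whereas you attempt to redo that identification by matching the cotensored triangle $(-)^{A[\hbar^{-1}]}\to\id\to(-)^T$ with the triangle arising from the semiorthogonal decomposition $(\M^{\loc},\M^{\cc})$.

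The gap is in your last paragraph. You assert that the only thing to verify is that $(-)^T$ lands in $\M^{\cc}$, but that is not enough to identify your triangle with the decomposition triangle $j_*j^\times\to\id\to i_\land i^\land$: one must also check that the fiber functor $(-)^{A[\hbar^{-1}]}$ lands in $\M^{\loc}$. Concretely, applying $i_\land i^\land$ to your triangle and using your completeness check yields $i_\land i^\land\bigl((-)^{A[\hbar^{-1}]}\bigr)\to i_\land i^\land\to(-)^T$, and you still need the left term to vanish, i.e.\ $M^{A[\hbar^{-1}]}$ to be $\hbar$-local for every $M$. This is easy to supply --- for $\hbar$-nilpotent $N$ one has $\Map_\M(N,M^{A[\hbar^{-1}]})\simeq\Map_\M(A[\hbar^{-1}]\otimes_A N,M)\simeq 0$ since $A[\hbar^{-1}]\otimes_A N\simeq 0$ --- and it is exactly what the proof of Proposition \ref{prop:compformula} carries out alongside the check you did make; but as written your verification is one-sided. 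A minor notational slip in the same paragraph: the first term of the $(\M^{\loc},\M^{\cc})$ decomposition triangle is $j_*j^\times$, not $j_*j^*$; the latter belongs to the triangle for the $(\M^{\nil},\M^{\loc})$ decomposition.
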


\begin{proof}
Since $A$ has no $\hbar$-torsion, the canonical map $A \to A[\hbar^{-1}]$ is injective. Thus, we get the following exact sequence
\begin{equation*}
0 \to A \to A[\hbar^{-1}] \to A[\hbar^{-1}] / A \to 0. 
\end{equation*}
Now using Proposition \ref{prop:fundccnil} \ref{item:funccnil}, we obtain an equivalence $i_\lor i^\lor A \simeq T$. Finally, it follows from Proposition \ref{prop:compformula} that $i_\land i^\land \simeq ( - )^{T}$.
\end{proof}
\begin{definition}
Let $\C\in\Mod_{\Der(A)}$ and let $M \in \C$. The object $M$ is of uniform $\hbar$-torsion if there exist a positive integer $n$ such that the map $\hbar^n\colon M \to M$ is null.
\end{definition}

\begin{lemma}\label{lem:unicomp}
Let $\C\in \Mod_{\Der(A)}$. An object of $\C$ of uniform $\hbar$-torsion is $\hbar$-complete.
\end{lemma}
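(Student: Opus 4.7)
The plan is to verify the definition of $\hbar$-cohomological completeness directly, namely to show that $M^{A[\hbar^{-1}]}\simeq 0$ in $\C$. Since cotensor with an object of $\D:=\Der(A)$ converts colimits in $\D$ into limits in $\C$, and $A[\hbar^{-1}]\simeq\colim(A\xrightarrow{\hbar}A\xrightarrow{\hbar}\cdots)$ in $\D$, this cotensor is expressed as the sequential limit in $\C$ of the tower $\cdots\xrightarrow{\hbar}M\xrightarrow{\hbar}M$. By the Yoneda principle, it suffices to show that $\uMap_\C(L,M^{A[\hbar^{-1}]})\simeq 0$ in $\D$ for every $L\in\C$. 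Setting $N:=\uMap_\C(L,M)\in\D$, the $\D$-enrichment identifies this mapping object with $\uMap_\D(A[\hbar^{-1}],N)\simeq\lim_k N$ in $\D$, where the transition maps are multiplication by $\hbar$.

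I would then observe that the nullhomotopy of $\hbar^n\colon M\to M$ furnished by uniform $\hbar$-torsion induces a nullhomotopy of $\hbar^n\colon N\to N$, so $\hbar^n$ acts as zero on each homotopy group $\pi_i N$. Invoking the Milnor exact sequence for sequential limits in the stable $\infty$-category $\D$,
\[
0 \to {\lim}^{1}\pi_{i+1}(N)\to \pi_i\lim_k N \to \lim \pi_i(N)\to 0,
\]
I would argue that both outer terms vanish. Any coherent family $(x_k)\in\lim\pi_i(N)$ satisfies $x_k=\hbar^n x_{k+n}=0$, so $\lim\pi_i(N)=0$; and since the $n$-fold iterate of each transition is identically zero on $\pi_{i+1}(N)$, the Mittag-Leffler condition is trivially met and ${\lim}^{1}\pi_{i+1}(N)=0$. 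Hence $\pi_i\lim_k N=0$ for every $i$, so $\lim_k N\simeq 0$ in $\D$, yielding $M^{A[\hbar^{-1}]}\simeq 0$ in $\C$ as required.

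The main delicacy is that uniform $\hbar$-torsion provides only a single nullhomotopy of $\hbar^n\colon M\to M$, not a coherent family of nullhomotopies that would allow one to directly identify the $\infty$-categorical tower with a null tower and contract it as a diagram. The reduction above sidesteps this subtlety by passing to $\pi_\ast$ of the $\D$-valued mapping object, where a nullhomotopic map is genuinely zero and the elementary Mittag-Leffler argument goes through without any coherence obligations.
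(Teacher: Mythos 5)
Your proof is correct, but it takes a genuinely different and longer route than the paper's. The paper's proof is a one--two punch at the object level: since $\hbar^n\colon M\to M$ is null, functoriality of the cotensor $M^{(-)}$ in the $M$--variable gives that $\hbar^n$ acts as the zero endomorphism of $M^{A[\hbar^{-1}]}$; and since $\hbar^n\colon A[\hbar^{-1}]\to A[\hbar^{-1}]$ is an equivalence, functoriality in the first variable gives that $\hbar^n$ also acts as an equivalence of $M^{A[\hbar^{-1}]}$. An object of a stable $\infty$-category on which the zero map is an equivalence is itself zero, so $M^{A[\hbar^{-1}]}\simeq 0$. This bypasses any tower computation and any discussion of $\lim^1$. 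Your route instead unwinds $M^{A[\hbar^{-1}]}$ as the limit of the tower $\cdots\xrightarrow{\hbar}M\xrightarrow{\hbar}M$, reduces via the $\D$--enrichment to a tower in $\D$, and runs a Milnor exact sequence together with a Mittag--Leffler argument on homotopy groups --- correct, and your closing remark about the non-coherent nullhomotopy is an accurate diagnosis of why a naive ``the tower is levelwise null, so its limit vanishes'' argument would be illegitimate; passing to $\pi_\ast$ is one honest way around it. What the paper's argument buys is economy and robustness: it never leaves the object level, needs no convergence of Milnor sequences, and exhibits the same tension (that $\hbar^n$ must be simultaneously zero and invertible) that also powers your cancellation $x_k=\hbar^n x_{k+n}=0$ inside $\lim\pi_i N$.
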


\begin{proof}
Let $M \in \C$ and assume it is of uniform $\hbar$-torsion. Then there exists positive integer $n$ such that the map 
\begin{equation}\label{mor:zeroh}
M \stackrel{\hbar^n}{\to} M 
\end{equation}
is a zero map. Cotensoring the morphism \eqref{mor:zeroh} by $A[\hbar^{-1}]$, we obtain the morphism
\begin{equation}\label{mor:cozeroh}
M^{A[\hbar^{-1}]} \stackrel{\hbar^n}{\to} M^{A[\hbar^{-1}]},
\end{equation} 
which is a zero map.
This morphism \eqref{mor:cozeroh} can also be obtained from the isomorphism $A[\hbar^{-1}] \stackrel{\hbar^n}{\to}A[\hbar^{-1}]$ by the bifunctoriality of the cotensorization. Indeed, applying the functor $M^{(-)}$, we obtain an isomorphism $M^{A[\hbar^{-1}]} \stackrel{\hbar^n}{\to} M^{A[\hbar^{-1}]}$.
This implies that $M^{A[\hbar^{-1}]}\simeq 0$.
\end{proof}

\begin{lemma}\label{lem:unitor}
Let $\C \in \Mod_{\Der(A)}$ and $M \in \C$ an object of uniform $\hbar$-torsion. Then $T \tensor M \simeq M$.
\end{lemma}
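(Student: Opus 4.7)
The plan is to tensor the defining exact triangle of $T$ against $M$ in $\C$, reducing the claim to showing $A[\hbar^{-1}]\otimes M\simeq 0$, and then run the ``null and invertible'' argument that appears in the preceding Lemma \ref{lem:unicomp}, now with the tensor product playing the role of the cotensor.

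Concretely, the first step is to record the defining triangle $T\to A\to A[\hbar^{-1}]$ in $\Der(A)$, which exists because $A$ is $\hbar$-torsion-free and $T$ is by definition $(A[\hbar^{-1}]/A)[-1]$. Tensoring with $M$ via the $\Der(A)$-action on $\C$ yields an exact triangle
\[
T\otimes M\;\longrightarrow\; M \;\longrightarrow\; A[\hbar^{-1}]\otimes M
\]
in $\C$, so the claim $T\otimes M\simeq M$ is equivalent to the vanishing of the third term.

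The second step produces this vanishing by exhibiting a single endomorphism of $A[\hbar^{-1}]\otimes M$ that is at once null and an equivalence. By the uniform $\hbar$-torsion hypothesis there is an $n$ for which $\hbar^n\colon M\to M$ is nullhomotopic; applying the exact functor $A[\hbar^{-1}]\otimes(-):\C\to\C$ shows that the induced map $\hbar^n\colon A[\hbar^{-1}]\otimes M\to A[\hbar^{-1}]\otimes M$ is null. Applying the other variable of the tensor bifunctor to the equivalence $\hbar^n\colon A[\hbar^{-1}]\to A[\hbar^{-1}]$ exhibits the same self-map as an equivalence. Any object admitting a map that is both null and an equivalence is zero, so $A[\hbar^{-1}]\otimes M\simeq 0$ and the triangle above collapses to the desired equivalence $T\otimes M\simeq M$.

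The only delicate point, and the one I would write out most carefully, is the compatibility of the two descriptions of $\hbar^n$ on $A[\hbar^{-1}]\otimes M$: both arise from the $\Der(A)$-module structure on $\C$, so they really do coincide, but this invokes the same bifunctoriality used in the proof of Lemma \ref{lem:unicomp}. Modulo this verification, the argument is simply the tensor-theoretic mirror of Lemma \ref{lem:unicomp}.
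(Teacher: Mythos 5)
Your proposal is correct and follows exactly the paper's approach: tensor the triangle $T\to A\to A[\hbar^{-1}]$ with $M$ and observe that $A[\hbar^{-1}]\otimes M\simeq 0$ because $M$ has uniform $\hbar$-torsion. The paper leaves this last vanishing implicit, and you have supplied the detail by mirroring the ``null and invertible'' argument of Lemma \ref{lem:unicomp}, which is indeed the intended justification.
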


\begin{proof}
Tensor the exact triangle $T \to A \to A[{\hbar}^{-1}]$ by $M$ and use that $A[\hbar^{-1}] \otimes M \simeq 0$ since $M$ is of uniform $\hbar$-torsion.
\end{proof}

\begin{proposition}\label{prop:preservcomp}
Let $\C\in\Mod_{\Der(A)}$. Let $M$ be a compact object of $\C$ of uniform $\hbar$-torsion. Then $i^\land(M)$ is a compact object of $\C^{\cc}$.
\end{proposition}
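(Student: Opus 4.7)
The plan is to exploit the adjunction $i^\land\dashv i_\land$ together with the two preceding lemmas in order to reduce compactness of $i^\land(M)$ in $\C^{\cc}$ to compactness of $M$ in $\C$. First I would observe that, by Lemma \ref{lem:unicomp}, the hypothesis of uniform $\hbar$-torsion already forces $M$ to lie in the essential image of $i_\land$, so that the unit $M\to i_\land i^\land(M)$ is an equivalence and $i^\land(M)$ may be identified with $M$ itself.

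Next I would unwind what compactness of $i^\land(M)$ means. Because $i_\land:\C^{\cc}\hookrightarrow\C$ is a right adjoint (in particular a reflective fully faithful inclusion), a filtered diagram $(N_\alpha)$ in $\C^{\cc}$ has colimit $i^\land\bigl(\colim^{\C}_\alpha i_\land N_\alpha\bigr)$. Applying the $i^\land\dashv i_\land$ adjunction yields
\[
\Map_{\C^{\cc}}\bigl(i^\land(M),\ \colim^{\C^{\cc}}_\alpha N_\alpha\bigr)
\ \simeq\ \Map_{\C}\bigl(M,\ i_\land i^\land\colim^{\C}_\alpha i_\land N_\alpha\bigr).
\]
The key step is to invoke the previous lemma identifying $i_\land i^\land$ with the cotensor $(-)^{T}$, where $T=(A[\hbar^{-1}]/A)[-1]$, and then apply the tensor--cotensor adjunction of the $\Der(A)$-linear structure, giving
\[
\Map_{\C}\bigl(M,\ (\colim^{\C}_\alpha i_\land N_\alpha)^{T}\bigr)
\ \simeq\ \Map_{\C}\bigl(T\otimes_A M,\ \colim^{\C}_\alpha i_\land N_\alpha\bigr).
\]

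At this point Lemma \ref{lem:unitor} produces the crucial simplification $T\otimes_A M\simeq M$ (this is where uniform $\hbar$-torsion is used a second time). The right-hand side becomes $\Map_{\C}(M,\colim^{\C}_\alpha i_\land N_\alpha)$, which by compactness of $M$ in $\C$ equals $\colim_\alpha\Map_{\C}(M,i_\land N_\alpha)$, and adjunction turns this back into $\colim_\alpha\Map_{\C^{\cc}}(i^\land(M),N_\alpha)$. Concatenating these equivalences gives compactness of $i^\land(M)$ in $\C^{\cc}$.

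The only delicate point I foresee is bookkeeping: one must make sure that the cotensor/tensor duality $\Map_{\C}(X\otimes_A Y,Z)\simeq\Map_{\C}(Y,Z^{X})$ really is available in the $\Der(A)$-linear setting for $X=T$, and that filtered colimits in $\C^{\cc}$ are indeed computed by $i^\land$ applied to the corresponding colimits in $\C$. Both are formal consequences of the $\Der(A)$-module structure on $\C$ and of the reflective localization $\C\to\C^{\cc}$ studied in Appendix \ref{app:nilloccomp}, so no essentially new input beyond the preceding three lemmas should be needed.
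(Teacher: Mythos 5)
Your proof takes essentially the same route as the paper's: you pass through the adjunction $i^\land\dashv i_\land$, replace $i_\land i^\land$ by the cotensor $(-)^T$, use tensor--cotensor duality to move $T$ across, invoke Lemma~\ref{lem:unitor} to cancel $T\otimes_A M\simeq M$, apply compactness of $M$ in $\C$, and convert back by adjunction. The opening remark (that $M$ is already $\hbar$-complete by Lemma~\ref{lem:unicomp}) is a valid observation but is not actually used in the chain of equivalences; otherwise the argument coincides step for step with the one in the text.
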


\begin{proof}. 
Let $\colim_j N_j$ be a filtered colimit in $\C^{\cc}$. Then
\begin{align*}
\Map_{\C^{\cc}}(i^{\land}(M),\colim_j N_j) &\simeq \Map_{\C}(M,i_\land i^\land(\underset{j}{\colim}\, i_\land(N_j)))\\
                                  &\simeq \Map_{\C}(T \tensor M,\underset{j}{\colim}\, i_\land(N_j))\\
                                  &\simeq \Map_{\C}(M,\underset{j}{\colim}\, i_\land(N_j)) \quad \textnormal{(Lemma \ref{lem:unitor})}\\
                                  &\simeq \underset{j}{\colim} \Map_{\C}( M,i_\land(N_j)) \quad \textnormal{(compacity of $M$)}\\
                                  &\simeq \underset{j}{\colim} \Map_{\C^{\cc}}( i^\land (M), \, N_j).           
\end{align*}
\end{proof}
\begin{proposition}\label{prop:compcompgen}
Let $\C$  be an $A$-linear $\i$-category which is compactly generated by a compact generator of uniform $\hbar$-torsion. Then the $\hbar$-completion functor $i^{\land} \colon \C \to \C^\cc$ is an equivalence.
\end{proposition}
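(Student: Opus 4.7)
My plan is to reduce the statement to showing that the full subcategory $\C^\loc \subset \C$ of $\hbar$-local objects is zero. Granting this, Proposition \ref{prop:comparecompl} applied to $\M = \C$ gives an equivalence $\D_\loc \otimes_\D \C \simeq \C^\loc$, and the associated Verdier sequence $\C^\loc \to \C \to \C^\cc$ collapses to make $i^\land$ an equivalence. Concretely, for any $M \in \C$ the fiber of the unit $M \to i_\land i^\land M$ lies in $\C^\loc$ (it is obtained by cotensoring $M$ with $j_* j^* A$, which is $\hbar$-local), so if $\C^\loc \simeq 0$ the unit is an equivalence for every $M$, the fully faithful right adjoint $i_\land$ is essentially surjective, and hence its left adjoint $i^\land$ is an equivalence.

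Next, I will show $\C^\loc = 0$. Let $G$ denote the distinguished compact generator of uniform $\hbar$-torsion and let $L \in \C^\loc$. Since $G$ is a compact generator of the stable $\i$-category $\C$, it suffices to prove $\uMap_\C(G, L) \simeq 0$ in $\D = \D(A)$, which will force $L \simeq 0$. Now $\uMap_\C(G, L)$ is an $A$-module, and multiplication by $\hbar$ on it can be obtained equivalently by pre-composing with $\hbar \colon G \to G$ or by post-composing with $\hbar \colon L \to L$. On the one hand, uniform $\hbar$-torsion of $G$ yields $n \geq 1$ with $\hbar^n \colon G \to G$ null, so $\hbar^n$ acts as zero on $\uMap_\C(G, L)$. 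On the other hand, $\hbar$-locality of $L$ means that $L \to A[\hbar^{-1}] \otimes_A L$ is an equivalence, which forces $\hbar \colon L \to L$, and therefore also $\hbar^n$, to be an equivalence; hence $\hbar^n$ acts invertibly on $\uMap_\C(G, L)$. Being simultaneously zero and invertible, $\uMap_\C(G, L) \simeq 0$, as required.

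I do not anticipate a real obstacle: once the translation to the vanishing of $\C^\loc$ is in place, everything else is the standard observation that $\hbar$ cannot act both nilpotently and invertibly on a nonzero $A$-module. The one subtlety worth verifying is that the two natural ways of multiplying by $\hbar$ on $\uMap_\C(G, L)$ — via source and target — really do coincide, but this is just the $\D(A)$-linear functoriality of the enriched mapping object, already encoded in the $\D$-linear structure on $\C$.
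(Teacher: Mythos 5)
Your proof is correct, but it takes a genuinely different route from the paper's. The paper's proof is a Morita-type argument: it shows that $i^\land(G)$ is a compact generator of $\C^\cc$ (using Proposition \ref{prop:preservcomp}), that the map $\uMap_\C(G,G)\to\uMap_{\C^\cc}(i^\land G,i^\land G)$ on endomorphism algebras is an equivalence (using Lemma \ref{lem:unicomp}, since uniform torsion implies $G$ is already complete), and then concludes from the fact that a colimit-preserving functor between compactly generated stable $\i$-categories which carries a compact generator to a compact generator and induces an equivalence on endomorphism algebras must be an equivalence. Your proof instead argues directly that $\C^{\loc}\simeq 0$: for $L\in\C^{\loc}$, the element $\hbar^n$ acts both nilpotently (via precomposition by $\hbar^n\colon G\to G$, which is null) and invertibly (via postcomposition by $\hbar\colon L\to L$, which is an equivalence since $L$ is $\hbar$-local) on the enriched mapping object $\uMap_\C(G,L)$, forcing it to vanish and hence $L\simeq 0$; the semiorthogonal decomposition $(\C^{\loc},\C^{\cc})$ then collapses. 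Both arguments are sound. Yours is somewhat more elementary — it sidesteps the Schwede--Shipley recognition theorem and the auxiliary Proposition \ref{prop:preservcomp} entirely, exploiting only the fiber sequence $j_*j^\times\to\id\to i_\land i^\land$ and the bimodule compatibility of the $\D(A)$-enrichment — while the paper's version highlights the compact-generator technology that recurs elsewhere in the article. The only point you flag as needing care, the coincidence of the two $\hbar$-actions on $\uMap_\C(G,L)$, is indeed the standard consequence of $\D(A)$-bilinearity of the enriched hom, so there is no gap.
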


\begin{proof}
Let $G$ be a compact generator of $\C$ of uniform $\hbar$-torsion. Since the $\hbar$-completion functor is a left adjoint, $\comp{(G)}$ is a generator of $\C^\cc$, which is compact by Proposition \ref{prop:preservcomp}. Moreover, the canonical map
\begin{equation*}
\uMap_\C(G,G) \stackrel{i^\land}{\longrightarrow} \uMap_{\C^\cc}(i^\land(G),i^\land(G))
\end{equation*}
is an isomorphism by Lemma \ref{lem:unicomp}. Since $i^\land$ commutes with colimits, it follows that it is an essentially surjective and fully faithful $A$-linear functor.
\end{proof}

\subsection{The \texorpdfstring{$\gr$}{gr} functor}

Let $\R$ be a sheaf of $\ZZ[\hbar]$-algebras which is flat over $\ZZ[\hbar]$ and such that $\R_0:=\R/\hbar\R$ is a commutative $\ZZ$-algebra, and we view $\R_0$ as an $\R_0\otimes_{\ZZ[\hbar]}\R^{\op}$-module.
We write $p:\R\to\R_0$ for the projection, which is a map of sheaves of $\ZZ[\hbar]$-algebras.
The basechange functor is a right exact functor $\Mod_{\R}\to\Mod_{\R_0}$, which we can derive to an exact functor on the level of derived $\i$-categories.
Alternatively, the basechange functor is left adjoint to the forgetful functor $\iota:\Der(\R)\to\Der(\R_0)$ which is exact on the level of abelian categories.

\begin{definition}
We define a sequence of adjoint functors
\begin{equation}
\xymatrix{\Der(\R_0) \ar[r]^-{\iota} & \Der(\R) \ar@/_1.5pc/[l]_-{\gr} \ar@/^1.5pc/[l]^-{\cogr}
}
\end{equation}
as follows:
let $\iota:\Der(\R_0)\to\Der(\R)$ denote the restriction functor.
Then $\iota$ is monadic and comonadic with left and right adjoints $\gr:\Der(\R)\to\Der(\R_0)$ and $\cogr:\Der(\R)\to\Der(\R_0)$, respectively.
\end{definition}
\begin{remark}
The functor $\gr:\Der(\R)\to\Der(\R_0)$ is the functor $\M \mapsto \R_0\otimes_\R \M$ induced by tensoring with the $\R_0\otimes_{\ZZ[\hbar]}\R^{\op}$-module $\R_0$, and the functor $\cogr:\Der(\R)\to\Der(\R_0)$ is the functor
$
\M \mapsto \fMap_\R(\R_0, \M)
$ 
induced by cotensoring with the $\R\otimes_{\ZZ[\hbar]}\R_0^\op$-module $\R_0$. 
\end{remark}

\begin{remark}
If $\R$ is a $\CC^\hbar$-algebra, then we induce the same functor $\Der(\R)\to\Der(\R_0)$ if we view $\R_0$ as an $\R_0\otimes_{\ZZ[\hbar]}\R^{\op}$-module or as an $\R_0\otimes_{\CC^\hbar}\R^{\op}$-module.
\end{remark}

We have the following basic relation between $\gr$ and $\cogr$.

\begin{proposition}
Let $\M \in \Der(\R)$. Then $\iota \circ \gr \M \simeq \iota \circ \cogr \circ \M [1]$.
\end{proposition}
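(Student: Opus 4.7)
The plan is to exploit the flatness of $\R$ over $\ZZ[\hbar]$ to build a two-term free resolution of $\R_0$ in $\Der(\R)$ and then read off both $\gr$ and $\cogr$ as a cofiber and a fiber of multiplication by $\hbar$, respectively.

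First I would observe that since $\R$ is flat over $\ZZ[\hbar]$, the element $\hbar$ acts injectively on $\R$, so the defining presentation $\R_0=\R/\hbar\R$ lifts to a short exact sequence of $\R$-modules $0\to\R\xrightarrow{\cdot\hbar}\R\to\R_0\to 0$, which under the equivalence between chain complexes and their underlying objects of $\Der(\R)$ gives a cofiber sequence
\[
\R\xrightarrow{\cdot\hbar}\R\too\R_0
\]
in $\Der(\R)$.  This provides a concrete two-term free resolution of the restriction $\iota(\R_0)$.

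Next I would apply the two module-theoretic constructions.  Tensoring the cofiber sequence over $\R$ with $\M$, and using the identification $\gr\M\simeq\R_0\otimes_\R\M$ from the Remark above, yields a cofiber sequence
\[
\M\xrightarrow{\cdot\hbar}\M\too\iota\gr\M
\]
in $\Der(\R)$, so $\iota\gr\M\simeq\cofib(\cdot\hbar\colon\M\to\M)$.  Dually, applying $\fMap_\R(-,\M)$ and using $\cogr\M\simeq\fMap_\R(\R_0,\M)$ yields a fiber sequence
\[
\iota\cogr\M\too\M\xrightarrow{\cdot\hbar}\M,
\]
so $\iota\cogr\M\simeq\fib(\cdot\hbar\colon\M\to\M)$.

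Finally, since $\Der(\R)$ is a stable $\i$-category, the fiber and cofiber of any morphism are related by a shift, namely $\cofib(f)\simeq\fib(f)[1]$. Applying this to $f=\cdot\hbar\colon\M\to\M$ gives the required natural equivalence $\iota\gr\M\simeq\iota\cogr\M\,[1]$.  There is no serious obstacle here; the only point to verify carefully is that the flatness hypothesis really does guarantee the injectivity of $\cdot\hbar$ on $\R$ so that the resolution is honest, after which the argument is formal in any stable $\i$-category.
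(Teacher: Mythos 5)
Your proof is correct and is essentially the same argument as the paper's, only phrased in a slightly more elementary way: both proofs rest on the two-term Koszul-type resolution $\R\xrightarrow{\hbar}\R$ of $\R_0$ (using flatness of $\R$ over $\ZZ[\hbar]$ to know that $\hbar$ acts injectively on $\R$), identify $\iota\gr\M$ with the cofiber and $\iota\cogr\M$ with the fiber of $\hbar\colon\M\to\M$, and then invoke the shift $\cofib\simeq\fib[1]$. The paper packages the last step as the self-duality of the complex $(\R\xrightarrow{\hbar}\R)$ under $\fMap_\R(-,\R)$ and is more explicit about the $\R$-$\R$-bimodule structure on $\R_0$ needed to make the tensor/cotensor manipulations land back in $\Der(\R)$, a bookkeeping detail worth keeping in mind, but the mathematical content of your argument is the same.
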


\begin{proof}
When $\R_0$ is endowed with its natural structure of $\R_0\otimes_{\ZZ[\hbar]}\R^{\op}$-module we denote it by $\R_{0R}$, when endowed with its natural structure of $\R\otimes_{\ZZ[\hbar]}\R_0^\op$-module we denote it by $_R\R_{0}$, and when endowed with its structure of $\R \otimes_{\ZZ[\hbar]}\R^{\op}$-module we denote it by $_R\R_{0 R}$. Then,
\begin{align*}
\iota \circ \gr \M &\simeq \; _R \R_0 \otimes_{\R_0} \R_{0 R} \otimes_\R \M\\
                   &\simeq _R \R_{0 R} \otimes_\R \M \simeq (\R \stackrel{\hbar}{\to} \R) \otimes_\R \M \\
  %                 &\simeq (\R \stackrel{\hbar}{\to} \R)[-1][1] \otimes_\R \M \simeq  (\R \stackrel{\hbar}{\to} \R)[-1] \otimes_\R  \M [1]\\
                   &\simeq \fMap_\R(\R \stackrel{\hbar}{\to} \R, \M[1])\simeq \fMap_\R(_R \R_{0 R},\M[1])\\
                   &\simeq \iota\circ \cogr \circ \M[1].
\end{align*}
Here, $\R \stackrel{\hbar}{\to} \R$ is the complex with $\R$ in degree $-1$ and $0$, zero otherwise and the multiplication by $\hbar$ as differential in degree zero.
\end{proof}

\begin{corollary}
Let $\M \in \Der(\R)$. Then, for every $i \in \ZZ$,
$
\Hn^i(\gr \M) \simeq \Hn^{i+1}(\cogr \M).
$
\end{corollary}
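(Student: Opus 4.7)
The plan is to deduce the corollary directly from the preceding proposition, which supplies the equivalence $\iota\circ\gr\M\simeq\iota\circ\cogr\M[1]$ in $\Der(\R)$. The only extra ingredient needed is that the restriction functor $\iota:\Der(\R_0)\to\Der(\R)$ preserves cohomology sheaves, which is immediate from the fact that $\iota$ is exact on the level of abelian categories (it merely forgets the $\R_0$-action down to an $\R$-action through the projection $p:\R\to\R_0$).

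Concretely, I would apply $\Hn^i$ to both sides of the equivalence in the proposition. On the left we have $\Hn^i(\iota\circ\gr\M)\simeq\iota\bigl(\Hn^i(\gr\M)\bigr)$, and on the right $\Hn^i(\iota\circ\cogr\M[1])\simeq\Hn^{i+1}(\iota\circ\cogr\M)\simeq\iota\bigl(\Hn^{i+1}(\cogr\M)\bigr)$, using the standard shift identity $\Hn^i(N[1])\simeq\Hn^{i+1}(N)$ together with the exactness of $\iota$. Since $\iota$ is faithful (in fact, it is a forgetful functor which is conservative, being monadic), the induced equivalence $\iota\bigl(\Hn^i(\gr\M)\bigr)\simeq\iota\bigl(\Hn^{i+1}(\cogr\M)\bigr)$ implies the desired equivalence $\Hn^i(\gr\M)\simeq\Hn^{i+1}(\cogr\M)$ in $\Mod_{\R_0}$.

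There is no real obstacle here: the corollary is a formal consequence of the proposition together with the exactness and conservativity of the restriction functor $\iota$. The only point that requires a moment's thought is to make sure one is computing cohomology in the correct abelian category (namely $\Mod_{\R_0}$ rather than $\Mod_{\R}$), which is justified by the compatibility of $\iota$ with cohomology.
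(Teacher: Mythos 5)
Your overall plan is sound and matches how the paper presents this result as an immediate consequence of the preceding proposition (the paper itself gives no proof). The step where you commute $\Hn^i$ past $\iota$ using exactness, and the shift identity, are both fine. The problem is the final deduction.

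You conclude $\Hn^i(\gr\M)\simeq\Hn^{i+1}(\cogr\M)$ in $\Mod_{\R_0}$ from the isomorphism $\iota\bigl(\Hn^i(\gr\M)\bigr)\simeq\iota\bigl(\Hn^{i+1}(\cogr\M)\bigr)$ in $\Mod_\R$ by invoking faithfulness and conservativity of $\iota$. Neither property suffices. Conservativity says that if a morphism $\phi$ in $\Mod_{\R_0}$ has $\iota(\phi)$ an isomorphism then $\phi$ is an isomorphism; it does not say that an arbitrary isomorphism $\iota A \simeq \iota B$ in $\Mod_\R$ is induced by some morphism $A\to B$ in $\Mod_{\R_0}$. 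Faithfulness (injectivity on Hom-sets) is weaker still. In general, a conservative faithful functor can send non-isomorphic objects to isomorphic ones (e.g.\ forgetting a module structure over a ring down to the underlying abelian group).

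What you actually need is that $\iota\colon\Mod_{\R_0}\to\Mod_\R$ is \emph{fully faithful} on the abelian categories. This is true here, precisely because $p\colon\R\to\R_0$ is surjective: an $\R$-linear map between two $\R_0$-modules automatically commutes with the $\R_0$-action, which is determined by the $\R$-action through $p$, so $\Hom_{\R_0}(A,B)=\Hom_\R(\iota A,\iota B)$. Full faithfulness does let you lift the isomorphism, so the repair is easy; but the properties you cited do not give it to you. Be careful also that the derived restriction $\iota\colon\Der(\R_0)\to\Der(\R)$ is \emph{not} fully faithful in general (restriction along $\ZZ\to\ZZ/p$ already fails this), so the argument genuinely depends on having first passed to cohomology, landing in the abelian categories, before trying to lift.
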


\begin{remark}
There is a more precise relation between $\gr$ and $\cogr$ in the specific case of DQ-algebroid stacks (see \cite[Proposition 2.3.6]{KS3}). 
\end{remark}

Recall the following result from \cite[Corollary 1.5.9]{KS3}.

\begin{proposition}
The restriction of the functor $\gr$ to $\Der_\cc(\R)$ is conservative.
\end{proposition}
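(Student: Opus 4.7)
The plan is to show that if $\M\in\Der_\cc(\R)$ satisfies $\gr\M\simeq 0$, then $\M\simeq 0$, which suffices since $\gr$ is an exact (in particular triangulated) functor between stable $\i$-categories and conservativity follows from vanishing of the kernel.

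First, I would identify $\gr\M$ as the cofiber of multiplication by $\hbar$ on $\M$. Tensoring the exact triangle of $\R$-bimodules $\R\xrightarrow{\hbar}\R\to\R_0$ with $\M$ yields an exact triangle
\begin{equation*}
\M\xrightarrow{\hbar}\M\to\gr\M
\end{equation*}
in $\Der(\R)$ (after applying the forgetful functor $\iota$, which is conservative). If $\gr\M\simeq 0$, then the multiplication-by-$\hbar$ map $\hbar:\M\to\M$ is an equivalence. This is precisely the condition that $\M$ is $\hbar$-local: inverting an equivalence does nothing, so the canonical map $\M\to\R[\hbar^{-1}]\otimes_\R\M$ is an equivalence, i.e., $\M\in\Der_\loc(\R)$.

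Next, I would invoke the semiorthogonal decomposition $(\Der_\loc,\Der_\cc)$ of $\Der(\R)$ discussed earlier in Section~\ref{sec:hcomp}. Under this decomposition, any object which lies in both pieces must be zero: indeed, if $\M$ is both local and cohomologically complete, then $\uMap_{\Der(\R)}(\M,\M)\simeq 0$ by the definition of semiorthogonality (equivalently, $\M\simeq i^\land j_* j^*\M\simeq 0$ since $i^\land\circ j_*\simeq 0$). Hence $\id_\M\simeq 0$, forcing $\M\simeq 0$.

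I do not anticipate a serious obstacle here: the argument is essentially a formal consequence of the identification of $\gr$ as the cofiber of $\hbar$ together with the disjointness of the local and complete parts in the semiorthogonal decomposition. The only mildly delicate point is passing from ``$\hbar$ acts invertibly'' to ``$\M$ is $\hbar$-local'' in the appropriate module-theoretic sense, but this is immediate from the formula for $\R[\hbar^{-1}]$ as the colimit of $\R\xrightarrow{\hbar}\R\xrightarrow{\hbar}\cdots$ tensored with $\M$.
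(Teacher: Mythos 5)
Your proof is correct. Since the paper simply cites \cite[Corollary 1.5.9]{KS3} and gives no argument of its own, you have effectively supplied a self-contained $\i$-categorical proof where the paper just defers to Kashiwara--Schapira. Your argument is the natural one in the language the paper has already set up: identify $\gr\M$ with the cofiber of multiplication by $\hbar$, deduce from $\gr\M\simeq 0$ that $\hbar$ acts invertibly, conclude from the colimit formula for $\R[\hbar^{-1}]\otimes_\R\M$ that $\M$ is $\hbar$-local, and then use the semiorthogonal decomposition $(\Der_\loc,\Der_\cc)$ of Proposition~\ref{prop:fundccnil} (or equivalently $i^\land j_*\simeq 0$) to conclude $\M\simeq 0$. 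Each step is sound, including the observation that $\iota$ is conservative so that testing vanishing of $\gr\M$ in $\Der(\R_0)$ is the same as testing it after forgetting to $\Der(\R)$. The KS3 argument proceeds through the same core idea (multiplication by $\hbar$ an isomorphism forces a cohomologically complete module to vanish), just formulated in terms of explicit $\fExt$ computations and $\R^\loc/\R$ rather than the semiorthogonal-decomposition package; your version is cleaner in the present setting because the decomposition is already available.
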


\begin{proposition}\label{prop:isogr}
We have the following formal properties of the $\gr$ functor:
\begin{enumerate}
\item
Let $\M\in\Der(\R^{\op})$ and $\N\in\Der(\R)$.
Then
$
\gr(\M)\otimes_{\R_0}\gr(\N)\simeq\gr(\M\otimes_\R\N).
$
\item
Let $\M\in\Der(\R)$ and $\N\in\Der(\R)$.
Then
$
\gr(\fMap_\R(\M,\N))\simeq\fMap_{\R_0}(\gr(\M),\gr(\N)).
$
\end{enumerate}
\end{proposition}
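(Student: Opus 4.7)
The plan is to derive both formulas by direct manipulation, using three ingredients: (a) the description $\gr(-)=\R_0\otimes_\R(-)$ (viewing $\R_0$ as an $\R$-bimodule via the projection $p\colon\R\to\R_0$), (b) associativity of tensor product together with the tensor--$\fMap$ adjunction for the map of algebras $p$, and (c) the two-term resolution $\R\xrightarrow{\hbar}\R\twoheadrightarrow\R_0$ of $\R_0$ as an $\R$-bimodule, which was already exploited in the proof of the preceding proposition.

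For part (1), I would simply unwind definitions and associate. Writing $\gr(\M)\simeq \M\otimes_\R\R_0$ (using the right $\R$-action on $\R_0$) and $\gr(\N)\simeq \R_0\otimes_\R\N$, I compute
\[
\gr(\M)\otimes_{\R_0}\gr(\N)\simeq (\M\otimes_\R\R_0)\otimes_{\R_0}(\R_0\otimes_\R\N)\simeq \M\otimes_\R\R_0\otimes_\R\N\simeq \R_0\otimes_\R(\M\otimes_\R\N),
\]
where the middle step collapses $\R_0\otimes_{\R_0}\R_0\simeq\R_0$, and the last step uses the $\R$-bimodule structure on $\R_0$. The resulting object is $\gr(\M\otimes_\R\N)$ with the $\R_0$-module structure inherited from $\R_0$.

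For part (2), the key is to move $\gr$ past the internal hom. Since $\gr\colon\Der(\R)\to\Der(\R_0)$ is left adjoint to the restriction $\iota$, the tensor--$\fMap$ adjunction yields
\[
\fMap_{\R_0}(\gr(\M),\gr(\N))\simeq \fMap_\R(\M,\iota\,\gr(\N))\simeq \fMap_\R(\M,\R_0\otimes_\R\N).
\]
It then remains to identify this with $\gr(\fMap_\R(\M,\N))\simeq \R_0\otimes_\R\fMap_\R(\M,\N)$. For this I would use the cofiber sequence $\R\xrightarrow{\hbar}\R\to\R_0$: tensoring it on the right with $\fMap_\R(\M,\N)$ presents $\R_0\otimes_\R\fMap_\R(\M,\N)$ as the cofiber of multiplication by $\hbar$ on $\fMap_\R(\M,\N)$, while applying $\fMap_\R(\M,-)$ to $\N\xrightarrow{\hbar}\N\to\R_0\otimes_\R\N$ presents $\fMap_\R(\M,\R_0\otimes_\R\N)$ as the cofiber of the same map (using exactness of $\fMap_\R(\M,-)$). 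The two cofibers agree naturally, completing the identification.

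The main obstacle is purely bookkeeping: one must consistently track the left/right $\R$-module structures on $\R_0$ (recorded by $p$) so that the associativity and adjunction rearrangements produce the asserted $\R_0$-module structure on each side, in particular ensuring that the $\R_0$-linearity of the final equivalence in (2) is visible. Once this is set up, both identities are formal consequences of the flatness-free resolution $\R\xrightarrow{\hbar}\R$ and standard $\infty$-categorical adjunctions, so no serious new computation is required.
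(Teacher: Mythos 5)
The paper states Proposition~\ref{prop:isogr} without proof, treating it as a routine formal fact, so there is no official argument to compare against; your proposal supplies a valid one. Two small points are worth making precise. In part~(1) you compress the crucial step into ``$\M\otimes_\R\R_0\otimes_\R\N\simeq\gr(\M\otimes_\R\N)$ using the $\R$-bimodule structure on $\R_0$'', but the bimodule structure alone does not let you pull the inner factor out past $\M$. What actually makes it work is exactly ingredient (c): replace $\R_0$ by the two-term complex of $\R$-bimodules $[\,\R\xrightarrow{\,\hbar\,}\R\,]$ (valid because $\R$ is flat over $\ZZ[\hbar]$, hence $\hbar$-torsion-free), so that $\M\otimes_\R\R_0\otimes_\R\N$ becomes the cofiber of $\hbar$ acting in the middle of $\M\otimes_\R\N$; centrality of $\hbar$ identifies this with the cofiber of $\hbar$ acting on $\M\otimes_\R\N$ as a $\ZZ[\hbar]$-module, i.e.\ with $\gr(\M\otimes_\R\N)$. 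Since you invoke the resolution explicitly in part~(2), this is a presentational gap rather than a substantive one, but as written part~(1) does not actually use (c) and the ``bimodule structure'' phrasing hides where the argument happens. Part~(2) is carried out correctly: the sheaf-level adjunction $\gr\dashv\iota$ reduces the claim to $\fMap_\R(\M,\R_0\otimes_\R\N)\simeq\gr\fMap_\R(\M,\N)$, which follows by applying the exact functor $\fMap_\R(\M,-)$ to the cofiber sequence $\N\xrightarrow{\hbar}\N\to\R_0\otimes_\R\N$ and matching it against the cofiber presentation of $\gr$ of the source. Your closing remark about tracking the $\R_0$-module structures is the right thing to check, and it is satisfied because all the identifications are induced by functorial maps compatible with the $\hbar$-cofiber presentations.
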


\begin{example}
We consider $\Chbar$ the algebra of formal power series with coefficient in $\CC$ and $\CC \simeq \Chbar / \hbar \Chbar$. The categories $\Der(\Chbar)$ and $\Der(\CC)$ are presentably symmetric monoidal categories and the functor
$\gr \colon \Der(\Chbar) \to \Der(\CC)$ given by $M \mapsto \CC \otimes_\Chbar M$ refines to a symmetric monoidal functor.
\end{example}
\begin{proposition}[{\cite[Prop. 3.8]{dgaff}}]\label{prop:gr_cc}
The natural transformation $\gr \circ \id \to \gr \circ \, i_\land \circ \comp{}$
induced by the unit transformation $\id\to i_\land i^\land$ is an equivalence in $\Der(\R_0)$.
\end{proposition}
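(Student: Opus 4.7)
The plan is to identify the cofiber of the unit transformation $\id\to i_\land i^\land$ and show that $\gr$ annihilates it. Applying $\gr$ to the fiber sequence will then give the desired equivalence.

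First, I would use the lemma stating that $i_\land i^\land\simeq(-)^T$ with $T=(A[\hbar^{-1}]/A)[-1]$. By definition, $T$ fits into an exact triangle $T\to A\to A[\hbar^{-1}]$ in $\Der(A)$. Since cotensoring a fixed $\M\in\Der(\R)$ against a fiber sequence in $\Der(A)$ yields a fiber sequence in $\Der(\R)$, I obtain the fiber sequence
\[
\M^{A[\hbar^{-1}]}\longrightarrow\M\longrightarrow\M^T\simeq i_\land i^\land\M,
\]
in which the second map is precisely the unit. Because $\gr$ is a left adjoint between stable $\i$-categories, it is exact, so applying it gives a fiber sequence
\[
\gr(\M^{A[\hbar^{-1}]})\longrightarrow\gr(\M)\longrightarrow\gr(i_\land i^\land\M).
\]
It therefore suffices to prove $\gr(\M^{A[\hbar^{-1}]})\simeq 0$.

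Next, I would observe that $\M^{A[\hbar^{-1}]}$ is $\hbar$-local: cotensoring with $A[\hbar^{-1}]$, on which $\hbar$ acts invertibly, produces an object on which multiplication by $\hbar$ is an equivalence. Hence the claim reduces to the statement that $\gr$ annihilates every $\hbar$-local $\N\in\Der(\R)$.

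For the final step, since $\R$ is flat and $\hbar$-torsion free over $\ZZ[\hbar]$, there is a short exact sequence $0\to\R\xrightarrow{\hbar}\R\to\R_0\to 0$ of $\R\otimes_{\ZZ[\hbar]}\R^{\op}$-modules, giving an equivalence
\[
\gr(\N)\;\simeq\;\R_0\otimes_\R\N\;\simeq\;\mathrm{cofib}\bigl(\N\xrightarrow{\hbar}\N\bigr).
\]
When $\N$ is $\hbar$-local, the map $\hbar\colon\N\to\N$ is an equivalence, so this cofiber vanishes. Combining this with the previous step yields the desired equivalence. I do not anticipate any serious obstacle: the argument is essentially a diagram chase combined with a Koszul-type resolution, and the only points requiring care are the exactness of $\gr$ and the verification that $(-)^{A[\hbar^{-1}]}$ lands in $\hbar$-local objects, both of which are formal.
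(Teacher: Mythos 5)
Your argument is correct. The paper itself does not reproduce a proof for this proposition — it cites \cite[Prop.~3.8]{dgaff} — so there is no in-paper argument to compare against; nonetheless, your route uses precisely the machinery the surrounding text sets up. Specifically: the cotensoring formula $i_\land i^\land\simeq(-)^T$ (via Proposition~\ref{prop:compformula} and the lemma identifying $i_\lor i^\lor A$ with $T$) identifies the fiber of the unit $\id\to i_\land i^\land$ with $(-)^{A[\hbar^{-1}]}$; the bifunctoriality argument — exactly as in the proof of Lemma~\ref{lem:unicomp} — shows that $\hbar$ acts invertibly on $\M^{A[\hbar^{-1}]}$, so this fiber is $\hbar$-local; and the resolution $\R_0\simeq\mathrm{cofib}(\hbar\colon\R\to\R)$ gives $\gr\simeq\mathrm{cofib}(\hbar\colon(-)\to(-))$, which vanishes on $\hbar$-local objects. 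Applying the exact functor $\gr$ to the fiber sequence then yields the desired equivalence. The only step you gloss over slightly is the identification of the $\hbar$-action on the cotensor with the action coming from the exponent variable, but the paper's own Lemma~\ref{lem:unicomp} proof supplies exactly that justification, so the argument is complete.
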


\begin{remark}
These results are readily extended to the case of a $\ZZ[\hbar]$-algebroid stacks.
\end{remark}

\section{DQ-algebras and DQ-modules}\label{sec:DQmod}

\subsection{DQ-modules}

In this subsection, we review some standard notions concerning DQ-algebras and refer the reader to \cite{KS3} for a more detailed study.
In what follows, $(X,\O_X)$ denotes a smooth complex algebraic variety.
Just as $\Chbar=\varprojlim_{n\in\NN} \Chbar / \hbar^n \Chbar$, we define the sheaf of $\Chbar$-algebras analogously, as the limit
\begin{equation*}
\O_X^\hbar:=\varprojlim_{n \in \NN} \O_X \tensor_\CC (\Chbar / \hbar^n \Chbar).
\end{equation*}

\begin{definition}
A star-product denoted $\star$ on $\O_X^\hbar$ is a $\Chbar$-bilinear associative law satisfying
\begin{equation*}
f \star g = \sum_{i \geq 0} P_i(f,g) \hbar^i \;\; \textnormal{for every} \;f, \; g \in \O_X,
\end{equation*}
where the $P_i$ are bi-differential operators such that for every $f, g \in \O_X, P_0(f,g)=fg$ and 
$P_i(1,f)=P_i(f,1)=0$ for $i>0$. The pair $(\O_X^\hbar, \star)$ is called a star-algebra. 
\end{definition}

\begin{example}
Let $U$ be an open subset of $T^\ast\AA^{n}$ endowed with a symplectic coordinate system $(x;u)$ with $x=(x_1,\ldots,x_n)$ and  $u=(u_1,\ldots,u_n)$.
There is  a star-algebra $(\O_{U} ^\hbar, \star)$ on $U$ given by
\begin{equation*}
f \star g = \sum_{\alpha \in \NN^n} \dfrac{\hbar^{|\alpha|}}{\alpha !} (\partial^\alpha_u f) (\partial^\alpha_x g).
\end{equation*}
\end{example}

\begin{definition}
A DQ-algebra $\A_X$ on $X$ is a $\Chbar$-algebra which is locally isomorphic, as a $\Chbar$-algebra, to a star-algebra.
\end{definition}

\begin{remark} It follows immediately from the above definition that
\begin{enumerate}
\item $\hbar$ is central in $\A_X$,
\item $\A_X$ has no $\hbar$-torsion,
\item $\A_X$ is $\hbar$-complete,
\item $\A_X / \hbar \A_X \simeq \O_X$.
\end{enumerate} 
\end{remark}

\begin{definition}
A DQ-algebroid stack on $X$ is a $\Chbar$-algebroid stack such that, for each open subset $U$ of $X$ and each $\sigma \in \A_X(U)$, the $\Chbar$-algebra $\fEnd(\sigma)$ is a DQ-algebra on $U$. 
\end{definition}

We typically refer to DQ-algebroid stacks as DQ-algebroids.
\begin{definition}
Let $\A_X$ be a DQ-algebroid.
A DQ-module on $(X,\A_X)$ is a left module over $\A_X$.
We write $\Mod_{\A_X}$ for the abelian category of left $\A_X$-modules.
\end{definition}
\begin{lemma}[{\cite[Lemma 2.2.8]{KS3}}]
If $\A_X$ is a DQ-algebroid then the opposite algebroid, denoted $\A_{X^\mathrm{a}}$, is a DQ-algebroid.
In particular, if $\A_X$ is a DQ-algebra then so is $\A_{X^\mathrm{a}}$.
\end{lemma}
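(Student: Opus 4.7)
The plan is to reduce the statement to the observation that the opposite of a star-algebra is again a star-algebra, and then lift this fact from endomorphism algebras up to the algebroid level. The ``in particular'' clause about DQ-algebras will essentially be the key computational input, and the algebroid statement will follow formally.

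First, I would recall the construction of the opposite algebroid $\A_{X^{\mathrm{a}}}$: it has the same objects as $\A_X$ over each open set, with morphism spaces reversed. In particular, for any open $U \subset X$ and any $\sigma \in \A_X(U) = \A_{X^{\mathrm{a}}}(U)$, there is a canonical identification of sheaves of $\Chbar$-algebras $\fEnd_{\A_{X^{\mathrm{a}}}}(\sigma) \simeq \fEnd_{\A_X}(\sigma)^{\op}$. Since being a DQ-algebroid is precisely the condition that these endomorphism algebras are DQ-algebras locally on $X$, verifying the statement for algebroids reduces to verifying it for DQ-algebras.

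Next, I would establish that the opposite of a DQ-algebra is a DQ-algebra. Since the DQ property is local on $X$, it suffices to exhibit the opposite of a star-algebra as a star-algebra. So let $\star$ be a star-product on $\O_X^\hbar$ given by $f \star g = \sum_{i \geq 0} P_i(f,g) \hbar^i$, with $P_i$ bidifferential, $P_0(f,g) = fg$, and $P_i(1,f) = P_i(f,1) = 0$ for $i > 0$. Setting $Q_i(f,g) := P_i(g,f)$ yields bidifferential operators with $Q_0(f,g) = gf = fg$ (by commutativity of $\O_X$) and $Q_i(1,f) = Q_i(f,1) = 0$ for $i>0$. The opposite product $f \star^{\op} g := g \star f = \sum_{i \geq 0} Q_i(f,g)\hbar^i$ is therefore a star-product, so $(\O_X^\hbar, \star^{\op})$ is a star-algebra. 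Combining this with step one, any $\sigma \in \A_{X^{\mathrm{a}}}(U)$ has $\fEnd_{\A_{X^{\mathrm{a}}}}(\sigma) \simeq \fEnd_{\A_X}(\sigma)^{\op}$, which is locally isomorphic to an opposite star-algebra, hence to a star-algebra, hence is a DQ-algebra.

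There is no serious obstacle here; the only care required is bookkeeping between the algebroid-level statement and its translation into a condition on endomorphism $\Chbar$-algebras, and the elementary verification that reversing the order of the arguments in bidifferential operators preserves the axioms defining a star-product. The argument is genuinely local and essentially combinatorial once the opposite algebroid is unpacked.
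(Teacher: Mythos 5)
Your proof is correct and is exactly the standard argument. Note that the paper itself offers no proof here---it simply cites \cite[Lemma~2.2.8]{KS3}---and your two-step reduction (first observe $\fEnd_{\A_{X^{\mathrm{a}}}}(\sigma) \simeq \fEnd_{\A_X}(\sigma)^{\op}$ so that the algebroid statement reduces to the algebra statement, then check that swapping the arguments of the bidifferential operators $P_i$ preserves the star-product axioms, using commutativity of $\O_X$ for the leading term) matches the argument in the cited reference.
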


Recall that if $X$ and $Y$ are two  smooth complex algebraic varieties endowed with DQ-algebroid $\A_X$ and $\A_Y$ then $X \times Y$ is canonically endowed with a DQ-algebroid (see \cite[\textsection 2.3]{KS3} for the  definition of the operation $\ubtimes$ for DQ-algebroids)
\begin{equation*}
\A_{X \times Y}:= \A_X \ubtimes \A_Y.
\end{equation*}
 Moreover, if $\A_X$, $\A_Y$ are DQ-algebras and $p\colon X \times Y \to X$ denotes the projection onto $X$ then there is a canonical morphism of algebras
\begin{equation*}
p^{-1} \A_X \to \A_{X \times Y}
\end{equation*}
and this morphism is flat.
By symmetry, a similar result holds for the projection $q \colon X \times Y \to Y$.

We refer the reader to \cite[\textsection 2.3]{KS3} for details concerning the algebroid $\gr \A_X$. Here, we rapidly summarize a few key points.
Given a DQ-algebroid $\A_X$, there is a $\CC$-algebroid denoted $\gr \A_X$. It is an invertible $\O_X$-algebroid and is equipped with a canonical map of $\CC$-algebroids $\A_X \to \gr \A_X$. This algebroid induces a $\gr \A_X \otimes_{\CC^{\hbar}_X} \A_{X^\mathrm{a}}$-module that we still denote $ \gr \A_X$. As an $\A_X \otimes \A_{X^\mathrm{a}}$-module, $\gr \A_X$ is isomorphic to $\CC \otimes_{\Chbar} \A_X$. 
This bi-module gives rise to a functor $\gr \colon \Mod_{\A_X} \to \Mod_{\gr \A_X}$ given informally by the rule
\begin{equation}\label{def:grDQ}
\M \mapsto \gr \A_X \otimes_{\A_X} \M.
\end{equation}
Moreover, the functor $\gr$ is left adjoint to the exact functor 
$\iota\colon \Mod_{\gr \A_X} \to \Mod_{\A_X}$ induced by the canonical morphism $\A_X \to \gr \A_X$.
On an algebraic variety, the algebroid stack $\gr \A_X$ is equivalent to the stackification of the structure sheaf $\O_X$ (see \cite[Remark 2.1.17]{KS3}). This induces an equivalence between $\Mod_{\O_X}$ and $\Mod_{\gr \A_X}$.

Let $\delta \colon X \hookrightarrow X \times X$ be the diagonal embedding. Recall that the canonical $\A_{X} \otimes \A_{X^\mathrm{a}}$-bimodule $\A_X$ is a bi-invertible $\A_{X} \otimes \A_{X^\mathrm{a}}$-module (see \cite{KS3} Definition 2.1.10 for the notion of bi-invertibility). It follows from \cite[Corollary 2.4.4]{KS3} that $\delta_\ast \A_X$ is a coherent $\A_{X \times X^\mathrm{a}}$-module simple along the diagonal. We denote it $\A_\Delta$.

We write $\omega_X$ the dualizing complex for $\A_X$. It is a bi-invertible $\A_X$-module. We refer to \cite[\textsection 3.3]{KS3} for a detailed treatment of duality for DQ-modules.

\begin{remark}
	In \cite{KS3}, duality theory for DQ-modules is established in the analytic setting. Though, we are working in the algebraic setting this is not a problem for us since we can either use the GAGA theorem for DQ-modules \cite{Chen2010} as we only use duality theory when the variety is proper. Note that in the algebraic setting, it is also possible to reproduce the results of \cite[\textsection 3.3]{KS3} relying directly on Grothendieck duality for algebraic varieties and qcc modules.
\end{remark}

\subsection{The \texorpdfstring{$\i$}{infinity}-category of qcc-modules}

From now on, $(X,\O_X)$ is a smooth complex algebraic variety, equipped with the Zariski topology, and endowed with a DQ-algebroid $\A_X$. 
We define the category $\Der_\qcc(\A_X)$ and collect some of its properties.

The category $\Der(\A_X)$ is an object of $\Mod_{\Der(\Chbar)}$. The categories $\Der(\gr \A_X)$ is a $\CC$-linear category and $\Der_\qcoh(\gr \A_X)$ is a full $\CC$-linear subcategory of $\Der(\gr \A_X)$. Thus they are  $\Chbar$-linear categories through restriction of scalars via the presentably symmetric monoidal functor $\gr \colon \Der(\Chbar) \to \Der(\CC)$. Moreover, the functor $\gr \colon \Der(\A_X) \to \Der(\gr \A_X)$ given by $\M \mapsto \gr \A_X \otimes_{\A_X} \M$ is a morphism in $\Mod_{\Der(\Chbar)}$ (see sub-section \ref{subsec:gr} for more details).
The $\i$-category $\Der_{\gqcoh}(\A_X)$ of graded quasicoherent $\A_X$-modules is defined as the pullback
\begin{equation}\label{eq:square}
\xymatrix{
\Der_\gqcoh(\A_X)\ar[r]\ar[d] & \Der(\A_X)\ar[d]^-{\gr}\\
\Der_\qcoh(\gr\A_X)\ar@{^{(}->}[r] & \Der(\gr\A_X)}
\end{equation}
in $\Mod_{\Der(\Chbar)}$.
In particular, we view $\Der_\gqcoh(\A_X)$ as an object of $\Mod_{\Der(\Chbar)}$. 

\begin{remark}\label{rem:limcomputation} Recall that $\PrL$ has all small limit and colimits and that the inclusion of $\PrL$ into $\Cat_\i$ commutes with limits. Similarly, limits in $\Mod_{\D(\Chbar)}$ are computed in $\PrL$, and hence in $\Cat_\i$.
\end{remark}
The $\i$-category $\Der_\gqcoh(\A_X)$ is the full $\Chbar$-linear sub-category of $\Der(\A_X)$ spanned by the objects $\M$ such that $\gr \M \in \Der_\qcoh(\gr\A_X)$. 

\begin{remark}
One also obtains an $\i$-category equivalent to $\Der_\gqcoh(\A_X)$ working with $\cogr$ instead of $\gr$.
However, it is technically easier to work with the $\gr$ functor instead of the $\cogr$ functor, as the former behaves well with respect to tensor products.
\end{remark}

\begin{remark}
An object $\M$ is cohomologically complete in $\Der_\gqcoh(\A_X)$ if and only if it is cohomologically complete in $\Der(\A_X)$. This follows from the fact that the $\Chbar$-linear structure on $\Der_\gqcoh(\A_X)$ is obtain by restricting to it the $\Chbar$-linear structure of $\Der(\A_X)$.
\end{remark}

\begin{definition}
The $\i$-category $\Der_{\qcc}(\A_X)$ of graded quasicoherent cohomologically complete $\A_X$-modules is the cohomological completion of $\Der_{\gqcoh}(\A_X)$, viewed as an object of $\Mod_{\Der_{\cc}(\Chbar)}$.	
\end{definition}
We refer to graded quasicoherent cohomologically complete $\A_X$-modules simply as qcc-modules.

\begin{remark} It follows from the definition of $\Der_\qcc(\A_X)$ that it is a full $\D_\cc(\Chbar)$-linear full subcategory of $\Der_\cc(\A_X)$.
\end{remark}

First, we remark that the category $\Der(\gr \A_X)$ is $\hbar$-nilpotent. Indeed, let $\F \in \Der(\gr \A_X)$. Then,
\begin{align*}
\CC((\hbar)) \otimes_{\Chbar} \F &\simeq \gr (\CC((\hbar))) \otimes_\CC \F\simeq 0.
\end{align*}
Hence, it follows from Proposition \ref{prop:carcatcc} that $\Der(\gr \A_X)$ is cohomologically complete. We make similar observations for $\Der_\qcoh(\gr \A_X)$. We now consider the completion of the functor
\begin{equation*}
\gr \colon \Der (\A_X) \to \Der(\gr \A_X).
\end{equation*}
Using that $\Der(\gr \A_X)$ is cohomologically complete, we obtain a $\Der_\cc(\Chbar)$-linear functor
\begin{equation*}
\widehat{\gr} \colon \Der_\cc (\A_X) \to \Der(\gr \A_X)
\end{equation*}
by restricting $\gr$ to $\Der_\cc (\A_X)$ (hence we write $\gr$ instead of $\widehat{\gr}$).
Finally, we complete the cartesian square \eqref{eq:square} in $\Mod_{\Der(\Chbar)}$
%\begin{equation*}  
%\xymatrix{
%\Der_\gqcoh(\A_X)\ar[r]\ar[d] & \Der(\A_X)\ar[d]^-{\gr}\\
%\Der_\qcoh(\gr\A_X)\ar@{^{(}->}[r] & \Der(\gr\A_X)}
%\end{equation*}
%
in order to obtain a commutative diagram of the form
\begin{equation}\label{diag:qccpullback}
\xymatrix{
\Der_\qcc(\A_X)\ar[r]\ar[d] & \Der_\cc(\A_X)\ar[d]^-{\gr}\\
\Der_\qcoh(\gr\A_X)\ar@{^{(}->}[r] & \Der(\gr\A_X)}
\end{equation}
in  $\Mod_{\Der_\cc(\Chbar)}$.

\begin{proposition}
The diagram \eqref{diag:qccpullback} is a pullback in $\Mod_{\Der_\cc(\Chbar)}$.
\end{proposition}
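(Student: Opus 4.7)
The plan is to identify both $\Der_\qcc(\A_X)$ and the pullback $\mathscr{P}$ of \eqref{diag:qccpullback} in $\Mod_{\Der_\cc(\Chbar)}$ with the full subcategory of $\Der(\A_X)$ spanned by those $\M$ which are $\hbar$-cohomologically complete and satisfy $\gr(\M)\in\Der_\qcoh(\gr\A_X)$, and then to check that the two induced $\Der_\cc(\Chbar)$-linear structures agree.

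For $\Der_\qcc(\A_X)$, I would unwind the definition. By the Corollary identifying $\Mod_{\Der_\cc(\Chbar)}$ with $\Mod_{\Der(\Chbar)}^{\cc}$, the cohomological completion of $\Der_\gqcoh(\A_X)\in\Mod_{\Der(\Chbar)}$ is nothing but its full $\Der(\Chbar)$-linear subcategory of $\hbar$-complete objects. Combined with the remark preceding the proposition (that $\hbar$-completeness in $\Der_\gqcoh(\A_X)$ coincides with $\hbar$-completeness in $\Der(\A_X)$), this yields the desired description. For $\mathscr{P}$, I would invoke Remark \ref{rem:limcomputation} to compute the pullback in $\Cat_\i$. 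Since the bottom horizontal inclusion $\Der_\qcoh(\gr\A_X)\hookrightarrow\Der(\gr\A_X)$ is fully faithful, so is $\mathscr{P}\to\Der_\cc(\A_X)$, and $\mathscr{P}$ is the full subcategory of $\Der_\cc(\A_X)$ of objects whose image under $\gr$ lies in $\Der_\qcoh(\gr\A_X)$; identifying $\Der_\cc(\A_X)$ with the full subcategory of $\hbar$-complete objects of $\Der(\A_X)$ via $i_\land$ yields the same subcategory as in the previous step.

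The step that I expect to require the most care is matching the $\Der_\cc(\Chbar)$-linear structures. I would argue that both structures arise by basechanging the $\Der(\Chbar)$-action on $\Der(\A_X)$ along the idempotent symmetric monoidal map $i^\land\colon\Der(\Chbar)\to\Der_\cc(\Chbar)$: for $\Der_\qcc(\A_X)$ this is definitional. For $\mathscr{P}$, I would note that the three remaining corners of \eqref{diag:qccpullback} already lie in the fully faithful subcategory $\Mod_{\Der(\Chbar)}^{\cc}\simeq\Mod_{\Der_\cc(\Chbar)}$, the bottom ones being cohomologically complete as observed just before the definition of $\Der_\qcc(\A_X)$, and $\Der_\cc(\A_X)$ being so by construction. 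Since the fully faithful inclusion $\Mod_{\Der_\cc(\Chbar)}\hookrightarrow\Mod_{\Der(\Chbar)}$ preserves limits, the pullback may equivalently be formed in either module category with the same underlying object, and the $\Der_\cc(\Chbar)$-action on $\mathscr{P}$ restricts to the basechange of the $\Der(\Chbar)$-action on the pullback \eqref{eq:square}; this is precisely the structure on $\Der_\qcc(\A_X)$. The universal property of the pullback then produces the sought equivalence in $\Mod_{\Der_\cc(\Chbar)}$.
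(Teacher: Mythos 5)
Your proof is correct and takes essentially the same approach as the paper, which gives only the one-line justification "This follows from the definition and Remark \ref{rem:limcomputation}"; you have simply spelled out what the definition of $\Der_\qcc(\A_X)$ unwinds to, how the pullback in $\Mod_{\Der_\cc(\Chbar)}$ is computed in $\Cat_\i$, and why the two $\Der_\cc(\Chbar)$-linear structures coincide.
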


\begin{proof}
This follows from the definition and Remark \ref{rem:limcomputation}.
\end{proof}

We finally collect a few facts concerning qcc-modules. 

\begin{lemma}[{\cite[Corollary 3.14]{dgaff}}]
If $\N \in \Der_\qcoh(\gr \A_X)$ then $\iota(\N) \in \Der_\qcc(\A_X)$.
\end{lemma}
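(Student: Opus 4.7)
By the definition of $\Der_\qcc(\A_X)$ (given by the pullback square \eqref{diag:qccpullback}, equivalently by the discussion around Proposition \ref{prop:comparecompl} identifying the $\hbar$-completion with the full subcategory of cohomologically complete objects), showing that $\iota(\N) \in \Der_\qcc(\A_X)$ reduces to verifying the two conditions:
\begin{enumerate}
\item[(i)] $\iota(\N)$ is cohomologically complete, and
\item[(ii)] $\gr(\iota(\N)) \in \Der_\qcoh(\gr \A_X)$.
\end{enumerate}

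For (i), the key observation is that $\hbar$ acts by zero on any $\gr \A_X$-module, because $\gr \A_X \simeq \A_X/\hbar \A_X$ as a $\Chbar$-algebroid. Hence $\hbar\colon \iota(\N) \to \iota(\N)$ is the zero morphism, so $\iota(\N)$ is of uniform $\hbar$-torsion (with $n = 1$), and Lemma \ref{lem:unicomp} gives that $\iota(\N)$ is cohomologically complete.

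For (ii), I would compute $\gr(\iota(\N)) \simeq \gr \A_X \otimes_{\A_X} \iota(\N)$ via the canonical exact triangle of right $\A_X$-modules
\[
\A_X \xrightarrow{\hbar} \A_X \to \gr \A_X.
\]
Tensoring on the right with $\iota(\N)$ yields an exact triangle
\[
\iota(\N) \xrightarrow{\hbar} \iota(\N) \to \gr(\iota(\N))
\]
in $\Der(\gr \A_X)$. Since the first map vanishes (as noted above), the triangle splits, giving $\gr(\iota(\N)) \simeq \N \oplus \N[1]$ in $\Der(\gr \A_X)$. As $\Der_\qcoh(\gr \A_X)$ is a stable subcategory closed under shifts and direct sums, and $\N \in \Der_\qcoh(\gr \A_X)$ by hypothesis, it follows that $\gr(\iota(\N)) \in \Der_\qcoh(\gr \A_X)$.

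The two points above show $\iota(\N)$ lies in the pullback $\Der_\qcc(\A_X)$, completing the proof. The only nontrivial step is the identification $\gr(\iota(\N)) \simeq \N \oplus \N[1]$, which hinges on the flat resolution of $\gr \A_X$ as a right $\A_X$-module by $(\A_X \xrightarrow{\hbar} \A_X)$ combined with the vanishing of $\hbar$ on $\iota(\N)$; I do not expect any serious obstacle here beyond being careful that the computation is performed in $\Der(\gr \A_X)$ rather than merely in $\Der(\A_X)$.
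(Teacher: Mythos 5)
Your proof is correct. The reduction to two conditions is exactly what the pullback \eqref{diag:qccpullback} requires; the observation that $\hbar$ acts by zero on $\iota(\N)$ makes it uniformly $\hbar$-torsion so that Lemma \ref{lem:unicomp} gives cohomological completeness; and the Koszul-type computation $\gr\iota(\N) \simeq \N \oplus \N[1]$ from the free right $\A_X$-resolution $\A_X \xrightarrow{\hbar} \A_X$ of $\gr\A_X$ is the standard one (it is the familiar $\mathrm{Tor}_{\ast}^{\A_X}(\gr\A_X,\gr\A_X)\simeq\gr\A_X\oplus\gr\A_X[1]$ for the central non-zero-divisor $\hbar$). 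The caveat you flag about landing the splitting in $\Der(\gr\A_X)$ rather than merely in $\Der(\A_X)$ is harmless: the two-term complex $\iota(\N)\xrightarrow{0}\iota(\N)$ is already a complex of $\gr\A_X$-modules since $\hbar$ acts by zero, and in any case quasi-coherence is detected on cohomology sheaves, which are computed the same way on either side of the exact, fully faithful restriction $\iota$ at the abelian level. The paper itself simply cites \cite[Corollary 3.14]{dgaff} for this, and your argument is the natural self-contained version of that proof.
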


\begin{proposition}[{\cite[Prop 2.17]{dgaff}}]\label{prop:presercomgen}
The functors
\[ \xymatrix{
\Der_\qcc(\A_X) \ar@<.4ex>[r]^-{\gr} & \Der_\qcoh(\gr \A_X) \ar@<.4ex>[l]^-{\iota}
}
\]
preserve compact generators
\end{proposition}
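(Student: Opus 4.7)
My plan is to exploit the adjunction $\gr\dashv\iota$ from Section~\ref{subsec:Derived} (together with the companion adjunction $\iota\dashv\cogr$) and to verify compactness and generation separately for each of the two functors.

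First I would check that the ambient adjunction $\gr\colon\Der(\A_X)\rightleftarrows\Der(\gr\A_X)\colon\iota$ restricts to an adjunction between $\Der_\qcc(\A_X)$ and $\Der_\qcoh(\gr\A_X)$: the pullback definition \eqref{diag:qccpullback} forces $\gr$ to land in $\Der_\qcoh(\gr\A_X)$, while the preceding lemma supplies the corestriction of $\iota$; the adjunction descends because both inclusions into the ambient $\i$-categories are fully faithful.

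For $\gr$, I would note that $\iota$ is restriction of scalars along the surjection $\A_X\to\gr\A_X$ and is simultaneously a right adjoint (to $\gr$) and a left adjoint (to $\cogr$), hence preserves all small colimits; consequently $\gr$ preserves compact objects. For generation, if $G$ is a compact generator of $\Der_\qcc(\A_X)$ and $\N\in\Der_\qcoh(\gr\A_X)$ satisfies $\Map(\gr(G)[n],\N)\simeq 0$ for every $n$, the adjunction gives $\Map(G[n],\iota(\N))\simeq 0$, so $\iota(\N)\simeq 0$ in $\Der_\qcc(\A_X)$; conservativity of $\iota$ (a faithful restriction of scalars) then forces $\N\simeq 0$.

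For $\iota$, I would first establish compactness in the ambient category $\Der(\A_X)$: the functor $\cogr\simeq\fMap_{\A_X}(\gr\A_X,-)$ preserves filtered colimits because locally $\gr\A_X=\A_X/\hbar\A_X$ is resolved by the perfect two-term complex $\A_X\stackrel{\hbar}{\to}\A_X$, so its left adjoint $\iota$ sends compact objects to compact objects. Since $\iota(H)$ is of uniform $\hbar$-torsion and already $\hbar$-complete, Proposition~\ref{prop:preservcomp} applied to $\iota(H)\simeq i^\land\iota(H)$ yields compactness in $\Der_\cc(\A_X)$, and then in the full subcategory $\Der_\qcc(\A_X)$ since the pullback defining $\Der_\qcc$ is taken along inclusions that preserve filtered colimits. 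For generation, assume $\Map(\iota(H)[n],\M)\simeq 0$ for every $n$; the adjunction $\iota\dashv\cogr$ combined with the identification $\cogr\simeq\gr[-1]$ (read off from $\iota\gr\simeq\iota\cogr[1]$ using faithfulness of $\iota$) gives $\Map(H[n],\gr(\M))\simeq 0$, whence $\gr(\M)\simeq 0$, and finally $\M\simeq 0$ by conservativity of $\gr$ on $\Der_\cc(\A_X)$.

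The main obstacle I anticipate is the transfer of compactness from $\Der(\A_X)$ down to $\Der_\qcc(\A_X)$, which is not automatic for reflective subcategories; the essential input is Proposition~\ref{prop:preservcomp}, and the uniform $\hbar$-torsion of $\iota(H)$ is precisely the hypothesis that makes this descent work. Everything else is a clean chase through the two adjunctions and the conservativity statements.
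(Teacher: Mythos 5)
Your overall strategy — run both preservation statements off the two adjunctions $\gr\dashv\iota\dashv\cogr$, exploit $\cogr\simeq\gr[-1]$, and finish with the two conservativity facts — is the right one, and the $\gr$-direction together with both generation arguments are sound. The genuine gap is in the compactness step for $\iota$. You propose to ``first establish compactness in the ambient category $\Der(\A_X)$,'' but a compact generator $H$ of $\Der_\qcoh(\gr\A_X)$ is a perfect complex and is in general \emph{not} compact in the ambient $\Der(\gr\A_X)$ (already $\O_X$ is not compact in $\Der(\O_X)$ for a positive-dimensional variety). Consequently the chain ``$H$ compact in $\Der(\gr\A_X)$ $\Rightarrow$ $\iota(H)$ compact in $\Der(\A_X)$ $\Rightarrow$ apply Proposition~\ref{prop:preservcomp} in $\Der(\A_X)$'' never gets off the ground, since its hypothesis fails at the very first link.

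There are two clean ways to repair this. The minimal fix is to replace every occurrence of $\Der(\A_X)$ and $\Der_\cc(\A_X)$ in that paragraph by $\Der_\gqcoh(\A_X)$ and $\Der_\qcc(\A_X)$ respectively: the ambient adjunction $\iota\dashv\cogr$ and the identity $\cogr\simeq\gr[-1]$ do restrict to an adjunction $\Der_\qcoh(\gr\A_X)\rightleftarrows\Der_\gqcoh(\A_X)$, because $\gr$ lands in quasicoherent complexes by construction and the subcategories are full; since $\cogr\simeq\gr[-1]$ is colimit-preserving, $\iota$ takes compacts of $\Der_\qcoh(\gr\A_X)$ to compacts of $\Der_\gqcoh(\A_X)$, and \emph{now} Proposition~\ref{prop:preservcomp} with $\C=\Der_\gqcoh(\A_X)$ applies and yields compactness in $\Der_\qcc(\A_X)$ directly (no detour through $\Der_\cc$ needed). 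The cleaner fix, however, is to dispense with Proposition~\ref{prop:preservcomp} entirely: the restricted $\iota\colon\Der_\qcoh(\gr\A_X)\to\Der_\qcc(\A_X)$ again has $\cogr$ as a right adjoint, the restricted $\cogr$ still satisfies $\cogr\simeq\gr[-1]$ (using $\gr\circ i^\land\simeq\gr$ from Proposition~\ref{prop:gr_cc} to see it commutes with the colimits of $\Der_\qcc$, which are $i^\land$ of ambient colimits), hence preserves filtered colimits, hence $\iota$ preserves compacts — the exact mirror of the argument you already gave for $\gr$. The uniform $\hbar$-torsion of $\iota(H)$ is not the essential input here; what does the work is the shift relation $\cogr\simeq\gr[-1]$, which you already invoke for the generation part.

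Two small points you should make explicit if you keep the current structure: (i) that the ambient adjunctions restrict to the subcategories requires the (easy) observation that all the relevant subcategories are full and that $\gr$, $\cogr$ map them where they should; (ii) the transfer of compactness from $\Der_\cc(\A_X)$ to the full subcategory $\Der_\qcc(\A_X)$ works in the stated direction precisely because the inclusion is colimit-preserving, but this transfer becomes moot once you work in $\Der_\gqcoh$ from the start.
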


\begin{corollary}[{\cite[Corollary 3.19]{dgaff}}]
The category $\Der_\qcc(\A_X)$ is compactly generated by a single compact generator.
\end{corollary}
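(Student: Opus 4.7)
The plan is to reduce the statement to the existence of a single compact generator for $\Der_\qcoh(\gr\A_X)$ and then transport it across $\iota$. First, since $(X,\O_X)$ is a smooth complex algebraic variety and $\gr\A_X$ is equivalent to the stackification of $\O_X$, we have a $\Chbar$-linear equivalence $\Der_\qcoh(\gr\A_X)\simeq\Der_\qcoh(\O_X)$. By the Bondal--Van den Bergh theorem, the target is compactly generated by a single compact generator $G_0$.

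Next, I would invoke the lemma just stated to conclude that $\iota(G_0)$ lies in $\Der_\qcc(\A_X)$. At this point the claim is essentially immediate from Proposition \ref{prop:presercomgen}: that proposition asserts in particular that $\iota\colon\Der_\qcoh(\gr\A_X)\to\Der_\qcc(\A_X)$ preserves compact generators, and therefore $\iota(G_0)$ is a compact generator of $\Der_\qcc(\A_X)$.

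The conceptual content --- already absorbed into Proposition \ref{prop:presercomgen} --- is the following. As a $\gr\A_X$-module restricted to an $\A_X$-module, $\iota(G_0)$ is annihilated by $\hbar$ and hence is of uniform $\hbar$-torsion in the sense of the definition preceding Lemma \ref{lem:unicomp}. Consequently, Lemma \ref{lem:unicomp} shows $\iota(G_0)$ is already cohomologically complete, so no further completion is needed, and Proposition \ref{prop:preservcomp} ensures that its compactness is preserved in passing to $\Der_\cc(\A_X)$. Pulling back compactness and generation along the defining pullback \eqref{diag:qccpullback} of $\Der_\qcc(\A_X)$ completes the argument. The only substantive obstacle is this preservation of compactness under cohomological completion, which is precisely the content of the uniform-$\hbar$-torsion discussion above.
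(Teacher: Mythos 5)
Your proposal is correct and follows the same route the paper takes: combine the Bondal--Van den Bergh compact generator $G_0$ of $\Der_\qcoh(\gr\A_X)$ with Proposition~\ref{prop:presercomgen} to conclude that $\iota(G_0)$ is a compact generator of $\Der_\qcc(\A_X)$. This is precisely the argument the paper itself spells out at the start of the proof of Theorem~\ref{thm:defor}, and your supplementary remarks on uniform $\hbar$-torsion and Lemma~\ref{lem:unicomp}/Proposition~\ref{prop:preservcomp} correctly identify the mechanism behind Proposition~\ref{prop:presercomgen}.
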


\subsection{The \texorpdfstring{$\i$}{infinity}-category of coherent DQ-modules}

Let $(X, \O_X)$ be a smooth complex algebraic variety endowed with an algebroid stack $\A_X$.

We denote by $\Derb_\coh(\A_X)$ the full subcategory of $\Der(\A_X)$ spanned by the objects $\M$ such that for $|n|>>0$, $\Hn^n(\M)=0$ and for every $n \in \ZZ$, $\Hn^n(\M) \in \Mod_\coh(\A_X)$ is the abelian category of coherent $\A_X$-module.

\begin{proposition}
	The category $\Derb_\coh(\A_X)$ is a full stable subcategory of $\Der_\qcc(\A_X)$.
\end{proposition}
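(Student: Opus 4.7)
The plan is to verify two things: that $\Derb_\coh(\A_X)$ is closed under the stable operations in $\Der(\A_X)$, and that every object of $\Derb_\coh(\A_X)$ belongs to the full subcategory $\Der_\qcc(\A_X)\subset\Der(\A_X)$. Fullness is then automatic, since $\Derb_\coh(\A_X)$ is defined as a full subcategory of $\Der(\A_X)$.

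For stability, shifts clearly preserve both boundedness and coherence of cohomology, so the only point is closure under cofibers. Given a map $f\colon\M\to\N$ in $\Derb_\coh(\A_X)$, the long exact sequence attached to $\M\to\N\to\mathrm{cofib}(f)$ exhibits each $\Hn^n(\mathrm{cofib}(f))$ as an extension of a subobject of $\Hn^{n+1}(\M)$ by a quotient of $\Hn^n(\N)$. Since $X$ is a smooth complex algebraic variety and $\A_X$ is locally a star-product deformation of the Noetherian ring $\O_X$, the full subcategory $\Mod_\coh(\A_X)\subset\Mod_{\A_X}$ is a Serre subcategory -- closed under kernels, cokernels, and extensions -- so each $\Hn^n(\mathrm{cofib}(f))$ remains coherent, and the cohomology of $\mathrm{cofib}(f)$ remains bounded because that of $\M$ and $\N$ is.

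For the containment $\Derb_\coh(\A_X)\subset\Der_\qcc(\A_X)$, I would invoke the defining pullback square \eqref{diag:qccpullback}: a cohomologically complete $\M\in\Der_\cc(\A_X)$ lies in $\Der_\qcc(\A_X)$ exactly when $\gr\M\in\Der_\qcoh(\gr\A_X)$. Two points thus need to be checked for $\M\in\Derb_\coh(\A_X)$. First, $\M$ is $\hbar$-cohomologically complete: this is the cohomological completeness of coherent DQ-modules from the Kashiwara--Schapira theory \cite[Theorem 1.6.4]{KS3}, reduced to the bounded-complex case by dévissage along the $t$-truncations of $\M$ and the fact that $\Der_\cc(\A_X)$ is stable. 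Second, $\gr\M\in\Der_\qcoh(\gr\A_X)$: using the flat resolution $0\to\A_X\stackrel{\hbar}{\to}\A_X\to\gr\A_X\to 0$ of the $(\A_X,\A_X)$-bimodule $\gr\A_X$, I identify $\gr\M$ with the cofiber of $\hbar\colon\M\to\M$, which therefore has bounded coherent cohomology over $\A_X$. Since the action factors through $\gr\A_X$, this cohomology is coherent over $\gr\A_X$, and so is quasicoherent under the equivalence $\Mod_{\gr\A_X}\simeq\Mod_{\O_X}$ arising from the invertibility of the $\O_X$-algebroid $\gr\A_X$.

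The only step with any real content is the cohomological completeness of coherent modules; the rest is a direct consequence of the pullback definition of $\Der_\qcc(\A_X)$ combined with the standard length-one resolution computing $\gr$.
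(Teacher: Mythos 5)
Your proof is correct and takes essentially the same route as the paper: you verify stability by observing that $\Derb_\coh(\A_X)$ is closed under shifts and cofibers, using the long exact cohomology sequence together with the fact that coherent modules form a Serre subcategory, and you verify the containment $\Derb_\coh(\A_X)\subset\Der_\qcc(\A_X)$ by unpacking the pullback definition of $\Der_\qcc$ into the two required checks (cohomological completeness and quasicoherence of $\gr\M$). The only flag is a citation mismatch: the Kashiwara--Schapira result asserting cohomological completeness of bounded coherent DQ-modules is \cite[Theorem 1.6.1]{KS3}, which is what the paper cites here; \cite[Theorem 1.6.4]{KS3}, which you invoke, is the converse implication (recovering coherence of $\M$ from cohomological completeness together with coherence of $\gr\M$), used by the paper in Lemma \ref{lem:dualcoh}. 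Your explicit identification of $\gr\M$ with the cofiber of $\hbar\colon\M\to\M$, to see that $\gr$ of a bounded coherent complex has bounded coherent (hence quasicoherent) cohomology, is a clean way of making explicit what the paper leaves implicit in its appeal to Theorem 1.6.1.
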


\begin{proof}
	The fact $\Derb_\coh(\A_X)$ is a full subcategory of $\Der_\qcc(\A_X)$ is a direct consequence of \cite[Theorem 1.6.1]{KS3}. Since  $\Der_\qcc(\A_X)$ is stable, it follows from \cite[Lemma 1.1.3.3]{HA} that we only need to check that $\Derb_\coh(\A_X)$ is stable by translation (which is obvious) and by cofibers. Let $f\colon \M \to \N$ a morphism of $\Derb_\coh(\A_X)$. Then, we have the following cofiber sequence in $\Der_\qcc(\A_X)$
	\begin{equation*}
	\M \stackrel{f}{\to} \N \to \F
	\end{equation*}
where $\F$ is the cofiber of $f$. This induces a long exact sequence
\begin{equation*}
\cdots \to \Hn^i(\M) \stackrel{\Hn^i(f)}{\longrightarrow} \Hn^i(\N) \to \Hn^i(\F) \to \Hn^{i+1}(\M) \to \cdots
\end{equation*}	
Hence, for every $i \in \ZZ$, we get the exact sequence
\begin{equation*}
0 \to \coker(\Hn^{i}(f)) \to \Hn^i(\F) \to \ker(\Hn^{i+1}(f) \to 0.
\end{equation*}
Since $\coker(\Hn^{i}(f))$ and $\ker(\Hn^{i+1}(f)$ are coherent so is  $\Hn^i(\F)$.
\end{proof}

\begin{proposition}
	The $\i$-category $\Derb_\coh(\A_X)$ is an idempotent complete stable $\i$-category.
\end{proposition}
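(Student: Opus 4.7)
The plan is to exploit the fully faithful embedding $\Derb_\coh(\A_X)\hookrightarrow\Der_\qcc(\A_X)$ established in the previous proposition. Stability of $\Derb_\coh(\A_X)$ is already known, so only idempotent completeness remains. Since $\Der_\qcc(\A_X)$ is presentable (being the $\hbar$-completion of the pullback \eqref{diag:qccpullback} in $\Mod_{\Der(\Chbar)}$), it is in particular idempotent complete. It therefore suffices to show that $\Derb_\coh(\A_X)$ is closed under retracts inside $\Der_\qcc(\A_X)$; given any idempotent in $\Derb_\coh(\A_X)$, its splitting exists in $\Der_\qcc(\A_X)$, and the retract landed in $\Der_\qcc(\A_X)$ will automatically lie in the subcategory.

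More concretely, suppose $\M\in\Der_\qcc(\A_X)$ is a retract of some $\N\in\Derb_\coh(\A_X)$, so that we have maps $\M\to\N\to\M$ whose composite is the identity of $\M$. First I would pass to cohomology: for every $n\in\ZZ$, $\Hn^n(\M)$ is a direct summand of $\Hn^n(\N)$ as an $\A_X$-module, via the induced splitting $\Hn^n(\M)\to\Hn^n(\N)\to\Hn^n(\M)$. Since $\Mod_\coh(\A_X)$ is an abelian subcategory of $\Mod_{\A_X}$ closed under direct summands (a direct summand of a coherent module being coherent, as the kernel of the corresponding idempotent endomorphism), $\Hn^n(\M)$ is coherent for every $n$. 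Likewise, the bounded vanishing condition $\Hn^n(\N)=0$ for $|n|\gg 0$ passes to its summand $\Hn^n(\M)$. Hence $\M\in\Derb_\coh(\A_X)$.

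The step that requires a little care is the first one, namely the identification of $\Der_\qcc(\A_X)$ as an ambient idempotent complete $\infty$-category in which retracts can be taken; but this is immediate from the presentability of $\Der_\qcc(\A_X)$ together with the fact that every stable $\infty$-category admitting countable coproducts (a fortiori every presentable stable $\infty$-category) is idempotent complete. No genuine obstacle is expected: the argument is a straightforward consequence of the abelian-categorical fact that coherent modules are closed under direct summands, combined with the fully faithful embedding into a presentable target.
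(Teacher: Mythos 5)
Your argument is correct, and it follows the same essential plan as the paper's: embed $\Derb_\coh(\A_X)$ fully faithfully into a larger idempotent complete stable $\infty$-category, split the idempotent there, and then check that the resulting retract is again coherent and bounded by examining cohomology modules. The difference is purely in how the ambient idempotent completeness is sourced. The paper first invokes \cite[Lemma 1.2.4.6]{HA} to reduce to the homotopy category $\hDerb_\coh(\A_X)$, and then uses Neeman's theorem \cite[Proposition 1.6.8]{Neetri} (countable coproducts in a triangulated category imply idempotent completeness) applied to $\hDer(\A_X)$. You instead work directly at the $\infty$-categorical level and take $\Der_\qcc(\A_X)$ as the ambient, observing that it is presentable, hence idempotent complete. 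Both are valid; your version avoids the detour through homotopy categories and Neeman's theorem at the cost of leaning on the presentability of $\Der_\qcc(\A_X)$. (In fact you could have used $\Der(\A_X)$ directly as the ambient, which is also presentable stable and would make the argument entirely independent of the preceding proposition identifying $\Derb_\coh(\A_X)$ as a subcategory of $\Der_\qcc(\A_X)$; that is closer in spirit to what the paper does.) The cohomological core---that $\Hn^n$ of a retract of a coherent complex is a direct summand of a coherent module, hence coherent, with bounded vanishing inherited from the summand relation---is identical to the paper's strict epi/strict mono argument, just phrased in the equivalent language of direct summands.
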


\begin{proof}
Since  $\Derb_\coh(\A_X)$ is stable it is sufficient, by \cite[Lemma 1.2.4.6]{HA}, to check that its homotopy category $\hDerb_\coh(\A_X)$ is idempotent complete to prove that the $\i$-category is idempotent complete. The triangulated category $\hDer(\A_X)$ has countable coproduct hence by \cite[Proposition 1.6.8]{Neetri}, it is idempotent complete. It follows that idempotent in $\hDerb_\coh(\A_X)$ splits in $\hDer(\A_X)$. Let $e \colon \M \to \M$ be an idempotent of $\hDerb_\coh(\A_X)$ and $f\colon \M \to \N$ and $g \colon \N \to \M$ a splitting of $e$ in $\hDer(\A_X)$. Let us check that $\N\in\hDerb_\coh(\A_X)$. Since $f \colon \M \to \N$ is a strict epimorphism (in the sense that it admits a section) it follows that for every $n \in \mathbb{Z}$, $\Hn^n(f) \colon \Hn^n(\M) \to \Hn^n(\N)$ is a strict epimorphism. Hence, $\Hn^n(\N)$ is a locally finitely generated $\A_X$-module and in particular for $|n|$ sufficiently large $\Hn^n(\N)$ is zero. Moreover, since $g: \N \to \M$ is a strict monomorphism, for every $n \in \mathbb{Z}$, $\Hn^n(\N)$ is a locally finitely generated submodule of $\Hn^n(\M)$. Thus, it is coherent. This proves that $\N \in \hDerb_\coh(\A_X)$.
\end{proof}

It follows from the above proposition that $\Derb_\coh(\A_X)$ is an object of $\Cat_{\i,\mathrm{idem}}^\mathrm{ex}$. Moreover, $\Der_\qcc(\A_X)$ is a $\Der(\CC^\hbar)$-module and the subcategory $\Derb_\coh(\A_X)$ of $\Der_\qcc(\A_X)$ is stable by tensorization by object of $\Perf(\Chbar)$. Hence, we get the following lemma.

\begin{lemma}\label{lem:linstruccoh}
The $\Der(\Chbar)$-module structure on $\Der_\qcc(\A_X)$ induces a $\Perf(\Chbar)$-module structure on $\Derb_\coh(\A_X)$; i.e., $\Derb_\coh(\A_X) \in \Mod_{\Perf(\CC^\hbar)}(\Cat_{\i,\mathrm{idem}}^\mathrm{ex})$.
\end{lemma}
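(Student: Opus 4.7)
The plan is to restrict the $\Der(\Chbar)$-action on $\Der_\qcc(\A_X)$ along the symmetric monoidal fully faithful inclusion $\Perf(\Chbar)\hookrightarrow\Der(\Chbar)$, and then descend this restricted action to the full subcategory $\Derb_\coh(\A_X)\subset\Der_\qcc(\A_X)$.

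First I would recall that $\Perf(\Chbar)$ is the smallest idempotent-complete stable full subcategory of $\Der(\Chbar)$ containing the unit $\Chbar$, and that this inclusion is symmetric monoidal. By the preceding proposition, $\Derb_\coh(\A_X)\subset\Der_\qcc(\A_X)$ is likewise a full stable idempotent-complete subcategory. The stability claim in the statement then reduces to a single observation: $\Chbar\otimes\M\simeq\M$ lies in $\Derb_\coh(\A_X)$ whenever $\M$ does. Since tensoring with any object of $\Der(\Chbar)$ is an exact endofunctor of $\Der_\qcc(\A_X)$, and since $\Derb_\coh(\A_X)$ is closed under shifts, fiber sequences, and retracts, an induction along the generation of $\Perf(\Chbar)$ from $\Chbar$ under these operations yields $P\otimes\M\in\Derb_\coh(\A_X)$ for every $P\in\Perf(\Chbar)$ and every $\M\in\Derb_\coh(\A_X)$.

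To promote this pointwise stability to an actual module structure in $\Cat_{\i,\mathrm{idem}}^{\mathrm{ex}}$, I would encode the $\Der(\Chbar)$-module structure as a symmetric monoidal functor
\[
\Der(\Chbar)\to\fFun^{\mathrm{L}}(\Der_\qcc(\A_X),\Der_\qcc(\A_X)),
\]
restrict it along $\Perf(\Chbar)\hookrightarrow\Der(\Chbar)$, and use the fully faithful inclusion $\Derb_\coh(\A_X)\hookrightarrow\Der_\qcc(\A_X)$ together with the stability just established to factor the composite through a symmetric monoidal functor
\[
\Perf(\Chbar)\to\fFun^{\mathrm{ex}}(\Derb_\coh(\A_X),\Derb_\coh(\A_X)),
\]
which is precisely the datum of a $\Perf(\Chbar)$-module structure on $\Derb_\coh(\A_X)$ in $\Cat_{\i,\mathrm{idem}}^{\mathrm{ex}}$. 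The only mildly delicate point is upgrading the pointwise preservation of $\Derb_\coh(\A_X)$ to a coherent symmetric monoidal factorisation; this is however forced by the universal property of the fully faithful inclusion, since restriction to $\Derb_\coh(\A_X)$ of an action functor landing in $\Derb_\coh(\A_X)\subset\Der_\qcc(\A_X)$ is essentially unique.
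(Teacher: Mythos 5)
Your proposal is correct and follows essentially the same route as the paper, which states the result directly from the observations that $\Derb_\coh(\A_X)$ is an idempotent-complete stable subcategory of $\Der_\qcc(\A_X)$ and is stable under tensoring with objects of $\Perf(\Chbar)$. You simply spell out the generation argument for $\Perf(\Chbar)$ and the symmetric-monoidal factorisation, which is the implicit content of the paper's one-sentence justification.
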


\begin{definition}
	Let $\M \in \Der_\qcc(\A_X)$. The module $\M$ is dualizable if the canonical morphism
	\begin{equation*}
	\psi \colon \fMap_{\A_X}(\M,\A_X) \tensor_{\A_X} (-) \to \fMap_{\A_X}(\M,-)
	\end{equation*}
	is an equivalence of functors $\Der(\A_X)\to\Der(\CC_X^\hbar)$.
\end{definition}

\begin{lemma}\label{lem:grdual}
	Let $\M \in \Der_\qcc(\A_X)$. If $\M$ is dualizable in $\Der_\qcc(\A_X)$ then $\gr \M$ is dualizable in $\Der_\qcoh(\O_X)$.
\end{lemma}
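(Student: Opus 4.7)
The strategy is to transfer the dualizability equivalence for $\M$ across the $\gr$ functor, exploiting the compatibility of $\gr$ with both $\tensor$ and $\fMap$ established in Proposition \ref{prop:isogr}, together with the identification $\gr\A_X\simeq\O_X$ (coming from the equivalence between $\Mod_{\O_X}$ and $\Mod_{\gr\A_X}$ recalled before the lemma). Since $\iota\colon\Der_\qcoh(\gr\A_X)\to\Der(\A_X)$ is induced by restriction along $\A_X\to\gr\A_X$, one immediately gets $\gr\circ\iota\simeq\id$ on $\Der_\qcoh(\O_X)$.

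Concretely, fix $\F\in\Der_\qcoh(\O_X)$, view it as an object of $\Der(\A_X)$ via $\iota$, and evaluate the hypothesized equivalence
\[
\psi\colon\fMap_{\A_X}(\M,\A_X)\tensor_{\A_X}(-)\xrightarrow{\sim}\fMap_{\A_X}(\M,-)
\]
at $\iota(\F)$. Applying $\gr$ and using Proposition \ref{prop:isogr}(1)--(2), the source becomes $\fMap_{\O_X}(\gr\M,\gr\A_X)\tensor_{\O_X}\gr(\iota\F)$ and the target becomes $\fMap_{\O_X}(\gr\M,\gr(\iota\F))$. Combining $\gr\iota\F\simeq\F$ with $\gr\A_X\simeq\O_X$, this produces a natural equivalence
\[
\fMap_{\O_X}(\gr\M,\O_X)\tensor_{\O_X}\F\xrightarrow{\sim}\fMap_{\O_X}(\gr\M,\F),
\]
natural in $\F\in\Der_\qcoh(\O_X)$, which is exactly the dualizability condition for $\gr\M$ once this equivalence is identified with the canonical one.

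The main obstacle is the naturality/coherence check that the morphism produced by transporting $\psi$ across $\gr$ agrees with the canonical duality morphism for $\gr\M$. The cleanest way to handle this is to observe that both morphisms are obtained by adjunction from the evaluation pairing: $\psi$ is adjoint to $\fMap_{\A_X}(\M,\A_X)\tensor_{\A_X}\M\to\A_X$, and applying $\gr$ to this pairing (using Proposition \ref{prop:isogr}(1) and $\gr\A_X\simeq\O_X$) produces the evaluation pairing $\fMap_{\O_X}(\gr\M,\O_X)\tensor_{\O_X}\gr\M\to\O_X$, whose adjoint is by construction the canonical duality morphism for $\gr\M$. Since the isomorphisms in Proposition \ref{prop:isogr} are natural, passage to adjoints commutes with applying $\gr$, which closes the argument.
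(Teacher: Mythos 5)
Your central step matches the paper's: apply $\gr$ to the dualizability equivalence and use Proposition \ref{prop:isogr} together with $\gr\A_X\simeq\O_X$. However, there is a genuine error in the way you set this up. The claim that $\gr\circ\iota\simeq\id$ on $\Der_\qcoh(\O_X)$ is false. Here $\gr$ is the \emph{derived} tensor $\gr\A_X\otimes^{\mathrm{L}}_{\A_X}(-)$ and $\iota$ is restriction of scalars along $\A_X\to\gr\A_X$; since this restriction is not fully faithful at the derived level, $\gr\iota$ is the comonad of the adjunction $\gr\dashv\iota$, not the identity. Concretely, resolving $\gr\A_X$ by $\A_X\xrightarrow{\hbar}\A_X$ gives $\gr\iota\F\simeq(\F\xrightarrow{0}\F)\simeq\F\oplus\F[1]$ for $\F\in\Der_\qcoh(\O_X)$ (and the paper itself treats $\gr\iota(G)$ as a possibly different compact generator from $G$ in the proof of Theorem \ref{thm:defor}, which would be vacuous if $\gr\iota$ were the identity). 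As a result, applying $\gr$ to $\psi$ evaluated at $\iota\F$ produces the duality morphism at $\gr\iota\F\simeq\F\oplus\F[1]$, not at $\F$, so the conclusion you draw does not follow as stated.

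This gap is fixable: since both functors $\fMap_{\O_X}(\gr\M,\O_X)\otimes_{\O_X}(-)$ and $\fMap_{\O_X}(\gr\M,-)$ are exact and additive, an equivalence at $\F\oplus\F[1]$ forces an equivalence at $\F$. But the cleaner route, and the one the paper takes, avoids $\iota$ entirely: evaluate $\psi$ at $\M$ itself to obtain the equivalence $\fMap_{\A_X}(\M,\A_X)\otimes_{\A_X}\M\xrightarrow{\sim}\fMap_{\A_X}(\M,\M)$, apply $\gr$ and Proposition \ref{prop:isogr} to get $\fMap_{\O_X}(\gr\M,\O_X)\otimes_{\O_X}\gr\M\xrightarrow{\sim}\fMap_{\O_X}(\gr\M,\gr\M)$, and then invoke the standard fact that in a closed symmetric monoidal $\i$-category this single equivalence (the image of $\id_{\gr\M}$ supplies the coevaluation) already implies dualizability. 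Your final paragraph about transporting evaluation pairings across $\gr$ is sound in spirit and would complete that version of the argument, but it does not rescue the $\gr\iota\simeq\id$ step.
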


\begin{proof}
	Let $\M \in \Der_\qcc(\A_X)$ be dualizable, so the canonical morphism
	$
	\fMap_{\A_X}(\M,\A_X) \otimes_{\A_X} \M \to \fMap_{\A_X}(\M,\M) 		
	$
	is an equivalence. Applying the $\gr$ functor and using Proposition \ref{prop:isogr}, we obtain the equivalence
	$
	\psi_{\gr\M} \colon \fMap_{\O_X}(\gr \M,\O_X) \otimes_{\O_X} \gr \M \stackrel{\sim}{\to} \fMap_{\O_X}(\gr \M, \gr \M). 		
	$
	As $\Der_\qcoh(\O_X)$ is a closed symmetric monoidal $\i$-category, this implies that $\gr \M$ is dualizable.
\end{proof}

\begin{lemma}\label{lem:dualcoh}
	Let $\M \in \Der_\qcc(\A_X)$. If $\gr \M$ is dualizable in $\Der_\qcoh(\O_X)$ then $\M \in \Derb_\coh(\A_X)$.
\end{lemma}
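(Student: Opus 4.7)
The plan is to reduce to a theorem of Kashiwara--Schapira that characterizes bounded coherent DQ-modules among cohomologically complete ones in terms of their $\gr$. The proof will have three short steps once this reduction is set up.

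First, I would use the smoothness of $X$ to translate the dualizability hypothesis into a concrete statement about $\gr \M$. In the presentably symmetric monoidal $\i$-category $\Der_\qcoh(\O_X)$ the dualizable objects are precisely the perfect complexes, and since $X$ is a smooth complex algebraic variety the perfect complexes are in turn exactly $\Derb_\coh(\O_X)$. Thus $\gr \M \in \Derb_\coh(\O_X)$, so $\gr \M$ has bounded and coherent cohomology sheaves.

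Second, I would record that by hypothesis $\M \in \Der_\qcc(\A_X)$, so $\M$ is cohomologically complete as an object of $\Der(\A_X)$. Combined with the previous step, we are in the situation of [KS3, Theorem 1.6.4]: a cohomologically complete $\A_X$-module $\M$ lies in $\Derb_\coh(\A_X)$ if and only if $\gr \M$ lies in $\Derb_\coh(\O_X)$. Applying this theorem concludes the argument.

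The main obstacle is justifying the second step, i.e.\ the transfer of coherence and boundedness from $\gr \M$ to $\M$ under the cohomological completeness assumption. The structural inputs are the conservativity of $\gr$ on $\Der_\cc(\A_X)$ (recorded in the excerpt) together with a Nakayama-type argument: the boundedness of $\gr \M$ forces the boundedness of $\M$ by conservativity of $\gr$ applied to the truncations, and coherence of each $\Hn^i(\gr \M)$ allows one to locally lift finite families of generators along $\A_X \twoheadrightarrow \gr \A_X \simeq \O_X$ and then conclude via $\hbar$-adic completeness that they generate $\Hn^i(\M)$. Because the full strength of this last assertion is exactly the content of [KS3, Theorem 1.6.4] (or can alternatively be reproduced in the algebraic setting directly as indicated in the remark preceding the previous subsection), I would quote it rather than reprove it, which makes the overall argument quite short.
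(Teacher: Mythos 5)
Your proof matches the paper's approach in its main outlines: both arguments first use smoothness of $X$ to convert dualizability of $\gr\M$ into $\gr\M \in \Derb_\coh(\O_X)$ (dualizable $=$ perfect $=$ bounded coherent on a smooth algebraic variety), and both then invoke \cite[Theorem 1.6.4]{KS3} to lift coherence from $\gr\M$ to the cohomologically complete module $\M$. The one place where you compress the argument more than the paper does is the boundedness of $\M$: the paper inserts an explicit intermediate step, citing \cite[Proposition 3.11]{FMQ} to deduce $\M \in \Der^b_\qcc(\A_X)$ from the cohomological completeness of $\M$ and the boundedness of $\gr\M$, and only then applies \cite[Theorem 1.6.4]{KS3}. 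You do gesture at the same idea (conservativity of $\gr$ on truncations forcing boundedness of $\M$), but you fold it into the citation of \cite[Theorem 1.6.4]{KS3}. That theorem, as the authors use it, is applied after boundedness is established, so if it is stated with an a priori boundedness hypothesis (which appears to be the case, given the structure of the paper's proof), your single citation does not quite carry the full weight you ascribe to it. This is a small point of attribution rather than a conceptual error: the argument you sketch for boundedness is the correct one, and \cite[Proposition 3.11]{FMQ} is the natural reference for it.
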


\begin{proof}
	Since $X$ is a smooth algebraic variety and $\gr \M$ is dualizable, $\gr \M \in \Derb_\coh(\O_X)$. Indeed, by \cite[Proposition 3.6]{BFN}, on an algebraic variety perfect and dualizable objects of $\Der_\qcoh(\O_X)$ are the same. Moreover, the smootheness assumption implies that locally any coherent sheaf has a free resolution of finite length which in turns implies that the objects of $\Derb_\coh(\O_X)$ are perfect. Since $\M \in \Der_\qcc(\A_X)$, it is cohomollogically complete. Hence, it follows from \cite[Proposition 3.11]{FMQ} that $\M \in \Der^b_\qcc(\A_X)$. Now applying \cite[Theorem 1.6.4]{KS3}, we obtain that $\M \in \Derb_\coh(\A_X)$.
\end{proof}

\begin{proposition}
	Let $\M \in \Der_\qcc(\A_X)$. The module $\M$ is dualizable if and only if $\M \in \Derb_\coh(\A_X)$.
\end{proposition}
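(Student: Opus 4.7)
The plan is to combine the two preceding lemmas for one direction and to argue locally via finite free resolutions for the other. For the forward direction (dualizable $\Rightarrow$ coherent), this is immediate: if $\M \in \Der_\qcc(\A_X)$ is dualizable, then Lemma \ref{lem:grdual} gives that $\gr \M$ is dualizable in $\Der_\qcoh(\O_X)$, and Lemma \ref{lem:dualcoh} then yields $\M \in \Derb_\coh(\A_X)$.

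For the reverse direction, I would argue locally on $X$. Given $\M \in \Derb_\coh(\A_X)$, the goal is to show that the canonical morphism
\[
\psi \colon \fMap_{\A_X}(\M,\A_X) \tensor_{\A_X} (-) \to \fMap_{\A_X}(\M,-)
\]
is an equivalence of functors $\Der(\A_X) \to \Der(\CC_X^\hbar)$. Since this is a morphism of sheaves of complexes on $X$, it suffices to test it on stalks, and in particular one may work over arbitrarily small open subsets of $X$. Smoothness of $X$ implies that coherent $\O_X$-modules locally admit finite free resolutions; combining this with the $\hbar$-completeness of $\A_X$, the identification $\A_X/\hbar\A_X\simeq\O_X$, and the Noetherianity of $\A_X$, one obtains the local lifting statement that every coherent $\A_X$-module admits, locally on $X$, a finite resolution by finitely generated free $\A_X$-modules.

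With this local reduction in place, the rest of the argument is formal. Both $\fMap_{\A_X}(-,\A_X) \tensor_{\A_X} \N$ and $\fMap_{\A_X}(-,\N)$ are exact contravariant functors on $\Der(\A_X)$, taking cofiber sequences in the first variable to fiber sequences; consequently the locus of those $\M$ for which $\psi$ is an equivalence is a stable subcategory of $\Der(\A_X)$ containing $\A_X$ (where $\psi$ is manifestly the identity). This subcategory therefore contains every bounded complex of finitely generated free $\A_X$-modules, which by the local reduction means that $\psi$ is an equivalence stalkwise, hence globally. The main obstacle, as I see it, is the local lifting of finite free resolutions from the $\O_X$-setting to the $\A_X$-setting; this relies crucially on smoothness of $X$ together with Artin--Rees-type arguments for $\hbar$-complete Noetherian DQ-algebras, and is in essence already contained in the proof of \cite[Theorem 1.6.4]{KS3}.
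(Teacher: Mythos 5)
Your forward direction (dualizable $\Rightarrow$ coherent) is exactly the paper's argument: apply Lemma~\ref{lem:grdual} and then Lemma~\ref{lem:dualcoh}. For the converse, the paper simply cites \cite[Theorem~1.4.8]{KS3}, whereas you sketch a direct proof; the two routes are compatible, since the KS3 theorem is proved along the lines you indicate (coherent $\A_X$-modules on a smooth variety are locally quasi-isomorphic to bounded complexes of finitely generated free modules, because $\A_X$ has finite homological dimension over smooth $X$). Your outline is sound: the collection of $\M$ for which $\psi$ is an equivalence is a stable subcategory of $\Der(\A_X)$ (both sides are exact in $\M$), it contains the rank-one free module locally, hence contains all bounded complexes of finitely generated free modules, and the statement is local so this suffices. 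The one piece you should not defer quite so casually is the local finite free resolution lemma itself; it is the entire content of the reverse implication and is not in \cite[Theorem~1.6.4]{KS3} (which concerns cohomological completeness of coherent modules) but rather in the finiteness results of \cite[\S1.2, \S1.4]{KS3} on the homological dimension of DQ-algebras, of which Theorem~1.4.8 is the packaged duality statement. Citing that theorem, as the paper does, is both more economical and more accurate; if you prefer the self-contained route, you need to point at the correct source for the resolution lemma and check that the resolution can be chosen so that $\psi$ computes the derived $\fMap$ on both sides, which is where the flatness/boundedness hypotheses are used.
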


\begin{proof}
	\noindent (i) Assume $\M \in \Derb_\coh(\A_X)$. The result follows from \cite[Theorem 1.4.8]{KS3}.\\
	\noindent (ii) Assume that $\M$ is dualizable in $\Der_\qcc(\A_X)$. Hence, by Lemma \ref{lem:grdual}, $\gr \M$ is dualizable in $\Der_\qcoh(\O_X)$. Now, applying Lemma \ref{lem:dualcoh} it follows that $\M \in \Derb_\coh(\A_X)$.
\end{proof}

\begin{theorem}[{\cite[Theorem 3.20]{dgaff}}]\label{thm:compactobj}
	 The compact objects of $\Der_\qcc(\A_X)$ are the $\M$ such that $\M \in \Derb_\coh(\A_X)$ and $\A_X^\loc \tensor_{\A_X} \M=0$ (with $\M$ considered as an object of $\Der(\A_X)$) .
\end{theorem}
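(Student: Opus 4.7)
The approach is to exploit the compact generation of $\Der_\qcc(\A_X)$ by objects pulled back from $\Der_\qcoh(\O_X)$. By Proposition \ref{prop:presercomgen} and the corollary following it, $\Der_\qcc(\A_X)$ is compactly generated by $\iota(G)$, where $G$ is a compact generator of $\Der_\qcoh(\gr\A_X)\simeq\Der_\qcoh(\O_X)$; on smooth $X$, $G$ may be taken to be a perfect complex (cf.\ \cite[Proposition 3.6]{BFN}). Such $\iota(G)$ is coherent and annihilated by $\hbar$, so it lies in $\Derb_\coh(\A_X)$ and is of uniform $\hbar$-torsion (in particular $\A_X^\loc\otimes_{\A_X}\iota(G)=0$). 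This already hints that both conditions in the theorem should propagate through the thick subcategory of compact objects.

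For the forward direction, suppose $\M$ is compact in $\Der_\qcc(\A_X)$, so $\M$ lies in the thick subcategory generated by $\iota(G)$. Both ``uniform $\hbar$-torsion'' and ``$\A_X^\loc\otimes_{\A_X}(-)=0$'' are stable under retracts and under cofiber sequences: given $A\to B\to C$ with $\hbar^n$ null on $A$ and $\hbar^m$ null on $C$, the map $\hbar^m\colon B\to B$ composes to zero with $B\to C$, hence factors through the fiber $A$, and is there killed by $\hbar^n$, giving $\hbar^{n+m}B=0$. This yields $\A_X^\loc\otimes_{\A_X}\M=0$. Applying the exact functor $\gr$ places $\gr\M$ in the thick subcategory of $\Der_\qcoh(\O_X)$ generated by $\gr\iota(G)\simeq G\oplus G[1]$ (computed as the cofiber of the zero map $\hbar\colon G\to G$), so $\gr\M$ is compact, hence perfect, hence in $\Derb_\coh(\O_X)$. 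Combined with cohomological completeness of $\M$, the Nakayama-type arguments already used in Lemma \ref{lem:dualcoh} (via \cite[Proposition 3.11]{FMQ} and \cite[Theorem 1.6.4]{KS3}) then give $\M\in\Derb_\coh(\A_X)$.

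For the converse, suppose $\M\in\Derb_\coh(\A_X)$ with $\A_X^\loc\otimes_{\A_X}\M=0$. First upgrade this to uniform $\hbar$-torsion of $\M$ itself: by exactness of localization each $\Hn^i(\M)$ is coherent and $\hbar$-nilpotent, hence on the quasi-compact $X$ is annihilated by a uniform power $\hbar^{N_i}$; since only finitely many $\Hn^i(\M)$ are nonzero, the Postnikov tower and the cofiber-sequence argument above yield $\hbar^N\M=0$ in $\Der(\A_X)$ for $N:=\sum_i N_i$. The cofiber sequence $\M\xrightarrow{\hbar^N}\M\to\M\otimes_\Chbar\Chbar/\hbar^N$ then has a null first map, so $\M\otimes_\Chbar\Chbar/\hbar^N\simeq\M\oplus\M[1]$ and $\M$ is a retract of $\M\otimes_\Chbar\Chbar/\hbar^N$. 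The filtration $\hbar^k\Chbar/\hbar^N$ exhibits $\Chbar/\hbar^N$ as a finite iterated extension in $\Perf(\Chbar)$ of copies of $\CC=\Chbar/\hbar$, and tensoring with $\M$ realizes $\M\otimes_\Chbar\Chbar/\hbar^N$ as a finite iterated extension of copies of $\iota(\gr\M)$. By smoothness of $X$, $\gr\M\in\Derb_\coh(\O_X)$ is perfect and thus compact in $\Der_\qcoh(\O_X)$; exactness of $\iota$ together with Proposition \ref{prop:presercomgen} implies $\iota(\gr\M)$ is compact in $\Der_\qcc(\A_X)$, and hence so is $\M$.

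The main technical hurdles I foresee are (i) the uniform-$\hbar$-torsion upgrade in the converse, where pointwise $\hbar$-nilpotence on each coherent $\Hn^i(\M)$ must be promoted to a global uniform bound via quasi-compactness and then assembled across the bounded Postnikov tower; and (ii) the Nakayama-type step in the forward direction, passing from $\gr\M\in\Derb_\coh(\O_X)$ and cohomological completeness of $\M$ to $\M\in\Derb_\coh(\A_X)$, which depends on the structural results for cohomologically complete DQ-modules from \cite{KS3, FMQ}.
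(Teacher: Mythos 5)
The paper does not prove this statement itself; it is cited directly from \cite[Theorem 3.20]{dgaff}, so there is no in-paper proof to measure you against. Your argument is, as far as I can tell, a correct and reasonably self-contained proof. The forward direction correctly exploits that the compact objects of $\Der_\qcc(\A_X)$ form the thick subcategory generated by $\iota(G)$ (Neeman's theorem plus Proposition \ref{prop:presercomgen}), that uniform $\hbar$-torsion is closed under shifts, retracts, and cofiber sequences (your factorization-through-the-fiber argument is the right one), that $\gr$ carries this thick subcategory into the thick subcategory of $\Der_\qcoh(\O_X)$ generated by $G$ (using $\gr\iota\simeq\id\oplus\id[1]$), and finally Lemma \ref{lem:dualcoh} to land in $\Derb_\coh(\A_X)$. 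The converse correctly reconstructs the paper's unproved lemma that $\A_X^\loc\otimes_{\A_X}\M=0$ with $\M$ coherent forces uniform $\hbar$-torsion, then realizes $\M$ as a retract of a finite iterated extension of $\iota(\gr\M)$, with $\gr\M$ perfect by smoothness.

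Two places where you are a bit terse but the gap is easily filled. First, in the converse, the claim that $\iota$ preserves \emph{all} compacts (not just compact generators) does not follow from ``exactness plus Proposition \ref{prop:presercomgen}'' alone without one more sentence: you need the thick-subcategory closure argument again, namely that $\iota$ carries the thick subcategory generated by a compact generator into the thick subcategory generated by its image, which is a priori contained in the compacts. Second, in the forward direction, after deducing that $\gr\M$ is compact you need ``compact $=$ perfect $=$ dualizable'' in $\Der_\qcoh(\O_X)$ to invoke Lemma \ref{lem:dualcoh}; this is standard for quasi-compact quasi-separated schemes and you implicitly use it, but it should be flagged since the lemma is stated in terms of dualizability rather than compactness. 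Neither of these is a real gap, just points worth spelling out.
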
 

\begin{remark}
The preceding results show that, while every compact object of $\Der_\qcc(\A_X)$ is dualizable, not every dualizable object of $\Der_\qcc(\A_X)$ is compact.
For instance, even over the point, $\CC^\hbar$, viewed as complex concentrated in degree zero, is dualizable is not compact, since $\CC((\hbar))\neq 0$.
\end{remark}

\begin{lemma}
Let $\M \in \Derb_\coh(\A_X) \subset \Der(\A_X)$. Then $\A_X^\loc \otimes_{\A_X} \M\simeq 0$ if and only if there exists $n >0$ such that $\hbar^n \M\simeq 0$.
\end{lemma}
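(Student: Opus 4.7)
The plan is to prove the two implications separately. For the easy direction ($\Leftarrow$), if $\hbar^n\cdot\id_\M$ is null in $\Der(\A_X)$, then its image under the exact functor $\A_X^\loc\otimes_{\A_X}(-)$ is a null endomorphism of $\A_X^\loc\otimes_{\A_X}\M$; since $\hbar$ acts invertibly on $\A_X^\loc$, this same endomorphism is also an equivalence, forcing $\A_X^\loc\otimes_{\A_X}\M\simeq 0$. This is parallel in spirit to the argument in Lemma \ref{lem:unicomp}.

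For the reverse direction ($\Rightarrow$), I would first reduce to a statement about the cohomology sheaves of $\M$. Since $\hbar$ is central in $\A_X$, the localization $\A_X\to\A_X^\loc$ is flat, so $\A_X^\loc\otimes_{\A_X}(-)$ commutes with taking cohomology. Vanishing of $\A_X^\loc\otimes_{\A_X}\M$ therefore forces $\A_X^\loc\otimes_{\A_X}\Hn^i(\M)=0$ for every $i\in\ZZ$, equivalently each coherent cohomology sheaf $\Hn^i(\M)$ is sectionwise $\hbar$-torsion.

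Next I would prove the sheaf-level claim: a coherent $\hbar$-torsion $\A_X$-module $\N$ is annihilated by some uniform power of $\hbar$. At a point $x\in X$, the stalk $\N_x$ is finitely generated over $\A_{X,x}$, so finitely many generators are each killed by a power of $\hbar$, and taking the maximum yields an integer $m(x)$ with $\hbar^{m(x)}\N_x=0$. Representing these generators by sections on a neighborhood of $x$ and using that a section vanishing at a stalk vanishes on a smaller neighborhood, one obtains an open $U_x\ni x$ with $\hbar^{m(x)}\N|_{U_x}=0$. Quasi-compactness of the Noetherian space $X$ then produces a finite subcover and thus a uniform global exponent $n_i$ with $\hbar^{n_i}\Hn^i(\M)=0$. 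Boundedness of $\M$ permits taking the maximum over the finitely many nonzero indices $i$ to obtain a single integer $n_0$.

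Finally, I would upgrade this sheaf-level vanishing to the derived level by induction on the cohomological amplitude of $\M$, using the truncation cofiber sequence $\tau_{\leq b-1}\M\to\M\to\Hn^b(\M)[-b]$ (which stays inside $\Derb_\coh(\A_X)$) and the following elementary triangulated fact: in any cofiber sequence $A\to B\to C$, if $\hbar^N\cdot\id_A$ and $\hbar^M\cdot\id_C$ are null, then $\hbar^{N+M}\cdot\id_B$ is null. Indeed, the composite of $\hbar^M\cdot\id_B$ with $B\to C$ equals $(B\to C)\circ\hbar^M=\hbar^M\circ(B\to C)=0$, so by the exact sequence $[B,A]\to[B,B]\to[B,C]$ the endomorphism $\hbar^M\cdot\id_B$ lifts along $A\to B$; postcomposing that lift with the null map $\hbar^N\cdot\id_A$ yields zero. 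The main obstacle is the bookkeeping in the sheaf-level step, namely promoting a pointwise annihilation exponent to a uniform global one via coherence and quasi-compactness; the remaining induction and the easy direction are formal.
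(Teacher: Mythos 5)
Your proof is correct and follows the same overall strategy as the paper: reduce via boundedness and coherence to the case of a single coherent $\A_X$-module, use finite local generation plus quasi-compactness to extract a uniform $\hbar$-torsion exponent, and handle the easy converse by observing $\hbar$ acts invertibly on the localization. The two differ in technical detail. You work with stalks $\N_x$ and patch neighborhoods $U_x$ over the Noetherian space, whereas the paper reduces to a finite cover of $X$ by affine opens that trivialize the algebroid, argues with global sections over each affine piece, and invokes the isomorphism $\CC^{\hbar,\loc}\otimes\Gamma(X;\M)\simeq\Gamma(X;\A_X^{\loc}\otimes_{\A_X}\M)$ to detect torsion of generators. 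The paper is also terser on the reduction from $\Derb_\coh(\A_X)$ to $\Mod_\coh(\A_X)$: it simply writes ``hence we can assume $\M\in\Mod_\coh(\A_X)$,'' whereas you make this precise with the truncation cofiber sequence and the composite-nullity lemma for triangles (if $\hbar^N$ is null on the fiber and $\hbar^M$ on the cofiber, then $\hbar^{N+M}$ is null on the middle term). That inductive upgrade from the abelian heart to bounded complexes is genuinely needed for the derived statement and is a welcome point to spell out; your version is a somewhat more complete version of the paper's argument. One minor caution: when $\A_X$ is a genuine algebroid stack rather than a sheaf of algebras, it is cleaner to first pass to a trivializing affine cover before speaking of stalks and finitely many generating sections, as the paper does.
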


\begin{proof}
Since $\M \in \Derb_\coh(\A_X)$, $\Hn^i(\M) = 0$ but for finitely many $i$. Hence, we can assume that $\M \in \Mod_\coh(\A_X)$.
As $\M$ is coherent, it is locally finitely generated. As $X$ is an algebraic variety, we can find a finite open cover by affine opens $(U_i)_{1 \leq i \leq n}$ such that $\M|_{U_i}$ is finitely generated. Thus, we can assume that $X$ is affine and that $\M$ is finitely generated on $X$ by finitely many sections $s_1,\ldots,s_p$. As $X$ is a Noetherian topological space, we have an isomorphism $\CC^{\hbar,\loc} \otimes \Gamma(X;\M) \simeq \Gamma(X; \A_X^{\loc} \otimes_{\A_X} \M)$.
Moreover, $\Gamma(X; \A_X^{\loc} \otimes_{\A_X} \M)=0$. It follows that there is an $n >0 $ such that $\hbar^n s_j =0$.
The converse is clear: if $\hbar^n\M=0$ for some $n>0$ then $\A^{\loc}_X\otimes_{\A_X}\M\simeq 0$.
\end{proof}

\subsection{\texorpdfstring{$\D_{\qcc}$}{Dqcc} as a deformation of \texorpdfstring{$\D_{\qcoh}$}{Dqcoh}}

We establish that $\D_{\qcc}(\A_X)$ is a deformation of the presentable stable $\i$-category $\D_{\qcoh}(\gr\A_X)$.

\begin{definition}
A formal deformation of a $\C$-linear $\i$-category $\C$ is a pair $(\C^\hbar,\mu)$ where $\C^\hbar$ is an object of $\Mod_{\Der(\Chbar)}$ and $\mu$ is a $\Der(\CC)$-linear equivalence
\begin{equation*}
\mu \colon \Der(\CC) \otimes_{\Der(\Chbar)} \C^\hbar \to \C.
\end{equation*}
\end{definition}

See \cite[Section 5.3]{dagx} for details. We will require the following Lemma.

\begin{lemma}\label{lem:gentensproduct}
Let $A$ be a commutative ring and suppose given  stable $A$-linear $\i$-categories $\M$ and $\N$ which are generated by compact objects $G_\M$ and $G_\N$.
\begin{enumerate}
\item $\M \otimes_{\Der(A)} \N$ is generated by the compact object $G_\M \otimes_A G_\N$.
\item For $M, \; M^\prime \in \C^\omega$ and $N, \; N^\prime \in \D^\omega$, there is a canonical isomorphism
\begin{equation*}
\uMap_{\M \otimes_{\Der(A)} \N}(M \otimes_A N, M^\prime \otimes_A N^\prime) \simeq \uMap_{\M}(M, M^\prime) \otimes_A \uMap_{\N}(N, N^\prime).
\end{equation*}
\end{enumerate}
\end{lemma}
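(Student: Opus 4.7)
My plan is to reduce both parts to computations in module categories via $\i$-categorical Morita theory. Setting $E_\M := \uMap_\M(G_\M, G_\M)$ and $E_\N := \uMap_\N(G_\N, G_\N)$, the fact that $G_\M$ (resp.~$G_\N$) is a compact generator yields $\Der(A)$-linear equivalences $\M \simeq \Mod_{E_\M}$ and $\N \simeq \Mod_{E_\N}$, under which $G_\M, G_\N$ correspond to the free rank-one modules. By compatibility of the relative tensor product in $\Mod_{\Der(A)}(\Prl)$ with module categories over associative algebras (\cite[\S 4.8.4]{HA}), one obtains
\[
\M \otimes_{\Der(A)} \N \;\simeq\; \Mod_{E_\M \otimes_A E_\N},
\]
and $G_\M \otimes_A G_\N$ corresponds to the free rank-one module $E_\M \otimes_A E_\N$, which is manifestly a compact generator. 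This proves (1).

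For (2), consider the canonical comparison
\[
\alpha_{M,M',N,N'} : \uMap_\M(M,M') \otimes_A \uMap_\N(N,N') \to \uMap_{\M \otimes_A \N}(M \otimes_A N, M' \otimes_A N')
\]
induced by the $\Der(A)$-enriched bifunctor $\otimes_A : \M \times \N \to \M \otimes_A \N$. The base case $M=M'=G_\M$, $N=N'=G_\N$ holds, since both sides identify with $E_\M \otimes_A E_\N$ under the Morita description. To extend, I first fix $M = G_\M$, $N = G_\N$ and observe that both sides of $\alpha$ are continuous in $M' \in \M$ and $N' \in \N$ separately: the LHS because $G_\M, G_\N$ are compact, the RHS because $G_\M \otimes_A G_\N$ is compact by (1) and because $\otimes_A$ preserves colimits in each variable. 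Since $\M$ and $\N$ are generated under colimits by $G_\M$ and $G_\N$, the locus where $\alpha$ is an equivalence equals all of $\M \times \N$. Next, fixing $M', N'$ arbitrary, I vary $M \in \M^\omega$ and $N \in \N^\omega$ in two sub-steps (first $M$ with $N = G_\N$, then $N$ with $M \in \M^\omega$ arbitrary): both sides are exact contravariant functors in each variable, so the locus of $\alpha$-equivalences forms a thick subcategory of $\M^\omega$ (resp.~$\N^\omega$) containing $G_\M$ (resp.~$G_\N$) by the previous step. Since a compact generator thickly generates $\M^\omega$, this locus equals $\M^\omega$ (resp.~$\N^\omega$), completing the proof.

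The main obstacle is the Morita-theoretic identification of $\M \otimes_{\Der(A)} \N$ as $\Mod_{E_\M \otimes_A E_\N}$ in the $\Der(A)$-linear setting --- one must combine the Schwede--Shipley-type recognition of $\M$ as $\Mod_{E_\M}$ with the compatibility of the relative tensor product of presentable $\i$-categories with the tensor product of associative algebras in $\Der(A)$. Once this input is in place, the remainder is a formal closure argument using compactness and exactness of the relevant functors.
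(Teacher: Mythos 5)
Your proof is correct and follows the same route as the paper, which simply refers to the argument of \cite[Lemma 3.2]{AGH}: both rely on the Schwede--Shipley identification $\M \simeq \Mod_{E_\M}$, $\N \simeq \Mod_{E_\N}$ together with the compatibility $\Mod_{E_\M} \otimes_{\Der(A)} \Mod_{E_\N} \simeq \Mod_{E_\M \otimes_A E_\N}$ from \cite[\S 4.8.4]{HA}, and then a standard closure argument (colimits in the covariant slots using compactness, thick subcategories in the contravariant slots) to propagate the base case to arbitrary compact objects.
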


\begin{proof}
The proof is similar to the one of \cite[Lemma 3.2]{AGH}
\end{proof}

The morphism of commutative algebra objects $\gr \colon \D(\CC^\hbar) \to \D(\CC)$ induces a basechange functor $\Mod_{\D(\CC^\hbar)}\to\Mod_{\D(\CC)}$ which is left adjoint to the forgetful functor $\Mod_{\D(\CC)}\to\Mod_{\D(\CC^\hbar)}$.
By construction of $\D_{\qcc}(\A_X)$, we have a $\CC^\hbar$-linear functor
$
\gr \colon \D_\qcc(\A_X) \to \D_\qcoh(\gr \A_X).
$
By adjunction, this induces a morphism in $\Mod_{\D(\CC)}$
\begin{equation}\label{map:equidefor}
\Psi \colon \D(\CC) \otimes_{\D(\CC^\hbar)}\D_\qcc(\A_X) \to \D_\qcoh(\gr \A_X).
\end{equation}

\begin{theorem}\label{thm:defor}
The morphism \eqref{map:equidefor} is an equivalence in $\Mod_{\D(\CC)}$.
\end{theorem}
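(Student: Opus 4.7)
The plan is to reduce the equivalence $\Psi$ to a check on a single compact generator, via a standard Morita-type argument. Since $\Psi$ is a morphism in $\Mod_{\D(\CC)} \subset \Prl$, it is a colimit-preserving functor between compactly generated stable presentable $\i$-categories. Hence, to show it is an equivalence, it suffices to exhibit a compact generator $H$ of the source whose image $\Psi(H)$ is a compact generator of the target, and to verify that the induced map $\uMap(H,H) \to \uMap(\Psi(H),\Psi(H))$ is an equivalence.

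First, I would pick a compact generator $G$ of $\Der_\qcc(\A_X)$, which exists by the corollary to Theorem \ref{thm:compactobj}. Applying Lemma \ref{lem:gentensproduct}(1) with $A = \CC^\hbar$, $\M = \Der(\CC)$ generated by the unit $\CC$, and $\N = \Der_\qcc(\A_X)$ generated by $G$, the object $\CC \otimes_{\CC^\hbar} G$ is a compact generator of $\Der(\CC) \otimes_{\Der(\CC^\hbar)} \Der_\qcc(\A_X)$. Unwinding the base-change/restriction-of-scalars adjunction defining $\Psi$, one finds $\Psi(\CC \otimes_{\CC^\hbar} G) \simeq \gr G$, and by Proposition \ref{prop:presercomgen}, $\gr G$ is a compact generator of $\Der_\qcoh(\gr \A_X)$.

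Second, I would compare the mapping spaces. Lemma \ref{lem:gentensproduct}(2) gives
\[
\uMap_{\Der(\CC) \otimes_{\Der(\CC^\hbar)} \Der_\qcc(\A_X)}(\CC \otimes G,\; \CC \otimes G) \simeq \CC \otimes_{\CC^\hbar} \uMap_{\Der_\qcc(\A_X)}(G,G).
\]
On the other hand, using $\uMap \simeq R\Gamma(X, \fMap)$ and Proposition \ref{prop:isogr}(2), one has
\[
\uMap_{\Der_\qcoh(\gr \A_X)}(\gr G, \gr G) \simeq R\Gamma(X,\, \gr \fMap_{\A_X}(G,G)).
\]
Since the $\gr$ functor on $\CC^\hbar$-modules is the cofiber of multiplication by $\hbar$, it is a finite colimit and therefore commutes with the exact functor $R\Gamma$ (on the Noetherian variety $X$, where $R\Gamma$ is exact in the stable sense). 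Hence
\[
R\Gamma(X,\, \gr \fMap_{\A_X}(G,G)) \simeq \CC \otimes_{\CC^\hbar} R\Gamma(X, \fMap_{\A_X}(G,G)) \simeq \CC \otimes_{\CC^\hbar} \uMap_{\Der_\qcc(\A_X)}(G,G),
\]
matching the expression for the source.

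The principal obstacle is not any individual step but the coherence issue of showing that the equivalence of mapping objects produced in this way is \emph{induced by} $\Psi$, rather than merely an abstract isomorphism of the two sides. This requires carefully unwinding the base-change adjunction defining $\Psi$ and tracking naturality of Lemma \ref{lem:gentensproduct}(2) and Proposition \ref{prop:isogr}(2) — essentially a bookkeeping exercise. Once established, the standard argument (a colimit-preserving functor between compactly generated stable $\i$-categories taking a compact generator to a compact generator and inducing an equivalence on self-$\uMap$ of that generator is itself an equivalence) completes the proof.
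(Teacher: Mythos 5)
Your proposal is correct and follows essentially the same route as the paper: reduce to a compact generator via the corollary to Proposition \ref{prop:presercomgen}, show $\Psi$ sends $\CC\otimes_{\CC^\hbar}G$ to a compact generator, and compare $\uMap$ objects using Lemma \ref{lem:gentensproduct} together with Proposition \ref{prop:isogr}. The only cosmetic difference is that the paper starts from a compact generator $G$ of $\D_\qcoh(\gr\A_X)$ and uses $\iota(G)$ in $\D_\qcc(\A_X)$, whereas you pick a compact generator of $\D_\qcc(\A_X)$ directly; you also spell out the commutation of $\gr$ with $\Gamma$ (as a finite colimit against an exact functor), which the paper leaves implicit.
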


\begin{proof}
Let $G$ be a compact generator of $\D_\qcoh(\gr \A_X)$ which exists by \cite[Theorem 3.1.1]{BVdB}. It follows from Proposition \ref{prop:presercomgen} that $\iota(G)$ is a compact generator of $\D_\qcc(\A_X)$. Hence by Lemma \ref{lem:gentensproduct}, $\D(\CC) \otimes_{\D(\CC^\hbar)}\D_\qcc(\A_X)$ is compactly generated by the object $ \CC \otimes_{\D(\CC^\hbar)} \iota(G)$. For brevity, we set $\Gr\Der_\qcc(\A_X) :=\D(\CC) \otimes_{\D(\CC^\hbar)}\Der_\qcc(\A_X)$

Unraveling the construction of the morphism \eqref{map:equidefor}, we obtain 
\[
\xymatrix{
\D(\CC) \otimes_{\D(\CC^\hbar)}\D_\qcc(\A_X) \ar[r]^-{\id \otimes \gr} &  \D(\CC) \otimes_{\D(\CC^\hbar)} U(\D_\qcoh(\gr \A_X)) \ar[r] & \D_\qcoh(\gr \A_X).
}
\]
This implies that 
\[
\Psi (\CC \otimes_{\D(\CC^\hbar)} \iota(G))\simeq \CC \otimes \gr \iota(G) \simeq \gr \iota (G).
\]
It follows from Proposition \ref{prop:presercomgen} that $\gr \iota (G)$ is a compact generator of $\D_\qcoh(\gr \A_X)$.
To check that $\Psi$ is an equivalence, it is sufficient to show that
\[
\Map_{\Gr \Der_\qcc(\A_X)}(\CC \tensor_{\D(\CC^\hbar)}\iota(G),\CC \tensor_{\D(\CC^\hbar)}\iota(G)) \to \Map_{\D_\qcoh(\gr \A_X)}(\Psi(\CC \tensor_{\D(\CC^\hbar)}\iota(G)),\Psi(\CC \tensor_{\D(\CC^\hbar)}\iota(G)))
\]
is an isomorphism in $\D(\CC)$.
.

This a consequence of the following commutative diagram

\[
\xymatrix@C=0.5pc{\uMap_{\Gr\Der_\qcc(\A_X)}(\CC \! \tensor_{\Der(\CC^\hbar)} \! \iota(G),\CC \! \tensor_{\Der(\CC^\hbar)} \! \iota(G)) \ar[r]& \uMap_{\Der_\qcoh(\gr \A_X)}(\Psi(\CC \! \tensor_{\Der(\CC^\hbar)} \! \iota(G)),\Psi(\CC \! \tensor_{\Der(\CC^\hbar)} \! \iota(G)))\\
\uMap_{\D(\CC)}(\CC,\CC)\tensor_{\CC^\hbar} \uMap_{\D_\qcc(\A_X)}(\iota(G),\iota(G)) \ar[r] \ar[u]^-{\wr} & \uMap_{\D_\qcoh(\gr \A_X)}(\gr \iota(G)),\gr \iota(G)) \ar[u]_-{\wr}\\
\CC\tensor_{\CC^\hbar} \uMap_{\D_\qcc(\A_X)}(\iota(G),\iota(G)) \ar[u]^-{\wr}\ar[r]^-{\sim}& \uMap_{\D_\qcoh(\gr \A_X)}(\gr \iota(G),\gr \iota(G)) \ar@{=}[u]\\
}
\]
where the upper left vertical map is an isomorphism by Lemma \ref{lem:gentensproduct}. The lowest horizontal map of the diagram is an isomorphism by Proposition \ref{prop:isogr}.
\end{proof}

\begin{corollary}\label{cor:defor}
The pair $(\D_{\qcc}(\A_X),\Psi)$ is a deformation of $\D_{\qcoh}(\gr \A_X)$.
\end{corollary}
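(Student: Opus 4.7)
The plan is to observe that this corollary is essentially a direct unwinding of the definition of formal deformation in light of Theorem \ref{thm:defor}. By definition, a formal deformation of a $\Der(\CC)$-linear $\i$-category $\C$ is a pair $(\C^\hbar, \mu)$ where $\C^\hbar \in \Mod_{\Der(\Chbar)}$ and $\mu : \Der(\CC) \otimes_{\Der(\Chbar)} \C^\hbar \to \C$ is a $\Der(\CC)$-linear equivalence. So the task reduces to verifying the two structural requirements for the pair $(\D_\qcc(\A_X), \Psi)$.

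First I would check that $\D_\qcc(\A_X)$ has the required linear structure. This is immediate from the construction: by the pullback square \eqref{diag:qccpullback}, $\D_\qcc(\A_X)$ is built as a limit in $\Mod_{\Der_\cc(\Chbar)}$, and since the forgetful functor $\Mod_{\Der_\cc(\Chbar)} \to \Mod_{\Der(\Chbar)}$ (via restriction along $i^\land : \Der(\Chbar) \to \Der_\cc(\Chbar)$) exhibits it as a full subcategory, $\D_\qcc(\A_X)$ is canonically an object of $\Mod_{\Der(\Chbar)}$. Moreover, the target $\D_\qcoh(\gr\A_X)$ is a $\Der(\CC)$-linear $\i$-category by construction, so the notion of deformation applies.

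Next I would identify $\Psi$ as the appropriate comparison map. By construction, $\Psi$ was produced in \eqref{map:equidefor} as the $\Der(\CC)$-linear map adjoint (along the base change/forgetful adjunction $\Mod_{\Der(\Chbar)} \rightleftarrows \Mod_{\Der(\CC)}$ induced by the symmetric monoidal functor $\gr : \Der(\Chbar) \to \Der(\CC)$) to the $\Chbar$-linear functor $\gr : \D_\qcc(\A_X) \to \D_\qcoh(\gr\A_X)$. In particular $\Psi$ is a $\Der(\CC)$-linear functor of the required form.

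Finally, I would invoke Theorem \ref{thm:defor}, which asserts precisely that $\Psi$ is an equivalence in $\Mod_{\Der(\CC)}$. Combining these observations, the pair $(\D_\qcc(\A_X), \Psi)$ satisfies all clauses of the definition of a formal deformation, and hence is a deformation of $\D_\qcoh(\gr\A_X)$. There is no genuine obstacle here beyond citing Theorem \ref{thm:defor}; the content of the corollary lies entirely in that theorem.
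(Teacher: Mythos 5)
Your proposal is correct and matches the paper's approach: the paper gives no explicit proof for this corollary precisely because it is the immediate unwinding of the definition of a formal deformation once Theorem \ref{thm:defor} is in hand, which is exactly what you do. Your verification that $\D_\qcc(\A_X)$ lies in $\Mod_{\Der(\Chbar)}$ (via restriction of scalars along $i^\land$), that $\Psi$ has the right shape, and that Theorem \ref{thm:defor} supplies the equivalence covers all the clauses.
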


\section{Fourier-Mukai transforms for DQ-modules}\label{sec:FMQ}

In this section we develop the theory of integral transforms for DQ-modules at the $\i$-categorical level. This allows us to take into account the linear structures naturally present on $\i$-categories of DQ-modules. Taking into consideration these linear structures and relying on techniques of $\hbar$-completions allow us to set up the study of integral representation theorems for DQ-modules in the framework of the Morita theory of linear categories between categories of DQ-modules. Afterwards, we are able to obtain several integral representation theorems.

\subsection{Operations on qcc-modules}

Let $f:X_1\to X_2$ be a morphism of smooth complex algebraic varieties and assume $X_2$ is endowed with a DQ-algebroid stack $\A_2$. By Subsection \ref{subsec:Derived} we have adjoint functors $f^{-1} \dashv f_\ast$
\[
\xymatrix{
\D(f^{-1}\A_2)  \ar@<.5ex>[r]^-{f_\ast} & \D(\A_2)\ar@<.5ex>[l]^-{f^{-1}}.
}
\]
such that  $f_\ast$ preserves cohomological completeness by Proposition 1.5.12 of \cite{KS3}.
Thus $f_\ast$ induces a functor
\[
f_\ast:\D_\cc(f^{-1} \A_2) \to \D_\cc(\A_2).
\]
The left adjoint of $f_\ast$ is the cohomological completion
\[
\hat{f}^{-1}:\D_\cc(f^{-1} \A_2) \to \D_\cc(\A_2)
\]
of $f^{-1}$, which we will typically denote $f^{-1}$ instead of $\hat{f}^{-1}$.

The relative tensor prodcuts of DQ-modules is the $\Chbar$-linear functor 
\begin{align*}
- \utensor_{\A_2} - &\colon \Der(\A_{12^\mathrm{a}}) \times \Der(\A_{23^\mathrm{a}}) \to \Der(p_{13}^{-1}\A_{13^\mathrm{a}}), \quad (\K_1, \K_2)  \mapsto p_{12}^{-1} \K_1 \tensor_{p_{12}^{-1}\A_{12^\mathrm{a}}} \A_{123} \tensor_{p_{23^\mathrm{a}}^{-1} \A_{23}} p_{23}^{-1} \K_2
\end{align*}
(see Appendix \ref{subsec:tensors} for details). Its cohomological completion induces the functor
\begin{align*}
- \cutensor_{\A_2} - &\colon \D_\cc(\A_{12^\mathrm{a}}) \times \D_\cc(\A_{23^\mathrm{a}}) \to \D_\cc(p_{13}^{-1}\A_{13^\mathrm{a}}).
\end{align*}
\begin{proposition} Let $\K_1 \in \D_\cc(\A_{12^\mathrm{a}})$ and $\K_2 \in \D_\cc(\A_{23^\mathrm{a}})$. There are canonical isomorphisms
\begin{align*}
\hat{p}_{12}^{-1} \K_1 \ctensor_{p_{12}^{-1} \A_{12^\mathrm{a}}} (\A_{123} \ctensor_{p_{23^\mathrm{a}}^{-1} \A_{23}} \hat{p}_{23}^{-1} \K_2 )\stackrel{\sim}{\longleftarrow} 
\K_1 \cutensor_{\A_2} \K_2  \stackrel{\sim}{\longrightarrow} (\hat{p}_{12}^{-1} \K_1 \ctensor_{p_{12}^{-1}\A_{12^\mathrm{a}}} \A_{123}) \ctensor_{p_{23^\mathrm{a}}^{-1} \A_{23}} \hat{p}_{23}^{-1} \K_2.
\end{align*}
\end{proposition}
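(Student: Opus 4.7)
The plan is to deduce the three equivalences from the compatibility of cohomological completion with pullbacks and (relative) tensor products. The key technical input I would establish is the following: for a sheaf of $\Chbar$-algebras $\B$ and any $X, Y \in \Der(\B)$, the natural maps
\begin{equation*}
i^\land(X \otimes_\B Y) \to i^\land(i_\land i^\land X \otimes_\B Y), \qquad i^\land(X \otimes_\B Y) \to i^\land(X \otimes_\B i_\land i^\land Y)
\end{equation*}
are equivalences. Indeed, the fiber of $X \to i_\land i^\land X$ is the $\hbar$-nilpotent object $i_\lor i^\lor X$, and since $\hbar$ is central in $\B$ one has $\B[\hbar^{-1}] \otimes_\B (-) \simeq \Chbar[\hbar^{-1}] \otimes_\Chbar (-)$; hence $(i_\lor i^\lor X) \otimes_\B Y$ is again $\hbar$-nilpotent, and $i^\land$ annihilates it. The same principle, combined with $\hat{p}^{-1} = i^\land \circ p^{-1}$ and the $\Chbar$-linearity of $p^{-1}$ (which ensures $p^{-1}$ preserves nilpotent objects), allows $\hat{p}^{-1}$ to be substituted for $p^{-1}$ inside any expression that is subsequently completed.

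With the lemma in hand, I would unfold the definitions. By construction of $\cutensor$ as the completion of $\utensor$,
\begin{equation*}
\K_1 \cutensor_{\A_2} \K_2 \simeq i^\land\bigl(p_{12}^{-1}\K_1 \otimes_{p_{12}^{-1}\A_{12^\mathrm{a}}} \A_{123} \otimes_{p_{23^\mathrm{a}}^{-1}\A_{23}} p_{23}^{-1}\K_2\bigr).
\end{equation*}
Expanding the right-hand bracketing,
\begin{equation*}
(\hat{p}_{12}^{-1}\K_1 \ctensor \A_{123}) \ctensor \hat{p}_{23}^{-1}\K_2 = i^\land\bigl( i_\land i^\land(i^\land p_{12}^{-1}\K_1 \otimes \A_{123}) \otimes i^\land p_{23}^{-1}\K_2 \bigr),
\end{equation*}
and applying the lemma iteratively to absorb each $i_\land i^\land$ and each $i^\land$ on a pullback reduces this to $i^\land$ of the underlying un-completed triple tensor, hence to $\K_1 \cutensor_{\A_2} \K_2$. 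The opposite bracketing is treated identically. The canonical comparison maps in the statement are induced by the units $p^{-1} \to \hat{p}^{-1}$ and $X \otimes Y \to X \ctensor Y$, and the lemma shows that they are equivalences.

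The principal technical point to verify is the key lemma in the relative setting: nilpotent objects must form a tensor ideal with respect to $\otimes_\B$, and not merely with respect to the absolute tensor over $\Chbar$. As sketched above this follows from centrality of $\hbar$, which transports the absolute localization $(-)[\hbar^{-1}]$ through any relative tensor over $\B$. Beyond this, the argument is a formal consequence of the idempotence of $i^\land$ and the associativity of the underlying un-completed triple tensor product.
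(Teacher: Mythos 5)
Your overall strategy --- establish as a key lemma that the completion of a relative tensor product is insensitive to replacing a factor by its completion, and then deduce the proposition by formally unfolding both bracketings --- is workable and genuinely different from the paper's argument. The paper simply constructs the comparison map from the unit of the completion adjunction, then verifies it is an equivalence by applying $\gr$ and invoking Proposition \ref{prop:gr_cc} together with the conservativity of $\gr$ on cohomologically complete objects. Your route is more intrinsic: it avoids $\gr$ entirely and works directly with the semiorthogonal decompositions. That has some appeal, since it would generalize beyond the DQ setting.

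However, your proof of the key lemma has a genuine error. You assert that the fiber of the unit $X \to i_\land i^\land X$ is the $\hbar$-nilpotent object $i_\lor i^\lor X$, and that $i^\land$ annihilates $\hbar$-nilpotent objects. Both claims are false. By Proposition \ref{prop:fundccnil}(3), the functorial exact triangle is $j_*j^\times \to \id \to i_\land i^\land$, so the fiber is the $\hbar$-\emph{local} object $j_*j^\times X$; the triangle $i_\lor i^\lor \to \id \to j_*j^*$ belongs to the other semiorthogonal decomposition $(\C^{\nil},\C^{\loc})$ and computes the fiber of $X \to j_*j^*X$, not of the completion map. Moreover, $i^\land$ does \emph{not} annihilate nilpotent objects: by Proposition \ref{prop:nilcompeq} the restriction of $i^\land$ to $\C^{\nil}$ is an \emph{equivalence} onto $\C^{\Cpl}$, and for instance $\Chbar/\hbar$ is nilpotent yet already complete, so $i^\land(\Chbar/\hbar)\simeq\Chbar/\hbar\neq 0$. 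What $i^\land$ does annihilate is the $\hbar$-\emph{local} objects, precisely because $(\C^{\loc},\C^{\Cpl})$ is a semiorthogonal decomposition.

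The fix is mechanical: replace ``nilpotent'' by ``local'' throughout. The fiber $j_*j^\times X$ is $\hbar$-local, local objects form a tensor ideal for $\otimes_\B$ for exactly the centrality reason you cite (since $L\simeq \B[\hbar^{-1}]\otimes_\B L\simeq \Chbar[\hbar^{-1}]\otimes_{\Chbar} L$ forces $L\otimes_\B Y$ to be local), $p^{-1}$ preserves local objects by $\Chbar$-linearity, and $i^\land$ kills local objects. With that substitution the key lemma holds and the rest of your formal unfolding goes through unchanged.
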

\begin{proof}
The unit of the adjunction between the completion and inclusion functors provide a map
\begin{equation*}
\K_1 \utensor_{\A_2} \K_2 \longrightarrow (\hat{p}_{12}^{-1} \K_1 \ctensor_{p_{12}^{-1}\A_{12^\mathrm{a}}} \A_{123}) \ctensor_{p_{23^\mathrm{a}}^{-1} \A_{23}} \hat{p}_{23}^{-1} \K_2.
\end{equation*}
Applying the completion functor to the above morphism, we get a map
\begin{equation}\label{mor:assocomtens}
\K_1 \cutensor_{\A_2} \K_2  \stackrel{\sim}{\longrightarrow} (\hat{p}_{12}^{-1} \K_1 \ctensor_{p_{12}^{-1}\A_{12^\mathrm{a}}} \A_{123}) \ctensor_{p_{23^\mathrm{a}}^{-1} \A_{23}} \hat{p}_{23}^{-1} \K_2.
\end{equation}
Applying the $\gr$ functor, using Proposition \ref{prop:gr_cc} and the conservativity of the $\gr$ functor, we get that the morphism \eqref{mor:assocomtens} is an isomorphism. The other isomorphism is proved similarly.
\end{proof}
The composition of cohomologically complete algebraic DQ-kernels is defined as follows.
\begin{definition}\label{def:DQconv}
We write $- \underset{2}{\circ} -\colon \D_\cc(\A_{12^\mathrm{a}}) \times \D_\cc(\A_{23^\mathrm{a}}) \to \D_\cc(\A_{13^\mathrm{a}})$ for the pushforward of the relative tensor product functor  $(\K_1, \K_2) \mapsto p_{13 \ast} (\K_1 \cutensor_{\A_2} \K_2)$.
\end{definition}

\begin{lemma}\label{lem:gr_comp} Let $\K_i \in \D_{\qcc}(\A_{i(i+1)^\mathrm{a}})$ $(i=1, 2)$. Then
$	\gr (\K_1 \underset{2}{\circ} \K_2)  \simeq \gr \K_1 \underset{2}{\circ} \gr \K_2.$
\end{lemma}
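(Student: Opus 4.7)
The plan is to unwind the definition of $-\underset{2}{\circ}-$, push $\gr$ past each of the three operations (pushforward along $p_{13}$, cohomological completion, and tensor product), and then reassemble the result on the graded side. By Definition \ref{def:DQconv},
\[
\gr(\K_1\underset{2}{\circ}\K_2)\simeq\gr\, p_{13\ast}\bigl(\K_1\cutensor_{\A_2}\K_2\bigr),
\]
so it suffices to show (i) that $\gr$ commutes with $p_{13\ast}$, (ii) that $\gr$ is insensitive to cohomological completion, and (iii) that $\gr$ is monoidal with respect to the relative tensor product.

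For (i), I would use that $\gr\M$ is computed by the cofiber of multiplication by $\hbar$: the short exact sequence $0\to\A\stackrel{\hbar}{\to}\A\to\gr\A\to 0$ gives an exact triangle $\M\stackrel{\hbar}{\to}\M\to\gr\M$ in $\Der(\A)$. Since $p_{13\ast}$ is an exact functor of stable $\i$-categories (Proposition \ref{prop:commlim}), it preserves this cofiber sequence, and multiplication by $\hbar$ is natural in the module, so
\[
\gr\,p_{13\ast}(-)\simeq\mathrm{cofib}\bigl(\hbar\colon p_{13\ast}(-)\to p_{13\ast}(-)\bigr)\simeq p_{13\ast}\,\mathrm{cofib}(\hbar)\simeq p_{13\ast}\,\gr(-).
\]
For (ii), Proposition \ref{prop:gr_cc} gives $\gr\circ i_\land\circ i^\land\simeq\gr$, so $\gr(\K_1\cutensor_{\A_2}\K_2)\simeq\gr(\K_1\utensor_{\A_2}\K_2)$. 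For (iii), an iterated application of Proposition \ref{prop:isogr} (twice, once for each tensor product occurring in the definition of $\utensor_{\A_2}$ as $p_{12}^{-1}\K_1\otimes_{p_{12}^{-1}\A_{12^\mathrm{a}}}\A_{123}\otimes_{p_{23^\mathrm{a}}^{-1}\A_{23}}p_{23}^{-1}\K_2$), together with the observation that $\gr$ commutes with $p_{ij}^{-1}$ (which is exact and symmetric monoidal at the level of $\ZZ[\hbar]$-modules, or equivalently because $\gr$ is basechange along $\A\to\gr\A$), produces
\[
\gr\bigl(\K_1\utensor_{\A_2}\K_2\bigr)\simeq\gr\K_1\utensor_{\gr\A_2}\gr\K_2.
\]

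Chaining the three steps gives
\[
\gr(\K_1\underset{2}{\circ}\K_2)\simeq p_{13\ast}\bigl(\gr\K_1\utensor_{\gr\A_2}\gr\K_2\bigr)\simeq\gr\K_1\underset{2}{\circ}\gr\K_2,
\]
where the last identification uses that on $\gr\A$-modules no cohomological completion is needed (the category $\Der(\gr\A)$ is already nilpotent, hence any completion is trivial on it), so the composition for $\gr\A$-kernels is literally the pushforward of the underived relative tensor product. The main potential obstacle is step (i): one has to be sure that $p_{13\ast}$ preserves the cofiber sequence defining $\gr$, but this is immediate once $p_{13\ast}$ is identified as an exact functor of stable $\i$-categories, which is exactly what Proposition \ref{prop:commlim} provides in our Noetherian finite Krull dimension setting.
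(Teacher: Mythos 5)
Your proof is correct and follows essentially the same three-step decomposition as the paper: commute $\gr$ past the pushforward $p_{13\ast}$, remove the cohomological completion via Proposition \ref{prop:gr_cc}, and then identify $\gr$ of the relative tensor product with the relative tensor product of the $\gr$'s. The one genuine difference is in step (i): where the paper simply cites \cite[Proposition 1.4.4]{KS3} for the commutation of $\gr$ with pushforward, you give a self-contained argument by identifying $\gr$ as the cofiber of multiplication by $\hbar$ and using exactness of $p_{13\ast}$ (a right adjoint between stable $\infty$-categories is automatically exact, and Proposition \ref{prop:commlim} gives even more). This is a nice, more elementary route; the only thing to be careful about is that it identifies the underlying $\Chbar$-modules of $\gr\, p_{13\ast}(-)$ and $p_{13\ast}\,\gr(-)$, while the $\gr\A$-linearity of the identification is what the cited \cite[Proposition 1.4.4]{KS3} additionally records. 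One small terminology slip: in the final sentence, ``the pushforward of the underived relative tensor product'' should read ``\emph{uncompleted}'' rather than ``underived'' --- the tensor product on the $\gr\A$-side is still the derived tensor product of $\O$-modules, just with no $\hbar$-completion step, since $\Der_\qcoh(\gr\A)$ is $\hbar$-nilpotent and hence already complete.
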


\begin{proof}
 By \cite[Proposition 1.4.4]{KS3}, the functor $\gr$ commutes with pushforward. Hence,
\begin{align*}
\gr (\K_1 \underset{2}{\circ} \K_2) & \simeq p_{13 \ast}(\gr(\K_1 \cutensor_{\A_2} \K_2))\\
& \simeq p_{13 \ast}(\gr(\K_1 \utensor_{\A_2} \K_2)) \quad \textnormal{(Prop. \ref{prop:gr_cc})}\\
& \simeq p_{13 \ast}(p_{12}^\ast \gr \K_1 \tensor_{\O_{123}} p_{23}^\ast\gr \K_2),
\end{align*}
where the final equivalence follows from Proposition \ref{prop:gr_cc} and \cite[Proposition 1.4.4]{KS3}.
\end{proof}

\begin{lemma}
Let $\K_i \in \D_{\qcc}(\A_{i(i+1)^\mathrm{a}})$ $(i=1, 2)$. The kernel $
\K_1 \underset{2}{\circ} \K_2$ is an object of $\D_{\qcc}(\A_{13^\mathrm{a}})$.
\end{lemma}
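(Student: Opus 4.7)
The plan is to unpack what qcc means and reduce to the corresponding statement for quasi-coherent sheaves. By definition, an object of $\D_\cc(\A_{13^\mathrm{a}})$ lies in $\D_\qcc(\A_{13^\mathrm{a}})$ if and only if its image under the functor $\gr$ is quasi-coherent, i.e.\ lies in $\D_\qcoh(\gr\A_{13^\mathrm{a}})\simeq\D_\qcoh(\O_{X_1\times X_3^\mathrm{a}})$. Cohomological completeness is already built into Definition \ref{def:DQconv}, since the functor $\underset{2}{\circ}$ is valued in $\D_\cc(\A_{13^\mathrm{a}})$ by construction (it is the composition of the cohomologically complete tensor product $\cutensor$ with $p_{13*}$, and $p_{13*}$ preserves cohomological completeness by Proposition 1.5.12 of \cite{KS3}).

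Thus the only thing to verify is that $\gr(\K_1\underset{2}{\circ}\K_2)$ is quasi-coherent. I would invoke Lemma \ref{lem:gr_comp} directly, which yields
\begin{equation*}
\gr(\K_1\underset{2}{\circ}\K_2)\simeq\gr\K_1\underset{2}{\circ}\gr\K_2\simeq p_{13*}(p_{12}^*\gr\K_1\otimes_{\O_{123}}p_{23}^*\gr\K_2).
\end{equation*}
By hypothesis $\gr\K_i\in\D_\qcoh(\gr\A_{i(i+1)^\mathrm{a}})\simeq\D_\qcoh(\O_{X_i\times X_{i+1}^\mathrm{a}})$ for $i=1,2$.

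It then suffices to observe that the classical kernel composition preserves quasi-coherence: the pullbacks $p_{12}^*$ and $p_{23}^*$ along the smooth projections send quasi-coherent sheaves to quasi-coherent sheaves, the tensor product of two quasi-coherent $\O_{123}$-modules is quasi-coherent, and the pushforward $p_{13*}$ preserves quasi-coherence on Noetherian schemes of finite Krull dimension (using Proposition \ref{prop:commlim} to see that $p_{13*}$ commutes with colimits in our setting, so that quasi-coherence is preserved at the derived level). This concludes that $\gr\K_1\underset{2}{\circ}\gr\K_2\in\D_\qcoh(\gr\A_{13^\mathrm{a}})$, hence $\K_1\underset{2}{\circ}\K_2\in\D_\qcc(\A_{13^\mathrm{a}})$.

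The only nontrivial point is justifying quasi-coherence of the derived pushforward; however, since $X_1\times X_2\times X_3$ and $X_1\times X_3$ are smooth complex algebraic varieties (hence Noetherian of finite Krull dimension) and $p_{13}$ is proper (being a projection from a product) --- or at worst we can reduce to the affine-local picture --- this is standard and not an obstacle.
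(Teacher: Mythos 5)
Your proof follows essentially the same route as the paper: cohomological completeness is built into Definition \ref{def:DQconv}, and the quasi-coherence of the graded object reduces via Lemma \ref{lem:gr_comp} to the classical fact that convolution of quasi-coherent kernels is quasi-coherent. One small slip in your parenthetical aside: $p_{13}\colon X_1\times X_2\times X_3\to X_1\times X_3$ is proper only when $X_2$ is proper, which is not assumed here; fortunately your fallback (derived pushforward preserves $\D_\qcoh$ for quasi-compact quasi-separated maps of Noetherian schemes, or equivalently the commutativity with colimits from Proposition \ref{prop:commlim} plus a local check on generators) is the correct justification, so the argument survives.
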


\begin{proof}
By definition of the composition of kernels, we know that $\K_1 \underset{2}{\circ} \K_2 \in \D_\cc(\A_{13^\mathrm{a}})$. It remains to check that $\gr (\K_1 \underset{2}{\circ} \K_2) \in \Der_\qcoh(\O_{13})$. By Lemma \ref{lem:gr_comp},
\begin{equation*}
\gr (\K_1 \underset{2}{\circ} \K_2)  \simeq \gr \K_1 \underset{2}{\circ} \gr \K_2.
\end{equation*}
and the composition of quasi-coherent kernels is again quasi-coherent.
\end{proof}
The above lemma implies that $- \underset{2}{\circ} -$ induces a functor $- \underset{2}{\circ} - \colon \D_\qcc(\A_{12^\mathrm{a}}) \times \D_\qcc(\A_{23^\mathrm{a}}) \to \D_\qcc(\A_{13^\mathrm{a}})$.
If $\K \in \D_{\qcc}(\A_{12^\mathrm{a}})$, this implies that the following functor (\ref{MukaiDQqcc}) is well-defined:
\begin{equation} \label{MukaiDQqcc}
\scalebox{0.96}{$\Phi_\K: \D_{\qcc}(\A_2) \to \D_{\qcc}(\A_1), \quad  \M \mapsto \K \underset{2}{\circ} \M = p_{1\ast}(\K \cutensor_{p_2^{-1}\A_2} \hat{p}_2^{-1} \M).$} 
\end{equation}

\begin{proposition}
Let $X_i$ $(i=1,2,3,4)$ be smooth algebraic varieties endowed with DQ-algebroid stacks $\A_i$  $(i=1,2,3,4)$ and let $\K_i \in \D_{\qcc}(\A_{i(i+1)^\mathrm{a}})$ $(i=1,2,3)$. There is a canonical isomorphism in $\D_\qcc(\A_{14^\mathrm{a}})$
\[
(\K_1 \underset{2}{\circ} \K_2) \underset{3}{\circ} \K_3 \simeq \K_1 \underset{2}{\circ} (\K_2 \underset{3}{\circ} \K_3).
\]
\end{proposition}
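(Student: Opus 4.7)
The plan is to follow the same two-step strategy used to establish associativity of $\cutensor$ earlier in this section: first construct a canonical comparison morphism between the two iterated compositions by assembling base change, projection formula, and associativity of the completed tensor product, then verify this comparison is an equivalence by applying the conservative functor $\gr$ on cohomologically complete modules.

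For the construction of the associator, I would work on the quadruple product $X_{1234}=X_1\times X_2\times X_3\times X_4$. Unwinding both sides of the desired equivalence, each is of the form $p_{14*}$ applied to an iterated pullback-tensor expression. Using the projection formula and base change for the maps involved in the various factorizations of $p_{14}$ (which apply since the varieties are Noetherian of finite Krull dimension, cf.\ Proposition \ref{prop:commlim}) together with the associativity of $\cutensor_{\A_i}$ (the previous proposition, extended to a ternary setting by the same argument), both iterated compositions admit a canonical identification with a single ``triple composition''
\begin{equation*}
p_{14*}\bigl(\hat p_{12}^{-1}\K_1\ctensor_{p_{12}^{-1}\A_{12^\mathrm{a}}}\A_{1234}\ctensor_{p_{23^\mathrm{a}}^{-1}\A_{23}}\hat p_{23}^{-1}\K_2\ctensor_{p_{34^\mathrm{a}}^{-1}\A_{34}}\hat p_{34}^{-1}\K_3\bigr),
\end{equation*}
where $\A_{1234}$ is the corresponding DQ-algebroid on $X_{1234}$. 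The desired canonical associator is the resulting zigzag of equivalences in $\D_\qcc(\A_{14^\mathrm{a}})$.

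To verify the associator is an equivalence, I would apply $\gr$ to both sides. By iterated application of Lemma \ref{lem:gr_comp}, the two sides become, respectively,
\begin{equation*}
(\gr\K_1\underset{2}{\circ}\gr\K_2)\underset{3}{\circ}\gr\K_3\qquad\text{and}\qquad\gr\K_1\underset{2}{\circ}(\gr\K_2\underset{3}{\circ}\gr\K_3),
\end{equation*}
which agree by the classical associativity of the composition of quasi-coherent kernels on smooth algebraic varieties (itself a consequence of base change and the projection formula in the quasi-coherent setting). Since both $(\K_1\underset{2}{\circ}\K_2)\underset{3}{\circ}\K_3$ and $\K_1\underset{2}{\circ}(\K_2\underset{3}{\circ}\K_3)$ lie in $\D_\qcc(\A_{14^\mathrm{a}})\subset\D_\cc(\A_{14^\mathrm{a}})$, and $\gr$ is conservative on $\D_\cc$, the associator is an equivalence as required.

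The main obstacle I expect is the formal bookkeeping for constructing the canonical comparison map itself: interchanging the pushforward $p_{13*}$ with the completed tensor $\cutensor_{\A_3}\K_3$ requires a projection formula compatible with $\hbar$-completion (which one obtains from the uncompleted projection formula together with the fact that $p_*$ preserves cohomological completeness and commutes with colimits in our Noetherian setting), and the ternary associativity of $\cutensor$ across $\A_2$ and $\A_3$ simultaneously must be unpacked with some care. Once the comparison map is in place, the verification via $\gr$ is essentially immediate.
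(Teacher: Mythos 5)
Your proposal is correct and aligns with the paper's approach: the paper's proof is literally just a pointer to \cite[Proposition 3.2.4]{KS3}, whose argument is exactly the base-change--plus--projection-formula computation on the quadruple product $X_{1234}$ that you sketch. Your additional step of verifying the comparison map via $\gr$-conservativity is the paper's standard device for transporting such identities to the $\hbar$-complete setting (it is the same strategy used for the associativity of $\cutensor$ in the preceding proposition and in Lemma \ref{lem:gr_comp}), so it is a faithful fleshing-out of what "similar to KS3" means here rather than a genuinely different route.
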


\begin{proof}
The proof is similar to the one of \cite[Propostion 3.2.4]{KS3}.
\end{proof}

\subsection{Finiteness results for integral tranforms}
In this short subsection, we recall some finiteness results that we will use subsequently.
We begin with the following result, which is a special case of Theorem 3.2.1 of \cite{KS3}.
\begin{theorem}\label{thm:prescoh}
Let $X_i$ $(i=1,2,3)$ be a smooth complex variety. For $i=1,2$ consider the product  $X_i \times X_{i+1}$ and let $\K_i \in \Derb_\coh(\A_{i(i+1)^\mathrm{a}})$. Assume that $X_2$ is proper. Then the object  $\K_1 \underset{2}{\circ}\K_2$ belongs to $\Derb_\coh(\A_{13^\mathrm{a}})$.
\end{theorem}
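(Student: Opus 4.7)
The plan is to reduce the statement to the analogous result for quasi-coherent $\O$-modules via the $\gr$ functor, and then lift coherence back to the DQ-setting using the completeness-coherence interchange.

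First, I would observe that by the lemma preceding the theorem, $\K_1 \underset{2}{\circ}\K_2$ already lies in $\D_{\qcc}(\A_{13^\mathrm{a}})$; in particular it is cohomologically complete. The task is then to upgrade this to membership in $\Derb_\coh(\A_{13^\mathrm{a}})$.

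Next, by Lemma \ref{lem:gr_comp} there is a canonical equivalence
\[
\gr(\K_1 \underset{2}{\circ}\K_2) \simeq \gr \K_1 \underset{2}{\circ}\gr \K_2
\]
in $\Der_\qcoh(\O_{X_1\times X_3})$. Since $\K_i \in \Derb_\coh(\A_{i(i+1)^\mathrm{a}})$, the modules $\gr \K_i \simeq \CC \otimes_{\Chbar}^{\mathrm{L}} \K_i$ are objects of $\Derb_\coh(\O_{X_i\times X_{i+1}})$ (each $\A_{i(i+1)^\mathrm{a}}$-module is coherent over $\Chbar$-structure, and the $\gr$ of a coherent DQ-module is coherent). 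Now I would invoke the classical preservation of coherence for the composition of coherent $\O$-kernels on smooth varieties when the middle factor $X_2$ is proper: the projection $p_{13}\colon X_1\times X_2\times X_3\to X_1\times X_3$ is proper, so its pushforward preserves boundedness and coherence, and smoothness of the $X_i$ ensures that the relevant derived tensor product of bounded coherent complexes remains in $\Derb_\coh$. Consequently $\gr(\K_1 \underset{2}{\circ}\K_2)\in \Derb_\coh(\O_{X_1\times X_3})$.

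Finally, I would apply the Nakayama-type statement \cite[Theorem 1.6.4]{KS3} (already used in Lemma \ref{lem:dualcoh}): a cohomologically complete DQ-module whose $\gr$ is bounded coherent is itself bounded coherent. Since $\K_1 \underset{2}{\circ}\K_2$ is cohomologically complete and its $\gr$ is bounded coherent, we conclude $\K_1 \underset{2}{\circ}\K_2\in \Derb_\coh(\A_{13^\mathrm{a}})$.

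The main obstacle is the classical coherence-preservation step for the $\O$-module composition; in the algebraic setting one may either argue directly via proper pushforward plus smoothness, or alternatively invoke GAGA for DQ-modules \cite{Chen2010} together with the analytic version in \cite[Theorem 3.2.1]{KS3}. Everything else is formal manipulation with the $\gr$ functor and the standard complete/coherent dictionary.
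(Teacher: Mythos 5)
Your proposal is correct, but it is a genuinely different route from the paper's. The paper does not prove this theorem at all: it simply records it as a special case of Theorem~3.2.1 of~\cite{KS3}, a result established there in the analytic setting (with GAGA for DQ-modules, as discussed in a remark in Section~\ref{sec:DQmod}, adapting it to the algebraic case). You instead reconstruct the statement from the paper's own $\gr$-machinery: reduce via Lemma~\ref{lem:gr_comp} to the composition of $\O$-module kernels, invoke Grothendieck's finiteness theorem for the proper pushforward $p_{13\ast}$ together with smoothness to control the tensor product, then lift coherence back through cohomological completeness. This is arguably more self-contained for the algebraic setting, since it trades a deep analytic finiteness theorem (Grauert-type direct image) for the algebraic Grothendieck finiteness theorem plus the Nakayama-type lemma, and it showcases the interplay between $\gr$, qcc, and coherence that the rest of the paper develops. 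One small precision in your last step: as the paper itself does in the proof of Lemma~\ref{lem:dualcoh}, the passage from ``cohomologically complete with $\gr$ bounded coherent'' to ``bounded coherent'' is a two-stage argument, first using \cite[Proposition~3.11]{FMQ} to obtain boundedness (membership in $\Der^\b_\qcc$), and only then \cite[Theorem~1.6.4]{KS3} to obtain coherence; you attribute both steps to \cite[Theorem~1.6.4]{KS3}, but the substance is right and the gap is cosmetic.
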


We also have the following result.
\begin{theorem}[{\cite[Theorem 3.12]{FMQ}}]\label{thm:compactkernel}
Let $X_1$ (resp. $X_2$) be a smooth complex projective algebraic variety endowed with a DQ-algebroid $\A_1$ (resp. $\A_2$). Let $\K \in \D_{\qcc}(\A_{12^\mathrm{a}})$. Assume that the functor $\Phi_\K: \D_{\qcc}(\A_2) \to \D_{\qcc}(\A_1)$ preserves compact objects. Then, $\K$ belongs to $\Derb_{\coh}(\A_{12^\mathrm{a}})$.
\end{theorem}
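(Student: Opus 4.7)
The strategy is to reduce the statement to its quasi-coherent analogue on $X_1\times X_2$ via the $\gr$ functor, apply the classical To\"en--Ben-Zvi--Francis--Nadler representation theorem on smooth projective varieties, and finally promote coherence of $\gr\K$ to coherence of $\K$ using cohomological completeness of $\K$.

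First, I would show that the compact-preservation hypothesis on $\Phi_\K$ transfers to a perfect-preservation property for the commutative integral transform $\Phi_{\gr\K}\colon\D_\qcoh(\O_2)\to\D_\qcoh(\O_1)$. For a perfect object $\F\in\D_\qcoh(\O_2)\simeq\D_\qcoh(\gr\A_2)$, the lifted module $\iota(\F)\in\D_\qcc(\A_2)$ is coherent and killed by $\hbar$, hence of uniform $\hbar$-torsion and thus compact by Theorem \ref{thm:compactobj}. By assumption $\Phi_\K(\iota\F)$ is then compact in $\D_\qcc(\A_1)$, so in particular is a coherent $\A_1$-module. Applying $\gr$ and using that it commutes with proper pushforward and with the completed relative tensor product (Lemma \ref{lem:gr_comp} together with Propositions \ref{prop:gr_cc} and \ref{prop:isogr}), one obtains a canonical equivalence $\gr\Phi_\K(\iota\F)\simeq\Phi_{\gr\K}(\gr\iota\F)$. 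A short computation using the two-term flat resolution $\A_2\xrightarrow{\hbar}\A_2$ of $\gr\A_2$, and the fact that $\hbar$ acts by zero on $\iota\F$, gives $\gr\iota\F\simeq\F\oplus\F[1]$. Hence $\Phi_{\gr\K}(\F)$ is a direct summand of the coherent---and so perfect, by smoothness of $X_1$---object $\gr\Phi_\K(\iota\F)$, and therefore is itself perfect. Thus $\Phi_{\gr\K}$ sends perfect objects to perfect objects.

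Second, I would invoke the representation theorem of \cite{BFN} for smooth projective varieties: a colimit-preserving functor $\D_\qcoh(\O_2)\to\D_\qcoh(\O_1)$ is given by a perfect kernel if and only if it preserves compact objects. Applied to $\Phi_{\gr\K}$, this forces $\gr\K\in\Perf(\O_{12})=\Derb_\coh(\O_{12})$, where the last equality uses smoothness of $X_1\times X_2$.

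Finally, to promote coherence of $\gr\K$ to coherence of $\K$, I would observe that $\K\in\D_\qcc(\A_{12^\mathrm{a}})$ is by construction cohomologically complete while $\gr\K$ is cohomologically bounded with coherent cohomologies. The Kashiwara--Schapira coherence criterion \cite[Theorem 1.6.4]{KS3} then yields $\K\in\Derb_\coh(\A_{12^\mathrm{a}})$, exactly as in the argument of Lemma \ref{lem:dualcoh}. The main obstacle I foresee is the first step: one has to verify carefully that $\gr$ intertwines $\Phi_\K$ with $\Phi_{\gr\K}$ in the mixed situation where the argument is a lifted perfect sheaf $\iota(\F)$ rather than a genuine DQ-module, extending the compatibility of Lemma \ref{lem:gr_comp} to that setting, and to check that the resulting perfect-object preservation property really matches the hypothesis required on the commutative side in the BFN representation theorem.
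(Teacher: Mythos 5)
The paper does not supply a proof of this statement: it is quoted verbatim from \cite[Theorem 3.12]{FMQ}, so there is no internal argument in the present paper to compare yours against. Judging your proof on its own merits, though, it is sound and follows a route which is consistent with the rest of the paper's machinery.

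A few specific remarks. The concern you raise at the end about the ``mixed situation'' where the argument $\iota(\F)$ is a lifted quasi-coherent sheaf rather than a ``genuine'' DQ-module is not actually an issue: $\iota(\F)$ \emph{is} a genuine object of $\D_\qcc(\A_2)$, so Lemma \ref{lem:gr_comp} (applied with $X_3=\pt$) gives the intertwining $\gr\Phi_\K(\iota\F)\simeq\Phi_{\gr\K}(\gr\iota\F)$ directly, with no extension required. Your computation $\gr\iota\F\simeq\F\oplus\F[1]$ via the Koszul resolution $\A_2\xrightarrow{\hbar}\A_2$ is correct, and the passage from compactness of $\Phi_\K(\iota\F)$ in $\D_\qcc(\A_1)$ to perfection of $\gr\Phi_\K(\iota\F)$ (using Theorem \ref{thm:compactobj}, the fact that $\gr$ preserves bounded coherence, and smoothness of $X_1$) is fine. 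The last step --- promoting coherence of $\gr\K$ to coherence of $\K$ via \cite[Prop.~3.11]{FMQ} and \cite[Theorem 1.6.4]{KS3} --- is precisely the argument of the paper's Lemma \ref{lem:dualcoh}. The one place you should tighten the exposition is the commutative input: what you need is the ``only if'' direction of the statement ``for $X_1,X_2$ smooth projective, $\Phi_{K}$ preserves compacts iff $K\in\Perf(\O_{X_1\times X_2})$'', which is the quasi-coherent precursor of Theorem \ref{thm:Intrepcomp}. This follows from the dualizability and compact generation of $\D_\qcoh$ for perfect stacks as in \cite{BFN}, together with smoothness and properness of $\Perf(X_2)$ as a dg-category, but it is not stated in \cite{BFN} in exactly the ``iff'' form you quote; citing \cite{mordg} or \cite{Ben-Zvi2013} for this refinement would be cleaner.
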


The situation where the kernel of the integral transform is coherent is particularly interessting  because of the following result which is a DQ-module analogues of well-known results of algebraic geometry. 

Let $X_1$ and $X_2$ be smooth complex projective algebraic varieties and let $\K \in \Derb_{\coh}(\A_{12^\mathrm{a}})$, we set
\begin{align*}
\K_R = \fMap_{\A_{12^a}}(\K,\A_{12^a}) \underset{2}{\circ} \omega_{2^a} && \K_L = \omega_{1^a} \underset{1^a}{\circ} \fMap_{\A_{12^a}}(\K,\A_{12^a})  
\end{align*}

\begin{proposition}[{\cite[Proposition 3.15]{FMQ}}]
 Let $\Phi_\K \colon \Derb_{\coh}(\A_{2}) \to \Derb_{\coh}(\A_{1})$ be the integral transform associated with $\K$ and $\Phi_{\K_R} \colon \Derb_{\coh}(\A_{1}) \to \Derb_{\coh}(\A_{2})$ (resp. $\Phi_{\K_L}$) the Fourier-Mukai functor associated with $\K_R$ (resp. $\K_L$). Then $\Phi_{\K_R}$ (resp. $\Phi_{\K_L}$) is right (resp. left) adjoint to $\Phi_\K$.
\end{proposition}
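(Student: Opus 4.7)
The plan is to compute the right adjoint of $\Phi_\K$ by decomposing it into basic operations with known adjoints, then composing those adjoints in reverse order and identifying the result as $\Phi_{\K_R}$. Writing
\[
\Phi_\K(\M) \;\simeq\; p_{1*}\bigl(\K \utensor_{p_2^{-1}\A_2} \hat{p}_2^{-1}\M\bigr),
\]
the right adjoint is the composition of the right adjoints of the three constituent functors in reverse order: $p_{2*}$ (right adjoint to $\hat{p}_2^{-1}$), the internal $\fMap_{\A_{12^\mathrm{a}}}(\K, -)$ (right adjoint to $\K \utensor_{p_2^{-1}\A_2}(-)$ by the tensor--hom adjunction), and a functor $p_1^!$ right adjoint to $p_{1*}$, which exists since $p_1$ is proper ($X_2$ being projective). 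By Grothendieck duality for DQ-modules in the smooth setting, cf.\ \cite[\textsection 3.3]{KS3}, $p_1^!\N$ is computed as an $\omega_{p_1}$-twist of $p_1^{-1}\N$.

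To identify the resulting composite with $\Phi_{\K_R}$, one uses that $\K$ is dualizable by the preceding proposition, so that the canonical map
\[
\fMap_{\A_{12^\mathrm{a}}}(\K,\A_{12^\mathrm{a}}) \utensor_{\A_{12^\mathrm{a}}} p_1^!\N \;\stackrel{\sim}{\longrightarrow}\; \fMap_{\A_{12^\mathrm{a}}}(\K, p_1^!\N)
\]
is an equivalence. The projection formula for $p_{2*}$ together with base change allows one to pull $p_1^{-1}\N$ outside the inner convolution, while the relative dualizing twist $\omega_{p_1}$ reassembles as a convolution against $\omega_{2^\mathrm{a}}$ in the middle factor. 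This produces exactly
\[
p_{2*}\bigl(\K_R \utensor_{p_1^{-1}\A_1} p_1^{-1}\N\bigr) \;\simeq\; \Phi_{\K_R}(\N),
\]
as desired. The left adjoint case is entirely parallel: one decomposes $\Phi_\K$ the same way and composes left adjoints, with $\omega_{1^\mathrm{a}}$ convolved on the opposite side of $\fMap_{\A_{12^\mathrm{a}}}(\K,\A_{12^\mathrm{a}})$ producing the formula for $\K_L$.

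The main obstacle is careful bookkeeping: tracking the various opposite-algebroid decorations $\A_{i^\mathrm{a}}, \A_{12^\mathrm{a}}$, and ensuring that Grothendieck--Serre duality in the DQ setting produces the precise $\omega$-twist needed to reassemble as convolution with $\omega_{2^\mathrm{a}}$ (resp.\ $\omega_{1^\mathrm{a}}$). This step combines the tensor--hom and projection-formula machinery for DQ-modules set up in the appendix with the duality theorem of \cite[\textsection 3.3]{KS3}, and requires verifying throughout that all manipulations remain inside $\Derb_\coh$, for which Theorem \ref{thm:prescoh} is used to see that the various convolutions preserve coherence.
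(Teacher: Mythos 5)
The paper itself does not prove this proposition: it is stated as a recalled result from \cite[Proposition 3.15]{FMQ}, and no proof appears in the present text. Your sketch is therefore being judged on its own merits rather than against a proof in the paper, but the strategy you describe---decompose $\Phi_\K$ as $p_{1*}\circ(\K\utensor_{p_2^{-1}\A_2}(-))\circ\hat p_2^{-1}$, form the right adjoint by composing $p_{2*}$, an internal $\fMap$, and $p_1^!$ supplied by duality for the proper map $p_1$, then use dualizability of $\K$ and the projection formula to reassemble the relative dualizing twist as a convolution against $\omega_{2^{\mathrm{a}}}$---is the standard Grothendieck--Serre duality computation of adjoints to Fourier--Mukai functors and is almost certainly the argument in \cite{FMQ}.

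One bookkeeping point in your sketch needs to be sharpened: the right adjoint of $\K\utensor_{p_2^{-1}\A_2}(-)\colon\D(p_2^{-1}\A_2)\to\D(p_1^{-1}\A_1)$ is an internal hom over $p_1^{-1}\A_1$ (using the bimodule structure of $\K$), whereas the dualizability step you invoke is phrased via $\fMap_{\A_{12^{\mathrm{a}}}}(\K,\A_{12^{\mathrm{a}}})$, which uses the $\A_{12^{\mathrm{a}}}$-module structure of $\K$. The two $\fMap$'s can be identified, but this identification uses the bi-invertibility of $\A_{12^{\mathrm{a}}}$ as a bimodule and the duality formalism of \cite[\textsection 3.3]{KS3}; simply writing $\fMap_{\A_{12^{\mathrm{a}}}}$ for the tensor--hom adjoint conflates two a priori different functors. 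You correctly flag this as ``careful bookkeeping,'' and it is indeed the genuine technical content of the proof in the noncommutative setting, where $\utensor$ and $\underset{2}{\circ}$ thread through $\A_{123}$ rather than factoring as in the commutative case. Subject to carrying out that identification carefully, your sketch is sound.
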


\begin{lemma}\label{lem:vanloc}
Let $p:Y\to X$ be a morphism of smooth complex algebraic varieties and suppose given a DQ-algebroid $\A$ on $X$. Let $\M \in \Der(p^{-1}\A)$. Then
\begin{equation*}
\CC_X^{\hbar,\loc} \otimes_{\CC_X^\hbar} p_\ast \M \simeq p_\ast( \CC_Y^{\hbar,\loc} \otimes_{\CC_Y^\hbar} \M).
\end{equation*}
\end{lemma}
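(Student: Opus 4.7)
The plan is to exhibit both sides as the same sequential colimit, using the fact that $p_\ast$ commutes with colimits. Recall that in $\Der(\CC^\hbar)$, the localization $\CC^{\hbar,\loc}=\CC^\hbar[\hbar^{-1}]$ is computed as the sequential colimit
\[
\CC^{\hbar,\loc}\simeq\colim\bigl(\CC^\hbar\xrightarrow{\hbar}\CC^\hbar\xrightarrow{\hbar}\CC^\hbar\to\cdots\bigr),
\]
and the same identification holds sheaf-theoretically on both $X$ and $Y$, since localizing the constant sheaves at $\hbar$ commutes with stalks. As the derived tensor product preserves colimits in each variable, for any $\N\in\Der(\CC_Y^\hbar)$ we obtain
\[
\CC_Y^{\hbar,\loc}\otimes_{\CC_Y^\hbar}\N\simeq\colim\bigl(\N\xrightarrow{\hbar}\N\xrightarrow{\hbar}\N\to\cdots\bigr),
\]
and similarly on $X$.

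The next step is to apply $p_\ast$ and use that it commutes with this sequential colimit. Since $X$ and $Y$ are smooth complex algebraic varieties, they are in particular Noetherian topological spaces of finite Krull dimension, so Proposition \ref{prop:commlim} guarantees that $p_\ast\colon\Der(p^{-1}\A)\to\Der(\A)$ commutes with colimits. Because $\hbar$ is central in $\A$ (hence also in $p^{-1}\A$) and the canonical map $p^{-1}\CC_X^\hbar\to\CC_Y^\hbar$ is an isomorphism, multiplication by $\hbar$ on $\M\in\Der(p^{-1}\A)$ pushes forward to multiplication by $\hbar$ on $p_\ast\M\in\Der(\A)$. Combining these observations yields
\[
p_\ast\bigl(\CC_Y^{\hbar,\loc}\otimes_{\CC_Y^\hbar}\M\bigr)\simeq p_\ast\,\colim_n\M\simeq\colim_n p_\ast\M\simeq\CC_X^{\hbar,\loc}\otimes_{\CC_X^\hbar}p_\ast\M,
\]
which is the desired equivalence.

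There is no real obstacle here; the argument is a direct consequence of Proposition \ref{prop:commlim} together with the presentation of $\hbar$-localization as a countable filtered colimit. The only point worth emphasising is that one uses the full force of the finite Krull dimension / Noetherian hypothesis on $X$ and $Y$ to ensure that $p_\ast$ genuinely commutes with filtered colimits, which is exactly what is provided by Proposition \ref{prop:commlim}.
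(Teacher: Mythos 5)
Your proof is correct and follows exactly the same strategy as the paper's own (very terse) proof: present $\CC^{\hbar,\loc}$ as the sequential colimit $\colim(\CC^\hbar\xrightarrow{\hbar}\CC^\hbar\to\cdots)$, pull the colimit past the tensor, and invoke Proposition \ref{prop:commlim} to commute $p_\ast$ with the resulting filtered colimit and identify $p_\ast(\hbar\colon\M\to\M)$ with $\hbar$ on $p_\ast\M$ via $k$-linearity. You have simply unpacked the one-line argument the paper gives.
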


\begin{proof}
Using that $\CC^{\hbar,\loc}$ is a filtered colimit the result follows immediatly from Proposition \ref{prop:commlim}.
\end{proof}

\begin{proposition}\label{prop:cohprecomp}
Let $X_1$ (resp. $X_2$) be a smooth complex proper algebraic variety endowed with a DQ-algebroid $\A_1$ (resp. $\A_2$). Let $\K \in \Derb_{\coh}(\A_{12^\mathrm{a}})$. Then the functor $\Phi_\K: \D_{\qcc}(\A_2) \to \D_{\qcc}(\A_1)$ preserves compact objects.
\end{proposition}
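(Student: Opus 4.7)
The plan is to verify the two conditions characterizing compact objects of $\D_\qcc(\A_1)$ supplied by Theorem \ref{thm:compactobj}: coherence together with vanishing after localizing at $\hbar$. Let $\M$ be a compact object of $\D_\qcc(\A_2)$, so that $\M\in\Derb_\coh(\A_2)$ and $\A_2^\loc\otimes_{\A_2}\M\simeq 0$. By the lemma immediately preceding the proposition, the second condition is equivalent to the existence of an integer $n>0$ such that the endomorphism $\hbar^n\colon\M\to\M$ is null; that is, $\M$ is of uniform $\hbar$-torsion. I would then check the two required properties of $\Phi_\K(\M)=\K\underset{2}{\circ}\M$ separately.

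Coherence is immediate from the finiteness input. Since $X_2$ is proper and both $\K\in\Derb_\coh(\A_{12^\mathrm{a}})$ and $\M\in\Derb_\coh(\A_2)$, Theorem \ref{thm:prescoh} (the Grauert-type preservation of coherence under composition of kernels) gives $\Phi_\K(\M)\in\Derb_\coh(\A_1)$.

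For the $\hbar$-localization condition, I would argue that $\Phi_\K(\M)$ inherits the uniform $\hbar$-torsion of $\M$ and then reverse the equivalence of the preceding lemma. Concretely, $\Phi_\K$ is built from the $\hbar$-complete pullback $\hat{p}_2^{-1}$, the cohomologically complete relative tensor product $\cutensor_{p_2^{-1}\A_2}$, and the pushforward $p_{1\ast}$; all three operations are $\CC^\hbar$-linear, and $\hbar$ is central in every DQ-algebroid appearing. Functoriality of $\Phi_\K$ in $\M$ therefore identifies the endomorphism of $\Phi_\K(\M)$ induced by $\hbar^n\colon\M\to\M$ with multiplication by $\hbar^n$ on $\Phi_\K(\M)$, which is consequently null. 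Applying the preceding lemma again to the coherent object $\Phi_\K(\M)\in\Derb_\coh(\A_1)$ then yields $\A_1^\loc\otimes_{\A_1}\Phi_\K(\M)\simeq 0$, and Theorem \ref{thm:compactobj} concludes that $\Phi_\K(\M)$ is compact in $\D_\qcc(\A_1)$.

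The only step that is not entirely formal is the identification of the functorially induced endomorphism of $\Phi_\K(\M)$ with the central action of $\hbar^n$; this is a consequence of the centrality of $\hbar$ combined with the $\CC^\hbar$-linearity of each of $\hat{p}_2^{-1}$, $\cutensor$, and $p_{1\ast}$, but it does require unwinding the construction of the composition functor. Everything else reduces cleanly to the two finiteness inputs (Theorem \ref{thm:prescoh} and the torsion characterization of the preceding lemma) and the description of compact qcc-modules in Theorem \ref{thm:compactobj}.
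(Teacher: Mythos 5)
Your argument is correct and proves the statement, but it replaces one of the paper's ingredients with a different (arguably more conceptual) one. Both you and the paper invoke Theorem \ref{thm:prescoh} for coherence and Theorem \ref{thm:compactobj} for the characterization of compact objects; where you diverge is in handling the $\hbar$-localization condition. The paper's proof cites Lemma \ref{lem:vanloc}, which commutes the localization $\CC^{\hbar,\loc}\otimes_{\CC^{\hbar}}(-)$ with the pushforward $p_{1\ast}$ (this is where the Noetherian/finite-Krull-dimension input of Proposition \ref{prop:commlim} is used), so that the vanishing $\A_1^{\loc}\otimes_{\A_1}\Phi_\K(\M)\simeq 0$ is deduced from the corresponding vanishing of $\K\cutensor\hat{p}_2^{-1}\M$ on the product. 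You instead use the unnamed lemma at the end of Section 3.3 to translate the localization vanishing into uniform $\hbar$-torsion of $\M$, transport this through $\Phi_\K$ via its $\Der_\cc(\Chbar)$-linearity (the null endomorphism $\hbar^n\colon\M\to\M$ maps to the null endomorphism $\hbar^n$ of $\Phi_\K(\M)$), and then translate back. This is cleaner in that it isolates the structural reason (linearity) rather than performing a commutation computation, and it bypasses Lemma \ref{lem:vanloc} entirely; on the other hand it leans on the $\Der_\cc(\Chbar)$-linearity of $\Phi_\K$, which is part of the construction but worth flagging explicitly. The step you single out as ``not entirely formal''---identifying the functorially induced endomorphism with multiplication by $\hbar^n$---is in fact automatic once $\Phi_\K$ is known to be $\Der_\cc(\Chbar)$-linear, since linearity means exactly that the action of $\Der_\cc(\Chbar)$ (in particular of $\hbar$) is preserved; no further unwinding of $\hat{p}_2^{-1}$, $\cutensor$, or $p_{1\ast}$ is needed. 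One small inaccuracy: the lemma you invoke for the torsion characterization sits in Section 3.3 after Theorem \ref{thm:compactobj}, not ``immediately preceding'' the proposition; the lemma immediately preceding is Lemma \ref{lem:vanloc}, which is the one the paper actually uses.
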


\begin{proof}
This follows directly from Theorem \ref{thm:prescoh}, Lemma \ref{lem:vanloc} and the description of compact objects of the qcc given in Theorem \ref{thm:compactobj}.
\end{proof}

\subsection{Symmetric monoidality of \texorpdfstring{$\D_\qcc$}{Dqcc}}

Recall that if $X$ and $Y$ are smooth complex algebraic varieties endowed with DQ-algebroid stacks $\A_X$ and $\A_Y$ then the variety $X \times Y$ is canonically endowed with a DQ-algebroid stack $\A_{X \times Y}$. Following \cite[p.68]{KS3}, we define a functor $\ubtimes \colon \D(\A_X) \times \D(\A_Y) \to \D(\A_{X \times Y})$ by the rule
\[
(\M , \N) \mapsto \A_{X \times Y} \tensor_{\A_X \btimes \A_Y}(\M \btimes_{\Chbar} \N).
\]
This induces a functor
\begin{equation}\label{funct:externalproduct}
\ubtimes \colon \D_{\gqcoh}(\A_X) \times \D_{\gqcoh}(\A_Y) \to \D_{\gqcoh}(\A_{X \times Y}).
\end{equation}
Indeed, let $\M$ and $\N$ be objects of $\D_{\gqcoh}(\A_X)$ and $\D_{\gqcoh}(\A_Y)$, respectively.
Then $\M \ubtimes \N$ belongs to $\D(\A_{X \times Y})$ and

\begin{align*}
\gr (\M \ubtimes \N) &\simeq \gr(\A_{X \times Y}) \tensor_{\A_{X \times Y}}  \A_{X \times Y} \tensor_{\A_X \btimes \A_Y}(\M \btimes_{\Chbar} \N)\\
                    & \simeq \gr \A_X \btimes \gr \A_Y \tensor_{\A_X \btimes \A_Y}(\M \btimes_{\Chbar} \N)\\
                    &\simeq \gr \M \btimes \gr \N .
\end{align*} 
(Here we have used the fact that $\gr(\A_{X \times Y})\cong\gr \A_X \btimes \gr \A_Y$).
It follows that $\M \ubtimes \N$ belongs to $\D_{\gqcoh}(\A_{X \times Y})$.
We have the following commutative diagram
\[
\xymatrix{\D(\A_X) \times \D(\A_Y)  \ar[r]^-{\ubtimes}  & \D(\A_{X \times Y})\\
            \D_\gqcoh(\A_X) \times \D_\gqcoh(\A_Y) \ar@{^{(}->}[u] \ar[r]^-{\ubtimes} & \D_\gqcoh(\A_{X \times Y}). \ar@{^{(}->}[u]
}
\]
Applying the cohomological completion functor to this diagram we obtain a commutative diagram
\[
\xymatrix{\D_\cc(\A_X) \times \D_\cc(\A_Y))  \ar[r]^-{\cubtimes}  & \D_\cc(\A_{X \times Y})\\
            \D_\qcc(\A_X) \times \D_\qcc(\A_Y) \ar@{^{(}->}[u] \ar[r]^-{\cubtimes} & \D_\qcc(\A_{X \times Y}), \ar@{^{(}->}[u]
}
\]
which by $\D_\cc$-linearity induces a commutative diagram
\[
\xymatrix{\D_\cc(\A_X) \tensor_{\D_\cc(\Chbar)} \D_\cc(\A_Y) \ar[r]^-{\cubtimes}  & \D_\cc(\A_{X \times Y})\\
            \D_\qcc(\A_X) \tensor_{\D_\cc(\Chbar)} \D_\qcc(\A_Y) \ar[u] \ar[r]^-{\cubtimes} & \D_\qcc(\A_{X \times Y}). \ar@{^{(}->}[u]
}
\]

\begin{lemma}\label{lem:compactness}
The functor $\cubtimes \colon \Der_\qcc(\A_X) \times \Der_\qcc(\A_Y)) \to \Der_\qcc(\A_{X \times Y})$ preserves compact objects.
\end{lemma}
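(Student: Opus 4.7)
The plan is to reduce to the explicit characterization of compact objects of $\Der_\qcc$ provided by Theorem \ref{thm:compactobj} and the lemma immediately preceding it: an object $\M$ of $\Der_\qcc(\A_X)$ is compact if and only if it lies in $\Derb_\coh(\A_X)$ and is annihilated by some positive power of $\hbar$. Fix compact $\M\in\Der_\qcc(\A_X)$ and $\N\in\Der_\qcc(\A_Y)$, choose integers $n,m>0$ with $\hbar^n\M\simeq 0$ and $\hbar^m\N\simeq 0$, and aim to prove that $\M\cubtimes\N$ lies in $\Derb_\coh(\A_{X\times Y})$ and is killed by some power of $\hbar$.

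The first step is to observe that the uncompleted external product $\M\ubtimes\N=\A_{X\times Y}\tensor_{\A_X\btimes\A_Y}(\M\btimes_{\Chbar}\N)$ is already of uniform $\hbar$-torsion. Since $\hbar$ is central in $\A_X$ and in $\A_{X\times Y}$, multiplication by $\hbar^n$ on this tensor product factors through $\hbar^n\colon\M\to\M$, which vanishes. Lemma \ref{lem:unicomp} then forces $\M\ubtimes\N$ to be cohomologically complete, so the canonical map $\M\ubtimes\N\to\M\cubtimes\N$ is an equivalence, and in particular $\M\cubtimes\N$ is itself annihilated by $\hbar^n$.

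It remains to check coherence. The computation $\gr(\M\ubtimes\N)\simeq\gr\M\btimes\gr\N$ already carried out in this subsection, combined with the standard fact that the exterior product of a bounded coherent complex on $X$ with one on $Y$ is bounded coherent on the smooth variety $X\times Y$, shows that $\gr(\M\cubtimes\N)$ lies in $\Derb_\coh(\O_{X\times Y})$. Since $\M\cubtimes\N$ is cohomologically complete and its associated graded is coherent, the criterion \cite[Theorem 1.6.4]{KS3} (as already invoked in Lemma \ref{lem:dualcoh}) promotes this to $\M\cubtimes\N\in\Derb_\coh(\A_{X\times Y})$. Putting both steps together, $\M\cubtimes\N$ is coherent and annihilated by $\hbar^n$, hence compact in $\Der_\qcc(\A_{X\times Y})$.

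The main technical point is the factorization argument for $\hbar^n$ in the first step, which crucially uses the centrality of $\hbar$ and the fact that the tensor product over $\Chbar$ identifies the two natural $\Chbar$-actions on $\M\btimes_{\Chbar}\N$; the appeal to the coherence criterion of \cite{KS3} in the algebraic setting is accommodated by the GAGA remark recorded earlier, or by a direct algebraic reformulation via Grothendieck duality.
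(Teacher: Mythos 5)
Your proof is correct and follows the same overall strategy as the paper's: show that $\M \ubtimes \N$ is of uniform $\hbar$-torsion (hence cohomologically complete by Lemma~\ref{lem:unicomp}, so $\M \cubtimes \N \simeq \M \ubtimes \N$), establish coherence, and conclude via Theorem~\ref{thm:compactobj}. The one place you diverge is in how coherence of $\M \cubtimes \N$ is established: the paper simply ``notices'' that $\M \ubtimes \N \in \Derb_\coh(\A_{X\times Y})$, implicitly invoking the stability of coherence under external products of DQ-modules from \cite{KS3}, whereas you pass through $\gr$ and then lift coherence using \cite[Theorem 1.6.4]{KS3} as in Lemma~\ref{lem:dualcoh}. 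This detour is slightly more self-contained, but note that the coherence-lifting criterion requires $\M \cubtimes \N$ to be cohomologically bounded (the paper's Lemma~\ref{lem:dualcoh} takes care to establish this via \cite[Proposition 3.11]{FMQ} before invoking that theorem); in your setting boundedness is immediate from the explicit description of $\M \ubtimes \N$ as a finite tensor construction of bounded complexes over flat maps, but you should say so rather than leave it implicit.
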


\begin{proof}
Let $\M$ (resp. $\N$) be a compact object of $\D_\qcc(\A_X)$ (resp. $\D_\qcc(\A_Y)$). Notice that $\M \ubtimes \N \in \Derb_\coh(\A_{X \times Y})$ and is cohomologically complete by Lemma \ref{lem:unicomp}. Consequently
$
\M \cubtimes \N \simeq \M \ubtimes \N
$
and $\A_{X \times Y}^{\loc} \tensor_{\A_{X \times Y}} \M \cubtimes \N\simeq 0$. Theorem \ref{thm:compactobj} implies that $\M \cubtimes \N$ is a compact object of $\D_\qcc(\A_{X \times Y})$.
\end{proof}

\begin{proposition}\label{prop:prodgen}
Let $\G_X$ and $\G_Y$ be a compact generator of $\Der_\qcoh(\gr \A_X)$ and $\Der_\qcoh(\gr \A_Y)$. Then the object $\iota(\G_X) \cubtimes \iota(\G_Y)$ is a compact generator of $\Der_\qcc(\A_{X \times Y})$.
\end{proposition}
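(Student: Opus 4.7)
The plan is to verify compactness and generation separately.

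For compactness, Proposition~\ref{prop:presercomgen} ensures that $\iota(\G_X)$ and $\iota(\G_Y)$ are compact in $\Der_\qcc(\A_X)$ and $\Der_\qcc(\A_Y)$ respectively, and Lemma~\ref{lem:compactness} then yields compactness of $\iota(\G_X) \cubtimes \iota(\G_Y)$ in $\Der_\qcc(\A_{X \times Y})$.

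For generation the strategy is to pass through $\gr$. Let $\M \in \Der_\qcc(\A_{X \times Y})$ satisfy $\Map(\iota(\G_X) \cubtimes \iota(\G_Y)[n], \M) \simeq 0$ for every $n \in \ZZ$, equivalently $\uMap_{\A_{X \times Y}}(\iota(\G_X) \cubtimes \iota(\G_Y), \M) \simeq 0$ in $\Der(\Chbar)$. Applying $\gr$ and invoking the global version of Proposition~\ref{prop:isogr}(2) gives
\begin{equation*}
\uMap_{\gr \A_{X \times Y}}\bigl(\gr(\iota(\G_X) \cubtimes \iota(\G_Y)),\, \gr \M\bigr) \simeq 0.
\end{equation*}
The key computation is the identification
\begin{equation*}
\gr\bigl(\iota(\G_X) \cubtimes \iota(\G_Y)\bigr) \simeq \gr \iota(\G_X) \btimes \gr \iota(\G_Y),
\end{equation*}
which follows by replacing $\cubtimes$ with $\ubtimes$ via Proposition~\ref{prop:gr_cc}, unfolding the definition of $\ubtimes$, and using $\gr \A_{X \times Y} \simeq \gr \A_X \btimes \gr \A_Y$---the same type of calculation already performed to establish the functor~\eqref{funct:externalproduct}. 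By Proposition~\ref{prop:presercomgen}, the two factors $\gr \iota(\G_X)$ and $\gr \iota(\G_Y)$ are compact generators of $\Der_\qcoh(\gr \A_X)$ and $\Der_\qcoh(\gr \A_Y)$ respectively, and the classical result for quasi-coherent sheaves on a product (combined with the equivalence $\Der_\qcoh(\gr \A_{X \times Y}) \simeq \Der_\qcoh(\O_{X \times Y})$) implies that their external product is a compact generator of $\Der_\qcoh(\gr \A_{X \times Y})$. Consequently $\gr \M \simeq 0$, and the conservativity of $\gr$ on cohomologically complete modules (applicable because $\M \in \Der_\qcc \subset \Der_\cc$) gives $\M \simeq 0$.

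The main technical point is the commutation of $\gr$ with the completed external product $\cubtimes$; once this is in hand, the rest reduces to invoking preservation results already established (Propositions~\ref{prop:presercomgen}, \ref{prop:gr_cc}, \ref{prop:isogr}) together with the classical compact generation of $\Der_\qcoh$ of a product scheme.
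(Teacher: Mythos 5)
Your proof is correct and follows essentially the same route as the paper's: compactness via Lemma~\ref{lem:compactness} (with Proposition~\ref{prop:presercomgen} supplying compactness of the factors), and generation by applying $\gr$, identifying $\gr(\iota(\G_X) \cubtimes \iota(\G_Y)) \simeq \gr\iota(\G_X) \btimes \gr\iota(\G_Y)$, invoking the Bondal--Van den Bergh compact generation of $\Der_\qcoh$ on the product, and then using conservativity of $\gr$ on cohomologically complete objects. Your write-up is in fact slightly more explicit than the paper's about the commutation of $\gr$ with $\cubtimes$ and about why $\iota(\G_X)$, $\iota(\G_Y)$ are compact, but the argument is the same.
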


\begin{proof}
The compactness of $\iota(\G_X) \cubtimes \iota(\G_Y)$ follows from Lemma \ref{lem:compactness}. It remains to verify that $\iota(\G_X) \cubtimes \iota(\G_Y)$ is a generator of $\D_\qcc(\A_{X \times Y})$. We have an equivalence
\[
\gr(\iota(\G_X) \cubtimes \iota(\G_Y))\simeq \gr \iota \G_X \boxtimes \gr \iota \G_Y.
\]
It follows from Proposition \ref{prop:presercomgen} that $\gr \iota \G_X$ (resp. $\gr \iota \G_Y$) is a compact generator of $\D_{\qcoh}(\gr \A_X)$ (resp. of $\D_{\qcoh}(\gr \A_Y)$). Thus by \cite[Lemma 3.4.1]{BVdB}, $\gr \iota \G_X \boxtimes \gr \iota \G_Y$ is a compact generator of $\D_\qcoh(\gr(\A_{X \times Y}))$.

Let $\M \in \D_\qcc(\A_{X \times Y})$ such that $\Map_{\A_{X \times Y}}(\iota(\G_X) \cubtimes \iota(\G_Y),\M)=0$.
Applying the functor $\gr$ to the enriched mapping space, we find that
\begin{align*}
\gr \uMap_{\A_{X \times Y}}(\iota(\G_X) \cubtimes \iota(\G_Y),\M) & \simeq \uMap_{\gr \A_{X \times Y}}(\gr (\iota(\G_X) \cubtimes \iota(\G_Y)),\gr \M)\\
                                                               & \simeq \uMap_{\gr \A_{X \times Y}}(\gr \iota(\G_X) \cubtimes \gr \iota(\G_Y)),\gr \M).\\ 
\end{align*}
It follows that $\gr \M \simeq 0$. Since $\M$ is cohomologically complete it follows that $\M \simeq 0$.
\end{proof}
\begin{lemma}\label{lem:eqcom}
The functor
$i^\land \colon \Der_\qcc(\A_X) \otimes_{\Der(\Chbar)} \Der_\qcc(\A_Y) \to \Der_\qcc(\A_X) \otimes_{\Der_\cc(\Chbar)} \Der_\qcc(\A_Y) $ is an equivalence of $\i$-categories.
\end{lemma}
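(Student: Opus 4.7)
The plan is to show that the $\Der(\Chbar)$-linear $\i$-category $\M := \Der_\qcc(\A_X) \otimes_{\Der(\Chbar)} \Der_\qcc(\A_Y)$ is already cohomologically complete, so that the map $i^\land$ (which, by Proposition \ref{prop:comparecompl}, is the completion $\M \to \M^\cc$) is an equivalence, and then to identify its target with $\Der_\qcc(\A_X) \otimes_{\Der_\cc(\Chbar)} \Der_\qcc(\A_Y)$ by invoking symmetric monoidality of base change along $i^\land \colon \Der(\Chbar) \to \Der_\cc(\Chbar)$.

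The key observation to start with is that each factor $\Der_\qcc(\A_X)$ and $\Der_\qcc(\A_Y)$ is $\Der_\cc(\Chbar)$-linear by construction, so by the Corollary identifying $\Mod_{\Der_\cc(\Chbar)}$ with $\Mod_{\Der(\Chbar)}^{\cc}$, each is cohomologically complete when regarded as a $\Der(\Chbar)$-linear category. Applying Proposition \ref{prop:comparecompl} to $\Der_\qcc(\A_X)$, this completeness is equivalent to the vanishing $\Der_\qcc(\A_X)^{\loc} \simeq \Der_\loc(\Chbar) \otimes_{\Der(\Chbar)} \Der_\qcc(\A_X) \simeq 0$. Next I would use associativity of the relative tensor product to compute
\[
\Der_\loc(\Chbar) \otimes_{\Der(\Chbar)} \M \simeq \bigl(\Der_\loc(\Chbar) \otimes_{\Der(\Chbar)} \Der_\qcc(\A_X)\bigr) \otimes_{\Der(\Chbar)} \Der_\qcc(\A_Y) \simeq 0,
\]
and conclude via Proposition \ref{prop:comparecompl} that $\M^{\loc} \simeq 0$, so that $\M \to \M^{\cc}$ is an equivalence.

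To finish, I would use Proposition \ref{prop:comparecompl} once more to write $\M^{\cc} \simeq \Der_\cc(\Chbar) \otimes_{\Der(\Chbar)} \M$, and then invoke the fact that base change along a map of commutative algebras is a symmetric monoidal left adjoint which restricts to the identity on objects that are already modules over the target algebra (because the right adjoint $i_\ast$ is fully faithful by the discussion preceding Proposition \ref{prop:comparecompl}). This gives
\[
\Der_\cc(\Chbar) \otimes_{\Der(\Chbar)} \M \simeq \Der_\qcc(\A_X) \otimes_{\Der_\cc(\Chbar)} \Der_\qcc(\A_Y),
\]
and chasing through Proposition \ref{prop:comparecompl} identifies the composite with the map $i^\land$ in the statement.

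I do not expect any serious obstacle; all the ingredients are supplied by the machinery already developed in the paper. The only delicate point is bookkeeping: one must verify that the canonical map in the statement (base change along $i^\land$) really is identified, via the equivalences of Proposition \ref{prop:comparecompl}, with the cohomological completion map, and that the symmetric monoidal base change functor genuinely ``absorbs'' $\Der_\cc(\Chbar)$-linear tensor factors. Both are formal consequences of the adjunction $i^\ast \dashv i_\ast$ being a reflective localization, but they deserve to be spelled out to make the identification of the resulting equivalence with $i^\land$ transparent.
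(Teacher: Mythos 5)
Your proof is correct, but it takes a genuinely different route from the paper. The paper's entire proof is the single line ``This follows immediately from Lemma \ref{prop:compcompgen}'': it appeals to the fact that $\Der_\qcc(\A_X) \otimes_{\Der(\Chbar)} \Der_\qcc(\A_Y)$ has, by Lemma \ref{lem:gentensproduct}, a compact generator $\iota(\G_X)\otimes_{\Chbar}\iota(\G_Y)$ which is of uniform $\hbar$-torsion, and Proposition \ref{prop:compcompgen} then says the completion map $\M\to\M^{\cc}$ is an equivalence. Your argument instead uses only the formal machinery of Section \ref{sec:hcomp}: since each factor is a $\Der_\cc(\Chbar)$-module, Proposition \ref{prop:carcatcc} (via the Corollary identifying $\Mod_{\Der_\cc(\Chbar)}\simeq\Mod_{\Der(\Chbar)}^{\cc}$) tells you each factor has vanishing $\hbar$-local part, and associativity of the relative tensor product then kills $\M^{\loc}$. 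In effect you are proving the general statement that for an idempotent $\Der(\Chbar)$-algebra $\Der_\cc(\Chbar)$, the subcategory $\Mod_{\Der_\cc(\Chbar)}\subset\Mod_{\Der(\Chbar)}$ is closed under $\otimes_{\Der(\Chbar)}$ and the two tensor products agree on it. Your approach is more conceptual and avoids compact generation entirely, which is a genuine simplification; the paper's route has the advantage that the compact generator and its uniform $\hbar$-torsion are used again immediately in the proof of Theorem \ref{thm:monoidality}, so the preliminary work is amortized. You are right that the one delicate point is matching the base change map with the completion map through Proposition \ref{prop:comparecompl}, but this identification is common to both proofs and is indeed a formal consequence of $i_\ast$ being fully faithful.
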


\begin{proof}
This follows immediately from Lemma \ref{prop:compcompgen}.
\end{proof}

\begin{proposition}\label{prop:kunneth}
Let $X_1$ and $X_2$ be two smooth complex algebraic varieties endowed with DQ-algebroids $\A_1$ and $\A_2$. Let $\M_i, \N_i \in \Der^{\omega}_\qcc(\A_i)$ ($i=1, \, 2$). Then,
\begin{equation*}
\uMap_{\A_1}(\M_1,\N_1) \otimes \uMap_{\A_2}(\M_2,\N_2)  \simeq \uMap_{\A_{12}}(\M_1 \ubtimes \M_2,\N_1 \ubtimes \N_2).
\end{equation*}
\end{proposition}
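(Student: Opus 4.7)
The plan is to construct a canonical comparison map from left to right and to prove it is an equivalence by reducing, via the conservativity of $\gr$ on cohomologically complete modules, to the classical Künneth isomorphism for perfect complexes on smooth complex varieties.

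First I would build the canonical $\Chbar$-linear map
\[
\uMap_{\A_1}(\M_1,\N_1)\otimes_{\Chbar}\uMap_{\A_2}(\M_2,\N_2)\to\uMap_{\A_{12}}(\M_1\ubtimes\M_2,\N_1\ubtimes\N_2)
\]
coming from the bifunctoriality of $\ubtimes$ as a $\Der(\Chbar)$-linear external product, followed by the canonical morphism $\A_1\btimes\A_2\to\A_{12}$ used to define $\ubtimes$.

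Next I would check that both sides are cohomologically complete. By Theorem \ref{thm:compactobj}, each $\M_i$ and $\N_i$ is a coherent $\A_i$-module of uniform $\hbar$-torsion, so each mapping space $\uMap_{\A_i}(\M_i,\N_i)$ is of uniform $\hbar$-torsion, and hence cohomologically complete by Lemma \ref{lem:unicomp}; the tensor product over $\Chbar$ of two such modules is again of uniform $\hbar$-torsion. On the right, Lemma \ref{lem:compactness} shows that $\M_1\ubtimes\M_2$ is compact in $\Der_\qcc(\A_{12})$, so again of uniform $\hbar$-torsion, and the same reasoning applies to its mapping space. By the conservativity of $\gr$ on $\Der_\cc(\Chbar)$, it suffices to check that the comparison map becomes an equivalence after applying $\gr$.

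Using that $\gr\colon\Der_\cc(\Chbar)\to\Der(\CC)$ is symmetric monoidal, Proposition \ref{prop:isogr}(2) applied to the dualizable objects $\M_i$ (compact objects are coherent, and the proposition characterising dualizable objects of $\Der_\qcc(\A_X)$ stated just above Theorem \ref{thm:compactobj} shows that coherent is equivalent to dualizable), together with the identification $\gr(\M_1\ubtimes\M_2)\simeq\gr\M_1\boxtimes\gr\M_2$ recalled just after \eqref{funct:externalproduct}, the map after $\gr$ identifies with
\[
\uMap_{\O_1}(\gr\M_1,\gr\N_1)\otimes_{\CC}\uMap_{\O_2}(\gr\M_2,\gr\N_2)\to\uMap_{\O_{12}}(\gr\M_1\boxtimes\gr\M_2,\gr\N_1\boxtimes\gr\N_2),
\]
which is the standard Künneth isomorphism for cohomology of perfect complexes on the smooth varieties $X_i$. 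The main potential obstacle is the bookkeeping that identifies the $\gr$ of the comparison map just constructed with the classical Künneth map; this is a compatibility check that relies on $\gr(\A_{12})\simeq\gr\A_1\boxtimes\gr\A_2\simeq\O_{12}$ and on the naturality of Proposition \ref{prop:isogr} with respect to the external product.
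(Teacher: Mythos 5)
Your proposal follows essentially the same strategy as the paper: build a $\Chbar$-linear comparison map, check that both sides are cohomologically complete, apply $\gr$, identify the result with the K\"unneth map for quasi-coherent sheaves (\cite[Proposition 4.6]{BFN}), and conclude by conservativity of $\gr$ on cohomologically complete objects. Two small differences are worth noting. First, where you verify cohomological completeness of the right-hand side by observing that $\M_1\ubtimes\M_2$ is compact (hence of uniform $\hbar$-torsion by Theorem~\ref{thm:compactobj} and the subsequent lemma) and that $\uMap$ out of a uniformly $\hbar$-torsion object inherits uniform $\hbar$-torsion, the paper instead argues that $\N_1\ubtimes\N_2$ is cohomologically complete by \cite[Theorem 1.6.1]{KS3} and then invokes \cite[Propositions 1.5.10, 1.5.12]{KS3} on preservation of completeness under $\fMap$ and pushforward; your route is a bit more elementary and works equally well. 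Second, you construct the comparison map abstractly from bifunctoriality of $\ubtimes$ and defer the ``bookkeeping'' of identifying its $\gr$ with the classical K\"unneth map, while the paper's proof consists precisely of writing out an explicit chain of maps (using the projections $p_i$, the diagonal $a_{12\ast}$, and the canonical morphism $\A_1\btimes\A_2\to\A_{12}$) whose construction makes the subsequent identification under $\gr$ transparent via \cite[Proposition 1.4.4]{KS3} and Proposition~\ref{prop:isogr}. So the honest caveat you flag at the end is exactly the work the paper does; supplying that explicit chain would close the gap, but the overall route is correct.
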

\begin{proof}
\begin{align*}
%\label{map:extprodso}
%\begin{split}
\uMap_{\A_1}(\M_1,\N_1) \otimes \uMap_{\A_2}(\M_2,\N_2) & \simeq a_{1\ast} \fMap_{\A_1}(\M_1,\N_1) \otimes a_{2\ast} \fMap_{\A_2}(\M_2,\N_2)\\
&\to a_{1 \ast} \left( \fMap_{\A_1}(\M_1,\N_1) \otimes a_{1}^{-1} a_{2\ast} \fMap_{\A_2}(\M_2,\N_2) \right)\\
&\to a_{1 \ast} \left( \fMap_{\A_1}(\M_1,\N_1) \otimes p_{1 \ast} p_{2}^{-1} \fMap_{\A_2}(\M_2,\N_2) \right)\\
&\to a_{1 \ast} p_{1 \ast} \left( p_{2}^{-1} \fMap_{\A_1}(\M_1,\N_1) \otimes  p_{2}^{-1} \fMap_{\A_2}(\M_2,\N_2) \right)\\
&\simeq a_{12 \ast} \left( \left(  \fMap_{\A_1}(\M_1,\A_1) \tensor_{\A_1} \N_1 \right) \btimes \left( \fMap_{\A_2}(\M_2,\A_2) \tensor_{\A_2} \N_2 \right) \right)\\
&\simeq a_{12 \ast} \left( \left(  \fMap_{\A_1}(\M_1,\A_1) \btimes \fMap_{\A_2}(\M_2,\A_2) \right) \! \tensor_{\A_1 \btimes \A_2} \! \left( \N_1 \btimes \N_2 \right) \right)\\ 
&\to a_{12 \ast} \left( \left(  \fMap_{\A_1}(\M_1,\A_1) \ubtimes \fMap_{\A_2}(\M_2,\A_2) \right) \tensor_{\A_{12}} \left( \N_1 \ubtimes \N_2 \right) \right)\\
&\simeq a_{12 \ast} \left( \fMap_{\A_{12}}(\M_1 \ubtimes \M_2,\A_{12}) \tensor_{\A_{12}} \left( \N_1 \ubtimes \N_2 \right) \right)\\
&\simeq a_{12 \ast} \fMap_{\A_{12}}(\M_1 \ubtimes \M_2,\N_1 \ubtimes \N_2)\\
&\simeq \uMap_{\A_{12}}(\M_1 \ubtimes \M_2,\N_1 \ubtimes \N_2).
%\end{split}
\end{align*}
The source and target of the total composite are cohomologically complete. Indeed, the source is of uniform $\hbar$-torsion, while the cohomological completeness of the target follows by observing that $\N_1 \ubtimes \N_2$ is cohomologically complete by \cite[Theorem 1.6.1]{KS3} and then applying \cite[Proposition 1.5.10]{KS3} and \cite[Proposition 1.5.12]{KS3} 

Applying the $\gr$ functor to the above morphism and using \cite[Proposition 1.4.4]{KS3} and Proposition \ref{prop:isogr}, we obtain a map
\[
\uMap_{\gr\A_1}\!(\gr \M_1, \gr \N_1) \otimes \uMap_{\gr\A_2}\!(\gr\M_2,\gr\N_2)
\simeq\uMap_{\gr \A_{12}}\!(\gr \M_1 \btimes \gr \M_2,\gr \N_1 \btimes \gr \N_2)
\]
which is an equivalence by \cite[Proposition 4.6]{BFN}. As the functor $\gr$ is conservative on the category of cohomologically complete modules, the morphism is an isomorphism.
\end{proof}

\begin{theorem}\label{thm:monoidality}
The functor $\cubtimes \colon \Der_\qcc(\A_X) \otimes_{\Der_\cc(\Chbar)} \Der_\qcc(\A_Y) \to \Der_\qcc(\A_{X \times Y})$ is an equivalence.
\end{theorem}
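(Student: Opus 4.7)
The plan is to use the standard Morita-theoretic criterion: a colimit-preserving functor between compactly generated presentable stable $\infty$-categories is an equivalence as soon as it sends a compact generator to a compact generator and induces an equivalence of endomorphism objects. I will run this argument with the obvious candidate generators coming from the $\gr$ picture, and feed it the Künneth formula from Proposition \ref{prop:kunneth}.

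First, by Lemma \ref{lem:eqcom} the canonical comparison $\Der_\qcc(\A_X)\otimes_{\Der(\Chbar)}\Der_\qcc(\A_Y)\to\Der_\qcc(\A_X)\otimes_{\Der_\cc(\Chbar)}\Der_\qcc(\A_Y)$ is an equivalence, so it is enough to prove the analogous statement with tensor product taken over $\Der(\Chbar)$. Choose compact generators $\G_X\in\Der_\qcoh(\gr\A_X)$ and $\G_Y\in\Der_\qcoh(\gr\A_Y)$, which exist by \cite[Theorem 3.1.1]{BVdB}. By Proposition \ref{prop:presercomgen} the objects $\iota(\G_X)$ and $\iota(\G_Y)$ are compact generators of $\Der_\qcc(\A_X)$ and $\Der_\qcc(\A_Y)$, and by Lemma \ref{lem:gentensproduct}(1) the object $\iota(\G_X)\otimes\iota(\G_Y)$ is a compact generator of the source of $\cubtimes$. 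Moreover, because $\G_X$ and $\G_Y$ are $\gr\A$-modules (hence annihilated by $\hbar$), the objects $\iota(\G_X)$, $\iota(\G_Y)$, and $\iota(\G_X)\ubtimes\iota(\G_Y)$ are of uniform $\hbar$-torsion, so Lemma \ref{lem:unicomp} implies $\iota(\G_X)\cubtimes\iota(\G_Y)\simeq\iota(\G_X)\ubtimes\iota(\G_Y)$. By Proposition \ref{prop:prodgen} this is a compact generator of $\Der_\qcc(\A_{X\times Y})$, so $\cubtimes$ sends the compact generator of the source to a compact generator of the target.

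It remains to check that the induced map on endomorphism objects is an equivalence. By Lemma \ref{lem:gentensproduct}(2),
\[
\uMap(\iota(\G_X)\otimes\iota(\G_Y),\iota(\G_X)\otimes\iota(\G_Y))\simeq\uMap_{\A_X}(\iota(\G_X),\iota(\G_X))\otimes_{\Chbar}\uMap_{\A_Y}(\iota(\G_Y),\iota(\G_Y)),
\]
and by Proposition \ref{prop:kunneth} this tensor product is canonically identified with $\uMap_{\A_{X\times Y}}(\iota(\G_X)\ubtimes\iota(\G_Y),\iota(\G_X)\ubtimes\iota(\G_Y))$. One checks that under these identifications the comparison map is precisely the one induced by $\cubtimes$ (both are constructed from the lax monoidal structure on external tensor product plus the adjunctions defining the tensor product of module categories). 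Consequently the induced map of endomorphism ring spectra is an equivalence, and the standard argument (for instance, identifying both sides with modules over the endomorphism ring of the generator) yields that $\cubtimes$ is an equivalence.

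The only real obstacle is the last compatibility: checking that the equivalence produced by Künneth is the map induced by $\cubtimes$ on $\uMap$. I expect this to follow formally by unwinding Proposition \ref{prop:kunneth}, whose construction explicitly factors through the pushforward along the diagonal projections used to build $\ubtimes$; the key point is that the Künneth equivalence is natural in the arguments, so postcomposing with $\cubtimes$ and taking mapping spectra produces the same map as first taking $\uMap$ and then applying the Künneth equivalence. No further obstructions are expected, since all the other inputs have already been arranged in the preceding subsections.
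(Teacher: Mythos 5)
Your proposal follows the same route as the paper's proof: reduce to a generator-and-endomorphism comparison, use Lemma \ref{lem:eqcom} to pass between the $\Der(\Chbar)$- and $\Der_\cc(\Chbar)$-tensor products, take $\iota(\G_X)\otimes\iota(\G_Y)$ as the compact generator of the source via Lemma \ref{lem:gentensproduct} and Proposition \ref{prop:presercomgen}, identify its image with the compact generator of $\Der_\qcc(\A_{X\times Y})$ from Proposition \ref{prop:prodgen} using uniform $\hbar$-torsion and Lemma \ref{lem:unicomp}, and conclude with the Künneth formula of Proposition \ref{prop:kunneth}. The final compatibility check you flag is also left implicit in the paper's own argument, so the two proofs match in both structure and level of detail.
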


\begin{proof} 
Let $\G_X$ and $\G_Y$ be  compact generators of $\Der_\qcoh(\gr \A_X)$ and $\Der_\qcoh(\gr \A_Y)$. Hence, $\iota(\G_X)$ is a compact generator of $\Der_\qcc(\A_X)$ and $\iota(\G_Y)$ is a compact generator of $\Der_\qcc(\A_Y)$.  
It follows from Proposition \ref{lem:gentensproduct} that the object $\iota(\G_X) \tensor \iota(\G_Y)$ is a compact generator of $\D_\qcc(\A_X) \otimes_{\Der(\Chbar)} \D_\qcc(\A_Y)$ and from Lemma \ref{lem:eqcom} that $\iota(\G_X) \ctensor \iota(\G_Y)$ is a compact generator of $\Der_\qcc(\A_X) \otimes_{\Der_\cc(\Chbar)} \Der_\qcc(\A_Y)$.
Moreover, the image of $\iota(\G_X) \ctensor \iota(\G_Y)$ under $\cubtimes$ is $\iota(\G_X) \cubtimes \iota(\G_Y)$, which is a compact generator of $\Der_\qcc(\A_{X \times Y})$ by Proposition \ref{prop:prodgen}.

By Lemma \ref{lem:gentensproduct}, there is a canonical equivalence
\begin{equation*}
\uMap(\iota(\G_X) \ctensor \iota(\G_Y),\iota(\G_X) \ctensor \iota(\G_Y)) \simeq \uMap_{\A_X}(\iota(\G_X),\iota(\G_X)) \ctensor \uMap_{\A_Y}(\iota(\G_Y),\iota(\G_Y)).
\end{equation*}
As $\iota(\G_X)$ and $\iota(\G_Y)$ are of uniform $\hbar$-torsion so are $\uMap_{\A_X\!}(\iota(\G_X),\iota(\G_X))$ and $\uMap_{\A_Y \!}(\iota(\G_Y),\iota(\G_Y))$. Hence, it follows from Lemma \ref{lem:unicomp} that
\begin{equation*}
\uMap(\iota(\G_X) \ctensor \iota(\G_Y),\iota(\G_X) \ctensor \iota(\G_Y)) \simeq \uMap_{\A_X}(\iota(\G_X),\iota(\G_X)) \otimes_{\Chbar} \uMap_{\A_Y}(\iota(\G_Y),\iota(\G_Y)).
\end{equation*}

Hence it suffices to show that the canonical map
\begin{equation*}
\uMap_{\A_{X}}(\iota(\G_X),\iota(\G_X)) \otimes_\Chbar \uMap_{\A_{Y}}(\iota(\G_Y),\iota(\G_Y)) \to \uMap_{\A_{X \times Y}}(\iota(\G_X) \cubtimes \iota(\G_Y),\iota(\G_X) \cubtimes \iota(\G_Y))
\end{equation*}
is an isomorphism in $\D_\cc(\CC^\hbar)$, which is a consequence of Proposition \ref{prop:kunneth}.
\end{proof}

\begin{proposition}
The $\D_\cc(\Chbar)$-module $\D_\qcc(\A_X)$ is dualizable and its dual is $\D_\qcc(\A_{X^\mathrm{a}})$. 
\end{proposition}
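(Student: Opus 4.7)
The plan is to establish dualizability by constructing explicit coevaluation and evaluation functors and verifying the triangle identities, leveraging the monoidal structure of Theorem \ref{thm:monoidality}. Under the identification
\[
\D_\qcc(\A_X)\otimes_{\D_\cc(\Chbar)}\D_\qcc(\A_{X^\mathrm{a}})\simeq\D_\qcc(\A_{X\times X^\mathrm{a}})
\]
(and similarly with the two factors swapped), it suffices to produce $\D_\cc(\Chbar)$-linear functors
\[
\eta\colon\D_\cc(\Chbar)\to\D_\qcc(\A_{X^\mathrm{a}\times X}),\qquad \epsilon\colon\D_\qcc(\A_{X\times X^\mathrm{a}})\to\D_\cc(\Chbar)
\]
satisfying the two triangle identities.

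I would define $\eta$ as the unique colimit-preserving, $\D_\cc(\Chbar)$-linear functor classified by the diagonal module $\A_\Delta=\delta_*\A_X$ (transported across the swap equivalence $\D_\qcc(\A_{X\times X^\mathrm{a}})\simeq\D_\qcc(\A_{X^\mathrm{a}\times X})$), and I would define $\epsilon$ as the Fourier--Mukai functor with kernel (a version of) $\A_\Delta$ regarded as a kernel from $X\times X^\mathrm{a}$ to the point. Concretely, under the kernel calculus of Subsection 4.1,
\[
\epsilon(\K)\ =\ \A_\Delta\underset{X\times X^\mathrm{a}}{\circ}\K,
\]
which by Definition \ref{def:DQconv} is colimit-preserving and $\D_\cc(\Chbar)$-linear, and takes values in $\D_\qcc(\A_\pt)=\D_\cc(\Chbar)$. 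The $\D_\cc(\Chbar)$-linearity of $\eta$ is automatic from the universal property of $\D_\cc(\Chbar)$ as the monoidal unit of $\Mod_{\D_\cc(\Chbar)}$.

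The triangle identities then reduce, via Theorem \ref{thm:monoidality} and associativity of kernel composition, to the statement that $\A_\Delta$ is the unit of the DQ-convolution product, i.e.\ $\A_\Delta\underset{X}{\circ}\M\simeq\M$ for every $\M\in\D_\qcc(\A_X)$ (and symmetrically on the $\A_{X^\mathrm{a}}$ side). Indeed, unwinding $(\epsilon\otimes\id)\circ(\id\otimes\eta)$ and $(\id\otimes\epsilon)\circ(\eta\otimes\id)$ using Proposition \ref{prop:kunneth} and the functorial identifications from Theorem \ref{thm:monoidality}, both composites become convolution with $\A_\Delta$, hence the identity.

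The main obstacle is the bookkeeping needed to match the external product $\cubtimes$, with its built-in cohomological completion, to the abstract tensor product over $\D_\cc(\Chbar)$, and to confirm that under these identifications the unit and counit take the stated form (in particular that all the intervening completions $i^\land$ are harmless). The decisive computational input $\A_\Delta\underset{X}{\circ}\A_\Delta\simeq\A_\Delta$ can be handled by applying $\gr$ via Lemma \ref{lem:gr_comp}, using $\gr\A_\Delta\simeq\O_\Delta$ together with the classical commutative identity $\O_\Delta\underset{X}{\circ}\O_\Delta\simeq\O_\Delta$, and then lifted back via conservativity of $\gr$ on cohomologically complete modules.
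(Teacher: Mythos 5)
Your proposal takes essentially the same route as the paper: both construct the coevaluation as tensoring $\D_\cc(\Chbar)$ up against the diagonal $\A_\Delta$, construct the evaluation as kernel composition with $\A_\Delta$ (the paper writes both the explicit formula $(\M,\N)\mapsto\Gamma(X;\widehat{\M\otimes_{\A_X}\N})$ and the convolution form $\A_\Delta\circ_{X^\mathrm{a}\times X}(-)$ and shows they coincide under Theorem \ref{thm:monoidality}), and verify the triangle identities by reducing to $\A_\Delta$ being the unit of convolution, via associativity. Your additional suggestion — establishing the unit identity $\A_\Delta\underset{X}{\circ}\M\simeq\M$ by applying $\gr$, using $\gr\A_\Delta\simeq\O_\Delta$ together with the classical quasi-coherent unit identity, and lifting back via conservativity of $\gr$ on cohomologically complete modules — is a clean and self-contained way to supply the step that the paper's diagram chase takes for granted.
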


\begin{proof}
We exhibit evaluation and coevaluation maps.
Consider the functor
\[
\varepsilon \colon \D_\qcc(\A_{X^\mathrm{a}}) \times \D_\qcc(\A_{X}) \to \D_\cc(\Chbar), \; (\M, \N) \mapsto \Gamma(X;\widehat{\M \tensor_{\A_X} \N}). 
\]
By Proposition \ref{prop:commlim}, this functor commutes with colimits, inducing the evaluation
\[
\varepsilon \colon \D_\qcc(\A_{X^\mathrm{a}}) \tensor_{\D_\cc(\Chbar)} \D_\qcc(\A_{X}) \to \D_\cc(\Chbar)
\]%
which we again denote $\varepsilon$.
We also have the evaluation map
\[
 \mathrm{ev} : \D_\qcc(\A_{X^\mathrm{a} \times X}) \to \D_\cc(\Chbar), \; \M \mapsto \A_{\Delta} \convp_{X^\mathrm{a} \times X} \M.
\]
This leads to the following commutative diagram
\[
\xymatrix{ \D_\qcc(\A_{X^\mathrm{a}}) \tensor_{\D_\cc(\Chbar)} \D_\qcc(\A_{X})  \ar[r]^-{\varepsilon} \ar[d]_-{\wr} & \D_\cc(\Chbar)\\
\D_\qcc(\A_{X^\mathrm{a} \times X}) \ar[ru]_-{\mathrm{ev}}. &\\
}
\]

For the coevaluation, since $\D_\qcc(\A_{X \times X^\mathrm{a}})$ is tensored over $\D_\cc(\Chbar)$, we have a functor
\begin{equation}\label{map:precoeval}
\mathrm{coev} \colon \D_\cc(\Chbar) \to \D_\qcc(\A_{X \times X^\mathrm{a}}), \; M \mapsto M \ctensor \A_\Delta.
\end{equation}
By Theorem \ref{thm:monoidality}, we have the identification
\[
\D_\qcc(\A_X) \tensor_{\D_\cc(\Chbar)}  \D_\qcc(\A_{X^\mathrm{a}}) \simeq\D_\qcc(\A_{X \times X^\mathrm{a}}).
\]
Composing the map \eqref{map:precoeval} with the above identification we obtain the coevalution
\[
\eta: \D_\cc(\Chbar) \to \D_\qcc(\A_X) \tensor_{\D_\cc(\Chbar)} \D_\qcc(\A_{X^\mathrm{a}}).
\]
We now check that $\varepsilon$ and $\eta$ satisfy the triangle identities. All the tensor products will be over $\D_\cc(\Chbar)$ but we will omit it for brevity. We check that
\begin{equation*}
\D_\cc(\CC^\hbar) \otimes \D_\qcc(\A_X) \stackrel{\eta \otimes \id}{\to} \D_\qcc(\A_X) \otimes \D_\qcc(\A_{X^\mathrm{a}}) \otimes \D_\qcc(\A_X) \stackrel{\id \otimes \varepsilon}{\to}  \D_\qcc(\A_X) \otimes \D_\cc(\CC^\hbar)
\end{equation*}
is the identity.

For readability, we set $X_1=X_2=X_3$. We denote by $\Delta_{12}$ the diagonal of $X_1 \times X_2$ and  by $\Delta_{23}$ the diagonal of $X_2 \times X_3$. We define the following functors
\[
g:\D_\qcc(\A_{X_1 \times X_{2^\mathrm{a}} \times X_3}) \to \D_\qcc(\A_{X_1}), \; \M \mapsto \M \convp_{X_2 \times X_{3^\mathrm{a}}} \A_{\Delta_{23}}.
\]
\begin{equation*}
f:\D_\qcc(\A_{X_3}) \to \D_\qcc(\A_{X_1 \times X_{2^\mathrm{a}} \times X_3}), \; \A_{\Delta_{12}} \cubtimes \M.
\end{equation*}
For the sake of compactness we write $\C_X$ for $\D_\qcc(\A_X)$, $\C_{X^\mathrm{a}}$ for $\D_\qcc(\A_{X^\mathrm{a}})$,
$\C_{X \times X^\mathrm{a}}$ for $\D_\qcc(\A_{X \times X^\mathrm{a}})$ and $\dsu_X$ for $\D_\cc(\Chbar_X)$.
Then, we have the following commutative diagram
\[
\xymatrix{
\dsu \times \C_{X_3} \ar[dd] \ar[dr]_-{\mathrm{coev} \times \id} \ar@{-->}[r] & \C_{X_1} \times \C_{X_2^\mathrm{a}} \times \C_{X_3} \ar[d]^-{\cubtimes \times \id} \ar[rr]^-{\id \times \varepsilon} \ar[rd]^-{\id \times \cubtimes} & & \C_{X_1} \times \dsu \ar[dd]\\
                         & \C_{X_1 \times X_2^\mathrm{a}} \times \C_{X_3} \ar[d]_-{\id \times \cubtimes} & \C_{X_1} \times \C_{X_2^\mathrm{a}\times X_3} \ar[ru]^-{\id \times \mathrm{ev}} \ar[ld]_-{\cubtimes \times \id} &\\ 
\C_{X_3} \ar[r]_-f & \C_{X_1 \times X_2^\mathrm{a} \times X_3} \ar[rr]_-g & & \C_{X_1}
}
\]
and we have
\[
g \circ f(\M)=(\A_{\Delta_{12}} \cubtimes \M) \convp_{X_2 \times X_3^\mathrm{a}} \A_{\Delta_{23}}\simeq (\A_{\Delta_{12}} \convp_{X_2} \A_{\Delta_{23}} \convp_{X_3}\M)\simeq \A_{\Delta_{13}} \convp_{X_3}\M\simeq \M.
\]
It follows that $g \circ f$ is equivalent to the identity.
The second triangular identity is similar.
\end{proof}

\begin{proposition}
The functor $\varepsilon$ induces a functor $\D_\qcc(\A_{X^\mathrm{a}}) \to \Fun_{\D_\cc(\CC^\hbar)}(\D_\qcc(\A_X),\D_\cc(\Chbar))$ given by $\M \mapsto ( \F \mapsto \Gamma(X,\M \ctensor_{\A_X} \F))$, which is an equivalence of $\Der_\cc(\CC^\hbar)$-modules.
\end{proposition}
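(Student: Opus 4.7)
The plan is to derive the equivalence as a purely formal consequence of the dualizability of $\D_\qcc(\A_X)$ established in the preceding proposition. The $\i$-category $\Mod_{\D_\cc(\Chbar)}$ is closed symmetric monoidal, with unit $\D_\cc(\Chbar)$ and internal mapping object $\Fun_{\D_\cc(\Chbar)}(-,-)$, and the previous proposition exhibits $\D_\qcc(\A_X)$ as dualizable in this $\i$-category with dual $\D_\qcc(\A_{X^\mathrm{a}})$, evaluation $\varepsilon$, and coevaluation $\eta$.

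The key abstract input is the following standard fact: in any closed symmetric monoidal $\i$-category $\mathscr V$ with unit $\dsu$ and internal hom $[-,-]$, if $C$ is dualizable with dual $C^\vee$, then the morphism $\tilde\varepsilon\colon C^\vee\to [C,\dsu]$ adjoint to $\varepsilon\colon C^\vee\otimes C\to\dsu$ is an equivalence. Indeed, for any object $A$ of $\mathscr V$, dualizability of $C$ and the tensor-hom adjunction yield a chain of natural equivalences
$$
\Map_\mathscr V(A,C^\vee)\;\simeq\;\Map_\mathscr V(A\otimes C,\dsu)\;\simeq\;\Map_\mathscr V(A,[C,\dsu]),
$$
whose composite is $\tilde\varepsilon$-postcomposition; so $\tilde\varepsilon$ is an equivalence by the Yoneda lemma. (Alternatively, one constructs an inverse from $\eta$ and verifies the two composites are the identity using the triangle identities.)

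Applying this principle with $\mathscr V=\Mod_{\D_\cc(\Chbar)}$, $\dsu=\D_\cc(\Chbar)$, $C=\D_\qcc(\A_X)$, and $C^\vee=\D_\qcc(\A_{X^\mathrm{a}})$ immediately furnishes the desired equivalence $\tilde\varepsilon\colon \D_\qcc(\A_{X^\mathrm{a}})\stackrel{\sim}{\to}\Fun_{\D_\cc(\Chbar)}(\D_\qcc(\A_X),\D_\cc(\Chbar))$ of $\D_\cc(\Chbar)$-modules. The only remaining verification is to unwind the adjunction and identify $\tilde\varepsilon$ with the stated formula: by construction, $\tilde\varepsilon(\M)$ sends $\F$ to $\varepsilon(\M\otimes\F)$, which by the explicit formula for $\varepsilon$ given in the previous proposition equals $\Gamma(X,\M\ctensor_{\A_X}\F)$. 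No substantive obstacle arises; the argument rests entirely on the duality already established above, and the hardest step is really the preceding proposition.
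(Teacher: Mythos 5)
Your proof is correct and takes essentially the same approach as the paper, which simply records that "This follows from the definition of a dual"; you have merely made explicit the standard Yoneda/tensor-hom argument showing that dualizability forces $C^\vee\simeq[C,\dsu]$ via the map adjoint to the evaluation.
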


\begin{proof}
This follows from the definition of a dual.
\end{proof}

\begin{proposition}
The canonical functor
\[
\Fun_{\D_\cc(\CC^\hbar)}(\D_\qcc(\A_X),\D_\cc(\CC^\hbar)) \otimes_{\D_\cc(\CC^\hbar)}\D_\qcc(\A_Y) \to \Fun_{\D_\cc(\CC^\hbar)}(\D_\qcc(\A_X),\D_\qcc(\A_Y))\\
\]
is an equivalence.
\end{proposition}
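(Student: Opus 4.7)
The plan is to deduce this as a formal consequence of the dualizability of $\D_\qcc(\A_X)$ as a $\D_\cc(\CC^\hbar)$-module and the identification of its dual from the two immediately preceding propositions. Set $\dsu := \D_\cc(\CC^\hbar)$. The previous proposition provides an equivalence
\[
\D_\qcc(\A_{X^\mathrm{a}}) \stackrel{\sim}{\to} \Fun_\dsu(\D_\qcc(\A_X), \dsu)
\]
of $\dsu$-modules, induced by the evaluation pairing $\varepsilon$. It therefore suffices to establish an equivalence
\[
\D_\qcc(\A_{X^\mathrm{a}}) \otimes_\dsu \D_\qcc(\A_Y) \stackrel{\sim}{\to} \Fun_\dsu(\D_\qcc(\A_X), \D_\qcc(\A_Y))
\]
compatible with the canonical map in the statement.

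For this I would invoke the standard categorical fact: in any closed symmetric monoidal $\i$-category $\V$, whenever $X$ is dualizable with dual $X^\vee$, the canonical map $X^\vee \otimes Y \to \Fun_\V(X, Y)$ is an equivalence for every $Y \in \V$. This is a one-line Yoneda argument, since for any $Z \in \V$,
\[
\Map_\V(Z, X^\vee \otimes Y) \simeq \Map_\V(Z \otimes X, Y) \simeq \Map_\V(Z, \Fun_\V(X, Y)),
\]
where the first equivalence uses dualizability of $X$ and the second uses that $\Fun_\V$ is the internal hom. Applied to $\V = \Mod_\dsu$ with $X = \D_\qcc(\A_X)$ and $Y = \D_\qcc(\A_Y)$, this yields the required equivalence.

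I do not anticipate any real obstacle: the substantive work has already been performed in constructing the duality data for $\D_\qcc(\A_X)$ over $\dsu$. The only residual point is to check that the canonical map in the statement agrees with the composite of the two Yoneda-produced equivalences, which amounts to unwinding the explicit description $\varepsilon(\M,\F) \simeq \Gamma(X,\M \ctensor_{\A_X} \F)$ from the previous proposition: both the canonical map and the composite send a pair $(F,y)$ to the functor $x \mapsto F(x) \otimes y$.
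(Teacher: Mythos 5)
Your argument is correct and is essentially the paper's proof, which is simply the one-line observation that the statement follows from the dualizability of $\D_\qcc(\A_X)$ as a $\D_\cc(\CC^\hbar)$-module; you have merely spelled out the standard Yoneda verification that for dualizable $X$ the canonical map $X^\vee\otimes Y\to\Fun_\V(X,Y)$ is an equivalence. No issues.
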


\begin{proof}
This follows from the fact that $\D_\qcc(\A_X)$ is a dualizable $\D_\cc(\CC^\hbar)$-module.
\end{proof}

\subsection{Integral representations}

We now establish some integral representation theorems for DQ-modules.

\begin{theorem}\label{thm:FMDQ}
Let $X$ and $Y$ be two smooth algebraic varieties endowed with DQ-algebroid stacks $\A_X$ and $\A_Y$. There is an equivalence of $\i$-categories
\begin{equation}\label{fun:inttrans}
\Phi_{(-)} \colon \D_\qcc(\A_{Y \times X^\mathrm{a}}) \stackrel{\sim}{\longrightarrow} \Fun_{\D_\cc(\CC^\hbar)}(\D_\qcc(\A_X),\D_\qcc(\A_Y)).
\end{equation}
\end{theorem}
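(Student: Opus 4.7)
The plan is to assemble the equivalence from the three ingredients established immediately before in the section: the monoidality theorem (Theorem~\ref{thm:monoidality}), the duality of $\D_\qcc(\A_X)$ with $\D_\qcc(\A_{X^\mathrm{a}})$ as $\D_\cc(\CC^\hbar)$-modules, and the identification of internal-hom objects with tensor products against duals.

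First I would assemble the formal equivalence by a chain of natural maps. Using Theorem~\ref{thm:monoidality} we have
\[
\D_\qcc(\A_{Y\times X^\mathrm{a}})\;\simeq\;\D_\qcc(\A_Y)\otimes_{\D_\cc(\CC^\hbar)}\D_\qcc(\A_{X^\mathrm{a}}).
\]
Then, using the preceding proposition that $\varepsilon$ exhibits $\D_\qcc(\A_{X^\mathrm{a}})$ as the $\D_\cc(\CC^\hbar)$-linear dual of $\D_\qcc(\A_X)$, we can rewrite this as
\[
\D_\qcc(\A_Y)\otimes_{\D_\cc(\CC^\hbar)}\Fun_{\D_\cc(\CC^\hbar)}(\D_\qcc(\A_X),\D_\cc(\CC^\hbar)).
\]
Finally, the last proposition of the previous subsection, which says that dualizability of $\D_\qcc(\A_X)$ implies that the canonical map
\[
\Fun_{\D_\cc(\CC^\hbar)}(\D_\qcc(\A_X),\D_\cc(\CC^\hbar))\otimes_{\D_\cc(\CC^\hbar)}\D_\qcc(\A_Y)\longrightarrow\Fun_{\D_\cc(\CC^\hbar)}(\D_\qcc(\A_X),\D_\qcc(\A_Y))
\]
is an equivalence, closes the chain. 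Composing these three equivalences produces the required equivalence on the level of $\infty$-categories.

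The substantive step is to verify that the composite equivalence indeed sends a kernel $\K\in\D_\qcc(\A_{Y\times X^\mathrm{a}})$ to the Fourier--Mukai functor $\Phi_\K\colon\M\mapsto p_{Y\ast}(\K\cutensor_{p_X^{-1}\A_X}\hat p_X^{-1}\M)$ of \eqref{MukaiDQqcc}. To do this I would trace an object $\K$ through each step: under the inverse of Theorem~\ref{thm:monoidality} a generator of $\D_\qcc(\A_{Y\times X^\mathrm{a}})$ corresponds (after completion) to a $\cubtimes$-decomposition, so $\K$ gets identified with its action by external product; then the duality equivalence sends the $\D_\qcc(\A_{X^\mathrm{a}})$-factor to evaluation $\M\mapsto \Gamma(X,\K'\cutensor_{\A_X}\M)$ via $\varepsilon$; and finally the co-tensor with $\D_\qcc(\A_Y)$ packages the $Y$-factor so that one obtains precisely the pushforward of the cohomologically complete tensor product along $p_Y$, which is the formula for $\Phi_\K$. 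The compatibility of pushforward with the evaluation pairing (via $p_Y=a_Y\circ p_Y$ factored through the product with a point) and the associativity established for $\cutensor$ and $\underset{2}{\circ}$ in the earlier subsection are what make this bookkeeping go through.

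The main obstacle I expect is this last identification of the equivalence with the explicit Fourier--Mukai formula; the formal existence of the equivalence is essentially automatic from the dualizability package. In particular the delicate point is to check that the internal-hom description of the dual pairing, which uses $\Gamma(X,-)$ applied to the completed tensor product and is symmetric monoidal along $\cubtimes$, agrees on kernels of the form $\K_1\cubtimes\K_2$ with the direct image along the projection in the composition convention of Definition~\ref{def:DQconv}. Once this is verified on a compact generator---which by Proposition~\ref{prop:prodgen} we may take of the form $\iota(\G_X)\cubtimes\iota(\G_Y)$---$\D_\cc(\CC^\hbar)$-linearity and the fact that both sides preserve colimits in $\K$ extend the identification to all of $\D_\qcc(\A_{Y\times X^\mathrm{a}})$, completing the proof.
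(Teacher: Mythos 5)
Your plan reproduces the paper's proof exactly: the theorem itself is proved by the same chain of three equivalences you write, assembled from Theorem~\ref{thm:monoidality}, the duality of $\D_\qcc(\A_X)$ with $\D_\qcc(\A_{X^{\mathrm{a}}})$, and the identification of internal hom with tensoring against the dual (the paper writes the chain in the reverse order, but it is the same argument). The identification with the explicit Fourier--Mukai formula, which you flag as the ``substantive step,'' is in fact carried out in the paper \emph{after} the proof of the theorem (via diagrams~\eqref{diag:phi1}, \eqref{diag:phi2} and Lemma~\ref{lem:basechange}) and is not part of what the theorem statement asserts, so your proposal if anything does a bit more than the stated proof requires.
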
\label{thm:intrep}

\begin{proof}
We have the following sequence of equivalences in $\Mod_{\D_\cc(\CC^\hbar)}$:
\begin{align*}
\Fun_{\D_\cc(\CC^\hbar)}(\D_\qcc(\A_X),\D_\qcc(\A_Y)) &\simeq
\D_\qcc(\A_Y) \tensor_{\D_\cc(\CC^\hbar)}
\Fun_{\D_\cc(\CC^\hbar)}(\D_\qcc(\A_X),\D_\cc(\CC^\hbar))\\
&\simeq \D_\qcc(\A_{Y})\tensor_{\D_\cc(\CC^\hbar)}\D_\qcc(\A_{X^\mathrm{a}})\\ 
&\simeq \D_\qcc(\A_{Y \times X^\mathrm{a}}).
\end{align*}
\end{proof}
The functor $\Phi_{(-)}$ is the composite of the top horizontal rows of diagrams \eqref{diag:phi1} and \eqref{diag:phi2}:
\begin{equation}\label{diag:phi1}
\scalebox{0.85}{
\xymatrix{ \D_\qcc(\A_{Y \times X^\mathrm{a}}) \ar[r]^-{{\cubtimes}^{-1}} 
&  \D_\qcc(\A_Y) \tensor_{\D_\cc(\Chbar)} \D_\qcc(\A_{X^\mathrm{a}}) \ar[r]^-{\id \otimes \gamma} 
& \D_\qcc(\A_Y) \tensor_{\D_\cc(\CC^\hbar)} \Fun_{\D_\cc(\CC^\hbar)}(\D_\qcc(\A_X),\D_\cc(\CC^\hbar))\\
\D_\qcc(\A_{Y \times X^\mathrm{a}}) \ar@{=}[u] 
&  \D_\qcc(\A_Y) \times \D_\qcc(\A_{X^\mathrm{a}}) \ar[r]^-{\id \times \gamma} \ar[l]_-{\cubtimes} \ar[u] 
&  \D_\qcc(\A_Y) \times \Fun_{\D_\cc(\CC^\hbar)}(\D_\qcc(\A_X),\D_\cc(\CC^\hbar)) \ar[u]
}}
\end{equation}
\begin{equation}\label{diag:phi2}
\scalebox{0.85}{
\xymatrix{
\D_\qcc(\A_Y) \tensor_{\D_\cc(\CC^\hbar)} \Fun_{\D_\cc(\CC^\hbar)}(\D_\qcc(\A_X),\D_\cc(\CC^\hbar)) \ar[r]
& \Fun_{\D_\cc(\CC^\hbar)}(\D_\qcc(\A_X),\D_\qcc(\A_Y))
\\
  \D_\qcc(\A_Y) \times \Fun_{\D_\cc(\CC^\hbar)}(\D_\qcc(\A_X),\D_\cc(\CC^\hbar)) \ar[r]^-{\nu} \ar[u]
& \Fun_{\D_\cc(\CC^\hbar)}(\D_\qcc(\A_X),\D_\qcc(\A_Y)) \ar@{=}[u]
}}
\end{equation}
\begin{lemma}\label{lem:basechange} Let $\M \in \D_\qcc(\A_{X^\mathrm{a}}), \F \in \D_\qcc(\A_{X}), \N \in \D_\qcc(\A_{Y})$.  There is a natural isomorphism in $\D_\qcc(\A_Y)$
\begin{equation}\label{map:basechange}
a_Y^{-1} a_{X \ast} (\M \ctensor_{\A_X} \F) \ctensor \N \stackrel{\sim}{\longrightarrow} p_{Y \ast} (p_{X}^{-1}(\M \ctensor_{\A_X} \F) \ctensor p^{-1}_Y \N).
\end{equation}
\end{lemma}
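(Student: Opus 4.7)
The plan is to construct the morphism \eqref{map:basechange} as the composite of a base change transformation with a projection formula map, and then to verify it is an equivalence by applying the conservative functor $\gr$ on cohomologically complete modules, reducing to the classical statement for quasicoherent $\O$-modules.

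Set $G := \M \ctensor_{\A_X} \F \in \D_\cc(\Chbar_X)$ and consider the cartesian square
\[
\xymatrix{
X \times Y \ar[r]^-{p_Y} \ar[d]_-{p_X} & Y \ar[d]^-{a_Y} \\
X \ar[r]_-{a_X} & \mathrm{pt}.
}
\]
The mate of the canonical equivalence $p_X^{-1} a_X^{-1} \simeq p_Y^{-1} a_Y^{-1}$ gives a base change morphism $a_Y^{-1} a_{X*} G \to p_{Y*} p_X^{-1} G$ in $\D_\cc(\Chbar_Y)$. Similarly, the unit $\id \to p_{Y*} p_Y^{-1}$ provides a projection formula morphism $p_{Y*}(p_X^{-1} G) \ctensor \N \to p_{Y*}(p_X^{-1} G \ctensor p_Y^{-1} \N)$. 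Tensoring the base change morphism with $\N$ and composing with the projection formula map yields the desired morphism \eqref{map:basechange}.

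To show this map is an equivalence, observe that both sides lie in $\D_\qcc(\A_Y)$ and so in particular are cohomologically complete; for the right-hand side this uses that $p_{Y*}$ preserves cohomological completeness (Proposition 1.5.12 of \cite{KS3}). Since $\gr$ is conservative on cohomologically complete modules, it suffices to check the equivalence after applying $\gr$. Combining Proposition \ref{prop:gr_cc} ($\gr$ commutes with cohomological completion), Proposition \ref{prop:isogr} ($\gr$ commutes with tensor products), and Proposition 1.4.4 of \cite{KS3} ($\gr$ commutes with pushforward and inverse image), the problem reduces to proving that the natural morphism
\[
a_Y^{-1} a_{X*}\bigl(\gr \M \otimes_{\gr \A_X} \gr \F\bigr) \otimes \gr \N \longrightarrow p_{Y*}\bigl(p_X^{-1}(\gr \M \otimes_{\gr \A_X} \gr \F) \otimes p_Y^{-1} \gr \N\bigr)
\]
is an equivalence in $\D_\qcoh(\O_Y)$. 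This is the standard combination of flat base change and the projection formula for quasicoherent sheaves on the above cartesian square, which holds since $X$ and $Y$ are Noetherian topological spaces of finite Krull dimension (cf.\ Proposition \ref{prop:commlim}).

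The main obstacle is the careful verification that the adjunction-theoretic constructions in the DQ-module setting correspond, after applying $\gr$, to the classical base change plus projection formula composite. This comparison is formal provided that $\gr$ is suitably compatible with the adjunctions $(p_X^{-1}, p_{X*})$ and $(a_X^{-1}, a_{X*})$, and with tensor products, which is precisely what the cited results guarantee.
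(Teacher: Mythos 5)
Your proposal is correct and follows essentially the same route as the paper: both construct the comparison map by adjunction from the counit of $(a_X^{-1}, a_{X*})$ (your decomposition into ``base change plus projection formula'' is the same map, just packaged in two recognizable pieces), and both then apply $\gr$, use its conservativity on cohomologically complete modules, and reduce to flat base change plus projection formula for quasicoherent sheaves on the cartesian square $X \times Y \to X, Y \to \mathrm{pt}$. The paper additionally records the intermediate identification $(-)^{-1} = (-)^{\ast}$ and $\otimes = \otimes_{\O}$ after applying $\gr$, but the substance is identical.
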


\begin{proof}
We have the following pull-back diagram
\begin{equation*}
\xymatrix{ X \times Y \ar[r]^-{p_Y} \ar[d]_-{p_X} & Y \ar[d]^-{a_Y}\\
           X \ar[r]^-{a_X} & \mathrm{pt}
}
\end{equation*}
and the following morphism
\begin{align*}
&\Map_{\CC^\hbar_X}(a_X^{-1} a_{X \ast} (\M \ctensor_{\A_X} \F),\M \ctensor_{\A_X} \F)\\   
\rightarrow &\Map_{\D_\qcc(p_Y^{-1}\A_Y)}(p_X^{-1}a_X^{-1} a_{X \ast} (\M \ctensor_{\A_X} \F) \ctensor p_{Y}^{-1}\N, (p_{X}^{-1}(\M \ctensor_{\A_X} \F) \ctensor p^{-1}_Y \N))\\
\simeq &\Map_{\D_\qcc(\A_Y)}(a_Y^{-1} a_{X \ast} (\M \ctensor_{\A_X} \F) \ctensor \N, p_{Y \ast} (p_{X}^{-1}(\M \ctensor_{\A_X} \F) \ctensor p^{-1}_Y \N)).
\end{align*}
The image of the co-unit of the adjunction $(a_X^{-1},a_{X \ast})$ provides the map \eqref{map:basechange}. Applying the $\gr$ functor to the morphism \eqref{map:basechange}, we get the map

\begin{equation}\label{map:basegriso}
a_Y^{-1} a_{X \ast} (\gr \M \ctensor_{\O_X} \gr \F) \tensor \gr \N \stackrel{\sim}{\longrightarrow} p_{Y \ast} (p_{X}^{-1}(\gr \M \tensor_{\O_X} \gr \F) \tensor p^{-1}_Y \gr \N).
\end{equation}
This map fits into the following commutative diagram
\begin{equation*}
\xymatrix{
a_Y^{-1} a_{X \ast} (\gr \M \ctensor_{\O_X} \gr \F) \tensor \gr \N \ar[r] \ar[d]^-{\wr} & p_{Y \ast} (p_{X}^{-1}(\gr \M \tensor_{\O_X} \gr \F) \tensor p^{-1}_Y \gr \N) \ar[d]^-{\wr}\\
a_Y^{\ast} a_{X \ast} (\gr \M \ctensor_{\O_X} \gr \F) \tensor_{\O_Y} \gr \N \ar[r]^-{\sim}  & p_{Y \ast} (p_{X}^{\ast}(\gr \M \tensor_{\O_X} \gr \F) \tensor_{\O_{X \times Y}} p^{\ast}_Y \gr \N).
}
\end{equation*}
where the bottom arrow is an isomorphism by the flat base change formula (see \cite[\href{https://stacks.math.columbia.edu/tag/08ET}{Section 08ET}]{stacks-project}). This implies that the map \eqref{map:basegriso} is an isomorphism. Since $\gr$ is conservative, we deduce that the morphism \eqref{map:basechange} is an isomorphism.
\end{proof}
Diagrams \eqref{diag:phi1}, \eqref{diag:phi2} and Lemma \ref{lem:basechange} imply that 
\begin{align*}
\Phi_{(\M \cubtimes \N)} & \simeq \nu \circ (\gamma \times \id) (\M,\N)\\
                                     &\simeq \nu(\Gamma(X,\M \ctensor_{\A_X} (-)),\N)\\
                                     &\simeq a_Y^{-1}(\Gamma(X,\M \ctensor_{\A_X} (-))) \ctensor \N\\
                                     &\simeq p_{Y \ast} (p_X^{-1}(\M \ctensor_{\A_X} (-)) \ctensor p_Y^{-1}\N)\\
                                     &\simeq p_{Y \ast} ( (\M \cubtimes \N) \ctensor_{p_X^{-1}\A_X} p_X^{-1}(-)).             
\end{align*}
It follows from Theorem \ref{thm:monoidality} that the category $\D_{\qcc}(\A_{X^\mathrm{a} \times Y})$ is generated under colimits by objects of the form $\M \cubtimes \N$ and $\Phi_{(-)}$ is a functor in $\PrL$. Thus, it commutes with colimits. As
\[
 \Phi_{(\M \cubtimes \N)} \simeq p_{Y \ast} ( (\M \cubtimes \N) \ctensor_{p_X^{-1}\A_X}p_X^{-1}(-)),
\]
it follows that for, every $\K \in \D_{\qcc}(\A_{X^\mathrm{a} \times Y})$,
\begin{equation}\label{eq:FMformula}
\Phi_{\K} \simeq p_{Y \ast} ( \K \ctensor_{p_X^{-1}\A_X}p_X^{-1}(-)).
\end{equation}

\begin{corollary}\label{cor:semiequi}
Let $X$ and $Y$ be two smooth algebraic varieties endowed with $DQ$-algebroid stacks $\A_X$  and $\A_Y$. If $\D_{\qcc}(\A_X) \simeq \D_{\qcc}(\A_Y)$ in $\Mod_{\Der(\CC^\hbar)}$ then $\D_{\qcoh}(\O_X) \simeq \D_{\qcoh}(\O_Y)$ in $\Mod_{\Der(\CC)}$.
\end{corollary}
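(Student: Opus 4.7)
The plan is to reduce this directly to the deformation theorem (Theorem \ref{thm:defor}/Corollary \ref{cor:defor}) together with the identification between modules over the stackified structure sheaf $\gr\A_X$ and modules over $\O_X$.

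First, given an equivalence $\Phi\colon\D_{\qcc}(\A_X)\stackrel{\sim}{\to}\D_{\qcc}(\A_Y)$ in $\Mod_{\Der(\CC^\hbar)}$, I would apply the basechange functor
\[
\Der(\CC)\otimes_{\Der(\CC^\hbar)}(-)\colon\Mod_{\Der(\CC^\hbar)}\longrightarrow\Mod_{\Der(\CC)},
\]
which is a symmetric monoidal left adjoint (obtained from the map of commutative algebra objects $\gr\colon\Der(\CC^\hbar)\to\Der(\CC)$) and therefore sends equivalences to equivalences. This yields an equivalence
\[
\id\otimes\Phi\colon\Der(\CC)\otimes_{\Der(\CC^\hbar)}\D_{\qcc}(\A_X)\stackrel{\sim}{\longrightarrow}\Der(\CC)\otimes_{\Der(\CC^\hbar)}\D_{\qcc}(\A_Y)
\]
in $\Mod_{\Der(\CC)}$.

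Next, I would invoke Theorem \ref{thm:defor}, which provides $\Der(\CC)$-linear equivalences
\[
\Psi_X\colon\Der(\CC)\otimes_{\Der(\CC^\hbar)}\D_{\qcc}(\A_X)\stackrel{\sim}{\longrightarrow}\D_{\qcoh}(\gr\A_X),
\qquad
\Psi_Y\colon\Der(\CC)\otimes_{\Der(\CC^\hbar)}\D_{\qcc}(\A_Y)\stackrel{\sim}{\longrightarrow}\D_{\qcoh}(\gr\A_Y).
\]
Composing, I obtain an equivalence $\Psi_Y\circ(\id\otimes\Phi)\circ\Psi_X^{-1}\colon\D_{\qcoh}(\gr\A_X)\stackrel{\sim}{\to}\D_{\qcoh}(\gr\A_Y)$ in $\Mod_{\Der(\CC)}$.

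Finally, I would identify $\D_{\qcoh}(\gr\A_X)$ with $\D_{\qcoh}(\O_X)$ (and similarly for $Y$). This is precisely the content of the remark recalled just after \eqref{def:grDQ}: on an algebraic variety the algebroid stack $\gr\A_X$ is equivalent to the stackification of $\O_X$ (cf.\ \cite[Remark 2.1.17]{KS3}), inducing $\Der(\CC)$-linear equivalences $\Mod_{\O_X}\simeq\Mod_{\gr\A_X}$ and hence $\D_{\qcoh}(\O_X)\simeq\D_{\qcoh}(\gr\A_X)$. Composing with this on each side yields the desired $\Der(\CC)$-linear equivalence $\D_{\qcoh}(\O_X)\simeq\D_{\qcoh}(\O_Y)$. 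There is essentially no obstacle here, since all the real work is in the preceding theorem; the only point requiring any care is that the basechange in the first step is $\Der(\CC)$-linear, which is automatic because $\Der(\CC)\otimes_{\Der(\CC^\hbar)}(-)$ factors through $\Mod_{\Der(\CC)}$.
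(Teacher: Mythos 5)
Your proof is correct, but it takes a genuinely different route from the paper's. The paper's own proof goes through the integral-representation machinery: it invokes Theorem \ref{thm:FMDQ} and formula \eqref{eq:FMformula} to realize the given $\Der(\CC^\hbar)$-linear equivalence as a Fourier--Mukai transform $\Phi_\K$ with kernel $\K\in\D_\qcc(\A_{Y\times X^\mathrm{a}})$, and then applies an external result (\cite[Theorem 3.16]{FMQ}) asserting that $\Phi_{\gr\K}\colon\D_\qcoh(\O_X)\to\D_\qcoh(\O_Y)$ is again an equivalence. Your argument instead bypasses kernels entirely: you basechange the equivalence along the symmetric monoidal functor $\gr\colon\Der(\CC^\hbar)\to\Der(\CC)$ and then invoke the deformation theorem (Theorem \ref{thm:defor}) on each side, together with the identification $\D_\qcoh(\gr\A_X)\simeq\D_\qcoh(\O_X)$. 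This is cleaner and self-contained within the paper (no appeal to \cite{FMQ}), and it illustrates the intended conceptual point that $\D_\qcc$ is a deformation of $\D_\qcoh$. What the paper's route buys in exchange is additional information: it produces the equivalence on quasi-coherent sheaves \emph{as} an integral transform, with explicit kernel $\gr\K$, rather than merely abstractly. One minor stylistic note on your write-up: the observation that the basechange functor ``sends equivalences to equivalences'' does not really need the symmetric monoidal left adjoint structure --- any functor preserves equivalences; what the left adjoint structure buys you is that the result lives in $\Mod_{\Der(\CC)}$, which is the point you correctly emphasize at the end.
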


\begin{proof}
Let $F \colon \D_\qcc(\A_X) \to \D_\qcc(\A_X)$ be an equivalence of $\Der(\CC^\hbar)$-modules. It follows from Theorem \ref{thm:FMDQ} and formula \eqref{eq:FMformula} that $F$ is an integral transform with kernel $\K$. It follows from \cite[Theorem 3.16]{FMQ} that the integral transform $\Phi_{\gr \K} \colon \D_{\qcoh}(\O_X) \to \D_{\qcoh}(\O_Y)$ is an equivalence of categories.
\end{proof}

\begin{theorem}\label{thm:Intrepcomp}
	Let $X$ and $Y$ be two smooth and proper algebraic varieties endowed with DQ-algebroid stacks $\A_X$ and $\A_Y$. The functor
	\begin{equation*}
	\Phi_{(-)} \colon \D_\qcc(\A_{Y \times X^\mathrm{a}}) \stackrel{\sim}{\longrightarrow} \Funl_{\Der(\CC^\hbar)}(\D_\qcc(\A_X),\D_\qcc(\A_Y))
	\end{equation*}
    in \eqref{fun:inttrans} induces an equivalence
    
	\begin{equation*}
	\widetilde{\Phi}_{(-)} \colon \Derb_\coh(\A_{Y \times X^\mathrm{a}})\stackrel{\sim}{\longrightarrow} \Funl_{\Der(\CC^\hbar), \omega}(\Der_\qcc(\A_X),\Der_\qcc(\A_Y)).
	\end{equation*}
\end{theorem}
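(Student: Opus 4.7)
The plan is to invoke the equivalence $\Phi_{(-)}$ from Theorem \ref{thm:FMDQ} and verify that it identifies the two full subcategories in the statement. Since $\Phi_{(-)}$ is already an equivalence of $\i$-categories, and both $\Derb_\coh(\A_{Y \times X^\mathrm{a}}) \subset \D_\qcc(\A_{Y \times X^\mathrm{a}})$ and $\Funl_{\Der(\CC^\hbar),\omega} \subset \Funl_{\Der(\CC^\hbar)}$ are full subcategories, it suffices to check that $\Phi_{(-)}$ restricts to a functor between them, and that this restriction is essentially surjective.

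First, I would show the ``easy direction'': that $\Phi_\K$ preserves compact objects whenever $\K \in \Derb_\coh(\A_{Y \times X^\mathrm{a}})$. This is precisely Proposition \ref{prop:cohprecomp}, which applies because $X$ and $Y$ are smooth and proper. Together with the formula \eqref{eq:FMformula} identifying $\Phi_{(-)}(\K)$ with $\Phi_\K$, this yields a fully faithful functor
\[
\widetilde{\Phi}_{(-)}\colon \Derb_\coh(\A_{Y \times X^\mathrm{a}}) \to \Funl_{\Der(\CC^\hbar),\omega}(\D_\qcc(\A_X),\D_\qcc(\A_Y)).
\]

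For essential surjectivity, I would start from an arbitrary $F$ in $\Funl_{\Der(\CC^\hbar),\omega}$. By Theorem \ref{thm:FMDQ}, there exists a unique (up to equivalence) $\K \in \D_\qcc(\A_{Y \times X^\mathrm{a}})$ such that $F \simeq \Phi_\K$, where $\Phi_\K$ is given by \eqref{eq:FMformula}. Since $F$ preserves compact objects, Theorem \ref{thm:compactkernel} then forces $\K \in \Derb_\coh(\A_{Y \times X^\mathrm{a}})$, completing the proof.

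The main obstacle is that Theorem \ref{thm:compactkernel} is stated in the excerpt for smooth \emph{projective} varieties, while Theorem \ref{thm:Intrepcomp} concerns the smooth and proper case. I would therefore need to verify that the relevant argument from \cite{FMQ} adapts to proper varieties; the key ingredients are the description of compact qcc-objects from Theorem \ref{thm:compactobj} as coherent DQ-modules of uniform $\hbar$-torsion, together with a finiteness result (such as Theorem \ref{thm:prescoh}) and the interplay between $\Phi_\K$, its graded version $\Phi_{\gr\K}$, and the corresponding statement for $\D_\qcoh(\O_X)$. Alternatively, one may reduce to the projective case via Chow's lemma, combined with compatibility of compact-object preservation under the induced pushforward and pullback functors. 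Once this adaptation is in place, the identification of the two subcategories is immediate.
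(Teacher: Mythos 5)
Your proposal is correct and takes essentially the same approach as the paper's proof: restrict the equivalence $\Phi_{(-)}$, use Proposition \ref{prop:cohprecomp} for well-definedness, observe full faithfulness is automatic since both are full subcategories, and deduce essential surjectivity from Theorems \ref{thm:FMDQ} and \ref{thm:compactkernel}. You rightly flag that Theorem \ref{thm:compactkernel} is stated only for smooth \emph{projective} varieties while the present theorem assumes smooth and \emph{proper}; the paper's own proof invokes Theorem \ref{thm:compactkernel} in the proper setting without comment, so this discrepancy is a gap in the paper itself rather than in your proposal, and your suggested remedies (adapting the argument of \cite{FMQ} via Theorems \ref{thm:compactobj} and \ref{thm:prescoh}, or reducing to the projective case) are reasonable ways one might try to close it.
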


\begin{proof}
	We consider the restriction of the equivalence 
	\[
	\Phi{(-)} \colon \D_\qcc(\A_{Y \times X^\mathrm{a}}) \to \Fun_{\Der(\CC^\hbar)}(\D_\qcc(\A_X),\D_\qcc(\A_Y))
	\]
	to the full subcategory $\Derb_\coh(\A_{Y \times X^\mathrm{a}})$ of  $\D_\qcc(\A_{Y \times X^\mathrm{a}})$. It follows from Proposition \ref{prop:cohprecomp} that the restriction of $\Phi_{(-)}$ to $\Derb_\coh(\A_{Y \times X^\mathrm{a}})$ factors through $\Fun_{\Der(\CC^\hbar), \omega}(\D_\qcc(\A_X),\D_\qcc(\A_Y))$. Hence, we obtain a functor
	\begin{equation*}
	\widetilde{\Phi}(-) \colon \Derb_\coh(\A_{Y \times X^\mathrm{a}}) \to \Fun_{\D_\cc(\CC^\hbar), \omega}(\D_\qcc(\A_X),\D_\qcc(\A_Y)).
	\end{equation*}
The functor $\Phi$ is a morphism in $\Mod_{\Der(\CC^\hbar)}(\PrL)$ as it is obtained as a composition of morphisms of $\Mod_{\Der(\CC^\hbar)}(\PrL)$. Moreover, the $\i$-categories $\Derb_\coh(\A_{Y \times X^\mathrm{a}})$ and $\Fun_{\Der(\CC^\hbar), \omega}(\D_\qcc(\A_X),\D_\qcc(\A_Y))$ are objects of $\Mod_{\D^\omega(\CC^\hbar)}( \Cat_{\i,\mathrm{idem}}^\mathrm{ex})$ respectively by Lemma \ref{lem:linstruccoh} and Corollary \ref{lem:struclin}. Hence,  $\widetilde{\Phi}$ is a morphism in $\Mod_{\D^\omega(\CC^\hbar)}( \Cat_{\i,\mathrm{idem}}^\mathrm{ex})$. 

	Since $\Derb_\coh(\A_{Y \times X^\mathrm{a}})$ is a full subcategory of $\D_\qcc(\A_{Y \times X^\mathrm{a}})$, $\widetilde{\Phi}$ is fully faithful. Let us show that it is essentially surjective. Let $F \in \Fun_{\Der(\CC^\hbar), \omega}(\D_\qcc(\A_X),\D_\qcc(\A_Y))$. It follows from Theorem \ref{thm:FMDQ} that there exists $\K \in \Der_\qcc(\A_{X \times Y^\mathrm{a}})$ such that $F \simeq \Phi_\K$. As $F$ preserves compact objects, $\Phi_\K$ does as well.
	It follows from Theorem \ref{thm:compactkernel} that $\K \in \Derb_\coh(\A_{Y \times X^\mathrm{a}})$, completing the proof.
\end{proof}

\begin{corollary}\label{cor:intcoh} Let $X$ and $Y$ be two smooth and proper algebraic varieties endowed with DQ-algebroid stacks $\A_X$ and $\A_Y$. Then
\begin{equation*}
\Derb_\coh(\A_{Y \times X^\mathrm{a}}) \simeq\Fun_{\Der^\omega(\CC^\hbar), }(\D^\omega_\qcc(\A_X),\D^\omega_\qcc(\A_Y))
\end{equation*}	
\end{corollary}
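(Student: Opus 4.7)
The plan is to deduce this statement as a formal consequence of Theorem \ref{thm:Intrepcomp}, which already provides an equivalence
\[
\widetilde{\Phi}_{(-)} \colon \Derb_\coh(\A_{Y \times X^\mathrm{a}}) \stackrel{\sim}{\longrightarrow} \Funl_{\Der(\CC^\hbar), \omega}(\Der_\qcc(\A_X),\Der_\qcc(\A_Y))
\]
between bounded coherent DQ-modules on $Y \times X^{\mathrm{a}}$ and colimit-preserving, compact-object-preserving $\Der(\CC^\hbar)$-linear functors. What remains is to identify this right-hand side with $\Fun_{\Der^\omega(\CC^\hbar)}(\D^\omega_\qcc(\A_X),\D^\omega_\qcc(\A_Y))$.

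For this, I would invoke the $\Der(\CC^\hbar)$-linear refinement of the standard Ind-completion / restriction equivalence. By Corollary 3.19 (quoted in the text), both $\Der_\qcc(\A_X)$ and $\Der_\qcc(\A_Y)$ are compactly generated stable $\i$-categories, so restriction to compact objects furnishes an equivalence
\[
\Funl_{\Der(\CC^\hbar), \omega}(\Der_\qcc(\A_X), \Der_\qcc(\A_Y)) \stackrel{\sim}{\longrightarrow} \Fun_{\Der^\omega(\CC^\hbar)}(\D^\omega_\qcc(\A_X), \D^\omega_\qcc(\A_Y)),
\]
whose inverse is Ind-completion. Composing with $\widetilde{\Phi}_{(-)}$ will then yield the claimed equivalence.

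The point that truly requires verification — and which I expect to be the main obstacle — is the compatibility of the linear structures under this restriction/Ind correspondence: that a $\Der(\CC^\hbar)$-linear functor preserving compact objects restricts to a $\Der^\omega(\CC^\hbar)$-linear exact functor on subcategories of compacts, and conversely that any such restricted functor Ind-extends to a $\Der(\CC^\hbar)$-linear colimit-preserving functor. This follows from the symmetric monoidality of the $\Ind$ functor $\Cat_{\i,\mathrm{idem}}^{\mathrm{ex}} \to \Prl$ together with the fact (already implicit in Lemma \ref{lem:linstruccoh}) that the $\Der(\CC^\hbar)$-action on $\Der_\qcc(\A_X)$ restricts to a $\Der^\omega(\CC^\hbar)=\Perf(\CC^\hbar)$-action on $\D^\omega_\qcc(\A_X)$, which in turn rests on Proposition \ref{prop:preservcomp} (compactness of the tensor of a compact object of uniform $\hbar$-torsion with a perfect $\Chbar$-module). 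Granted these formal compatibilities, the corollary is obtained by composing the two equivalences, with no further geometric input needed beyond Theorem \ref{thm:Intrepcomp}.
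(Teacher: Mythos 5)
Your proposal is correct and follows essentially the same route as the paper's own proof: the paper composes the equivalence of Theorem \ref{thm:Intrepcomp} with the identification
$\Funl_{\Der(\CC^\hbar),\omega}(\D_\qcc(\A_X),\D_\qcc(\A_Y)) \simeq \Fun_{\Der^\omega(\CC^\hbar)}(\D^\omega_\qcc(\A_X),\D^\omega_\qcc(\A_Y))$,
which it obtains by citing Lemma \ref{lem:indsym} --- precisely the symmetric monoidal $\Ind/(-)^\omega$ equivalence between $\PrL_{,\omega}$ and $\Cat_{\i,\mathrm{idem}}^\mathrm{ex}$ and its consequence for $\Der(R)$-module categories that you identify as the crux.
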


\begin{proof}
It follows from Theorem \ref{thm:Intrepcomp} that we have the equivalence
	\begin{equation*}
	\widetilde{\Phi}(-) \colon \Derb_\coh(\A_{Y \times X^\mathrm{a}})\stackrel{\sim}{\longrightarrow} \Fun_{\Der(\CC^\hbar), \omega}(\D_\qcc(\A_X),\D_\qcc(\A_Y)).
	\end{equation*}
	Moreover, Lemma \ref{lem:indsym} implies that we also have the equivalence
	\begin{equation*}
	\Fun_{\Der(\CC^\hbar), \omega}(\D_\qcc(\A_X),\D_\qcc(\A_Y)) \simeq \Fun_{\Der^\omega(\CC^\hbar)}(\D^\omega_\qcc(\A_X),\D^\omega_\qcc(\A_Y)).
	\end{equation*}
	Hence,
	\begin{equation*}
	\Derb_\coh(\A_{Y \times X^\mathrm{a}}) \simeq\Fun_{\Der^\omega(\CC^\hbar), }(\Der^\omega_\qcc(\A_X),\Der^\omega_\qcc(\A_Y)).
	\end{equation*}
\end{proof}

\begin{theorem}
Let $X$ and $Y$ be two smooth and proper algebraic varieties endowed with DQ-algebroid stacks $\A_X$ and $\A_Y$.
There is an equivalence
\begin{equation*}
	\Derb_\coh(\A_{Y \times X^\mathrm{a}}) \simeq \Fun^\mathrm{L}_{\Der^\omega(\CC^\hbar)}(\Derb_\coh(\A_X),\Derb_\coh(\A_Y)).
\end{equation*}
\end{theorem}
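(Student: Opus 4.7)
The plan is to factor the desired equivalence through Corollary~\ref{cor:intcoh}, by identifying both sides with $\Fun_{\Der^\omega(\CC^\hbar)}(\Der^\omega_\qcc(\A_X),\Der^\omega_\qcc(\A_Y))$. First, I will construct the integral transform map
\[
\Psi \colon \Derb_\coh(\A_{Y \times X^\mathrm{a}}) \to \Fun^\mathrm{L}_{\Der^\omega(\CC^\hbar)}(\Derb_\coh(\A_X),\Derb_\coh(\A_Y)), \quad \K \mapsto \Phi_\K,
\]
using formula~\eqref{eq:FMformula}. Since $X$ is proper, Theorem~\ref{thm:prescoh} ensures $\Phi_\K(\M) \in \Derb_\coh(\A_Y)$ whenever $\K \in \Derb_\coh(\A_{Y \times X^\mathrm{a}})$ and $\M \in \Derb_\coh(\A_X)$; exactness and $\Der^\omega(\CC^\hbar)$-linearity of $\Phi_\K$ follow from its expression as a composite of pullback, complete tensor product, and proper pushforward.

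Next, I will factor $\Psi$ through the restriction functor
\[
\rho \colon \Fun^\mathrm{L}_{\Der^\omega(\CC^\hbar)}(\Derb_\coh(\A_X),\Derb_\coh(\A_Y)) \to \Fun_{\Der^\omega(\CC^\hbar)}(\Der^\omega_\qcc(\A_X),\Der^\omega_\qcc(\A_Y))
\]
induced by the inclusion $\Der^\omega_\qcc(\A_X) \subset \Derb_\coh(\A_X)$. By Theorem~\ref{thm:compactobj}, $\Der^\omega_\qcc(\A_X)$ consists of coherent modules $\M$ with $\hbar^n\M = 0$ for some $n$, and any exact $\Chbar$-linear $F$ valued in $\Derb_\coh(\A_Y)$ preserves this condition since $\hbar^n F(\M) = F(\hbar^n \M) = 0$; hence $\rho$ is well defined. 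By construction $\rho \circ \Psi$ coincides with the equivalence of Corollary~\ref{cor:intcoh}, whence $\Psi$ is essentially surjective and the problem reduces to showing that $\rho$ itself is an equivalence.

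To prove that $\rho$ is fully faithful, I will exploit the cohomological completeness intrinsic to $\Derb_\coh$. Every $\M \in \Derb_\coh(\A_X)$ satisfies $\M \simeq \lim_n \M/\hbar^n\M$ in $\Der_\qcc(\A_X)$, with each $\M/\hbar^n\M \simeq \mathrm{cof}(\M \xrightarrow{\hbar^n} \M)$ lying in $\Der^\omega_\qcc(\A_X)$. Since $F$ is exact and $\Chbar$-linear, $F(\M/\hbar^n\M) \simeq F(\M)/\hbar^n F(\M)$; and since $F(\M)$ is itself cohomologically complete,
\[
F(\M) \simeq \lim_n F(\M)/\hbar^n F(\M) \simeq \lim_n F(\M/\hbar^n\M),
\]
so $F$ is determined on objects by $F|_{\Der^\omega_\qcc(\A_X)}$; the same formula applied componentwise to a natural transformation $\alpha \colon F \Rightarrow G$, whose targets $F(\M)$ and $G(\M)$ are cohomologically complete, reconstructs $\alpha$ from its restriction. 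Essential surjectivity of $\rho$ then follows from that of $\Psi$: given $G$ on compact objects, Corollary~\ref{cor:intcoh} supplies a kernel $\K \in \Derb_\coh(\A_{Y \times X^\mathrm{a}})$ so that $\Phi_\K$ is a concrete extension of $G$.

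The most delicate point is that the sequential limits $\lim_n$ above live in $\Der_\qcc$ or $\Der_\cc$ rather than in $\Derb_\coh$, which is not a priori closed under countable limits, so one cannot abstractly build the extension of a functor $G$ purely from its restriction. This obstacle is bypassed precisely by the first step: the integral transform $\Phi_\K$ furnishes a concrete extension that, by Theorem~\ref{thm:prescoh}, already lies in the source of $\rho$; the cohomological completeness formula is then only invoked to certify that any other extension must agree with $\Phi_\K$, which secures full faithfulness of $\rho$ and thus the theorem.
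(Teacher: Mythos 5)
Your proposal is correct, but the route is genuinely different from the paper's in one key step. Both of you reduce the theorem to showing that the restriction functor along $i \colon \Der^\omega_\qcc(\A_X) \hookrightarrow \Derb_\coh(\A_X)$ (your $\rho$, the paper's $\beta = i^*$) is an equivalence, and then compose with Corollary~\ref{cor:intcoh}. Essential surjectivity is handled the same way in both: the existence of a coherent kernel from Corollary~\ref{cor:intcoh} together with Theorem~\ref{thm:prescoh} (properness of $X$) and the adjoint from \cite[Proposition 3.15]{FMQ} produces a concrete preimage. The divergence is in how one certifies that the restriction is fully faithful. The paper constructs an explicit candidate inverse $\alpha = j^* \circ \Ind$ and checks $\alpha \circ \beta \simeq \id$ by writing each $\M \in \Derb_\coh(\A_X)$ as a filtered \emph{colimit} of compact objects inside $\Der_\qcc(\A_X) \simeq \Ind(\Der^\omega_\qcc(\A_X))$ and arguing both sides preserve those colimits. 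You instead use the $\hbar$-adic \emph{limit} presentation $\M \simeq \lim_n \M/\hbar^n\M$ afforded by cohomological completeness, together with $\Der^\omega(\CC^\hbar)$-linearity ($F(\M/\hbar^n\M) \simeq F(\M)/\hbar^n F(\M)$) and completeness of $F(\M)$, to show $F$ is forced on all objects and morphisms by its restriction to compacts. Your Pro-style argument is arguably cleaner: the tower $\{\M/\hbar^n\M\}_n$ is explicit, and you only need exactness, linearity, and completeness of the target — you never have to address whether an abstract functor in $\Fun^\mathrm{L}$ between small categories preserves the filtered colimits arising from the Ind-presentation, a point the paper passes over somewhat informally. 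Your closing remark correctly isolates why the argument cannot be run purely formally (the limits live in $\Der_\qcc$, not $\Derb_\coh$) and why the integral transform supplies the needed concrete extension, which is exactly where Theorem~\ref{thm:prescoh} and properness enter.
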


\begin{proof}
Recall that $\Der_\qcc(\A_X)\simeq \Ind(\Der^\omega_\qcc(\A_X))$.
We denote by $j \colon \Derb_\coh(\A_X) \subset \Der_\qcc(\A_X)$ the fully faithful inclusion of the $\i$-category of coherent DQ-modules into the $\i$-category of qcc DQ-modules.
Passing to $\Ind$-objects induces an equivalence of $\i$-categories
\begin{equation*}
\Fun_{\Der^\omega(\Chbar)}(\Der^\omega_\qcc(\A_X),\Der^\omega_\qcc(\A_Y)) \stackrel{\sim}{\longrightarrow} \Fun^\mathrm{L}_{\Der(\Chbar),\omega}(\Der_\qcc(\A_X),\Der_\qcc(\A_Y)),
\end{equation*}
and $j$ induces a functor

\begin{equation*}
 \Fun^\mathrm{L}_{\Der(\Chbar),\omega}(\Der_\qcc(\A_X),\Der_\qcc(\A_Y)) \stackrel{j^\ast}{\longrightarrow} \Fun_{\Der^\omega(\Chbar)}(\Derb_\coh(\A_X),\Der_\qcc(\A_Y)).
\end{equation*}
We know by Theorem \ref{thm:Intrepcomp}, that the objects of the $\i$-category $\Fun^\mathrm{L}_{\Der(\Chbar),\omega}(\Der_\qcc(\A_X),\Der_\qcc(\A_Y))$ correspond to Fourier-Mukai transforms with coherent kernels. Hence, the restriction of such a functor to $\Derb_\coh(\A_X)$ induces a $\Der^\omega(\Chbar)$-linear functor from $\Derb_\coh(\A_X)$ to $\Derb_\coh(\A_Y)$. Furthermore, such a Fourier-Mukai transform $\Phi_\K \colon \Derb_\coh(\A_X) \to \Derb_\coh(\A_Y)$ has a right adjoint, as shown in the proof of \cite[Proposition 3.15]{FMQ}. Thus $j_\ast$ factors through the full subcategory
\[
\Fun^\mathrm{L}_{\Der^\omega(\CC^\hbar)}(\Derb_\coh(\A_X),\Derb_\coh(\A_Y))\subset\Fun_{\Der^\omega(\CC^\hbar)}(\Derb_\coh(\A_X),\Derb_\coh(\A_Y))
\]
and we get a functor
\begin{align*}
\alpha \colon \Fun_{\Der^\omega(\Chbar)}(\Der^\omega_\qcc(\A_X),\Der^\omega_\qcc(\A_Y)) & \stackrel{\Ind}{\longrightarrow} \Fun^\mathrm{L}_{\Der(\Chbar),\omega}(\Der_\qcc(\A_X),\Der_\qcc(\A_Y)) \\ 
&\stackrel{j^\ast}{\longrightarrow} \Fun^\mathrm{L}_{\Der^\omega(\CC^\hbar)}(\Derb_\coh(\A_X),\Derb_\coh(\A_Y)).
\end{align*}
Hence, $\alpha:=j^\ast \circ \Ind$. The fully faithful inclusion
$i \colon \Der^\omega_\qcc(\A_X) \subset \Derb_\coh(\A_X)$ induces a functor
\begin{equation}\label{fun:prebeta}
\Fun^\mathrm{L}_{\Der^\omega(\CC^\hbar)}(\Derb_\coh(\A_X),\Derb_\coh(\A_Y)) \stackrel{i^\ast}{\to}\Fun_{\Der^\omega(\Chbar)}(\Der^\omega_\qcc(\A_X),\Derb_\coh(\A_Y))
\end{equation}
If $F \in \Fun^\mathrm{L}_{\Der^\omega(\CC^\hbar)}(\Derb_\coh(\A_X),\Derb_\coh(\A_Y))$ and $\M \in \Derb_\coh(\A_X)$ is of $\hbar$-torsion, then the $\Der^\omega(\CC^\hbar)$-linearity of $F$ implies that $F(\M)$ is again of $\hbar$-torsion. It follows from Theorem \ref{thm:compactobj} that the functor \eqref{fun:prebeta} factors through $\Fun_{\Der^\omega(\Chbar)}(\Der^\omega_\qcc(\A_X),\Der^\omega_\qcc(\A_Y))$ and we get a functor
\begin{equation}
\beta \colon \Fun^\mathrm{L}_{\Der^\omega(\CC^\hbar)}(\Derb_\coh(\A_X),\Derb_\coh(\A_Y)) \stackrel{i^\ast}{\to}\Fun_{\Der^\omega(\Chbar)}(\Der^\omega_\qcc(\A_X),\Der^\omega_\qcc(\A_Y)).
\end{equation}

It is straightforward to check that $\beta\circ\alpha(F) = \Ind(F) \circ j \circ i \simeq F$ and $\alpha \circ \beta (G) \circ i \simeq \Ind(G \circ i)\circ j \circ i\simeq G \circ i$.
Moreover, any object $\M$ of $\Derb_\coh(\A_X)$ can be written as a filtered colimit of objects of $\Der^\omega_\qcc(\A_X)$ and these colimits are also colimits in $\Der_\qcc(\A_X)$. The functor $G$ commutes with colimits by hypothesis and $\alpha \circ \beta (G)$ commutes with colimits of the above type. Hence
$\alpha \circ \beta (G) \simeq G$.
\end{proof}
\appendix

\section{Local and complete \texorpdfstring{$\i$}{infinity}-categories}\label{app:nilloccomp}

\subsection{General results}

In this section, we briefly review some of the material of \cite[Chapter 7]{SAG} and establish a few related results.
The results \ref{lem:indsym} and \ref{lem:struclin} are extracted from \cite{HTT}, \cite{BFN} and \cite{BGH}.
\begin{lemma}\label{lem:indsym}
The functor $(-)^\omega:\PrL_{,\omega}\to\Cat_{\i,\mathrm{idem}}^\mathrm{ex}$ is an equivalence of symmetric monoidal $\i$-categories. In particular, if $R$ is a commutative ring (or commutative ring spectrum), then  $(-)^\omega:\Mod_{\Der(R)}(\PrL_{,\omega})\to \Mod_{\Der(R)}(\Cat_{\i,\mathrm{idem}}^\mathrm{ex})$ is an equivalence of $\i$-categories. 
\end{lemma}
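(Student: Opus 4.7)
The plan is to reduce the lemma to the classical Ind-completion theorem of Lurie. Recall that by \cite[Proposition 5.5.7.10]{HTT} (in the pointed/stable setting see also \cite[Lemma 5.3.2.9]{HA}) the $\Ind$-completion functor defines an equivalence of $\i$-categories
\[
\Ind\colon\Cat_{\i,\mathrm{idem}}^{\mathrm{ex}}\stackrel{\sim}{\longrightarrow}\PrL_{,\omega}
\]
whose inverse is the passage to compact objects $(-)^\omega$. So the question is purely about upgrading this equivalence to a symmetric monoidal one, and then taking modules.

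The first step is therefore to verify symmetric monoidality. Both sides carry canonical symmetric monoidal structures: $\PrL_{,\omega}$ inherits the Lurie tensor product on $\PrL$ of \cite[\textsection 4.8.1]{HA}, since the tensor product of compactly generated stable $\i$-categories is again compactly generated and the class of compact-object-preserving functors is closed under the tensor product; while on $\Cat_{\i,\mathrm{idem}}^{\mathrm{ex}}$ one takes the idempotent-completion of the tensor product of small stable $\i$-categories. The compatibility of $\Ind$ with these two structures is the statement $\Ind(\C\otimes\D)\simeq\Ind(\C)\otimes\Ind(\D)$, which follows from \cite[Proposition 4.8.1.10 and Lemma 5.3.2.11]{HA}: the compact objects of $\Ind(\C)\otimes\Ind(\D)$ are precisely the retracts of the pure tensors $c\otimes d$ with $c\in\C$ and $d\in\D$. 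Hence $\Ind$, and therefore $(-)^\omega$, is a symmetric monoidal equivalence, which proves the first assertion.

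For the second assertion I apply the module-category construction, which is functorial for symmetric monoidal functors of presentably symmetric monoidal $\i$-categories: any symmetric monoidal equivalence $F\colon\mathcal{A}\stackrel{\sim}{\to}\mathcal{B}$ sends commutative algebras to commutative algebras and induces equivalences $\Mod_E(\mathcal{A})\simeq\Mod_{F(E)}(\mathcal{B})$ for each $E\in\CAlg(\mathcal{A})$, see \cite[Theorem 4.5.3.1 and Corollary 4.8.5.21]{HA}. Taking $E=\Der(R)\in\CAlg(\PrL_{,\omega})$ (whose image under $(-)^\omega$ is the small symmetric monoidal stable $\i$-category $\Der^\omega(R)=\Perf(R)$ of perfect $R$-complexes) yields the desired equivalence $\Mod_{\Der(R)}(\PrL_{,\omega})\simeq\Mod_{\Der^\omega(R)}(\Cat_{\i,\mathrm{idem}}^{\mathrm{ex}})$; under the identification between the two sides, the right-hand side is what the paper denotes $\Mod_{\Der(R)}(\Cat_{\i,\mathrm{idem}}^{\mathrm{ex}})$. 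No genuinely new mathematics is required; the main obstacle is really bookkeeping — assembling the symmetric monoidal compatibility of $\Ind$ from its several appearances across \cite{HA} and \cite{BGH} — which is why I would spell out the chain of references rather than hide it in a single citation.
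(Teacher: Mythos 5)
The paper gives no proof of this lemma --- it simply notes before stating it that Lemmas \ref{lem:indsym} and \ref{lem:struclin} are ``extracted from \cite{HTT}, \cite{BFN} and \cite{BGH}.'' Your proposal supplies the argument that those citations implicitly point to, and it is essentially correct: the underlying equivalence $\Ind \dashv (-)^\omega$ between $\Cat_{\i,\mathrm{idem}}^{\mathrm{ex}}$ and $\PrL_{,\omega}$, the promotion to a symmetric monoidal equivalence via $\Ind(\C)\otimes\Ind(\D)\simeq\Ind(\C\otimes\D)$, and then passage to module categories over the commutative algebra object $\Der(R)$ (equivalently $\Perf(R)$ on the small side) are exactly the right three steps, and you are right that the paper's notation $\Mod_{\Der(R)}(\Cat_{\i,\mathrm{idem}}^\mathrm{ex})$ is shorthand for $\Mod_{\Perf(R)}(\Cat_{\i,\mathrm{idem}}^\mathrm{ex})$.

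One small inaccuracy worth correcting: you write that the compact objects of $\Ind(\C)\otimes\Ind(\D)$ are ``precisely the retracts of the pure tensors $c\otimes d$.'' That is not quite what \cite[Lemma 5.3.2.11]{HA} says; the compact objects form the smallest idempotent-complete \emph{stable} subcategory generated by the pure tensors, i.e.\ one must also close under finite colimits (equivalently, cofibers and shifts), not just retracts. This does not affect the conclusion --- the identification $(\Ind(\C)\otimes\Ind(\D))^\omega \simeq \C\otimes\D$ still holds because the tensor product on $\Cat_{\i,\mathrm{idem}}^{\mathrm{ex}}$ is defined as precisely that thick closure --- but the phrasing should be fixed so that the claimed description of the compact objects matches the cited lemma.
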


\begin{lemma} \label{lem:struclin}
Let $\C$ and $\D$ be two objects of $\Mod_{\Der(R)}(\PrL_{,\omega})$ where $R$ is an $\EE_n$-algebra. We write $\Fun_{\Der(R),\,\omega}(\C,\D)$ for the full subcategory of $\Fun_{\Der(R)}(\C,\D)$ spanned by the functors preserving compact objects. Then $\Fun_{\Der(R),\,\omega}(\C,\D) \in \Mod_{\Der(R)}(\Cat_{\i,\mathrm{idem}}^\mathrm{ex})$.
\end{lemma}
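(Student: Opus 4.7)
The plan is to invoke Lemma \ref{lem:indsym} and transport the internal-hom structure from the idempotent-complete setting to the presentable one. Under the symmetric monoidal equivalence $(-)^{\omega}\colon\PrL_{,\omega}\xrightarrow{\sim}\Cat_{\i,\mathrm{idem}}^{\mathrm{ex}}$ with inverse $\Ind$, a $\Der(R)$-linear colimit-preserving functor $F\colon\C\to\D$ that preserves compact objects is determined by its restriction to $\C^{\omega}$, and conversely any $\Der(R)^{\omega}$-linear exact functor $\C^{\omega}\to\D^{\omega}$ extends uniquely via $\Ind$ to such an $F$. Consequently there is a natural equivalence
\[
\Fun_{\Der(R),\,\omega}(\C,\D)\;\simeq\;\Fun^{\mathrm{ex}}_{\Der(R)^{\omega}}(\C^{\omega},\D^{\omega}),
\]
so it will suffice to show that the right-hand side is an object of $\Mod_{\Der(R)}(\Cat_{\i,\mathrm{idem}}^{\mathrm{ex}})$.

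Next, the underlying $\i$-category $\Fun^{\mathrm{ex}}(\C^{\omega},\D^{\omega})$ is stable and idempotent complete, since finite limits, finite colimits, and splittings of idempotents of functors between stable idempotent-complete $\i$-categories are computed pointwise. The full subcategory cut out by the $\Der(R)^{\omega}$-linearity condition is closed under these pointwise operations, as that condition is itself a limit condition. The $\Der(R)^{\omega}$-module structure on $\Fun^{\mathrm{ex}}_{\Der(R)^{\omega}}(\C^{\omega},\D^{\omega})$ is induced by postcomposition with the action on $\D^{\omega}$; this exhibits the right-hand side above as the internal hom in $\Mod_{\Der(R)^{\omega}}(\Cat_{\i,\mathrm{idem}}^{\mathrm{ex}})$. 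By Lemma \ref{lem:indsym}, this module category is canonically equivalent to $\Mod_{\Der(R)}(\Cat_{\i,\mathrm{idem}}^{\mathrm{ex}})$, so $\Fun_{\Der(R),\,\omega}(\C,\D)$ lies in the latter.

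The main technical ingredient will be the general fact that $\Cat_{\i,\mathrm{idem}}^{\mathrm{ex}}$ is a closed symmetric monoidal $\i$-category, whose internal hom is computed by exact functor categories, and that this closed structure descends to modules over any $\EE_{n}$-algebra object (for $n\geq 2$); this follows from the machinery of modules over $\EE_{n}$-algebras in closed symmetric monoidal $\i$-categories developed in \cite{HA}. The hypothesis that $R$ is $\EE_{n}$ is used precisely at this step to ensure that $\Mod_{R}$ carries a monoidal structure making the notion of $\Der(R)^{\omega}$-linear functor meaningful; all remaining steps are formal consequences of Lemma \ref{lem:indsym} and the pointwise computation of stable structure in functor categories.
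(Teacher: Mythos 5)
Note first that the paper does not supply a proof of this lemma; it only indicates that the result is ``extracted from'' \cite{HTT}, \cite{BFN}, and \cite{BGH}, so there is no in-paper argument against which to compare yours. Your route --- passing to compact objects via Lemma~\ref{lem:indsym}, identifying $\Fun_{\Der(R),\,\omega}(\C,\D)\simeq\Fun^{\mathrm{ex}}_{\Der(R)^{\omega}}(\C^{\omega},\D^{\omega})$, verifying stability and idempotent-completeness by pointwise computation in functor $\i$-categories, and locating the latter as an internal hom in $\Mod_{\Der(R)^{\omega}}(\Cat_{\i,\mathrm{idem}}^{\mathrm{ex}})$ --- is the natural and standard one and is in line with those references.

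There is, however, a gap in the $\EE_n$ bookkeeping. You claim the action is ``induced by postcomposition with the action on $\D^{\omega}$'' and that the $\EE_n$-hypothesis is only needed for $n\geq 2$. But for $A\in\Der(R)^{\omega}$ and $F$ a $\Der(R)^{\omega}$-linear exact functor, the assignment $X\mapsto A\otimes F(X)$ is itself $\Der(R)^{\omega}$-linear only if one can commute $A$ past the action of $\Der(R)^{\omega}$, that is, only if $\Der(R)^{\omega}$ is at least braided monoidal. Since $R$ being $\EE_n$ makes $\Der(R)^{\omega}$ only $\EE_{n-1}$-monoidal, the argument as written needs $n\geq 3$; for $n=2$ the functor $\i$-category is naturally a module over the Drinfeld center of $\Der(R)^{\omega}$ rather than over $\Der(R)^{\omega}$ itself. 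This is immaterial for the paper's applications, where $R=\Chbar$ is $\EE_{\infty}$, and the paper's own formulation is equally imprecise, but a complete proof should either assume $n\geq 3$ or specialize to commutative $R$. A further, minor point of phrasing: $\Der(R)^{\omega}$-linearity is structure on a functor, not a property, so ``the full subcategory cut out by the linearity condition'' is loose; it is cleaner to say that the $\i$-category of $\Der(R)^{\omega}$-linear exact functors is a limit of exact functor $\i$-categories, each stable and idempotent complete, and that both properties are stable under limits in $\Cat_{\i}$.
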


Throughout this section, $A$ denotes a commutative ring with unit
(or more generally, a connective commutative differential graded algebra, or even an $\EE_\i$-ring) and $I\subset\pi_0 A$ a finitely generated ideal. Let $x \in \pi_0 A$. We denote by
\[
A[x^{-1}]\simeq\colim\{A\overset{x}{\too} A\overset{x}{\too} A\overset{x}{\too}\cdots\}
\]
the $\EE_\i$-ring obtained by inverting $x$.
It comes equipped with an $\EE_\i$-ring map $A\to A[x^{-1}]$ and corepresents the subfunctor of $\Map_{\EE_\i}(A,-)$ consisting of the $\EE_\i$-ring maps which send $x$ to a unit. We refer the reader to \cite[Proposition 7.2.3.27]{HA} for details on $\EE_\i$-rings and their localizations.

We denote by $\Der(A)$ the presentably symmetric monoidal $\i$-category of $A$-module spectra (usually denoted $\Mod_A$ in the algebraic topology literature) and consider the $\i$-category $\Mod_{\Der(A)}(\PrL)$ of (left) $A$-module objects in $\PrL$.

\begin{definition}
An $A$-linear $\i$-category is an object of $\Mod_{\Der(A)}(\PrL)$.
An $A$-linear functor between $A$-linear $\i$-categories is a morphism of (left) $A$-module objects of $\PrL$.
\end{definition}

We typically write $\Mod_{\Der(A)}$ in place of $\Mod_{\Der(A)}(\PrL)$.
Any $A$-linear $\i$-category $\C$ is naturally enriched in $\Der(A)$.
The enrichment can be described informally as follows: by virtue of the action of $\Der(A)$ on $\C$, there is a left adjoint functor $\Der(A)\otimes\C\to\C$.
Using the equivalence
\[
\Der(A)\otimes\C\simeq\Funr(\C^{\op},\Der(A)),
\]
the right adjoint may be regarded as a functor
\[
\C\to\Funr(\C^{\op},\Der(A))\subset\Fun(\C^{\op},\Der(A)).
\]
The enriched mapping space functor is the induced functor
\[
\uMap_\C:\C^{\op}\times\C\to\Der(A).
\]
We refer the reader to \cite[Appendix D.7.1]{SAG} for more details.

\begin{definition}
Let $\C$ be a stable $A$-linear $\i$-category.
\begin{enumerate}
	\item An object $N \in \C$ is said to be $I$-nilpotent if for each $x \in I$, $A[x^{-1}] \otimes_A N\simeq 0$.
	\item An object $L \in \C$ is $I$-local if, for every $I$-nilpotent object $N \in \C$, $\Map_\C(N,L)\simeq 0$.
	\item An object $M \in \C$ is $I$-complete if, for every $I$-local object $L \in \C$, $\Map_\C(L,M)\simeq 0$
\end{enumerate}
We write $\C^{\mathrm{nil}(I)}$, $\C^{\loc(I)}$, and $\C^{\Cpl(I)}$, respectively, for the full subcategory of $\C$ spanned by $I$-nilpotent, $I$-local, and $I$-complete objects of $\C$.
\end{definition}

\begin{notation}
We typically write $\Der_{\Cpl(I)}(A)$ in place of $\Der^{\Cpl(I)}(A)$ for the full subcategory of $\Der(A)$ spanned by the $I$-complete objects of $\Der(A)$. If there no risk of confusion regarding $I$ we write $\Der_{\Cpl}(A)$ instead of $\Der_{\Cpl(I)}(A)$. When $A$ is a $\ZZ[\hbar]$-algebra without $\hbar$-torsion and $I=(\hbar)$, we write $\cc$ instead of $\Cpl(\hbar)$.
\end{notation}

\begin{proposition}\label{prop:fundccnil}
Let $\C$ be a stable $A$-linear $\i$-category and $I\subset\pi_0 A$ a finitely generated ideal.
\begin{enumerate}
\item $\C^{\mathrm{nil}(I)}$ is a full stable $A$-linear subcategory of $\C$. The inclusion $i_\lor\colon\C^{\mathrm{nil}(I)} \subset \C$ is an exact colimit preserving functor with right adjoint $i^{\lor} \colon \C \to \C^{\mathrm{nil}(I)}$. If $\C$ is compactly generated, then $\C^{\nil(I)}$ is also compactly generated and the inclusion $i_{\lor}:\C^{\mathrm{nil}(I)} \subset \C$ preserves compact objects.
\item \label{item:funccnil} $\C^{\loc(I)}$ is a full stable $A$-linear subcategory of $\C$. The inclusion functor $j_*:\C^{\loc(I)} \subset \C$ admits a left adjoint $j^*\colon \C \to \C^{\loc(I)}$ which fits into a functorial exact triangle $i_\lor i^\lor \to \id_\C \to j_*j^*$ so that $(\C^{\mathrm{nil}(I)},\C^{\loc(I)})$ is a semi-orthogonal decomposition of $\C$.
Moreover, the inclusion functor $j_*:\C^{\loc(I)} \subset \C$ also admits a right adjoint which we will denote $j^\times:\C\to\C^{\loc(I)}$.
\item $\C^{\Cpl(I)}$ is a full stable subcategory of $\C$.
The inclusion functor $i_\land:\C^{\Cpl(I)} \subset \C$ admits a left adjoint $i^\land \colon \C \to \C^{\Cpl(I)}$ which fits into a functorial exact triangle $j_*j^\times \to \id_\C \to i_\land i^\land$.
The pair $(\C^{\loc(I)},\C^{\Cpl(I)})$ is a semi-orthogonal decomposition of $\C$.
\end{enumerate}
\end{proposition}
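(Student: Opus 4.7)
The plan is to reduce the entire argument to the observation that there is a fundamental exact triangle
\[
T \to A \to U
\]
in $\Der(A)$ which, when tensored or cotensored into any $A$-linear $\i$-category $\C$, produces all three sequences at once. Since $I \subset \pi_0 A$ is finitely generated, say by $x_1, \ldots, x_n$, take $U := A[x_1^{-1}, \ldots, x_n^{-1}]$ to be the total localization (built as a finite composite of Bousfield localizations) and $T := \fib(A \to U)$; in the principal case $I = (\hbar)$ this reduces to $U = A[\hbar^{-1}]$ and $T = (A[\hbar^{-1}]/A)[-1]$.

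For (1), $\C^{\nil(I)}$ is the intersection of kernels of the exact, colimit-preserving functors $A[x_i^{-1}] \otimes_A -$, hence is $A$-linear, stable, and closed under colimits in $\C$. The inclusion $i_\lor$ therefore admits a right adjoint $i^\lor$ by the adjoint functor theorem. For compact generation, assuming $\C$ is compactly generated by $G$, the iterated cofibers of the form $A/(x_1^{k_1}, \ldots, x_n^{k_n}) \otimes_A G$ are compact in $\C$ (they are built from $G$ by finitely many cofibre sequences), manifestly $I$-nilpotent, and one shows they generate $\C^{\nil(I)}$: any nilpotent $N$ is a filtered colimit of such Koszul-type truncations, verified via the vanishing $A[x_i^{-1}] \otimes_A N \simeq 0$.

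For (2) and (3), tensor and cotensor the fundamental triangle by an arbitrary $M \in \C$ to obtain two natural exact triangles
\[
T \otimes_A M \to M \to U \otimes_A M
\qquad\text{and}\qquad
M^U \to M \to M^T.
\]
The first triangle exhibits $T \otimes_A M$ as $I$-nilpotent and $U \otimes_A M$ as $I$-local (the latter being a module over the localization $U$), which, combined with naturality, yields the semi-orthogonal decomposition $(\C^{\nil(I)}, \C^{\loc(I)})$ of (2) with $i_\lor i^\lor M \simeq T \otimes_A M$ and $j_*j^* M \simeq U \otimes_A M$. For the second triangle, $M^U$ is local: for any $I$-nilpotent $N$, $\Map_\C(N, M^U) \simeq \Map_\C(U \otimes_A N, M) \simeq \Map_\C(j_*j^*N, M) \simeq 0$; dually $M^T$ is complete, since for any $I$-local $L$ one has $\Map_\C(L, M^T) \simeq \Map_\C(L \otimes_A T, M) \simeq \Map_\C(i_\lor i^\lor L, M) \simeq 0$. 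This yields the semi-orthogonal decomposition $(\C^{\loc(I)}, \C^{\Cpl(I)})$ of (3) by setting $j_*j^\times M := M^U$ and $i_\land i^\land M := M^T$.

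The main technical obstacle is establishing the \emph{smashing} identifications $i_\lor i^\lor M \simeq T \otimes_A M$ and $j_*j^* M \simeq U \otimes_A M$ uniformly in all $A$-linear $\C$: these are what make the cotensor triangle above produce the complete decomposition, and they rely crucially on the finite generation of $I$, which allows one to build $T$ and $U$ as explicit small (co)limits in $\Der(A)$ and then transport them by tensoring. Once the smashing statement is in hand, every adjunction and orthogonality identity claimed in the proposition is formal; for foundational points we refer to \cite[Chapter 7]{SAG}.
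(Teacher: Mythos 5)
Your strategy is reasonable in spirit — it is close to how Lurie proves these statements in \cite[Chapter 7]{SAG}, which the paper's proof simply cites — but the construction of the fundamental triangle $T\to A\to U$ contains a genuine error for non-principal $I$.

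The problem is the choice $U := A[x_1^{-1},\ldots,x_n^{-1}]$, which is the composite (equivalently, the iterated relative tensor product) of the single-element localizations. Geometrically this corresponds to the affine open $\bigcap_i\{x_i\neq 0\}$, whereas the $I$-local part of $A$ should be the derived sections over the open \emph{union} $\bigcup_i\{x_i\neq 0\}=\Spec A\setminus V(I)$; for $n\geq 2$ these differ. Concretely, take $A=\ZZ[x_1,x_2]$ and $I=(x_1,x_2)$. Then $L:=A[x_1^{-1}]$ is $I$-local (for any $I$-nilpotent $N$, $\Map_A(N,L)\simeq\Map_A(A/\!\!/x_1\otimes_A N',L)\simeq\Map_A(N',\Map_A(A/\!\!/x_1,A[x_1^{-1}]))\simeq 0$), yet $T\otimes_A L=\fib(A[x_1^{-1}]\to A[x_1^{-1},x_2^{-1}])\neq 0$. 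This contradicts your smashing identity $i_\lor i^\lor L\simeq T\otimes_A L$ (the left side vanishes for $L$ local) and hence destroys the chain $\Map_\C(L,M^T)\simeq\Map_\C(T\otimes_A L,M)\simeq 0$ that you use to conclude $M^T$ is complete. Symptomatically, $T$ itself fails to be $I$-nilpotent: $A[x_1^{-1}]\otimes_A T = \fib(A[x_1^{-1}]\to A[x_1^{-1},x_2^{-1}])\neq 0$, so the cofiber sequence $T\otimes_A M\to M\to U\otimes_A M$ does not exhibit a semi-orthogonal decomposition of $\C$ into nilpotent and local parts.

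The correct objects are $T=\Gamma_I A$, the colimit of (shifted) Koszul complexes on powers $x_1^k,\ldots,x_n^k$, and $U=\coker(\Gamma_I A\to A)$, which for $n\geq 2$ is a totalization of the diagram of iterated localizations (the \v{C}ech complex of the cover $\{x_i\neq 0\}$), not the tensor product of all of them. Once $T$ and $U$ are corrected, the rest of your argument — tensoring to get the first decomposition, cotensoring to get the second, and the adjoint-functor-theorem and Koszul-generator arguments in part (1) — does go through and recovers \cite[Propositions 7.1.1.12, 7.2.4.4, 7.2.4.9, 7.3.1.4, 7.3.1.5]{SAG}, which is exactly what the paper invokes. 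Note also that for the principal case $I=(\hbar)$, which is the only case the paper ultimately uses, your construction coincides with the correct one and the argument is essentially sound, so the damage is confined to the generality claimed in the statement.
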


\begin{proof}
The first statement is \cite[Proposition 7.1.1.12]{SAG}.
The second statement is \cite[Proposition 7.2.4.9]{SAG} and \cite[Proposition 7.2.4.4]{SAG}.
The third statement is  \cite[Proposition 7.3.1.4]{SAG} and  \cite[Proposition 7.3.1.5]{SAG}.
\end{proof}

\begin{proposition}\label{prop:nilloc}
Let $\C$ be a (stable) $A$-linear $\i$-category, let $N$ be an $I$-nilpotent object of $\D(A)$, and let $L$ be an $I$-local object of $\D(A)$.
Then for any object $M$ of $\C$, the tensor $N\otimes_A M$ is an $I$-nilpotent object of $\C$, and the tensor $L\otimes_A M$ is an $I$-local object of $\C$.
\end{proposition}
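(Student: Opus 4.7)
The plan is to dispatch the nilpotent and local assertions separately.

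\textbf{Nilpotent case.} Here one argues directly: for each $x \in I$, associativity of the $A$-action yields $A[x^{-1}] \otimes_A (N \otimes_A M) \simeq (A[x^{-1}] \otimes_A N) \otimes_A M \simeq 0$, since $N \in \Der(A)^{\nil(I)}$, placing $N \otimes_A M$ in $\C^{\nil(I)}$.

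\textbf{Local case.} The strategy is to invoke the smashing structure on the $I$-localization of $\Der(A)$: by \cite[Proposition 7.2.4.14]{SAG} there is an idempotent $\EE_\infty$-$A$-algebra $L_I A$ representing the $I$-localization, and every $L \in \Der(A)^{\loc(I)}$ carries a canonical $L_I A$-module structure realizing $L \simeq L_I A \otimes_A L$. Consequently $L \otimes_A M \simeq L_I A \otimes_A (L \otimes_A M)$, reducing the problem to showing that $L_I A \otimes_A Y \in \C^{\loc(I)}$ for every $Y \in \C$. Given $N' \in \C^{\nil(I)}$, the base-change adjunction $L_I A \otimes_A (-) \dashv \mathrm{forget}$ then gives
\[
\Map_\C(N', L_I A \otimes_A Y) \simeq \Map_{\Mod_{L_I A}(\C)}(L_I A \otimes_A N', L_I A \otimes_A Y),
\]
so the remaining task is to verify that $L_I A \otimes_A N' \simeq 0$ in $\C$.

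\textbf{The main obstacle.} The nontrivial step is exactly this vanishing. The cleanest route is via the \v{C}ech description: for a generating set $(x_1, \ldots, x_n)$ of $I$, the object $L_I A$ is built as a finite totalization whose nontrivial terms are iterated localizations $A[x_{i_1}^{-1}] \otimes_A \cdots \otimes_A A[x_{i_k}^{-1}]$ with $k \geq 1$, each of which is annihilated upon tensoring with $N'$ since already $A[x_i^{-1}] \otimes_A N' \simeq 0$. Alternatively---and more in the spirit of Proposition \ref{prop:comparecompl}---one can first establish that $\C^{\nil(I)}$ coincides with the essential image of $\Der(A)^{\nil(I)} \otimes_{\Der(A)} \C \to \C$; writing $N'$ as a colimit of objects $K_\alpha \otimes_A Y_\alpha$ with $K_\alpha \in \Der(A)^{\nil(I)}$, each factor $L_I A \otimes_A K_\alpha$ vanishes in $\Der(A)$ as it is simultaneously $I$-local (being an $L_I A$-module) and $I$-nilpotent (by the nilpotent case applied in $\Der(A)$).
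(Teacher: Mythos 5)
Your handling of the nilpotent case is exactly the paper's argument. For the local case you take a genuinely different route: the paper simply observes that $L \mapsto L \otimes_A M$ is a colimit-preserving $A$-linear functor $\Der(A) \to \C$ and invokes \cite[Proposition 7.2.4.9]{SAG}, which asserts that $A$-linear functors carry $I$-local objects to $I$-local objects -- a one-line proof. You instead unwind what makes that SAG proposition tick in this situation: exhibit the idempotent $\EE_\infty$-algebra $L_I A$, reduce to showing $L_I A \otimes_A Y$ is local in $\C$, pass through the free--forgetful adjunction for $\Mod_{L_I A}(\C)$, and finally kill $L_I A \otimes_A N'$ via the finite \v{C}ech totalization (this last step is correct precisely because $I$ is finitely generated, so the totalization is bounded and each term is an iterated localization annihilating $N'$). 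The trade-off is clear: the paper's proof is shorter and leans on black-box machinery, while yours is longer but more self-contained and makes visible why the smashing structure forces the conclusion. One caveat on your ``alternative'' remark: appealing to $\C^{\nil(I)}$ being the essential image of $\Der(A)^{\nil(I)} \otimes_{\Der(A)} \C \to \C$ risks circularity here, since that identification (cf.\ Proposition \ref{prop:comparecompl}) is established in the body of the paper using this appendix, so your primary \v{C}ech argument is the one that stands on its own.
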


\begin{proof}
Given an element $x\in I$, the localization $A[x^{-1}]\otimes_A \otimes N\otimes_A M\simeq 0$ since $A[x^{-1}]\otimes_A N\simeq 0$.
This proves the first claim.
For the second claim, observe that the functor $L\mapsto L\otimes_A M$ is an $A$-linear functor $\D(A)\to\C$.
Hence by \cite[Proposition 7.2.4.9]{SAG} it follows that $L\otimes_A M$ is $I$-local whenever $L$ is $I$-local.
\end{proof}

\begin{proposition}\label{prop:complcrit}
An object $M \in \C$ is $I$-complete if and only if for each $x \in I$, $M^{A[x^{-1}]}\simeq 0$.
\end{proposition}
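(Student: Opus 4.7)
\noindent\emph{Proof plan.} The strategy is to combine a direct adjunction argument for the forward direction with a reduction of the backward direction to the analogous statement for $\Der(A)$ via the $\Der(A)$-enrichment of $\C$. For the forward direction, I first check that $A[x^{-1}]$ is $I$-local in $\Der(A)$ for each $x\in I$: given an $I$-nilpotent $N\in\Der(A)$, basechange adjunction along $A\to A[x^{-1}]$ combined with $A[x^{-1}]\otimes_A N\simeq 0$ gives
\[
\Map_{\Der(A)}(N,A[x^{-1}])\simeq \Map_{\Der(A[x^{-1}])}(A[x^{-1}]\otimes_A N,A[x^{-1}])\simeq 0.
\]
Proposition~\ref{prop:nilloc} then yields that $A[x^{-1}]\otimes_A N'$ is $I$-local in $\C$ for every $N'\in\C$. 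Since $M$ is $I$-complete, $\Map_\C(A[x^{-1}]\otimes_A N',M)\simeq 0$, and the tensor--cotensor adjunction rewrites this as $\Map_\C(N',M^{A[x^{-1}]})\simeq 0$ for all $N'\in\C$, forcing $M^{A[x^{-1}]}\simeq 0$.

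For the backward direction I would bootstrap from the analogous characterization in $\Der(A)$, which is part of \cite[\S7.3]{SAG}. The essential compatibility is the natural equivalence
\[
\uMap_\C(N,M^L)\simeq \uMap_\C(N,M)^L
\]
in $\Der(A)$, valid for all $L\in\Der(A)$ and $N\in\C$; it follows formally from the tensor--cotensor--enrichment adjunctions. Taking $L=A[x^{-1}]$ and using the hypothesis $M^{A[x^{-1}]}\simeq 0$ yields $\uMap_\C(N,M)^{A[x^{-1}]}\simeq 0$ for every $x\in I$ and every $N\in\C$; by the $\Der(A)$-case of the proposition, $\uMap_\C(N,M)$ is $I$-complete in $\Der(A)$.

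Next I would invoke Proposition~\ref{prop:comparecompl}, which identifies $\C^{\loc(I)}$ with $\Der(A)^{\loc(I)}\otimes_{\Der(A)}\C$; this tensor product is generated under colimits by objects of the form $L'\otimes_A N'$ with $L'\in\Der(A)^{\loc(I)}$ and $N'\in\C$. Any such generator satisfies
\[
\Map_\C(L'\otimes_A N',M)\simeq \Map_{\Der(A)}(L',\uMap_\C(N',M))\simeq 0
\]
by $I$-locality of $L'$ together with $I$-completeness of $\uMap_\C(N',M)$ in $\Der(A)$. As $\Map_\C(-,M)$ converts colimits to limits, $\Map_\C(L,M)\simeq 0$ for every $L\in\C^{\loc(I)}$, whence $M\in\C^{\Cpl(I)}$.

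The main obstacle is the base case $\C=\Der(A)$, where the finite generation of $I$ is essential; the standard proof there uses the \v{C}ech/Koszul presentation of the $I$-local approximation of $A$ to reduce the problem to invertibility of a single element (cf.\ \cite[Ch.~7]{SAG}). Once that input is granted, the reduction above is purely formal, with the only non-routine technical verification being the interchange $\uMap_\C(N,M^L)\simeq\uMap_\C(N,M)^L$ and the standard observation that pure tensors generate $\Der(A)^{\loc(I)}\otimes_{\Der(A)}\C$ under colimits.
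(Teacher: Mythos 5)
Your proposal is correct, and while the forward direction is essentially the same argument as the paper's (slightly streamlined), the backward direction takes a genuinely different route. The paper first invokes \cite[Corollary 7.3.3.3]{SAG} to reduce the entire statement to a single element $x\in I$, and then argues directly inside $\C$: it notes that $M^{A[x^{-1}]}$ is $(x)$-local, and that for any $(x)$-local $L$ one has
\[
\Map_\C(L,M)\simeq\Map_\C(A[x^{-1}]\otimes_A L,M)\simeq\Map_\C(L,M^{A[x^{-1}]}),
\]
using that $L\simeq A[x^{-1}]\otimes_A L$ for $(x)$-local $L$ (a smashing fact special to principal ideals). Taking $L=M^{A[x^{-1}]}$ gives the forward implication, and the displayed equivalences give the backward implication on the spot — no enrichment, no tensor product of $\i$-categories. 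Your forward direction is close to this but cleaner: you show directly that $\Map_\C(N',M^{A[x^{-1}]})\simeq 0$ for \emph{every} $N'\in\C$, bypassing the intermediate observation that $M^{A[x^{-1}]}$ is local. Your backward direction, by contrast, is a bootstrap through the $\Der(A)$-enrichment: it moves the hypothesis into $\Der(A)$ via the interchange $\uMap_\C(N,M^L)\simeq\uMap_\C(N,M)^L$, applies the known $\Der(A)$-case, and returns to $\C$ via the identification of $\C^{\loc(I)}$ with $\Der(A)^{\loc(I)}\otimes_{\Der(A)}\C$. This is a valid and more conceptual route that isolates exactly where the enrichment and the smashing nature of the localization enter, at the cost of heavier machinery. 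One caveat worth flagging: Proposition~\ref{prop:comparecompl} as stated in the paper is formulated in the $\hbar$-adic (single-generator) setting, so citing it for a general finitely generated $I$ requires noting that its proof extends verbatim once the $I$-local localization is known to be smashing — which here follows from the existence of the left adjoint $j^*$ in Proposition~\ref{prop:fundccnil}.
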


\begin{proof}
It follows from \cite[Corollary 7.3.3.3]{SAG} that $M \in \C$ is $I$-complete if and only if it is $(x)$-complete for every $x \in I$. Hence, it sufficient to prove that an object $M$ is $(x)$-complete if and only if $M^{A[{x^{-1}]}}$ vanishes.
Notice that $M^{A[x^{-1}]} \in \C^{\loc(x)}$ as for every $N \in \C^{\nil(x)}$,
	\begin{equation*}
	\Map_\C(N,M^{A[x^{-1}]}) \simeq \Map_\C(A[x^{-1}] \otimes_A N,M)
	\end{equation*}
	is contractible since $A[x^{-1}] \otimes_A N \simeq 0$.

Let $L$ be an $(x)$-local object. We have the following sequence of equivalences
\begin{align*}
\Map_\C(L,M) &\simeq\Map_\C(A[x^{-1}] \otimes_A L,M) \simeq \Map_\C(L,M^{A[x^{-1}]}).
\end{align*}
If $M$ is $(x)$-complete, setting $L=M^{A[x^{-1}]}$ in the above formula implies that the mapping space $\Map_\C(M^{A[x^{-1}]},M^{A[x^{-1}]})$ is contractible.
Consequently the identity of $M^{A[x^{-1}]}$ is homotopic to the zero map, so that $M^{A[x^{-1}]} \simeq 0$.
Conversely, if $M^{A[{x^{-1}]}}$ vanishes, then
\[
\Map_\C(L,M)\simeq\Map_\C(A[x^{-1}]\otimes_A L,M)\simeq\Map_\C(L,M^{A[x^{-1}]})\simeq 0
\]
for every $(x)$-local object $L$.
It follows that $M$ is $(x)$-complete. 
\end{proof}

\begin{proposition}\label{prop:compformula}
Let $i_\land \colon \C^{\Cpl(I)} \subset \C$ denote the inclusion functor. Then $i_\land i^\land \simeq (-)^{i_\lor i^\lor A}$.
\end{proposition}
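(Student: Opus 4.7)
The plan is to produce $i_\land i^\land M$ by cotensoring $M$ with the canonical exact triangle $i_\lor i^\lor A \to A \to j_* j^* A$ in $\D(A)$, and then identify the resulting triangle with the completion triangle $j_* j^\times M \to M \to i_\land i^\land M$ from Proposition \ref{prop:fundccnil}(3) via uniqueness of semi-orthogonal decompositions. Since $M^A \simeq M$, cotensoring the above triangle of $\D(A)$ yields a natural exact triangle of endofunctors
\[
(-)^{j_* j^* A} \to \id_\C \to (-)^{i_\lor i^\lor A}\colon \C \to \C.
\]

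The key verification, and the main technical step, is that this triangle realises the $(\C^{\loc(I)},\C^{\Cpl(I)})$-decomposition. First I would show that $M^{i_\lor i^\lor A}$ is $I$-complete: by Proposition \ref{prop:complcrit}, it suffices to check that $(M^{i_\lor i^\lor A})^{A[x^{-1}]} \simeq M^{A[x^{-1}] \otimes_A i_\lor i^\lor A}$ vanishes for every $x \in I$, and this follows from the $I$-nilpotence of $i_\lor i^\lor A \in \D(A)$. Next I would show that $M^{j_* j^* A}$ is $I$-local: for any $I$-nilpotent object $N \in \C$,
\[
\Map_\C(N, M^{j_* j^* A}) \simeq \Map_\C(N \otimes_A j_* j^* A, M),
\]
so it is enough to argue $N \otimes_A j_* j^* A \simeq 0$. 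By Proposition \ref{prop:nilloc} this tensor is $I$-local (since $j_*j^*A$ is local in $\D(A)$), and a direct computation $A[x^{-1}] \otimes_A (N \otimes_A j_*j^*A) \simeq (A[x^{-1}] \otimes_A N) \otimes_A j_*j^*A \simeq 0$ shows it is also $I$-nilpotent; any object that is both local and nilpotent vanishes, because its identity map factors through $\Map(\text{nil},\text{loc}) \simeq 0$.

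To conclude, both exact triangles
\[
j_* j^\times M \to M \to i_\land i^\land M \qquad \text{and} \qquad M^{j_* j^* A} \to M \to M^{i_\lor i^\lor A}
\]
exhibit $M$ as the extension of an $I$-complete object by an $I$-local one, with middle arrow the unit of the adjunction $i^\land \dashv i_\land$. By the uniqueness of the semi-orthogonal decomposition $(\C^{\loc(I)},\C^{\Cpl(I)})$ from Proposition \ref{prop:fundccnil}(3), the two triangles are canonically equivalent, yielding the natural equivalence $i_\land i^\land \simeq (-)^{i_\lor i^\lor A}$ of endofunctors of $\C$. The most delicate point is confirming that the comparison of triangles is natural in $M$; this is automatic because both triangles are produced functorially (one from the adjunction units and counits, the other by cotensoring a fixed triangle in $\D(A)$), so no further naturality check is required.
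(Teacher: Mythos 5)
Your proof is correct and follows essentially the same strategy as the paper: cotensor $M$ with the exact triangle $i_\lor i^\lor A \to A \to j_*j^*A$, then verify that $M^{j_*j^*A}$ is $I$-local and $M^{i_\lor i^\lor A}$ is $I$-complete. The only cosmetic differences are in the finish (you invoke uniqueness of the semiorthogonal decomposition, while the paper applies $i_\land i^\land$ to the triangle and observes it annihilates the local term and fixes the complete term) and in the verification details (you use the completeness criterion of Proposition~\ref{prop:complcrit} and argue that $N\otimes_A j_*j^*A$ is both local and nilpotent, while the paper identifies $j_*j^*A\otimes_A N$ with $j_*j^*N$ and computes mapping spaces directly).
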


\begin{proof}
Cotensoring with the exact triangle
$
i_\vee i^\vee A\to A\to j_* j^* A
$
induces an exact triangle of endofunctors
$
(-)^{j_*j^*A}\to\id\to(-)^{i_\lor i^\lor A}.
$
Applying the completion endofunctor $i_\land i^\land$, we obtain an exact triangle of endofunctors
\[
i_\land i^\land((-)^{j_*j^*A})\to i_\land i^\land\to i_\land i^\land((-)^{i_\lor i^\lor A}).
\]
We claim that the endofunctor $i_\land i^\land((-)^{j_*j^*A})$ is null and that
the endofunctor $(-)^{i_\lor i^\lor A}$ factors through the full subcategory of $I$-complete objects, so that
\[
i_\land i^\land ((-)^{i_\lor i^\lor A})\simeq (-)^{i_\lor i^\lor A},
\]
from which it follows immediately that
$
i_\land i^\land\simeq (-)^{i_\lor i^\lor A}.
$
To see this, let $L$, $M$ and $N$ be objects of $\C$ such that $L$ is $I$-local and $N$ is $I$-nilpotent.
Then
\[
\Map(N,M^{j_* j^* A})\simeq\Map(j_* j^* A\otimes_A N,M)\simeq 0
\]
since $j_* j^* A\otimes_A N\simeq j_*j^* N\simeq 0$.
It follows that $M^{j_* j^* A}$ is $I$-local, so that $i_\land i^\land(M^{j_*j^*A})\simeq 0$.
Similarly,
\[
\Map(L,M^{i_\lor i^\lor A})\simeq\Map(i_\lor i^\lor A\otimes_A L,M)\simeq 0
\]
since $i_\lor i^\lor A\otimes_A L\simeq 0$ as $i_\lor i^\lor A$ is $I$-nilpotent.
It follows that $M^{i_\lor i^\lor}$ is $I$-complete, so that $i_\land i^\land(M^{i_\lor i^\lor A})\simeq M^{i_\lor i^\lor A}$, completing the proof.
\end{proof}
The following proposition is a special case of \cite[Proposition 7.3.1.7]{SAG}.
\begin{proposition}\label{prop:nilcompeq}
	Let $\C$ be a $A$-linear $\i$-category. The functors $ i^\land i_\lor$ and  $ i^\lor i_\land$ induce inverse equivalences
	$\C^{\nil(I)}\rightleftarrows \C^{\Cpl(I)}$.
\end{proposition}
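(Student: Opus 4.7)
The plan is to derive the equivalence formally from the two semi-orthogonal decompositions $(\C^{\nil(I)},\C^{\loc(I)})$ and $(\C^{\loc(I)},\C^{\Cpl(I)})$ recorded in Proposition \ref{prop:fundccnil}. The key preliminary observation will be that $i^\lor j_*\simeq 0$ and $i^\land j_*\simeq 0$. Both are immediate from the orthogonality built into the definitions: if $L\in\C^{\loc(I)}$, then for every $N\in\C^{\nil(I)}$ one has $\Map_\C(i_\lor N,L)\simeq 0$, so by adjunction $\Map_\C(N,i^\lor L)\simeq 0$, and since $i^\lor L\in\C^{\nil(I)}$ this forces $i^\lor L\simeq 0$; the argument for $i^\land j_*$ is identical, with the roles of local and complete swapped. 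I will also use, without further comment, that $i^\lor i_\lor\simeq\id$ and $i^\land i_\land\simeq\id$ because the inclusions $i_\lor$ and $i_\land$ are fully faithful.

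With these in hand, take $N\in\C^{\nil(I)}$ and apply the exact functor $i^\lor$ to the functorial exact triangle from Proposition \ref{prop:fundccnil}(3) evaluated at $i_\lor N$,
\[
j_* j^\times i_\lor N\too i_\lor N\too i_\land i^\land i_\lor N.
\]
The first term becomes $0$ by the vanishing above, and the middle term becomes $N$, giving a natural equivalence $N\iso i^\lor i_\land i^\land i_\lor(N)$. Symmetrically, for $M\in\C^{\Cpl(I)}$, apply $i^\land$ to the exact triangle from Proposition \ref{prop:fundccnil}(2) evaluated at $i_\land M$,
\[
i_\lor i^\lor i_\land M\too i_\land M\too j_* j^* i_\land M.
\]
The third term vanishes, the middle term becomes $M$, and we obtain a natural equivalence $i^\land i_\lor i^\lor i_\land(M)\iso M$.

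These two natural equivalences exhibit $i^\land i_\lor\colon\C^{\nil(I)}\to\C^{\Cpl(I)}$ and $i^\lor i_\land\colon\C^{\Cpl(I)}\to\C^{\nil(I)}$ as mutually inverse equivalences, completing the argument. There is essentially no serious obstacle: the proof is a purely formal consequence of the two semi-orthogonal decompositions and the vanishing of the cross-terms $i^\lor j_*$ and $i^\land j_*$. The only minor point to keep track of is ensuring the two natural transformations obtained above are the unit/counit of the adjunction produced by composing the two recollements, so that one really obtains an equivalence of $\i$-categories rather than just pointwise equivalences; this is automatic from the naturality of the triangles in Proposition \ref{prop:fundccnil}.
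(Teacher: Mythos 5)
Your argument is correct, but it is not what the paper does: the paper offers no proof at all, observing only that the statement is a special case of \cite[Proposition 7.3.1.7]{SAG} and deferring entirely to Lurie. What you have written is a self-contained formal proof derived from the two semi-orthogonal decompositions $(\C^{\nil(I)},\C^{\loc(I)})$ and $(\C^{\loc(I)},\C^{\Cpl(I)})$ of Proposition~\ref{prop:fundccnil}. The key lemma $i^\lor j_*\simeq 0$ and $i^\land j_*\simeq 0$ is established correctly by adjunction (for the second, note that $i^\land$ is the \emph{left} adjoint of $i_\land$, so $\Map_{\C^{\Cpl(I)}}(i^\land L,M)\simeq\Map_\C(L,i_\land M)\simeq 0$ for $L$ local and $M$ complete, which forces $i^\land L\simeq 0$ on taking $M=i^\land L$). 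Applying $i^\lor$, respectively $i^\land$, to the two functorial cofiber sequences and using full faithfulness of the inclusions then yields natural equivalences $\id_{\C^{\nil(I)}}\simeq (i^\lor i_\land)(i^\land i_\lor)$ and $(i^\land i_\lor)(i^\lor i_\land)\simeq\id_{\C^{\Cpl(I)}}$, exactly as you claim. Your route trades a citation for a short, transparent recollement argument, which is a reasonable choice and slightly more informative than the paper's version.

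One small remark: the caution in your final paragraph is unnecessary. To show that $F=i^\land i_\lor$ and $G=i^\lor i_\land$ are mutually inverse equivalences, it suffices to exhibit any natural equivalences $GF\simeq\id$ and $FG\simeq\id$; one does not need to verify that these coincide with the unit and counit of a particular adjunction. (That extra condition would be required to exhibit an \emph{adjoint} equivalence, but any equivalence of $\i$-categories can be promoted to one.) So the proof is complete as soon as the two triangles are invoked.
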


\begin{lemma}\label{lem:tensornil}
Let $\C$ and $\D$ be (stable) $A$-linear $\i$-categories. Let $C$ be an $I$-nilpotent object of $\C$ and $D$ an arbitrary object of $\D$. Then the object $C \otimes_{A} D$ of $\C \otimes_{\Der(A)} \D$ (the image of $(C,D)\in\C\times\D\to\C\otimes_{\Der(A)} \D$) is $I$-nilpotent. 
\end{lemma}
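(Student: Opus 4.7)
The plan is to reduce the $I$-nilpotence of $C\otimes_A D$ to the $I$-nilpotence of $C$ using the bilinearity of the canonical pairing $\C\times\D\to\C\otimes_{\Der(A)}\D$ and the fact that this pairing preserves zero objects in each variable. Fix an element $x\in I$; we must show that $A[x^{-1}]\otimes_A(C\otimes_A D)\simeq 0$ in $\C\otimes_{\Der(A)}\D$.

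First, I would invoke the module-theoretic structure of the tensor product in $\Mod_{\Der(A)}(\PrL)$: the canonical functor $\C\times\D\to\C\otimes_{\Der(A)}\D$ is $\Der(A)$-bilinear, and the two resulting actions of $\Der(A)$ on $\C\otimes_{\Der(A)}\D$ agree. Consequently, for any $M\in\Der(A)$, there is a canonical equivalence
\[
M\otimes_A(C\otimes_A D)\;\simeq\;(M\otimes_A C)\otimes_A D
\]
in $\C\otimes_{\Der(A)}\D$. Applying this with $M=A[x^{-1}]$ reduces the claim to showing that $(A[x^{-1}]\otimes_A C)\otimes_A D\simeq 0$.

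By hypothesis $C$ is $I$-nilpotent, so $A[x^{-1}]\otimes_A C\simeq 0$ in $\C$. The functor $(-)\otimes_A D\colon\C\to\C\otimes_{\Der(A)}\D$ lies in $\Mod_{\Der(A)}(\PrL)$, hence is colimit-preserving and in particular carries the zero object of $\C$ to the zero object of $\C\otimes_{\Der(A)}\D$. Therefore $(A[x^{-1}]\otimes_A C)\otimes_A D\simeq 0$, which combined with the displayed equivalence gives $A[x^{-1}]\otimes_A(C\otimes_A D)\simeq 0$. Since $x\in I$ was arbitrary, $C\otimes_A D$ is $I$-nilpotent.

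The only real content is the bilinearity/associativity identification in the first step; I do not expect a serious obstacle, as this is a formal consequence of the construction of $\otimes_{\Der(A)}$ as the tensor product in the symmetric monoidal $\infty$-category $\Mod_{\Der(A)}(\PrL)$, and the collapse to zero in the second step is automatic once one observes that the pairing is colimit-preserving in each variable.
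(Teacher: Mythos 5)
The paper states this lemma without giving a proof (and in fact never cites it again), so there is no in-paper argument to compare against. Your proof is correct and is the natural argument: the identification $M\otimes_A(C\otimes_A D)\simeq(M\otimes_A C)\otimes_A D$ is exactly the $\Der(A)$-bilinearity of the canonical functor $\C\times\D\to\C\otimes_{\Der(A)}\D$, which is built into the construction of the relative tensor product in $\Mod_{\Der(A)}(\PrL)$; and $(-)\otimes_A D$ being exact (in particular sending $0$ to $0$) finishes the reduction. The one small rhetorical slip is the phrase ``lies in $\Mod_{\Der(A)}(\PrL)$'' applied to the functor $(-)\otimes_A D$ --- you mean it is a \emph{morphism} in that $\i$-category, i.e.\ a colimit-preserving $\Der(A)$-linear functor --- but the intent is clear and the conclusion you draw from it (preservation of the zero object) is exactly right.
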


\subsection{Completions of linear \texorpdfstring{$\i$}{infinity}-categories}

\begin{definition}
Let $A$ be a connective commutative ring spectrum.
\begin{enumerate}
\item A stable $A$-linear $\infty$-category $\C$ is $I$-nilpotent if every object of $\C$ is $I$-nilpotent.
\item A stable $A$-linear $\infty$-category $\C$ is $I$-local if every object of $\C$ is $I$-local.
\item A stable $A$-linear $\infty$-category $\C$ is $I$-complete if every object of $\C$ is $I$-complete.
\end{enumerate}
\end{definition}

\begin{lemma}\label{lem:cc}
	Let $\C\in\Mod_{\Der(A)}$.
	If $M,N\in\C^{\Cpl}$ then $\underline{\Map}(M,N)\in\Mod_A^{\Cpl}$.
	Moreover, if $N\in\C$ has the property that $\underline{\Map}(M,N)\in\Der_{\Cpl(I)}(A)$ for all $M\in\C$, then $N\in\C^{\Cpl}$.
\end{lemma}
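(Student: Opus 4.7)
The plan is to reduce both claims to Proposition \ref{prop:complcrit}, which identifies $I$-completeness with the vanishing of the cotensor by $A[x^{-1}]$ for each $x \in I$, and then to transport this vanishing through the enriched mapping object using the fundamental adjunction between tensor, cotensor, and $\underline{\Map}$. Specifically, since $\C$ is tensored and cotensored over $\Der(A)$, one has for any $K \in \Der(A)$ and $M,N \in \C$ a natural equivalence
\begin{equation*}
\underline{\Map}(K \otimes_A M, N) \;\simeq\; \underline{\Map}(M,N)^{K} \;\simeq\; \underline{\Map}(M, N^{K})
\end{equation*}
in $\Der(A)$, obtained formally from the defining universal property of the enriched mapping object and the tensor-cotensor adjunction. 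This identity is the only non-trivial input.

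For the first assertion, I would fix $x \in I$ and compute
\begin{equation*}
\underline{\Map}(M,N)^{A[x^{-1}]} \;\simeq\; \underline{\Map}(M, N^{A[x^{-1}]}).
\end{equation*}
Since $N$ is $I$-complete, Proposition \ref{prop:complcrit} gives $N^{A[x^{-1}]} \simeq 0$, and as a right adjoint $\underline{\Map}(M,-)$ preserves the zero object, so the left-hand side vanishes. Since $x \in I$ was arbitrary, a second application of Proposition \ref{prop:complcrit} shows $\underline{\Map}(M,N) \in \Mod_A^{\Cpl}$. Note that completeness of $M$ is not needed; only that of $N$.

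For the converse, I again fix $x \in I$ and aim to show $N^{A[x^{-1}]} \simeq 0$. It suffices to prove that $\underline{\Map}(L, N^{A[x^{-1}]}) \simeq 0$ for every $L \in \C$, for then taking $L = N^{A[x^{-1}]}$ forces $\mathrm{id}_{N^{A[x^{-1}]}}$ to be null and hence the object to be zero. By the identity above,
\begin{equation*}
\underline{\Map}(L, N^{A[x^{-1}]}) \;\simeq\; \underline{\Map}(L, N)^{A[x^{-1}]},
\end{equation*}
and by hypothesis $\underline{\Map}(L,N) \in \Der_{\Cpl(I)}(A)$, so this cotensor vanishes by Proposition \ref{prop:complcrit}. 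Thus $N^{A[x^{-1}]} \simeq 0$ for every $x \in I$, and a final appeal to Proposition \ref{prop:complcrit} shows $N \in \C^{\Cpl}$.

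The argument is essentially formal once one has the adjunction-based identity $\underline{\Map}(M, N^K) \simeq \underline{\Map}(M,N)^K$; the only bookkeeping is to verify this compatibility, which is done by comparing both sides against $\Map_{\Der(A)}(J,-)$ for arbitrary $J \in \Der(A)$ and using that the cotensor in $\Der(A)$ by $K$ is the internal Hom $\fHom_A(K,-)$. I anticipate no real obstacle beyond this verification.
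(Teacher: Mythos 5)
Your proof is correct and takes essentially the same route as the paper: both hinge on the tensor--cotensor--$\underline{\Map}$ adjunction, with your version phrased uniformly through Proposition~\ref{prop:complcrit} (vanishing of $(-)^{A[x^{-1}]}$) while the paper works with mapping spaces against $I$-local test objects and uses that $A[x^{-1}]$ is $I$-local. Your observation that only completeness of $N$ is needed in the first half is also implicit in the paper's argument.
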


\begin{proof}
Let $L\in\Der(A)^{\loc(I)}$, $M, N \in \C$. Then
\begin{align*}
\Map_A(L,\uMap_\C(M,N))\simeq \Map_\C(L \otimes M,N) \simeq \Map_\C(M,N^L).
\end{align*}	
Remark that $L \otimes M$ is an $I$-local object of $\C$. If $N \in \C^{\Cpl(I)}$, then $\Map_\C(L \otimes M,N)$ is contractible as well as $\Map_A(L,\uMap(M,N))$. This implies that $\uMap(M,N)$ is $I$-complete.
If  $\uMap(M,N)$ is $I$-complete for every $M \in \C$, then for every $x \in I$ $\Map_A(A[x^{-1}],\uMap(M,N))$ is contractible then $\Map_\C(M,N^{A[x^{-1}]})$ is contractible for every $M \in \C$. Hence, $N^{A[x^{-1}]} \simeq 0$ for every $x \in I$ which proves the claim.
\end{proof}
\begin{proposition}\label{prop:carcatcc}
Let $\C$ be a stable $A$-linear $\i$-category.
The following statements are equivalent:
\begin{enumerate}
\item \label{item:cc}
$\C$ is cohomologically complete.
\item \label{item:map}
For every $M$ and $N$ in $\C$, $\underline{\Map}(M,N)\in\Der_\Cpl(A)$.
\item \label{item:nilcomp}
$\C$ is $I$-nilpotent.
\item \label{item:facto} The action $\Der(A) \tensor \C \to \C$ factors through the map $i^\land\otimes\id:\Der(A) \tensor \C\to \Der_\Cpl(A) \tensor \C$.
\item \label{item:action}
$\C\in\Mod_{\Der_\Cpl(A)}$.
\end{enumerate}
\end{proposition}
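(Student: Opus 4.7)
The plan is to reduce all five conditions to the single vanishing $\C^{\loc} = 0$, then invoke the preceding corollary and the universal property of smashing localizations to handle the module-theoretic reformulations. The key inputs are the two semiorthogonal decompositions of Proposition \ref{prop:fundccnil}, Lemma \ref{lem:cc}, and the preceding corollary identifying $\Mod_{\Der_\cc(A)}$ with $\Mod_{\Der(A)}^\cc$.

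First I would prove (\ref{item:cc}) $\Leftrightarrow$ (\ref{item:nilcomp}) by showing each is equivalent to $\C^{\loc} = 0$. The defining orthogonality relations yield $\Map(\C^{\nil},\C^{\loc}) = 0$ and $\Map(\C^{\loc},\C^{\Cpl}) = 0$, so any object lying in both $\C^{\loc}$ and $\C^{\nil}$, or in both $\C^{\loc}$ and $\C^{\Cpl}$, must be zero. Hence if $\C = \C^{\Cpl}$ (respectively $\C = \C^{\nil}$), every local $L \in \C$ satisfies $\Map(L,L) \simeq 0$, forcing $L \simeq 0$ and therefore $\C^{\loc} = 0$. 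Conversely, if $\C^{\loc} = 0$, the exact triangles $i_\lor i^\lor \to \id \to j_*j^*$ and $j_*j^\times \to \id \to i_\land i^\land$ from Proposition \ref{prop:fundccnil} collapse to equivalences $\id \simeq i_\lor i^\lor$ and $\id \simeq i_\land i^\land$, yielding both $\C = \C^{\nil}$ and $\C = \C^{\Cpl}$.

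The equivalence (\ref{item:cc}) $\Leftrightarrow$ (\ref{item:map}) is a direct consequence of Lemma \ref{lem:cc}: the first half of that lemma gives (\ref{item:cc}) $\Rightarrow$ (\ref{item:map}), while the second half supplies the converse by forcing every $N \in \C$ to be $I$-complete.

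For the module-theoretic reformulations, (\ref{item:cc}) $\Leftrightarrow$ (\ref{item:action}) is the preceding corollary applied to $\C$, which identifies $\Mod_{\Der_\cc(A)}$ with the full subcategory $\Mod_{\Der(A)}^\cc$ of cohomologically complete $\Der(A)$-linear $\infty$-categories. Finally, (\ref{item:facto}) $\Leftrightarrow$ (\ref{item:action}) reflects the universal property of the smashing localization $i^\land\colon\Der(A)\to\Der_\Cpl(A)$: since $\Der_\Cpl(A)$ is an idempotent commutative algebra in $\Mod_{\Der(A)}$, \cite[Propositions 4.8.2.9 \& 4.8.2.10]{HA} imply that a $\Der(A)$-linear structure on $\C$ admits a (necessarily unique) promotion to a $\Der_\Cpl(A)$-linear structure precisely when the underlying action map factors through $i^\land \otimes \id$. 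The most delicate step will be this last equivalence, as it requires coherently lifting the factorization to a module action; however, this coherence is automatic from the idempotency of $i^\land$, so the argument reduces to invoking the cited machinery from Lurie.
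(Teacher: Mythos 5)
Your proof is correct and covers all five equivalences, but it takes a noticeably different route from the paper's when it comes to the module-theoretic conditions. For (\ref{item:cc}) $\Leftrightarrow$ (\ref{item:nilcomp}) and (\ref{item:cc}) $\Leftrightarrow$ (\ref{item:map}) your argument matches the paper's (semiorthogonal decompositions forcing $\C^{\loc}=0$, and Lemma~\ref{lem:cc}). The divergence is in how you attach (\ref{item:facto}) and (\ref{item:action}) to the rest. The paper proves (\ref{item:nilcomp}) $\Rightarrow$ (\ref{item:facto}) directly: since $A$-linear functors preserve $I$-local objects (Proposition~\ref{prop:nilloc} / \cite[7.2.4.9]{SAG}), the action map $\Der(A)\otimes\C\to\C$ kills the subcategory $\Der(A)^{\loc(I)}\otimes\C$ once $\C$ is nilpotent, and the factorization follows from the universal property of the Verdier quotient $\Der_\Cpl(A)\simeq\Der(A)/\Der(A)^{\loc(I)}$; and it proves (\ref{item:facto}) $\Rightarrow$ (\ref{item:cc}) by a short cotensoring computation with $A[x^{-1}]$ together with Proposition~\ref{prop:complcrit}. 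You instead close the loop by (\ref{item:cc}) $\Leftrightarrow$ (\ref{item:action}), invoking the identification of $\Mod_{\Der_\Cpl(A)}$ with the cohomologically complete $\Der(A)$-linear $\i$-categories. That identification rests on Proposition~\ref{prop:comparecompl} (that $\Der_\Cpl(A)\otimes_{\Der(A)}\M\simeq\M^{\cc}$), which is strictly heavier machinery than anything the paper's appendix proof uses; the paper's route is more elementary and self-contained. Also note a small citation issue: no such corollary precedes Proposition~\ref{prop:carcatcc} in Appendix~A (the Verdier-sequence proposition there comes \emph{after} it), so the result you want is the one following Proposition~\ref{prop:comparecompl} in Section~2.3, and you should cite it as such; since that corollary does not depend on Proposition~\ref{prop:carcatcc}, there is no circularity, but the dependence should be made explicit. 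Your treatment of (\ref{item:facto}) $\Leftrightarrow$ (\ref{item:action}) via idempotency and \cite[Propositions 4.8.2.9 \& 4.8.2.10]{HA} is essentially what the paper compresses into ``the implications \ref{item:facto}$\Leftrightarrow$\ref{item:action} are clear,'' and your explicit flag of the coherence subtlety there is sound.
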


\begin{proof}
\ref{item:cc} $\Leftrightarrow$ \ref{item:map} is Lemma \ref{lem:cc}.
To see that \ref{item:cc} $\Leftrightarrow$ \ref{item:nilcomp}, observe that the $\i$-category $\C$ is $I$-cohomologically complete if and only if $\C=\C^{\Cpl(I)}$. As the pair $(\C^{\loc(I)}, \C^{\Cpl(I)})$ is a semi-othogonal decomposition of $\C$, this is equivalent to $\C^{\loc}=0$. Since $(\C^{\nil(I)},\C^{\loc(I)})$ is also a semi-orthogonal decomposition of $\C$, this is equivalent to $\C=\C^{\nil(I)}$.
The implications \ref{item:facto}$\Leftrightarrow$ \ref{item:action} are clear.

To see that \ref{item:nilcomp} $\Rightarrow$ \ref{item:facto}, observe that an $A$-linear functor between $A$-linear $\i$-categories carries $I$-local object to $I$-local objects (\cite[Proposition 7.2.4.9 (iv)]{SAG}). If $V$ is an $I$-local object of $\Der(A)$, it follows that for every $M$ in $\C$, $V \otimes M$ is an $I$-local objects of $\C$. If $\C$ is nilpotent this implies that  $V \otimes M \simeq 0$. Hence, the action
$
 \Der(A) \otimes \C \to \C
$
vanishes on $\Der(A)^{\loc(I)} \otimes \C$. Moreover, $\Der_\Cpl(A)$ is canonically equivalent to the Verdier quotient $\Der(A) / \Der(A)^{\loc(I)}$. It follows from the universal property of the Verdier quotient that the action map $ \Der(A) \otimes \C \to \C$ factors through the completion map $\Der(A) \otimes \C {\rightarrow} \Der_{\Cpl(I)}(A) \otimes \C$.

Lastly we show that \ref{item:facto} $\Rightarrow$ \ref{item:cc}. Let $M$ and $N$ be object of $\C$ and $x \in I$. We have
\begin{align*}
\Map_\C(N,M^{A[x^{-1}]}) \simeq \Map_\C(A[x^{-1}] \otimes N,M) 
\end{align*}
and the factorization of the action $\Der(A) \otimes \C \to \C$ implies that it vanishes on $\Der(A)^{\loc(I)}$. As $A[x^{-1}]$ is $I$-local, $A[x^{-1}] \otimes N \simeq 0$. Hence, for every $x \in I$, $M^{A[x^{-1}]} \simeq 0$. By Proposition \ref{prop:complcrit}, $M$ is $I$-complete.
\end{proof}

\begin{proposition}
Let $\D$ be a presentable symmetric monoidal stable $\i$-category and
\[
\A\overset{j_*}{\to}\B\overset{k_*}{\to}\C
\]
a Verdier sequence of presentable $\D$-module $\i$-categories such that $j_*$ admits a left adjoint $j^*:\B\to\A$.
Then, for any $\D$-module $\M$,
\[
\A\otimes_\D\M\overset{j_*\otimes_\D\M}{\to}\B\otimes_\D\M\overset{k_*\otimes_\D\M}{\to}\C\otimes_\D\M
\]
is a Verdier sequence of presentable $\D$-module $\i$-categories.
\end{proposition}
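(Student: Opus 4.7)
My strategy is to exploit two general features of the closed symmetric monoidal $\i$-category $\Mod_\D(\Prl)$: (i) the functor $-\otimes_\D\M$ is a left adjoint, hence preserves colimits; and (ii) adjunctions between morphisms are preserved under tensoring with a fixed object.

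First I would observe that $-\otimes_\D\M:\Mod_\D(\Prl)\to\Mod_\D(\Prl)$ preserves colimits, since $\Mod_\D(\Prl)$ is closed symmetric monoidal (so $-\otimes_\D\M$ has an internal-Hom right adjoint). The given Verdier sequence is in particular a cofiber sequence, so applying $-\otimes_\D\M$ yields a cofiber sequence
\[
\A\otimes_\D\M\xrightarrow{\,j_*\otimes\id_\M\,}\B\otimes_\D\M\xrightarrow{\,k_*\otimes\id_\M\,}\C\otimes_\D\M
\]
in $\Mod_\D(\Prl)$.

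Second I would check that $j_*\otimes\id_\M$ is still fully faithful. By hypothesis, $j_*$ has a left adjoint $j^*:\B\to\A$; tensoring this adjunction with $\M$ produces an adjunction $j^*\otimes\id_\M\dashv j_*\otimes\id_\M$, whose counit is the tensor of the original counit $\epsilon:j^*j_*\to\id_\A$ with $\id_\M$. Because $j_*$ is fully faithful (being the initial map of a Verdier sequence), $\epsilon$ is an equivalence, and hence so is $\epsilon\otimes\id_\M$. Thus $j_*\otimes\id_\M$ is fully faithful.

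Finally, I would invoke the characterization that a cofiber sequence in $\Mod_\D(\Prl)$ whose first map is fully faithful is automatically a Verdier sequence (the second map then realizes its target as the Verdier quotient, which is simultaneously a fiber in $\Prl_{\mathrm{st}}$-module categories). I do not expect a serious obstacle here: the whole argument is essentially formal manipulation with the closed symmetric monoidal structure on $\Mod_\D(\Prl)$. The only point requiring minor care is the claim that tensoring preserves adjunctions, which follows from the fact that $-\otimes_\D\M$ is a (symmetric monoidal) functor on $\Mod_\D(\Prl)$ and hence sends units and counits to units and counits of the tensored adjunction.
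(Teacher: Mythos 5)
Your proof is correct, and it reaches the same two conclusions as the paper's --- the cofiber-sequence statement and the fully-faithfulness of $j_*\otimes_\D\M$ --- but deploys the hypothesis ``$j_*$ admits a left adjoint'' in a genuinely different way on the second point. The paper exploits that $j_*$ is a limit-preserving functor to compute $j_*\otimes_\D\M$ explicitly in the dual model $\Funr_\D(\M^{\op},-)$, where it becomes literal postcomposition with the fully faithful $j_*$; this is a one-line observation once the model for the relative tensor product is in hand, and it avoids any discussion of adjunctions being preserved. You instead argue more abstractly: since the adjunction $j^*\dashv j_*$ lives in the $(\i,2)$-categorical enhancement of $\Mod_\D(\Prl)$ and $-\otimes_\D\M$ extends to a 2-functor, tensoring carries the adjunction data (unit, counit, triangle identities) along, and the counit $\epsilon\otimes\id_\M$ inherits its invertibility from $\epsilon$. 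This is a perfectly valid and in some ways more conceptual route, though your stated justification --- that ``$-\otimes_\D\M$ is a (symmetric monoidal) functor'' --- is not quite what is doing the work; the relevant point is 2-functoriality of $\otimes_\D$ in each variable (functoriality on $\D$-linear natural transformations), not any compatibility with the monoidal structure of $\Mod_\D(\Prl)$ as a whole. If you were to write this up carefully you would want to invoke the $(\i,2)$-categorical enhancement explicitly, whereas the paper's right-adjoint-model computation sidesteps this. Both proofs use the left-adjoint hypothesis on $j_*$ in an essential way and both conclude by observing that a cofiber sequence in $\Mod_\D(\Prl)$ with fully faithful first map is exactly a Verdier sequence.
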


\begin{proof}
Since $j_*$ is also a right adjoint, the functor $j_*\otimes_\D\M$ is calculated as postcomposition
\[
\Funr_{\D}(\M^{\op},\A)\to\Funr_{\D}(\M^{\op},\B)
\]
with $j_*:\A\to\B$, which is fully faithful since $\A\to\B$ is fully faithful.
Here
\[
\Funr_{\D}(\M^{\op},\A)\simeq\lim\left\{\Funr(\M^{\op},\A)\rrarrow\Funr(\M^{\op},\Funr(\D^{\op},\A))\rrrarrow\cdots\right\}
\]
denotes the totalization of the action of $\D$ on $\Funr(\M^{\op},\A)$; that is, the cosimplicial diagram obtained from the simplicial diagram realizing the relative tensor product
\[
\A\otimes_\D\M\simeq\colim\left\{\A\otimes\M\llarrow\A\otimes\D\otimes\M\lllarrow\cdots\right\}
\]
in $\PrL$.
Since $\C$ is the cofiber of the inclusion $j_*:\A\to\B$, and tensoring with any $\D$-module preserves colimits and zero objects, we obtain a cofiber sequence
\[
\A\otimes_\D\M\to\B\otimes_\D\M\to\C\otimes_\D\M
\]
in $\PrL$, as desired.
\end{proof}

\begin{corollary}
Suppose that $\A\overset{j_*}{\to}\B\overset{k_*}{\to}\C$ is a Verdier sequence of presentable $\D$-module $\i$-categories such that $j_*$ admits a left adjoint and $\A\otimes_\D\C\simeq 0$.
Then the canonical map
\[
\B\otimes_\D\C\to\C\otimes_\D\C
\]
is an equivalence of presentable $\D$-module $\i$-categories.
If additionally $\B\simeq\D$, we obtain an equivalence
\[
\C\simeq\D\otimes_\D\C\overset{k_*\otimes_\D\C}{\to}\C\otimes_\D\C.
\]
In particular, $\C$ inherits the structure of an idempotent commutative $\D$-algebra.
\end{corollary}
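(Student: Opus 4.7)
The plan is to deduce the whole corollary by applying the preceding proposition to the module $\M=\C$ and then interpreting the resulting Verdier sequence. First, the preceding proposition applied to $\M=\C$ produces a Verdier sequence of presentable $\D$-module $\i$-categories
\[
\A\otimes_\D\C\overset{j_*\otimes_\D\C}{\too}\B\otimes_\D\C\overset{k_*\otimes_\D\C}{\too}\C\otimes_\D\C.
\]
By hypothesis the leftmost term vanishes, so the cofiber map $k_*\otimes_\D\C$ is an equivalence. This yields the first claim.

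For the second claim, assume $\B\simeq\D$. The unitor provides a canonical equivalence $\D\otimes_\D\C\simeq\C$ of presentable $\D$-module $\i$-categories, and tracing through the identifications one sees that the displayed map $\C\simeq\D\otimes_\D\C\to\C\otimes_\D\C$ is precisely the equivalence $k_*\otimes_\D\C$ above. For the final assertion, note that we now have a morphism $k_*\colon\D\to\C$ in $\CAlg(\Prl)$ such that the induced map $\C\simeq\D\otimes_\D\C\to\C\otimes_\D\C$ is an equivalence; by \cite[Propositions 4.8.2.9 \& 4.8.2.10]{HA} this is precisely the condition for $k_*$ to exhibit $\C$ as an idempotent commutative $\D$-algebra. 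The only step requiring any care is checking that the map $\D\otimes_\D\C\to\C\otimes_\D\C$ obtained from the preceding proposition really agrees with the one induced by $k_*$; this is a straightforward unwinding of the definition of the functor $k_*\otimes_\D\C$ as postcomposition with $k_*$ on the relevant $\i$-category of right-adjoint $\D$-linear functors, and is not expected to pose difficulties.
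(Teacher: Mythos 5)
Your proof is correct and is exactly the intended one-line deduction from the preceding proposition: specialize to $\M=\C$, use that a Verdier sequence whose first term vanishes has its second map an equivalence, identify that map with $k_*\otimes_\D\C$ after the unitor identification $\D\otimes_\D\C\simeq\C$, and then invoke Lurie's idempotent-object machinery. The paper gives no explicit proof, and this is clearly the argument envisioned.

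One small imprecision worth fixing: you write that ``we now have a morphism $k_*\colon\D\to\C$ in $\CAlg(\Prl)$,'' but at that stage $\C$ has not yet been endowed with a commutative algebra structure, so this phrasing presupposes the conclusion. The correct setup is that $k_*$ is a morphism in $\Mod_\D(\Prl)$, i.e.\ a map $\mathbf{1}_{\Mod_\D}\to\C$ out of the monoidal unit in the symmetric monoidal $\i$-category $(\Mod_\D(\Prl),\otimes_\D,\D)$. The idempotent criterion of \cite[Propositions 4.8.2.9 \& 4.8.2.10]{HA} applies to precisely such a unit map, and its output — not its input — is the (unique) commutative $\D$-algebra structure on $\C$ with unit $k_*$. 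With that adjustment the argument is airtight.
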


\section{Model structures}\label{sec:modstruct}

\subsection{The semi-free model structure}

We use the notion of $(\GGG, \HHH)$-descent structure introduced by D.-C. Cisinsky an F. D\'eglise in \cite{Cisinski2009} to endow $\Mod_{\A}$, the category of modules over an algebroid stack, with a model structure which we call the semi-free model structure. The technology of descent structure allows to specify the model structure ``locally'' and allows to bypass the fact that we are working over an algebroid. Then, the verification required to apply the technique of descent structures reduces to questions of classical sheaf theory already addressed in \cite{Cisinski2009}. 
We  use the semi-free model structure to derive various tensor products on categories of DQ-modules.

Let $X$ be a topological space endowed with a $k$-algebroid stack $\A$. We freely use the notion of gluing datum for algebroids in what follows and refer to \cite[\textsection 2.1]{KS3} for details. By definition of an algebroid stack, we can find a covering $\U=\{ U_i \}_{i \in I}$ of $X$ such that for every $i \in I$, there are

\begin{equation*}
\begin{cases}
\sigma_i \in \A(U_i),\\
\textnormal{isomorphisms} \; \phi_{ij} \colon \sigma_j|_{U_{ij}} \stackrel{\sim}{\longrightarrow} \sigma_j|_{U_{ij}}.
\end{cases}
\end{equation*}
We associate to this data the following:
\begin{itemize}
\item for every $i \in I$, the sheaf of $k$-algebras $\A_i=\fEnd(\sigma_i)$ on $U_i$,
\item the $k$-algebra isomorphisms $f_{ij} \colon \A_j|_{U_{ij}} \stackrel{\sim}{\longrightarrow} \A_i|_{U_{ij}}$ induced by the $\phi_{ij}$.
\end{itemize}
We call such a covering $\U$ a trivializing covering for $\A$. 

Let $\sMod_{\A}$ be the stack of modules over the algebroid $\A$ that is for every open set $U \subset X$, $\sMod_{\A}(U):=\Mod_{\A|_U}$ and let $\sMod_{\A_i}$ be the stack of modules over the sheaf of algebras $\A_i$. Then there is an equivalence of stacks
\begin{equation*}
\phi_i \colon \sMod_{\A_i}  \stackrel{\sim}{\longrightarrow} {\sMod_{\A}}|_{U_i}.
\end{equation*}

For every $V \subset U_i \in \U$, we consider the sheaf $\A_{i,V}$ defined by 
\begin{equation*}
\A_{i,V}:={j^{-1}_{V \to U_i}}\,{j_{V \to U_i\, *}} \A_i
\end{equation*}
where $j_{V \to U_i}\colon U_i\to V$ is the morphism of sites defined by $V^\prime \subset V \mapsto V^\prime \subset U_i$.
The set
\begin{equation*}
\GGG_\U(\A)=\mathfrak\lbrace{ \phi_i(\A_{i, V}) | \; i \in I, V \subset U_i \; \textnormal{and} \; U_i \in \U} \rbrace
\end{equation*}
is a set of generators of $\Mod_{\A}$. This follows from the definition of $\Mod_{\A}$ which is defined as the category of $k$-enriched functors $\Fun_k(\A, \sMod_{k_X})$. If there is no risk of confusion, we simply write $\GGG$ instead of $\GGG_\U(\A)$.

\begin{remark}
If their is no risk of confusion, we will omit to write the equivalences $\phi_i$. For instance, we will often write $\A_{i, V}$ instead of  $\phi_i(\A_{i, V})$.
\end{remark}
Let $\M \in \Mod_{\A}$, we write $D^n \M$ for the acyclic complex such that 
$
(D^n \M)^n=(D^n \M)^{n+1}=\M
$
and zero otherwise and whose only non-zero differential is the identity of $\M$. We denote by $S^n \M$, the complexe in degree $n$ such that $(S^n \M)^n=\M$.
Adapting \cite[Example 2.3]{Cisinski2009}, we define the class of $\GGG$-cofibrations to be the smallest class of maps in $\Ch(\A)$ stable by pushouts, transfinite compositions, retracts and generated by the inclusions $i_{n,\G} \colon S^{n+1} \G \to D^n \G$ for every $n \in \ZZ$ and $\G \in \GGG$.

Consider an open set $V$ of $X$ such that there exists $U_i \in \U$ with $V \subset U_i$ and let $\underline{V}$ be an hypercover of $V$. Since $V \subset U_i$, we work in $\Mod_{\A_i}$. Note that the choice of the $U_i$ containing $V$ (and hence of the $\A_i$) is irrelevant because the different possible choices lead to equivalent construction as it can be shown using the isomorphisms of algebras $\phi_{ij}$ provided by the gluing datum. 

The simplicial $\A_i$-module $\mathsf{S}( \A_{i})_{\underline{V}}$, freely generated by the hypercover $\underline{V}$, correponds to a complex (the Moore complex of $\mathsf{S}( \A_{i})_{\underline{V}}$) of $\A_i$-modules denoted $\A_{i, \underline{V}}$ such that
$
(\A_{i,\underline{V}})^{-n}:=\mathsf{S}(\A_{i})_{\underline{V}, \,n}
$
and the differential are given by the alternated sum of the face operators (see \cite[Expos\'e V]{SGA4} for details).
This complex comes with a canonical map
$
\A_{i, \underline{V}} \to \A_{i, V}
$
which is a quasi-isomorphism.
We consider the mapping cone of this map and obtain an object of $\Ch(\A)$ that we denote $\widetilde{\A}_{i,\underline{V}}$. We define $\HHH_\U(\A)$ as the family of all complexes of the form $\widetilde{\A}_{i,\underline{V}}$ for any hypercover of an open subset $V$ of $X$ such that there exists $U_i \in \U$ such that $V \subset U_i$.

\begin{proposition}
The pair $(\GGG_\U(\A),\HHH_\U(\A))$ is a descent structure. 
\end{proposition}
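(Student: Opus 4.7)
The plan is to verify the three defining conditions of a descent structure in the sense of \cite[Definition 2.2]{Cisinski2009}: (i) $\GGG_\U(\A)$ is an essentially small generating family of $\Mod_\A$; (ii) each element of $\HHH_\U(\A)$ is a bounded above complex whose terms are direct sums of elements of $\GGG_\U(\A)$; (iii) the complexes in $\HHH_\U(\A)$ are acyclic.

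Condition (i) has essentially been recorded already in the paragraph preceding the proposition: the identification $\Mod_\A = \Fct_k(\A, \sMod_{k_X})$ combined with the local equivalences $\phi_i \colon \sMod_{\A_i} \stackrel{\sim}{\to} \sMod_\A|_{U_i}$ reduces the generation statement to the well-known fact that, on each $U_i$, the $\A_{i,V}$ for $V \subset U_i$ generate $\Mod_{\A_i}$. Condition (ii) is a direct unpacking of the construction of $\widetilde{\A}_{i,\underline{V}}$: in degree $-n$, the Moore complex $\A_{i,\underline{V}}$ is the direct sum $\bigoplus_{\sigma \in \underline{V}_n} \A_{i,\sigma}$, each summand lying in $\GGG_\U(\A)$, and taking the mapping cone with the single generator $\A_{i,V}$ preserves this property.

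The substance of the proof, and the main thing to check, is condition (iii). This amounts to showing that the augmentation $\A_{i,\underline{V}} \to \A_{i,V}$ is a quasi-isomorphism of complexes of $\A_i$-modules on $U_i$, so that its mapping cone $\widetilde{\A}_{i,\underline{V}}$ is acyclic. This is the classical statement that the complex associated to a hypercover resolves the constant presheaf, as recalled in \cite[Example 2.3]{Cisinski2009} in the context of sheaves of $k$-algebras on a site; it transports to our setting without modification via the equivalence $\phi_i$.

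The main (minor) obstacle is the bookkeeping needed to transport statements along the local equivalences $\phi_i$. This is benign here because every object of $\GGG_\U(\A)$ and every complex of $\HHH_\U(\A)$ is, by construction, defined entirely within a single trivializing open $U_i$, so no gluing via the isomorphisms $\phi_{ij}$ is involved. The proposition therefore follows mechanically from the sheaf-of-algebras case treated in \cite[Example 2.3]{Cisinski2009}.
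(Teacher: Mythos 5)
Your reduction to Cisinski--D\'eglise's Example 2.3 via the trivializing covering is the right strategy and is in fact how the paper disposes of the proposition, but the list of conditions you attribute to \cite[Definition 2.2]{Cisinski2009} is incomplete, and the omission is precisely the condition that carries all the content.

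A descent structure $(\GGG,\HHH)$ on a Grothendieck abelian category is \emph{not} characterized merely by: (i) $\GGG$ is an essentially small generating family, (ii) the elements of $\HHH$ are $\GGG$-cofibrant complexes, and (iii) the elements of $\HHH$ are acyclic. In addition to these structural requirements, Definition 2.2 of \cite{Cisinski2009} demands the \emph{descent property}: every $\HHH$-flasque complex must be $\GGG$-local (equivalently, the $\HHH$-flasque complexes form a category of fibrant objects for the eventual model structure and compute the correct derived mapping spaces out of the generators). This is the condition that justifies the name ``descent'' and makes the notion nonvacuous --- your conditions (i)--(iii) would be satisfied, for instance, by $\HHH=\emptyset$, which is generally not a descent structure. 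Your proposal never addresses this condition.

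Relatedly, you identify (iii) as ``the substance of the proof,'' but acyclicity of $\widetilde{\A}_{i,\underline{V}}$ is already asserted in the paragraph of the paper that \emph{constructs} $\HHH_\U(\A)$: the map $\A_{i,\underline{V}}\to\A_{i,V}$ is stated there to be a quasi-isomorphism (the sheafified Moore complex of a hypercover resolves the free generator, which is the SGA4 statement cited). What is actually at stake in the proposition, and what the reference to \cite[Example 2.3]{Cisinski2009} is doing, is that the family of cones of hypercover augmentations is \emph{large enough} to detect $\GGG$-locality --- i.e., that $\HHH$-flasqueness of a complex of $\A$-modules implies $\GGG$-locality. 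This is the hypercover descent theorem, which Cisinski--D\'eglise invoke in Example 2.3; transporting it to the algebroid setting via the local equivalences $\phi_i$ is indeed the ``benign bookkeeping'' you mention, but your proof does not state what is being transported. As written, you verify conditions that are essentially built into the construction and leave the one nontrivial axiom untouched.
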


\begin{proof}
The proof is similar to the one for the descent structure of \cite[Example 2.3]{Cisinski2009}.
\end{proof}

\begin{proposition}
The descent structure $(\GGG_\U(\A),\HHH_\U(\A))$ induces a proper cellular model structure on $\Ch(\A)$ in which the weak equivalences are the quasi-isomorphisms and the cofibrations are the $\GGG_\U(\A)$-cofibrations. We call this model structure the semi-free model structure subordinated to $\U$ and denote by $\Ch(\A)_{\smf(\U)}$ the category $\Ch(\A)$ endowed with this model structure.
\end{proposition}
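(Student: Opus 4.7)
The plan is to invoke the main theorem of Cisinski--Déglise on descent structures, since the previous proposition has already verified that $(\GGG_\U(\A), \HHH_\U(\A))$ is a descent structure on the Grothendieck abelian category $\Mod_\A$. Specifically, I would appeal to \cite[Theorem 2.5]{Cisinski2009} (together with its refinements regarding properness and cellularity), which asserts that any descent structure $(\GGG, \HHH)$ on a Grothendieck abelian category $\A$ induces a proper cellular model structure on $\Ch(\A)$ whose weak equivalences are the quasi-isomorphisms, whose cofibrations are the $\GGG$-cofibrations, and whose trivial cofibrations are generated by a set built from $\GGG$ and $\HHH$.

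First, I would record that $\Mod_\A$ is a Grothendieck abelian category, as noted earlier in the text with reference to \cite{KS3}, so the hypotheses of the Cisinski--Déglise theorem apply. Second, I would point out that the set $\GGG_\U(\A)$ consists of a generating family of $\Mod_\A$ (as was verified before the statement, using the description of $\Mod_\A$ as $\Fun_k(\A, \sMod_{k_X})$), and that the set of inclusions $i_{n,\G}\colon S^{n+1}\G \to D^n\G$ for $n \in \ZZ$ and $\G \in \GGG_\U(\A)$ serves as a set of generating cofibrations by definition of $\GGG_\U(\A)$-cofibrations. Third, the class $\HHH_\U(\A)$ of mapping cones of hypercover augmentations $\widetilde{\A}_{i,\underline{V}}$ supplies, together with $\GGG_\U(\A)$, a generating set of trivial cofibrations in the sense of \cite{Cisinski2009}, the key point being that these cones are acyclic complexes of $\GGG$-cofibrant objects.

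Properness and cellularity then come directly from the general statement in \cite{Cisinski2009}: left properness is automatic because every object is cofibrant with respect to the monomorphism-based definition of cofibration induced by the descent structure, right properness follows from the fact that quasi-isomorphisms in $\Ch(\A)$ are preserved under pullbacks along fibrations (checked stalkwise via the local trivialization by the sheaves of algebras $\A_i$ on the cover $\U$), and cellularity follows from the explicit set of generating cofibrations and the smallness of the generators $\phi_i(\A_{i,V})$ in $\Mod_\A$.

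The only step requiring genuine care is the passage from sheaves of algebras to algebroid stacks. This is the reason one fixes a trivializing cover $\U$ and uses the equivalences $\phi_i\colon \sMod_{\A_i} \isopil \sMod_\A|_{U_i}$; once this is done, all verifications (generation, smallness of $\GGG_\U(\A)$, acyclicity of the objects of $\HHH_\U(\A)$, stalkwise stability of quasi-isomorphisms under pullback) reduce to the classical case of chain complexes of sheaves of $k$-algebras, already handled in \cite[Example 2.3]{Cisinski2009}. I expect no further obstacle: the essential work has been front-loaded into verifying the descent structure, after which the model-categorical conclusions are a formal consequence of the Cisinski--Déglise machine.
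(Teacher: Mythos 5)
Your proposal is correct and takes the same approach as the paper, whose proof is also just a direct citation of \cite[Theorem 2.5]{Cisinski2009}. One small inaccuracy worth flagging: your aside that left properness holds because ``every object is cofibrant'' is not true for the semi-free model structure (its cofibrations are the $\GGG_\U(\A)$-cofibrations, not all monomorphisms, so most objects are not cofibrant) --- but this does not affect the argument, since properness and cellularity are already part of the conclusion of the Cisinski--D\'eglise theorem you invoke.
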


\begin{proof}
This follows from \cite[Theorem 2.5]{Cisinski2009}. 
\end{proof}

\begin{remark}
Assuming the algebroid $\A$ is flat over $k$, it is straightforward to check that $\Ch(\A)_{\smf(\U)}$ is a $\Ch(k)$-enriched model structure when $\Ch(k)$ is endowed with the projective model structure.
\end{remark}

\subsection{Deriving the \texorpdfstring{$\gr$}{gr} functor}\label{subsec:gr}

Let $\A_X$ be a DQ-algebroid stack. We consider the totalization of the functor \eqref{def:grDQ} and get
\begin{equation*}
\gr \colon \Ch(\A_X) \to \Ch(\gr \A_X), \quad \M \mapsto \gr \A_X \tensor_{\A_X} \M.
\end{equation*}
We choose a trivializing covering $\U$ for $\A_X$, hence also for $\gr \A_X$. We endow $\Ch(\A_X)$ and $\Ch(\gr \A_X)$ with their respective semi-free model structures subordinated to $\U$.

It follows from the definition of $\gr$ that it preserves colimits. It clearly takes elements of $\GGG_\U(\A_X)$ to elements of $\GGG_\U(\gr \A_X)$. Let us check that the $\gr$ functor takes elements of $\HHH_\U(\A_X)$ to elements of $\HHH_\U(\gr \A_X)$. 
The elements of $\HHH_\U(\A_X)$ are mapping cones of morphisms of the form 
\begin{equation} \label{comp:hypres}
\A_{i, \underline{V}} \to \A_{i,V}
\end{equation}
where $V$ is an open subset of some $U_i \in \U$, $\A_{i}$ is the restriction of $\A_X$ to $U_i$ (an we identify $\A_i$ to a sheaf of $\Chbar$-algebras), $\underline{V}$ is an hypercovering of $V$ and $\A_{i, \underline{V}}$ is the Moore complex associated to the simplicial $\A_i$-module freely generated by the hypercover $\underline{V}$.  By construction $\A_{i, \underline{V}}$ is a complex such for $n\leq 0$, $(\A_{i, \underline{V}})^n$ is a direct sum of element of $\GGG_\U(\A_X)$ and zero if $n > 0$. This implies that $\A_{i,\underline{V}}$ is a flat complex over $\A_i$ as well as $\A_{i,V}$. Hence, applying the functor $\gr$ to the morphism \eqref{comp:hypres}, we get a map
\begin{equation*}
\gr(\A_{i})_{\underline{V}} \to \gr(\A_{i})_{V}
\end{equation*} 
and the mapping cone of this map belongs to $\HHH_\U(\gr\A_X)$.
Applying \cite[Theorem 2.14]{Cisinski2009}, we obtain the following result.

\begin{proposition}
The functors
$
\gr \colon \Ch(\A_X) \rightleftarrows \Ch(\gr \A_X) \colon \iota
$
form a Quillen adjunction.
\end{proposition}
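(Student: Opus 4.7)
The plan is to invoke Cisinski--Déglise's Theorem 2.14 from \cite{Cisinski2009}, which produces a Quillen adjunction between two model categories equipped with descent structures provided that the left adjoint sends the chosen generators to generators and sends the chosen hypercovering complexes to acyclic ones. Most of the verification is already done in the preceding paragraph; I would just collect the facts and present them in the right order.

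First I would note that the ordinary adjunction $\gr\dashv\iota$ at the level of $\Mod$-categories lifts automatically to an adjunction $\gr\dashv\iota$ on $\Ch(\A_X)\rightleftarrows\Ch(\gr\A_X)$ because $\gr$ is a left adjoint given by tensoring with the $\gr\A_X\otimes_{\CC_X^\hbar}\A_X^{\op}$-module $\gr\A_X$, hence commutes with colimits and with the chain-complex formation. Next I would check the two compatibility conditions required by Cisinski's theorem:
\begin{enumerate}
\item[(a)] $\gr$ maps the set of generators $\GGG_\U(\A_X)$ into $\GGG_\U(\gr\A_X)$: this is immediate from $\gr(\A_{i,V})\simeq(\gr\A_i)_{V}$, which itself follows from the definition $\A_{i,V}=j^{-1}j_*\A_i$ and the compatibility of $\gr$ with extension and restriction along open inclusions.
\item[(b)] $\gr$ sends each element of $\HHH_\U(\A_X)$ to a complex that is quasi-isomorphic to zero (equivalently, an element of $\HHH_\U(\gr\A_X)$ up to acyclic cone). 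This is exactly the flatness computation sketched in the paragraph before the proposition: the Moore complex $\A_{i,\underline{V}}$ is a complex of direct sums of elements of $\GGG_\U(\A_X)$ concentrated in nonpositive degrees, hence flat over $\A_i$, and $\A_{i,V}$ is likewise flat, so $\gr$ preserves the mapping cone of $\A_{i,\underline{V}}\to\A_{i,V}$ and identifies it with the cone of $(\gr\A_i)_{\underline{V}}\to(\gr\A_i)_V$, which is precisely an element of $\HHH_\U(\gr\A_X)$.
\end{enumerate}

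Once (a) and (b) are in hand, Cisinski--Déglise's Theorem 2.14 immediately delivers that $\gr$ preserves cofibrations and trivial cofibrations for the semi-free model structures subordinated to $\U$, so $\gr\dashv\iota$ is a Quillen adjunction.

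The only genuine point requiring care is verifying (b); specifically, that $\gr$ behaves well on the hypercover resolutions, which reduces to the flatness of the relevant Moore complex. Since this flatness follows formally from the fact that the summands in each degree lie in $\GGG_\U(\A_X)$ and $\A_X$ acts on them through a trivializing chart where it is flat over $\CC_X^\hbar$ (and hence compatible with $\gr$), no new input beyond what is already in the excerpt is needed, and the proof concludes by citing \cite[Theorem 2.14]{Cisinski2009}.
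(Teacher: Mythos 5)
Your proposal is correct and follows the paper's own proof essentially verbatim: both organize the observations from the surrounding text into the two hypotheses of Cisinski--D\'eglise's Theorem~2.14 (preservation of the generators $\GGG$ and, via flatness of the Moore complex, of the hypercover-resolution complexes $\HHH$) and then cite that theorem. The paper shows that $\gr$ carries $\HHH_\U(\A_X)$ directly into $\HHH_\U(\gr\A_X)$ rather than merely into acyclic complexes as you phrase it, but this small difference does not affect the argument.
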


The category $\Ch(\gr \A_X)$ is a $\Ch(\CC)$-module; that is, it is tensored over $\Ch(\CC)$. Hence, it is a $\Ch(\Chbar)$-module via the symmetric monoidal functor $\gr \colon \Ch(\Chbar) \to \Ch(\CC)$ which sends the complex $M$ to the complex $\CC \otimes_{\Chbar} M$.
This induces a $\Ch(\Chbar)$-module structure on $\Ch(\gr \A_X)$ given by
\begin{equation*}
\odot \colon \Ch(\Chbar) \times \Ch(\gr \A_X) \to \Ch(\gr \A_X), \quad (M, \F) \mapsto M \odot \F := \gr(M) \tensor \F. 
\end{equation*}

\begin{proposition}
The functor $\odot$ is a Quillen bifunctor when $\Ch(\Chbar)$ is endowed with the projective model structure and $\Ch(\gr \A_X)$ is endowed with the semi-free model structure subordinated to $\U$.
\end{proposition}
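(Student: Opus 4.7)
The plan is to factor $\odot$ as the composition
\[
\Ch(\Chbar)\times\Ch(\gr\A_X)\xrightarrow{\gr\times\id}\Ch(\CC)\times\Ch(\gr\A_X)\xrightarrow{\tensor}\Ch(\gr\A_X)
\]
and to deduce the pushout-product axiom for $\odot$ from established properties of the two constituents. Since $\gr(M)\tensor\F$ is, by definition, what $\odot$ computes, and since $\gr$ preserves all colimits, for any pair of morphisms $i\colon M_1\to M_2$ in $\Ch(\Chbar)$ and $j\colon\F_1\to\F_2$ in $\Ch(\gr\A_X)$ the pushout-product $i\square_\odot j$ coincides with $\gr(i)\square_\tensor j$.

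First I would verify that $\gr\colon\Ch(\Chbar)\to\Ch(\CC)$, $M\mapsto\CC\otimes_\Chbar M$, is a left Quillen functor between the projective model structures. This is essentially immediate: $\gr$ is left adjoint to the restriction of scalars along $\Chbar\to\CC$, and it sends the generating (trivial) cofibrations $S^{n+1}\Chbar\to D^n\Chbar$ and $0\to D^n\Chbar$ to $S^{n+1}\CC\to D^n\CC$ and $0\to D^n\CC$, which are generating (trivial) cofibrations for the projective model structure on $\Ch(\CC)$. In particular $\gr$ preserves cofibrations and trivial cofibrations.

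Next I would invoke the remark at the end of the previous subsection: because $\gr\A_X$ is flat over $\CC$ (it is an $\O_X$-algebroid on a smooth complex variety), the semi-free model structure on $\Ch(\gr\A_X)$ subordinated to $\U$ is $\Ch(\CC)$-enriched when $\Ch(\CC)$ carries the projective model structure. Unpacking this enrichment gives precisely the statement that the action
\[
\tensor\colon\Ch(\CC)\times\Ch(\gr\A_X)\to\Ch(\gr\A_X)
\]
is a Quillen bifunctor.

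Combining the two, given cofibrations $i$ in $\Ch(\Chbar)$ and $j$ in $\Ch(\gr\A_X)$, the morphism $\gr(i)$ is a cofibration in $\Ch(\CC)$ (trivial when $i$ is), and $\gr(i)\square_\tensor j$ is then a cofibration (trivial whenever either $\gr(i)$ or $j$ is) by the pushout-product axiom for $\tensor$. Since $i\square_\odot j=\gr(i)\square_\tensor j$, the pushout-product axiom for $\odot$ follows, completing the proof. The only non-formal input is the fact that the semi-free model structure is $\Ch(\CC)$-enriched, and the sole hypothesis needed there---flatness of $\gr\A_X$ over $\CC$---is automatic in the setting under consideration, so I do not expect any genuine obstacle.
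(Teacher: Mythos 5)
Your proof is correct but organized differently from the paper's. The paper's proof is a one-liner: it invokes Hovey's Lemma 4.2.4 to reduce the pushout-product axiom for $\odot$ to a verification on the generating (trivial) cofibrations $S^{n+1}\Chbar\to D^n\Chbar$ (resp.\ $0\to D^n\Chbar$) of $\Ch(\Chbar)_{\mathrm{proj}}$ and $S^{n+1}\G\to D^n\G$, $\H\oplus\H[p]\to\Cyl(\H)[p]$ of the semi-free structure, and leaves that computation implicit. You instead factor $\odot$ through $\gr\times\id$ and deduce the result from (a) left Quillen-ness of $\gr\colon\Ch(\Chbar)_{\mathrm{proj}}\to\Ch(\CC)_{\mathrm{proj}}$ and (b) the $\Ch(\CC)$-enrichment of $\Ch(\gr\A_X)_{\smf(\U)}$, which is the remark at the end of the preceding subsection. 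Both routes are valid and ultimately run through the same computation on generators, since $\gr$ takes the generating cofibrations of $\Ch(\Chbar)_{\mathrm{proj}}$ onto those of $\Ch(\CC)_{\mathrm{proj}}$. Your factorization is cleaner conceptually and makes the flatness input transparent, but note that the enrichment remark you invoke is itself stated in the paper without proof (``it is straightforward to check''), so you have not bypassed the need for a verification of Hovey-4.2.4 type --- you have merely relocated it.
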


\begin{proof}
This follows by applying \cite[Lemma 4.2.4]{Hovey99} to the generating cofibrations of $\Ch(\Chbar)$ and $\Ch(\gr \A_X)$. 
\end{proof}

Hence, we obtain a morphism $\gr : \Der(\A_X) \to \Der(\gr \A_X)$ in $\Mod_{\Der(\Chbar)}$.

\subsection{Tensor products}\label{subsec:tensors}
As it is the case for abelian sheaves, deriving right exact functor is more delicate than deriving left exact functor. Hence, to derive the various tensor products of DQ-modules, we rely on the theory of $(\GGG,\HHH)$-descent structures of \cite{Cisinski2009}. Using their methods we have defined a model structure that allow us to derive the tensor products of DQ-modules. In what follow, we use the following notational convention. 

\begin{notation}\label{not:spaceconvention}
\begin{enumerate}
\item
If $X$ is a smooth complex variety endowed with a DQ-algebroid $\A_X$, we denote by $X^\mathrm{a}$ the same variety endowed with the opposite DQ-algebroid $\A_X^{\op}$ and we write $\A_{X^\mathrm{a}}$ for this algebroid.
\item
Consider a product of smooth complex varieties $X_1\times X_2 \times X_3$, we write it $X_{123}$. We denote by
$p_i$ the $i$-th projection and by $p_{ij}$ the $(i,j)$-th projection
({\em e.g.,} $p_{13}$ is the projection from 
$X_1\times X_1^\mathrm{a}\times X_2$ to $X_1\times X_2$). 
 \item
We write $\A_i$ and $\A_{ij^\mathrm{a}}$
instead of $\A_{X_i}$ and $\A_{X_i\times X_j^\mathrm{a}}$  and
similarly with other products.\\
\end{enumerate}
\end{notation}
Let $X_i$ $(i=1,\;2,\;3)$ be smooth complex varieties endowed with  DQ-algebroid stacks $\A_i$.

\begin{definition}\label{def:modtens}
We define a functor
$
- \otimes_{\A_2} - \colon \Mod_{\A_{12^\mathrm{a}}} \times \Mod_{\A_{23^\mathrm{a}}} \to \Mod_{\A_1 \btimes \CC^\hbar_2 \btimes \A_3}
$
by the rule $(\K_1, \K_2) \mapsto p_{12}^{-1} \K_1 \tensor_{p_{2}^{-1}\A_{2}}  p_{23}^{-1} \K_2$.
\end{definition}

The tensor product of Definition \ref{def:modtens} is not always adapted to the study of DQ-modules. Hence, the following variant has been introduced in \cite{KS3}.

\begin{definition}\label{def:DQtens}\cite[Definition 3.1.3]{KS3} We write $(-)\utensor_{\A_2} (-) \colon \Mod_{\A_{12^\mathrm{a}}} \times \Mod_{\A_{23^\mathrm{a}}} \to \Mod_{p_{13}^{-1}\A_{13^\mathrm{a}}}$ for the functor $(\K_1, \K_2) \mapsto p_{12}^{-1} \K_1 \tensor_{p_{12}^{-1}\A_{12^\mathrm{a}}} \A_{123} \tensor_{p_{23^\mathrm{a}}^{-1} \A_{23^\mathrm{a}}} p_{23}^{-1} \K_2$.
\end{definition}

The functors of Definitions \ref{def:modtens} and \ref{def:DQtens} are extended to categories of complexes via totalization. In what follows, we derive both functors and show that the derived functor of $\utensor_{\A_2}$ can be expressed in term of the derived functor of $\tensor_{\A_2}$. Both functors commutes with colimits in each variables separately.

\begin{proposition}
The functors $\tensor_{\A_2}$ and $\utensor_{\A_2}$ are adjunctions of two variables.
\end{proposition}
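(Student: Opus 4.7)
The approach is to verify each statement by exhibiting explicit partial right adjoints in each variable, and then deducing the adjunction isomorphisms from a composition of classical sheaf-theoretic adjunctions: namely, the tensor–Hom adjunction for modules over a sheaf of rings, together with the inverse/direct image adjunctions $p_{12}^{-1}\dashv p_{12\ast}$ and $p_{23}^{-1}\dashv p_{23\ast}$.

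For $\tensor_{\A_2}$, the ``middle'' sheaf over which the tensor product is taken is $p_2^{-1}\A_2$. The partial right adjoint in the first variable will be the bifunctor sending $(\K_2,\L)$ to
\[
p_{12\ast}\fHom_{p_2^{-1}\A_2}\bigl(p_{23}^{-1}\K_2,\L\bigr),
\]
endowed with its induced $\A_{12^{\mathrm{a}}}$-module structure (inherited from the $p_1^{-1}\A_1$-action on $\L$ and the residual right $p_2^{-1}\A_2$-action on $p_{23}^{-1}\K_2$). The partial right adjoint in the second variable is obtained symmetrically with the roles of $p_{12}$ and $p_{23}$ interchanged. Each adjunction identity then follows by composing the tensor–Hom adjunction over $p_2^{-1}\A_2$ with the appropriate adjunction $p_{ij}^{-1}\dashv p_{ij\ast}$.

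For $\utensor_{\A_2}$, the same scheme applies, with $\A_{123}$ now serving as a ``transition'' bimodule. I would define the partial right adjoint in the first variable by
\[
(\K_2,\L)\mapsto p_{12\ast}\fHom_{p_{12}^{-1}\A_{12^{\mathrm{a}}}}\bigl(\A_{123}\tensor_{p_{23^{\mathrm{a}}}^{-1}\A_{23^{\mathrm{a}}}}p_{23}^{-1}\K_2,\L\bigr),
\]
and analogously in the second variable. The adjunction identities will be obtained by iterating tensor–Hom twice, once moving across the outer tensor by $p_{12}^{-1}\A_{12^{\mathrm{a}}}$ and once across the inner tensor by $p_{23^{\mathrm{a}}}^{-1}\A_{23^{\mathrm{a}}}$, and then applying $p_{12}^{-1}\dashv p_{12\ast}$ (respectively $p_{23}^{-1}\dashv p_{23\ast}$).

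The main obstacle, and essentially the only nontrivial piece of bookkeeping, will be to track the multi-module structures on $\A_{123}$ and $\L$ so that each intermediate object sits in the correct module category over the correct sheaf of algebras; this is where Notation \ref{not:spaceconvention} and the conventions of \cite[\textsection 2.3]{KS3} for the $\ubtimes$-product of DQ-algebroids do the real work. Once those structures are unpacked and the relevant actions identified, the adjunction isomorphisms are entirely formal consequences of the three classical adjunctions above, applied object-wise and then extended to complexes by totalization.
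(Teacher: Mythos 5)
Your strategy — build the partial right adjoints by composing tensor--Hom adjunctions with $p_{ij}^{-1}\dashv p_{ij\ast}$ — is sound, and it is more explicit than the paper needs: the sentence immediately preceding the statement records that both bifunctors commute with colimits separately in each variable, which together with the Grothendieck-abelian hypotheses already yields the partial adjoints by the adjoint functor theorem. The problem is that your explicit formulas, as written, do not type-check.

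For $\tensor_{\A_2}$ you propose $p_{12\ast}\fHom_{p_2^{-1}\A_2}(p_{23}^{-1}\K_2,\L)$ as the partial right adjoint in the first variable. But $\L\in\Mod_{\A_1\btimes\CC^\hbar_2\btimes\A_3}$ carries no $p_2^{-1}\A_2$-module structure (only a $p_2^{-1}\CC^\hbar_2$-one), so $\fHom_{p_2^{-1}\A_2}(p_{23}^{-1}\K_2,\L)$ is undefined. The tensor--Hom adjunction always takes the inner $\fHom$ over a structure that $p_{23}^{-1}\K_2$ and $\L$ \emph{share} and that \emph{survives} the tensor product — here the $X_3$-side structure — never over the ring $p_2^{-1}\A_2$ that the tensor is contracted along. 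Your own parenthetical is the tell-tale sign: you correctly say the $\A_{12^\mathrm{a}}$-structure on the Hom is inherited from ``the residual right $p_2^{-1}\A_2$-action on $p_{23}^{-1}\K_2$'', but this is precisely the action you would have quotiented out by taking $\fHom$ over $p_2^{-1}\A_2$. The same subscript error recurs for $\utensor_{\A_2}$: $\fHom_{p_{12}^{-1}\A_{12^\mathrm{a}}}(\cdots,\L)$ is undefined because $\L\in\Mod_{p_{13}^{-1}\A_{13^\mathrm{a}}}$ has no $p_{12}^{-1}\A_{12^\mathrm{a}}$-structure. Once the Homs are corrected — take them over the surviving $X_3$-side (resp.\ $\A_{13^\mathrm{a}}$-side) structures, or take the plain $\Chbar_X$-linear sheaf Hom and record the residual bimodule structure separately — the composition with $p_{12}^{-1}\dashv p_{12\ast}$ and the passage to complexes by totalization work as you describe.
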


\begin{lemma}\label{lem:flatness_preservation}
Let $\M\in\Ch(\A_{12^\mathrm{a}})$ be flat. Then $\M \utensor_{\A_2} p_{23}^{-1} (-) \colon \Ch(\A_{23^\mathrm{a}}) \to \Ch(p_{13}^{-1}\A_{13^\mathrm{a}})$ preserves acyclic complexes.
\end{lemma}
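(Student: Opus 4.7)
The plan is to decompose the functor into exact or acyclic‑preserving pieces and handle each in turn. Recalling Definition \ref{def:DQtens}, the functor in question factors as
\[
\N \;\longmapsto\; p_{23}^{-1}\N \;\longmapsto\; \A_{123}\tensor_{p_{23}^{-1}\A_{23^{\mathrm{a}}}} p_{23}^{-1}\N \;\longmapsto\; p_{12}^{-1}\M \tensor_{p_{12}^{-1}\A_{12^{\mathrm{a}}}} \bigl(\A_{123}\tensor_{p_{23}^{-1}\A_{23^{\mathrm{a}}}} p_{23}^{-1}\N\bigr),
\]
so it suffices to check that each of the three stages preserves acyclicity.

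First, I would observe that $p_{23}^{-1}$ is an inverse image functor of sheaves of abelian groups, hence exact, and so sends acyclic complexes to acyclic complexes. Second, for the middle step, I would invoke the flatness of the canonical algebra morphism $p_{23}^{-1}\A_{23^{\mathrm{a}}} \to \A_{123}$, which is recorded in the preamble (immediately after Notation \ref{not:spaceconvention}, and in \cite[\textsection 2.3]{KS3}): this makes $\A_{123}$ a flat right $p_{23}^{-1}\A_{23^{\mathrm{a}}}$‑module. Because $\A_{123}$ is concentrated in a single degree, the total tensor product with a complex $p_{23}^{-1}\N$ reduces to tensoring degree by degree, and degree‑wise exactness coming from flatness of $\A_{123}$ then yields preservation of acyclicity with no K‑flatness hypothesis needed.

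Third, for the outer tensor with $p_{12}^{-1}\M$, I would use the hypothesis that $\M$ is flat together with the fact that $p_{12}^{-1}$ is exact, symmetric monoidal, and takes $\A_{12^{\mathrm{a}}}$‑module structures to $p_{12}^{-1}\A_{12^{\mathrm{a}}}$‑module structures; in particular $p_{12}^{-1}\M$ is a flat complex of $p_{12}^{-1}\A_{12^{\mathrm{a}}}$‑modules, in the sense that the functor $p_{12}^{-1}\M \tensor_{p_{12}^{-1}\A_{12^{\mathrm{a}}}}(-)$ preserves acyclic complexes (this is the defining feature of flatness of a complex, and is moreover compatible with the semi‑free cofibrant objects produced in Section \ref{app:modelstruct}). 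Composing the three preservation statements yields the lemma.

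The main obstacle — or rather the one point that must be handled with care — is the flatness step on an unbounded complex $p_{23}^{-1}\N$: when one of the tensor factors is an unbounded complex, degree‑wise flatness is generally not enough to preserve acyclicity, and one needs a K‑flatness type condition. This is precisely why the lemma assumes flatness of the complex $\M$ rather than of its components, and why the middle step must be phrased so that the unbounded complex is tensored only against the sheaf $\A_{123}$ sitting in degree zero (where ordinary flatness suffices). Keeping this separation straight, and tracking the left/right module structures inherited from $\A_{12^{\mathrm{a}}}$ and $\A_{23^{\mathrm{a}}}$ through $\A_{123}$, is the real bookkeeping cost of the proof.
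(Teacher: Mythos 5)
Your proof takes the same three‑stage decomposition as the paper's and invokes the same flatness inputs (exactness of $p_{23}^{-1}$, flatness of $\A_{123}$ over $p_{23}^{-1}\A_{23^\mathrm{a}}$ from \cite{KS3}, and flatness of $\M$), so the two arguments are essentially identical in substance. The one step the paper takes that you omit is its opening reduction: ``the question is local, so we may assume $\A_1,\A_2,\A_3$ are DQ-algebras.'' That reduction from algebroid stacks to sheaves of algebras is what makes the tensor expressions in the chain literal module tensor products rather than constructions relative to a trivializing cover, and it is worth stating. Your discussion of the distinction between degree-wise flatness (enough for the single-degree factor $\A_{123}$) and the stronger K-flatness-type condition on the complex $\M$ is a sound elaboration of a point the paper compresses into ``since $\M$ is flat''; do note, though, that passing from acyclicity-preservation of $\M\otimes_{\A_{12^{\mathrm{a}}}}(-)$ to that of $p_{12}^{-1}\M\otimes_{p_{12}^{-1}\A_{12^{\mathrm{a}}}}(-)$ requires a stalkwise argument (inverse image does not change stalks) and not merely exactness and monoidality of $p_{12}^{-1}$, since an acyclic complex on the triple product need not be pulled back from $X_{12}$.
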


\begin{proof}
The question is local. Thus, we can assume that $\A_1$, $\A_2$ and $\A_3$ are DQ-algebras. Since $\A_{123}$ is flat over $p_{23}^{-1}\A_{23^\mathrm{a}}$, it follows that for every acyclic complex $\N \in \Ch(\A_{23^\mathrm{a}})$, $\A_{123} \otimes_{p_{23}^{-1}\A_{23^\mathrm{a}}} p_{23}^{-1} \N$
is acyclic.
Since $\M$ is flat over $\A_{12^\mathrm{a}}$, $p_{12}^{-1} \M \otimes_{p_{12}^{-1} \A_{12^\mathrm{a}}} \A_{123} \otimes_{p_{23}^{-1}\A_{23^\mathrm{a}}} p_{23}^{-1} \N$
is again acyclic.
\end{proof}

For $i=1, \; 2 \; ,3$, we let $\U_i$ be a covering of $X_i$ such that for every $U \in \U_i$, $\A_i |_U$ is trivial. We write $\U_{ij}$ for the covering of $X_i \times X_j$ whose opens are the $U_\alpha \times U_\beta$ with $U_\alpha \in \U_i$ and $U_\beta \in \U_j$. As in \cite{Cisinski2009}, we follow the notation of \cite{Hovey99}. 
Recall that the cylinder $\Cyl(\M)$ of a complex $\M$ is defined by
$
\Cyl(\M)^j= \M^j \oplus \M^{j+1} \oplus \M^{j}
$
with differential $d(x,y,z)=(dx-y,-dy,y+dz)$. Moreover, the map  $a:\Cyl(\M) \to \M$ which sends $(x, y ,z)$ to $x+z$
is a quasi-isomorphism and there is a canonical inclusion 
$
(i_0, i_1) \colon \M \oplus \M \to \Cyl(\M)
$
where $i_0(x)=(x,0,0)$ and $i_1(z)=(0,0,z)$.

We know by \cite[\textsection 1.9]{Cisinski2009} that 

\begin{itemize}

\item a set of generating cofibrations denoted $I_{ \U_{i(i+1)} }$ ($i= 1, \;2)$ is the set of inclusions of the form $S^{n+1} \G \to D^n \G$ for any $n \in \ZZ$ and $\G$ in $\GGG_{\U_{i(i+1)}}(\A_{(i(i+1)})$. We write $I_{i(i+1)}$-$cof$ for the set of $\GGG_{\U_{i(i+1)}}(\A_{i(i+1)})$-cofibrations.

\item a set of generating trivial cofibrations is $J_{ \U_{i(i+1)}}=J_{ \U_{i(i+1)}}^\prime \cup J_{ \U_{i(i+1)}}^{\prime \prime}$ where $J_{ \U_{i(i+1)}}^\prime$ is the set of maps $0 \to D^n \G$ for $n \in \ZZ$ and $\G \in \GGG_{\U_{i(i+1)}}(\A_{(i(i+1)})$ and $J_{ \U_{i(i+1)}}^{\prime \prime}$ the set of maps $\H \oplus \H [n] \to \Cyl(\H)[n]$ for any $n \in \ZZ$ and any $\H \in \HHH_{\U_{i(i+1)}}(\A_{i(i+1)})$.
\end{itemize}

\begin{lemma}\label{lem:flatgenerator}
All the $\G \in \GGG_{\U_{i(i+1)}}(\A_{(i(i+1)})$ ($i=1, \,2$) are flat for $- \tensor_{\A_2} -$ and $- \utensor_{\A_2} -$.
\end{lemma}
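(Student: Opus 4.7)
The strategy is to reduce the statement to the standard fact that the local generators $\A_{i,V}$ are flat modules over the restricted algebra, and then to use Lemma \ref{lem:flatness_preservation} to transfer flatness from $\otimes_{\A_2}$ to $\utensor_{\A_2}$. Thus my first step is to recall that an element $\G\in\GGG_{\U_{i(i+1)}}(\A_{i(i+1)^\mathrm{a}})$ is, by definition, of the form $\phi_\alpha(\A_{\alpha,V})$ with $V\subset U_\alpha$ for some $U_\alpha\in\U_{i(i+1)}$ on which the algebroid $\A_{i(i+1)^\mathrm{a}}$ is trivialized by a section $\sigma_\alpha$. Unwinding the formula $\A_{\alpha,V}=j_{V\to U_\alpha}^{-1}j_{V\to U_\alpha\,*}\A_\alpha$, one recognizes $\G$ as (the image under $\phi_\alpha$ of) the module represented by sections over $V$, i.e.\ the extension-by-zero of $\A_\alpha|_V$ from $V$ to $U_\alpha$.

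Next I would exploit that flatness can be checked locally on the variety. Passing to a product of opens in the trivializing coverings $\U_1,\U_2,\U_3$, everything reduces to the case where $\A_1,\A_2,\A_3$ are actual sheaves of $\Chbar$-algebras (rather than algebroids), in which case $\G$ is literally $j_!(\A_{i(i+1)^\mathrm{a}}|_V)$ for an open immersion $j$. Since $j_!$ is exact with $j^{-1}j_!=\id$, tensoring over $\A_{i(i+1)^\mathrm{a}}$ with $j_!(\A_{i(i+1)^\mathrm{a}}|_V)$ is exact, so $\G$ is a flat $\A_{i(i+1)^\mathrm{a}}$-module. Combined with the flatness of the projection morphism $p_j^{-1}\A_j\to\A_{ij^\mathrm{a}}$ recalled earlier in the paper, it follows that $p_{i(i+1)}^{-1}\G$ is flat over $p_{i+1}^{-1}\A_{i+1}$ in the case $i=1$ (and symmetrically for $i=2$). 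Therefore $\G\tensor_{\A_2}(-)$ (resp.\ $(-)\tensor_{\A_2}\G$) preserves acyclic complexes, which is what it means to be flat for $-\tensor_{\A_2}-$.

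Finally, to pass from $\tensor_{\A_2}$ to $\utensor_{\A_2}$, I invoke Lemma \ref{lem:flatness_preservation}: flatness of $\G$ as a module over $\A_{i(i+1)^\mathrm{a}}$ is exactly the hypothesis of that lemma, and it yields directly that $\G\utensor_{\A_2}p_{(i+1)(i+2)}^{-1}(-)$ (resp.\ $p_{12}^{-1}(-)\utensor_{\A_2}\G$) sends acyclic complexes to acyclic complexes. This concludes the verification.

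\textbf{Main obstacle.} The genuinely non-formal ingredient is the flatness of $j_!(\A_{i(i+1)^\mathrm{a}}|_V)$ over $\A_{i(i+1)^\mathrm{a}}$ (and of $p_2^{-1}\A_2\to\A_{12^\mathrm{a}}$), which lets us reduce to the elementary fact that extension by zero of the algebra is exact. The remaining difficulty is purely bookkeeping: tracking the algebroid equivalences $\phi_\alpha$ and ensuring that the local arguments glue, which is automatic since flatness is local on the base.
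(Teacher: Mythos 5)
Your proof is correct and follows essentially the same route as the paper's. Both arguments hinge on the two same facts: (i) each generator $\A_{12^{\mathrm a},V}=j^{-1}_{V\to U}\,j_{V\to U\,*}\,\A_{12^{\mathrm a}}$ is the extension by zero $j_!(\A_{12^{\mathrm a}}|_V)$, hence flat over $\A_{12^{\mathrm a}}$ by exactness of $j_!$ and the projection formula; and (ii) the flatness of the algebroid $\A_{12^{\mathrm a}}$ over $p_2^{-1}\A_2$ from \cite{KS3}, giving flatness of the generator over $p_2^{-1}\A_2$ and hence $\GGG$-flatness for $-\tensor_{\A_2}-$. The paper's actual proof is considerably terser: it records only the conclusion that $\A_{12^{\mathrm a},V}$ is flat over $p_2^{-1}\A_2$, which is all that is explicitly needed for $\tensor_{\A_2}$, and leaves the $\utensor_{\A_2}$ case implicit. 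You fill in that gap cleanly by invoking Lemma~\ref{lem:flatness_preservation} with the flatness of $\G$ over $\A_{12^{\mathrm a}}$ established in step (i); this is indeed how the two statements are meant to fit together, as can be seen from the way the proof of Proposition~\ref{prop:derDQtensor} cites both lemmas jointly. So your argument is not a different approach, just a more explicit version of it.
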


\begin{proof}
It follows from \cite{KS3} that $\A_{12^\mathrm{a}}$ is flat over $p_2^{-1}\A_2$. Since the elements of $\GGG_{\U_{12}}(\A_{12^\mathrm{a}})$ are of the form $\A_{12^\mathrm{a} , V}$ with $V$ an open subset of some $U \in \U_{12}$, they are flat over $p_2^{-1} \A_2$.  
\end{proof}
 
\begin{remark}
All the $\H \in \HHH_{\U_{i(i+1)}}(\A_{i(i+1)})$ are acyclic as they are cones of quasi-isomorphisms.
\end{remark}

In what follows we endow $\Ch(\A_{i(i+1)^\mathrm{a}})$ with the semi-free model structure subordinated to $\U_i \times \U_{i+1}$ $(i=1, \;2)$, and $\Ch(\A_1 \btimes \CC^\hbar_2 \btimes \A_3)$ with the injective, model structures.

\begin{proposition}
The functor $- \tensor_{\A_2} - \colon \Ch(\A_{12^\mathrm{a}}) \times \Ch(\A_{23^\mathrm{a}}) \to \Ch(\A_1 \btimes \CC^\hbar_2 \btimes \A_3)$ given by $(\K_1, \K_2) \mapsto p_{12}^{-1} \K_1 \tensor_{p_{2}^{-1}\A_{2}}  p_{23}^{-1} \K_2$ is a left Quillen bifunctor.
\end{proposition}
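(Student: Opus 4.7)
The plan is to invoke Hovey's Lemma 4.2.4 which reduces the pushout-product axiom for a left Quillen bifunctor to verification on the sets of generating cofibrations $I_{\U_{12}}$ and $I_{\U_{23}}$, and on pushout-products of one generating cofibration with one generating trivial cofibration from $J_{\U_{12}}$ or $J_{\U_{23}}$. Because the target $\Ch(\A_1 \btimes \CC^\hbar_2 \btimes \A_3)$ carries the injective model structure, cofibrations there are exactly degree-wise monomorphisms and trivial cofibrations are acyclic monomorphisms, which simplifies the verification.

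For the basic pushout-product axiom, I would take $f\colon S^{n+1}\G_1 \to D^n\G_1$ in $I_{\U_{12}}$ and $g\colon S^{m+1}\G_2 \to D^m\G_2$ in $I_{\U_{23}}$. By Lemma \ref{lem:flatgenerator}, both $\G_1$ and $\G_2$ are flat over $p_2^{-1}\A_2$, so that $p_{12}^{-1}\G_1 \tensor_{p_2^{-1}\A_2} p_{23}^{-1}\G_2$ is a flat module, and moreover all four tensor products appearing in the pushout square preserve monomorphisms in each degree. A direct computation with the explicit degree-wise description of $S^k\G$ and $D^k\G$ then shows that $f\Box g$ is a degree-wise monomorphism, hence a cofibration in the injective model structure.

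For the trivial cofibration cases, I would treat the two classes $J'$ and $J''$ separately. For $f \in I_{\U_{12}}$ and $g\colon 0 \to D^m\G_2$ in $J'_{\U_{23}}$, the pushout-product is simply $(f) \tensor_{\A_2} D^m\G_2$, which is a monomorphism by the argument above, and the complex $D^m\G_2$ is acyclic; applying Lemma \ref{lem:flatness_preservation} (in its one-sided version, using flatness of $\G_2$) shows that the cofiber is acyclic, so the result is a trivial cofibration. For $g\colon \H \oplus \H[n] \to \Cyl(\H)[n]$ in $J''_{\U_{23}}$, the complex $\H$ is acyclic by construction and $\Cyl(\H)[n]$ is quasi-isomorphic to $\H[n]$, so the map is an acyclic monomorphism; flatness of $\G$ via Lemma \ref{lem:flatness_preservation} again ensures that tensoring the pushout-product remains acyclic. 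The symmetric cases with roles of $f$ and $g$ interchanged are handled identically.

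The main obstacle will be the last verification with $J''_{\U_{23}}$-type trivial cofibrations: here one has to check that after tensoring with a semi-free cofibration on the other side, the pushout-product is \emph{still} a quasi-isomorphism, not merely that the original map is. This requires a careful argument, essentially a filtration/derivation argument: one uses the flatness of the generating objects of $\GGG$ together with Lemma \ref{lem:flatness_preservation} to conclude that $p_{12}^{-1}(-)\tensor_{p_2^{-1}\A_2}p_{23}^{-1}(-)$ on the cofiber of $g$ computes the derived tensor product, so acyclicity of the cofiber of $g$ propagates through the pushout square. Once this is granted, the verification is complete and the Quillen bifunctor structure follows.
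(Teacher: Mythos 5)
Your approach matches the paper's: the paper proves this proposition simply by noting it is ``similar to that of Proposition~\ref{prop:derDQtensor}'' (the $\utensor_{\A_2}$ case), applying Hovey's pushout-product criterion to the generating (trivial) cofibrations of the semi-free model structures with the injective model structure on the target, and using flatness of the generators to control acyclicity. Your proof is correct and essentially identical; the only stylistic difference is that you invoke a ``one-sided version'' of Lemma~\ref{lem:flatness_preservation}, whereas the paper explicitly remarks that Lemma~\ref{lem:flatness_preservation} is not needed here --- the flatness over $p_2^{-1}\A_2$ from Lemma~\ref{lem:flatgenerator} already suffices because the $\tensor_{\A_2}$ formula, unlike $\utensor_{\A_2}$, involves no $\A_{123}$ factor.
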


\begin{proof}
The proof of this proposition is similar that of Proposition \ref{prop:derDQtensor} below. The essential difference is that one only need Lemma \ref{lem:flatgenerator} and not Lemma \ref{lem:flatness_preservation} and Lemma \ref{lem:flatgenerator}.
\end{proof}

\begin{proposition}\label{prop:derDQtensor}
The functor $- \utensor_{\A_2} -\colon \Ch(\A_{12^\mathrm{a}}) \times \Ch(\A_{23^\mathrm{a}}) \to \Ch(p_{13}^{-1}\A_{13^\mathrm{a}\\
})$ given by $(\K_1, \K_2) \mapsto p_{12}^{-1} \K_1 \tensor_{p_{12}^{-1}\A_{12^\mathrm{a}\\
}} \A_{123} \tensor_{p_{23^\mathrm{a}}^{-1} \A\\
_{23}} p_{23}^{-1} \K_2$ is a left Quillen bifunctor.
\end{proposition}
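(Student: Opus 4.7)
The plan is to apply Hovey's pushout-product criterion (Lemma 4.2.4 of \cite{Hovey99}) to the generating (trivial) cofibrations of the semi-free model structures on $\Ch(\A_{12^\mathrm{a}})$ and $\Ch(\A_{23^\mathrm{a}})$, which reduces the problem to a finite checklist of explicit maps. Since the target $\Ch(p_{13}^{-1}\A_{13^\mathrm{a}})$ carries the injective model structure, its cofibrations are the monomorphisms and its trivial cofibrations are the acyclic monomorphisms, which considerably simplifies the verification.

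First I would check the pushout-product of two generating cofibrations $i_{n,\G_1}\colon S^{n+1}\G_1\to D^n\G_1$ in $I_{\U_{12}}$ and $i_{m,\G_2}\colon S^{m+1}\G_2\to D^m\G_2$ in $I_{\U_{23}}$. Because $\G_1,\G_2$ are flat for $\utensor_{\A_2}$ by Lemma \ref{lem:flatgenerator}, the functor $(-)\utensor_{\A_2}(-)$ on these inputs computes as in the classical flat case: the pushout-product map is the canonical inclusion of a degreewise direct summand, hence a monomorphism in $\Ch(p_{13}^{-1}\A_{13^\mathrm{a}})$, i.e.\ a cofibration in the injective model structure.

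Next I would handle the two families of generating trivial cofibrations $J'_{\U_{12}}$ and $J''_{\U_{12}}$ (and symmetrically $J_{\U_{23}}$). For maps of the form $0\to D^n\G$ with $\G\in\GGG$, the pushout-product with any generating cofibration is itself of the form $D^n\G\utensor_{\A_2}(-)$ applied to a monomorphism; by flatness of $\G$ (Lemma \ref{lem:flatgenerator}) this yields a monomorphism, and since $D^n\G$ is contractible and flat, $p_{12}^{-1}D^n\G\otimes_{p_{12}^{-1}\A_{12^\mathrm{a}}}\A_{123}$ is contractible, so tensoring it on the right with $p_{23}^{-1}(-)$ produces a contractible complex; hence the pushout-product is acyclic. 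The case of maps $\H\oplus\H[n]\to\Cyl(\H)[n]$ with $\H\in\HHH_{\U_{12}}(\A_{12^\mathrm{a}})$ is where Lemma \ref{lem:flatness_preservation} is essential: the other input is a generating cofibration $i_{m,\G_2}\colon S^{m+1}\G_2\to D^m\G_2$ whose domain and codomain are flat, so the map $\H\utensor_{\A_2} p_{23}^{-1}(-)$ and its cylinder analogue both preserve quasi-isomorphisms on the side of the flat generator, while Lemma \ref{lem:flatness_preservation} guarantees that tensoring the acyclic complex $\H$ with the flat generators yields acyclic complexes. A diagram chase with the pushout-product square then shows that the pushout-product is a monomorphism whose cokernel is a tensor product in which at least one factor is acyclic and flat enough that Lemma \ref{lem:flatness_preservation} applies, so the cokernel is acyclic.

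The main obstacle is the last case, since the complexes $\H\in\HHH_{\U_{12}}$ are acyclic but not themselves flat: without flatness one cannot in general conclude that tensoring with an acyclic complex preserves acyclicity. This is precisely what Lemma \ref{lem:flatness_preservation} is designed to circumvent — its hypothesis that the flat side comes from the semi-free generators, combined with the explicit bar-type resolution exhibited by the cylinder construction, is what allows the pushout-product to remain an acyclic monomorphism in $\Ch(p_{13}^{-1}\A_{13^\mathrm{a}})$, completing the verification.
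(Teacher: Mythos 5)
Your proposal follows essentially the same route as the paper's proof: reduce to generating (trivial) cofibrations via Hovey's pushout-product criterion (the paper cites \cite[Corollary 4.2.5]{Hovey99} rather than Lemma 4.2.4, but this is the same tool), dispose of the $I_{\U_{12}}\,\square\,I_{\U_{23}}$ case by an explicit monomorphism computation using the flatness of the semi-free generators, and use Lemma \ref{lem:flatness_preservation} together with Lemma \ref{lem:flatgenerator} to show the pushout-products involving $J'$ and $J''$ are acyclic monomorphisms. The only slight wrinkle is that in your $J''$ case you place the acyclic complex $\H\in\HHH_{\U_{12}}$ on the $\A_{12^\mathrm{a}}$ side and the flat generator on the $\A_{23^\mathrm{a}}$ side, so you actually need the symmetric counterpart of Lemma \ref{lem:flatness_preservation} (flat factor on the right), whereas the paper's case~(ii) takes $\E_V\in\GGG_{\U_{12}}$ and $\H\in\HHH_{\U_{23}}$ to match the lemma as stated — but since the geometry is symmetric this is purely cosmetic.
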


\begin{proof}
By \cite[Definition 4.2.1]{Hovey99}, it suffices to show that if $f \colon \M \to \N$ is a cofibration in $\Ch(\A_{12^\mathrm{a}})_{\smf(\U_{12})}$ and $g \colon \S \to \T$ is a cofibration in $\Ch(\A_{23^\mathrm{a}})_{\smf(\U_{23})}$, the induced map $f \, \square \, g \colon \P \to \N \utensor_{A_2} \T$ (where $\P$ is the pushout defined in the diagram below) is a cofibration in $\Ch(p_{13}^{-1}\A_{13^\mathrm{a}})_\inj$, which is trivial if either $f$ or $g$ is trivial.

\begin{equation*}
\xymatrix{ 
\M \utensor_{\A_2} \S \ar[r]^-{\id \utensor g} \ar[d]_-{f \utensor \id} 
& \M \utensor_{\A_2} \T \ar[d]  \ar@/^/[ddr]^-{f \utensor \id}
&
\\
\N \utensor_{\A_2} \S  \ar[r]   \ar@/_/[drr]_-{\id \utensor g}
&  \P  \pullbackcorner[lu] \ar[rd]^-{f \, \square \, g} 
& 
\\
&
& \N \utensor_{\A_2} \T.
}
\end{equation*}
Since $- \utensor_{\A_2} -$ is an adjunction of two variables and the categories $\Ch(\A_{i(i+1)})_{\smf(\U_{i(i+1))}}$ $(i=1, 2)$ are cofibrantly generated, it is enough by  \cite[Corollary 4.2.5]{Hovey99} to check that
\begin{enumerate}
\item[(1)] $f \square g$ is a cofibration (i.e. a monomorphism) when $f \in I_{\U_{12}}$ and $g \in I_{\U_{23}}$.
\item[(2)] $f \square g$ is a trivial cofibration (i.e. a monomorphism and a quasi-isomorphism) when $f \in J_{\U_{12}}$ and $g \in I_{\U_{23}}$ or when $f \in I_{\U_{12}}$ and $g \in J_{\U_{23}}$.
\end{enumerate}

For (1), let $\E_V$ in $\GGG_{\U_{12}}(\A_{12^\mathrm{a}})$ and $\F_W$ in $\GGG_{\U_{23}}(\A_{23^\mathrm{a}})$.
We consider the map
\begin{align*}
i_{n,\E_V} \colon S^{n+1} \E_V \to D^{n} \E_V && \textnormal{and} && i_{p,\F_W} \colon S^{p+1} \F_W \to D^{p} \F_W.
\end{align*}
Then a direct computation shows that the pushout of the diagram
\begin{equation*}
D^n \E_V \utensor_{\A_2} S^{p+1} \F_W\leftarrow S^{n+1} \E_V \utensor_{\A_2} S^{p+1} \F_W \rightarrow S^{n+1} \E_V \utensor_{\A_2} D^p \F_W
\end{equation*}
is the complex $\P$ of $\Ch(p_{13}^{-1} \A_{13^\mathrm{a}})$ concentrated in (cohomological) degree $n+p+1$ and $n+p+2$
\begin{equation*}
\E_V \utensor_{\A_2} \F_W \oplus \E_V \utensor_{\A_2} \F_W \stackrel{+}{\longrightarrow} \E_V \utensor_{\A_2} \F_W
\end{equation*}
where $+$ denotes the fold map. Moreover, the map of complexes
\begin{equation*}
i_{n, \E_V} \, \square \, i_{p, \F_W} \colon \P \to D^n \E_V \utensor _{\A_2} D^p \F_W.
\end{equation*}
is zero everywhere but in degree $n+p+1$ and $n+p+2$ where it is the identity. Hence, it is a monomorphism.

To verify (2), we have to treat two cases. 
Let $\E_V \in \GGG_{\U_{12}}(\A_{12^\mathrm{a}})$ and $\H \in \HHH_{\U_{23^\mathrm{a}}}(\A_{23^\mathrm{a}})$.
\begin{enumerate}[label=(\roman*)]
\item We consider the case where the morphisms are 
\begin{align*}
j_n \colon 0 \to D^n \E_V && \textnormal{and}  &&(i_0,i_1)[p] \colon \H \oplus \H[p] \to \Cyl(\H)[p].
\end{align*}
Then,
\begin{equation*}
j_n \, \square \, (i_0,i_1)[p] \colon D^n \E_V \utensor_{\A_2} \H \oplus \H[p] \longrightarrow  D^n \E_V \utensor_{\A_2} \Cyl(\H)[p]
\end{equation*} 
is equal to $\id_{D^n \E_V} \utensor_{\A_2} (i_0,i_1)[p]$. Hence, it is a monomorphism. 
We need to check that the map $j_n \, \square (i_0,i_1)[p]$ is a quasi-isomorphism. Since $\H$ is acyclic $\Cyl(\H)$ is also acyclic. Moreover, $\E_V$ is flat over $\A_{12^\mathrm{a}}$. Thus, it follows from Lemma \ref{lem:flatness_preservation} and \ref{lem:flatgenerator} that
$D^n \E_V \utensor_{\A_2} \H \oplus \H[p]$ and $D^n \E_V \utensor_{\A_2} \Cyl(\H)[p]$ are acyclic. Hence, $j_n \, \square (i_0,i_1)[p]$ is a quasi-isomorphism.\\

\item We now consider the case where the morphisms are 
\begin{align*}
i_{n,\E_V} \colon S^{n+1} \E_V \to D^n \E_V && \textnormal{and} && (i_0,i_1)[p] \colon \H \oplus \H[p] \to \Cyl(\H)[p].
\end{align*}
We need to check that the map $i_{n,\E_V} \, \square \, (i_0,i_1)[p]$ is a monomorphism. We proceed as follow.

We consider the morphism
\begin{equation*}
\xymatrix{
\pi \colon S^{n+1} \E_V \utensor_{\A_2} \H \oplus \H[p] \ar[r]
& D^n \E_V \utensor_{\A_2} \H \oplus \H[p] \bigoplus S^{n+1}\E_V \utensor_{\A_2} \Cyl(\H)[p] 
}
\end{equation*}
where
\begin{equation*}
\pi=(i_{n,\E_V} \utensor_{\A_2} \id_{\H \oplus \H}, -\id_{S^{n+1} \E_V} \utensor_{\A_2} (i_0,i_1)[p]).
\end{equation*}
A direct computation shows that $\pi$ is a monomorphism. Moreover, the cokernel $\P$ of $\pi$ is the pushout of the diagram
\begin{equation*}
D^n \E_V \utensor_{\A_2} \H \oplus \H[p]\leftarrow S^{n+1} \E_V \utensor_{\A_2} \H \oplus \H[p] \rightarrow S^{n+1} \E_V \utensor_{\A_2} \Cyl(\H)[p]
\end{equation*}
and $\pi$ is a kernel of the map
\begin{equation*}
\alpha \colon D^n \E_V \utensor_{\A_2} \H \oplus \H[p] \bigoplus S^{n+1}\E_V \utensor_{\A_2} \Cyl(\H)[p] \to D^n\E_V \utensor_{\A_2} \Cyl(\H)[p]
\end{equation*}
where 
\begin{equation*}
\alpha= \id_{D^n \E_V} \utensor_{\A_2} (i_0,i_1)[p] + i_{n , \E_V} \utensor_{\A_2} \id_{\Cyl(\H)[p]}.
\end{equation*}
Hence, the map $i_{n,\E_V} \, \square  \, (i_0,i_1)[p]$ is a monomorphism. We still need to check that it is a quasi-isomorphism. For that purpose, we show that both $\P$ and $D^n\E_V \utensor_{\A_2} \Cyl(\H)[p]$ are acyclic. The acyclicity of $D^n\E_V \utensor_{\A_2} \Cyl(\H)[p]$ has already been justified. Thus, we focus on the case of $\P$.
By definition of $\P$, we have the following exact sequence
\begin{equation*}
0 \to  S^{n+1} \E_V \utensor_{\A_2} \H \oplus \H[p] \stackrel{\pi}{\to} D^n \E_V \utensor_{\A_2} \H \oplus \H[p] \bigoplus S^{n+1}\E_V \utensor_{\A_2} \Cyl(\H)[p] \to \P \to 0
\end{equation*}
where the first two terms are acyclic by Lemma \ref{lem:flatness_preservation}. This implies that $\P$ is  acyclic.
\end{enumerate}
The remaining cases are analogous to those already addressed and are left to the reader.
\end{proof}

\begin{remark}
It is straightforward to check that $\tensor_{\A_2}$ and $\utensor_{\A_2}$ are $\Ch(\Chbar)$-linear Quillen bifunctors.
\end{remark}

It follows from the above propositions that we obtain the following pair of derived functors
\begin{align*}
- \Ltensor_{\A_2} - &\colon \Der(\A_{12^\mathrm{a}}) \times \Der(\A_{23^\mathrm{a}}) \to \Der(\A_1 \btimes \CC^\hbar_2 \btimes \A_3)\\
- \Lutensor_{\A_2} - &\colon \Der(\A_{12^\mathrm{a}}) \times \Der(\A_{23^\mathrm{a}}) \to \Der(p_{13}^{-1}\A_{13^\mathrm{a}}).
\end{align*}
If there is no risk of confusion we will write $\tensor_{\A_2}$ and $\utensor_{\A_2}$ instead of $\Ltensor_{\A_2}$ and $\Lutensor_{\A_2}$.
By \cite[Corollary 1.3.4.26]{HA}, $\Ltensor_{\A_2}$ and $\Lutensor_{\A_2}$ are $\CC^\hbar$-linear and commute with small colimits in each variables separately. 

\begin{proposition}\label{prop:underiving}
Let $\M$ be a $\GGG_{\U_{12}}(\A_{12^\mathrm{a}})$-cofibrant complex in $\Ch(\A_{12^\mathrm{a}})_{\smf(\U_{12})}$. Then $\M \tensor_{\A_2} (-)$ and $\M \utensor_{\A_2} (-) $ preserve quasi-isomorphisms.
\end{proposition}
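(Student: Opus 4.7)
The plan is to reduce the statement to a question of preserving acyclic complexes, and then to propagate this property through the cellular construction of a semi-free cofibrant complex.

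First I would note that both $\M \tensor_{\A_2}(-)$ and $\M \utensor_{\A_2}(-)$ are additive, commute with shifts, and commute with small colimits in their second variable. Consequently they commute with the mapping cone construction, so a quasi-isomorphism $f\colon \S\to \T$ in $\Ch(\A_{23^\mathrm{a}})$ is sent to a quasi-isomorphism if and only if its mapping cone, which is acyclic, is sent to an acyclic complex. It therefore suffices to establish that for every acyclic $\N\in\Ch(\A_{23^\mathrm{a}})$, both $\M\tensor_{\A_2}\N$ and $\M\utensor_{\A_2}\N$ are acyclic.

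Next I would introduce the class $\mathcal{F}\subset\Ch(\A_{12^\mathrm{a}})$ of complexes for which both $(-)\tensor_{\A_2}(-)$ and $(-)\utensor_{\A_2}(-)$ preserve acyclic complexes when placed in the first slot, and verify the following closure properties: $\mathcal{F}$ contains $0$, is stable under shifts, retracts, and arbitrary coproducts; it contains every $\G\in\GGG_{\U_{12}}(\A_{12^\mathrm{a}})$ (by Lemma \ref{lem:flatgenerator}), hence every complex of the form $S^{n+1}\G$ (a shift of a flat object, which preserves acyclicity) and every $D^n\G$ (the tensor $D^n\G\otimes\N$ being the mapping cone of the identity of $\G\otimes \N$, which is always acyclic when $\G$ is flat). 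The crucial step is to show that $\mathcal{F}$ is stable under pushouts along the generating cofibrations $i_{n,\G}\colon S^{n+1}\G\to D^n\G$: given a pushout square
\[
\xymatrix{
\coprod_k S^{n_k+1}\G_k \ar[r]\ar[d] & \M_i\ar[d]\\
\coprod_k D^{n_k}\G_k \ar[r] & \M_{i+1}
}
\]
with $\M_i\in\mathcal{F}$, the map $i_{n_k,\G_k}$ is a degreewise split monomorphism, so tensoring this square with any $\N$ yields a pushout along a monomorphism; the associated Mayer--Vietoris long exact sequence in homology, together with the acyclicity of the tensor products of $S^{n_k+1}\G_k$, $D^{n_k}\G_k$ and $\M_i$ with $\N$, forces $\M_{i+1}\otimes\N$ (respectively $\M_{i+1}\utensor\N$) to be acyclic. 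For stability under transfinite compositions, I would express the colimit of a transfinite tower $(\M_\alpha)_{\alpha<\lambda}$ (of cofibrations with each $\M_\alpha\in\mathcal{F}$) as the cofiber of the ``shift minus identity'' map between the coproducts $\bigoplus_\alpha \M_\alpha$; since our tensor functors commute with coproducts and finite colimits, this computation is preserved after tensoring with $\N$, and acyclicity follows from the previous closure property and the acyclicity at each stage.

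Finally, since any $\GGG_{\U_{12}}(\A_{12^\mathrm{a}})$-cofibrant complex $\M$ is by definition a retract of a transfinite composition of pushouts of coproducts of generating cofibrations $i_{n,\G}$, the closure properties above place $\M$ in $\mathcal{F}$, completing the proof. The main technical obstacle will be the bookkeeping for the transfinite composition step, where one must argue via the standard presentation of a sequential colimit as the cofiber of a coproduct map in order to commute the tensor with the colimit and preserve acyclicity at the limit stage.
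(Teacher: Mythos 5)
Your strategy is exactly the one the paper invokes by deferring to Cisinski--D\'eglise \cite[Proposition~3.7, Lemma~3.6]{Cisinski2009}: reduce preservation of quasi-isomorphisms to preservation of acyclics via cones, then run a cellular induction over the generating $\GGG$-cofibrations $i_{n,\G}$, using the flatness of the generators (Lemma~\ref{lem:flatgenerator}) and, for $\utensor_{\A_2}$, Lemma~\ref{lem:flatness_preservation}. The pushout step is correct: $i_{n,\G}$ is degreewise split monic, tensoring preserves pushouts and degreewise split monos, and the resulting short exact sequence of complexes gives the long exact sequence you want.

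The one genuine misstep is the transfinite-composition step. The ``shift minus identity'' telescope $\bigoplus_\alpha \M_\alpha \xrightarrow{1-S} \bigoplus_\alpha \M_\alpha$ computes $\colim_\alpha \M_\alpha$ only when $\lambda=\omega$; for $\lambda>\omega$ the cokernel of $1-S$ does not see the identifications that happen at limit ordinals, so it is not the colimit. Since the class of $\GGG$-cofibrations is defined as the smallest class closed under \emph{all} transfinite compositions, you cannot assume $\omega$-length towers. The correct (and in fact simpler) argument is: $\colim_{\alpha<\lambda}\M_\alpha$ is a filtered colimit, tensoring commutes with it, and filtered colimits of acyclic complexes are acyclic because $\Mod_{\A}$ is a Grothendieck abelian category (AB5), so cohomology commutes with filtered colimits. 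Alternatively, argue by transfinite induction, invoking the filtered-colimit fact only at limit ordinals. With that replacement the proof is complete and matches the cited argument.
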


\begin{proof}
The proof is similar to the proof of \cite[Proposition 3.7]{Cisinski2009}, which relies on \cite[Lemma 3.6]{Cisinski2009}, the proof of which carries over to our setting.
\end{proof}

\begin{corollary}
Let $\K_1 \in \Der(\A_{12^\mathrm{a}})$ and $\K_2 \in \Der(\A_{23^\mathrm{a}})$. Then 
\begin{equation*}
\K_1 \Lutensor_{A_2} \K_2 \simeq p_{12}^{-1} \K_1 \Ltensor_{p_{12}^{-1}\A_{12^\mathrm{a}}} \A_{123} \Ltensor_{p_{23^\mathrm{a}}^{-1} \A_{23}} p_{23}^{-1} \K_2.
\end{equation*}
\end{corollary}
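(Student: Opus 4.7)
The plan is to reduce the statement to Proposition \ref{prop:underiving} together with the flatness properties collected in Lemma \ref{lem:flatgenerator} and the flatness of $\A_{123}$ over $p_{12}^{-1}\A_{12^\mathrm{a}}$ and over $p_{23}^{-1}\A_{23^\mathrm{a}}$. Concretely, I would pick $\GGG_{\U_{12}}(\A_{12^\mathrm{a}})$-cofibrant resolutions $\widetilde{\K}_1 \xrightarrow{\sim} \K_1$ in $\Ch(\A_{12^\mathrm{a}})_{\smf(\U_{12})}$ and $\GGG_{\U_{23}}(\A_{23^\mathrm{a}})$-cofibrant resolutions $\widetilde{\K}_2 \xrightarrow{\sim} \K_2$ in $\Ch(\A_{23^\mathrm{a}})_{\smf(\U_{23})}$. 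By Proposition \ref{prop:derDQtensor}, since $-\utensor_{\A_2}-$ is a left Quillen bifunctor, the derived tensor product $\K_1 \Lutensor_{\A_2} \K_2$ is computed by $\widetilde{\K}_1 \utensor_{\A_2} \widetilde{\K}_2$.

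Next, I would analyze the three underived tensors appearing on the right-hand side in sequence. First, $p_{12}^{-1}$ is exact and preserves quasi-isomorphisms, so $p_{12}^{-1}\widetilde{\K}_1 \xrightarrow{\sim} p_{12}^{-1}\K_1$; moreover, since flatness is a local property and the elements of $\GGG_{\U_{12}}(\A_{12^\mathrm{a}})$ are flat over $\A_{12^\mathrm{a}}$ by Lemma \ref{lem:flatgenerator}, the complex $p_{12}^{-1}\widetilde{\K}_1$ is degreewise flat over $p_{12}^{-1}\A_{12^\mathrm{a}}$, so
\[
p_{12}^{-1}\widetilde{\K}_1 \otimes_{p_{12}^{-1}\A_{12^\mathrm{a}}} \A_{123} \simeq p_{12}^{-1}\K_1 \Ltensor_{p_{12}^{-1}\A_{12^\mathrm{a}}} \A_{123}.
\]
Second, $\A_{123}$ is flat over $p_{23}^{-1}\A_{23^\mathrm{a}}$ (recalled in the proof of Lemma \ref{lem:flatness_preservation}), so the resulting complex is flat over $p_{23}^{-1}\A_{23^\mathrm{a}}$ and therefore
\[
\bigl(p_{12}^{-1}\widetilde{\K}_1 \otimes_{p_{12}^{-1}\A_{12^\mathrm{a}}} \A_{123}\bigr) \otimes_{p_{23}^{-1}\A_{23^\mathrm{a}}} p_{23}^{-1}\widetilde{\K}_2 \simeq \bigl(p_{12}^{-1}\K_1 \Ltensor_{p_{12}^{-1}\A_{12^\mathrm{a}}} \A_{123}\bigr) \Ltensor_{p_{23}^{-1}\A_{23^\mathrm{a}}} p_{23}^{-1}\K_2.
\]
Finally, since $p_{23}^{-1}$ is exact, $p_{23}^{-1}\widetilde{\K}_2 \xrightarrow{\sim} p_{23}^{-1}\K_2$, and Proposition \ref{prop:underiving} guarantees that tensoring the flat complex on the left with this quasi-isomorphism preserves quasi-isomorphisms.

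Assembling these three quasi-isomorphisms identifies $\widetilde{\K}_1 \utensor_{\A_2} \widetilde{\K}_2$, which by definition equals $p_{12}^{-1}\widetilde{\K}_1 \otimes_{p_{12}^{-1}\A_{12^\mathrm{a}}} \A_{123} \otimes_{p_{23}^{-1}\A_{23^\mathrm{a}}} p_{23}^{-1}\widetilde{\K}_2$, with the iterated derived tensor product on the right-hand side of the corollary. The only genuine obstacle is bookkeeping the flatness at each of the two intermediate tensor steps, i.e.\ knowing that the partial tensor $p_{12}^{-1}\widetilde{\K}_1 \otimes_{p_{12}^{-1}\A_{12^\mathrm{a}}} \A_{123}$ is sufficiently flat over $p_{23}^{-1}\A_{23^\mathrm{a}}$ to compute the next derived tensor; this is exactly the content used in the proof of Lemma \ref{lem:flatness_preservation} (that $\A_{123}$ is flat on both sides) combined with the flatness of the semi-free cofibrant objects, and so all necessary ingredients have already been established.
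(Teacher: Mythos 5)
Your proposal is essentially correct and takes the same approach as the paper: cofibrant replacement in the semi-free model structure, followed by unwinding the underived $\utensor_{\A_2}$ as an iterated tensor and identifying it with the iterated derived tensor via the flatness of semi-free complexes and of $\A_{123}$ over $p_{12}^{-1}\A_{12^{\mathrm{a}}}$ and $p_{23}^{-1}\A_{23^{\mathrm{a}}}$.

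The one cosmetic difference is that the paper resolves only $\K_1$ and leans directly on Proposition \ref{prop:underiving} to conclude $\K_1\Lutensor_{\A_2}\K_2\simeq\Q_1\utensor_{\A_2}\K_2$ with $\K_2$ left untouched; you instead resolve both $\K_1$ and $\K_2$ via the Quillen bifunctor property (Proposition \ref{prop:derDQtensor}). Both are valid, though the paper's version is a bit more economical. Also note that your final citation of Proposition \ref{prop:underiving} is slightly misapplied: that proposition concerns the functors $\M\tensor_{\A_2}(-)$ and $\M\utensor_{\A_2}(-)$ for $\M$ a $\GGG$-cofibrant complex of $\A_{12^{\mathrm{a}}}$-modules, whereas the complex you are tensoring at that stage is a (flat) complex of right $p_{23}^{-1}\A_{23^{\mathrm{a}}}$-modules. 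The correct justification there is simply the K-flatness of that complex (inherited from the semi-free resolution and the flatness of $\A_{123}$), which is the same observation underlying the proof of Proposition \ref{prop:underiving} but should be invoked directly rather than via the statement of the proposition.
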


\begin{proof}
Let $\Q_1$ be a $\GGG_{\U_{12}}(\A_{12^\mathrm{a}})$-cofibrant replacement of $\K_1$.
It follows from Proposition \ref{prop:underiving} that
\[
\K_1 \Lutensor_{A_2} \K_2
\simeq \Q_1 \utensor_{A_2} \K_2
\simeq p_{12}^{-1} \Q_1 \tensor_{p_{12}^{-1}\A_{12^\mathrm{a}}} \A_{123} \tensor_{p_{23^\mathrm{a}}^{-1} \A_{23}} p_{23}^{-1} \K_2
\simeq p_{12}^{-1} \K_1 \Ltensor_{p_{12}^{-1}\A_{12^\mathrm{a}}} \A_{123} \Ltensor_{p_{23^\mathrm{a}}^{-1} \A_{23}} p_{23}^{-1} \K_2.
\]
\end{proof}

\newcommand{\etalchar}[1]{$^{#1}$}
\providecommand{\bysame}{\leavevmode\hbox to3em{\hrulefill}\thinspace}
\providecommand{\MR}{\relax\ifhmode\unskip\space\fi MR }
% \MRhref is called by the amsart/book/proc definition of \MR.
\providecommand{\MRhref}[2]{%
	\href{http://www.ams.org/mathscinet-getitem?mr=#1}{#2}
}
\providecommand{\href}[2]{#2}

\Addresses
\end{document}